\documentclass[dvipsnames]{amsart}
\usepackage{pict2e}
 
\usepackage{amsmath,amssymb,amsthm}
\usepackage{extarrows}
\usepackage{fullpage}
\usepackage{changes}
\usepackage{soul}
\usepackage{todonotes}

\allowdisplaybreaks

\usepackage{color}
\usepackage{hyperref}
\hypersetup{
	colorlinks=true,
	linkcolor=blue,
	citecolor=blue,
	urlcolor=blue
}

\usepackage{graphicx}
\usepackage{mathrsfs}
\usepackage{tikz-cd}
\usepackage{array}
\usepackage{multirow}
\usepackage{soul}
\usepackage{quiver}
\usepackage{thmtools}
\usepackage{thm-restate}

\usepackage{xcolor}

\usepackage{tikz}

\newtheorem{theoremAlph}{Theorem}
\newtheorem{definAlph}{Definition}
  
\newtheorem{theorem}{Theorem}[section]
\newtheorem{lemma}[theorem]{Lemma}	
\newtheorem{proposition}[theorem]{Proposition}
\newtheorem{corollary}[theorem]{Corollary}
\theoremstyle{definition}
\newtheorem{definition}[theorem]{Definition} 
\newtheorem{remark}[theorem]{Remark}	
\newtheorem{example}[theorem]{Example}

\newcommand{\longdownarrow}{\lower 1.4ex\hbox{\begin{picture}(18,18)(0,0)
\thicklines
\put(0,18){\vector(0,-1){18}}
\end{picture}}}
\newcommand{\longsearrow}{\lower 1.4ex\hbox{\begin{picture}(18,18)(0,0)
\thicklines
\put(0,18){\vector(1,-1){18}}
\end{picture}}}
\newcommand{\longssearrow}{\lower 1.4ex\hbox{\begin{picture}(18,18)(0,0)
\thicklines
\put(0,18){\vector(1,-2){18}}
\end{picture}}}
\newcommand{\longeearrow}{\lower 1.4ex\hbox{\begin{picture}(0,0)(9,9)
\thicklines
\put(0,18){\vector(1,0){18}}
\end{picture}}}
 \usepackage{relsize}
\theoremstyle{definition}

\numberwithin{equation}{section}

\newcommand{\W}{\mathbb{W}}
\newcommand{\V}{\mathbb{V}}
\newcommand{\R}{\mathbb{R}}
\newcommand{\N}{\mathbb{N}}
\newcommand{\Z}{\mathbb{Z}}
\newcommand{\G}{\mathbb{G}}

\renewcommand{\H}{\mathbb{H}}
\title[Stokes' Theorem on positively graded groups]{Stokes' Theorem on positively graded groups} 
\keywords{Carnot groups,  Rumin complex, spectral sequences, Stokes' Theorem}

\thanks{F.T. and V.M. were supported by the MIUR Excellence Department Project awarded to the Department of Mathematics, University of Pisa, CUP I57G22000700001.
F.T. would like to thank the Centro di Ricerca Matematica Ennio De Giorgi and the Scuola Normale Superiore for their hospitality and support. She also acknowledges the support and hospitality of the Department of Mathematics of Pisa University during her visit.
V.M. acknowledges the support of PRIN 2022PJ9EFL {\em Geometric Measure Theory: Structure of Singular Measures, Regularity Theory and Applications in the Calculus of Variations}, funded by the European Union--NextGenerationEU, CUP E53D23005860006}

\subjclass{46L87, 53C17, 22E25, 18G40, 49Q15, 28A75}
\author[V.~Magnani]{Valentino Magnani}
\author[F.~Tripaldi]{Francesca Tripaldi}

\address[V. Magnani]{ Dipartimento di Matematica\\ Universit\`a di Pisa\\ Largo Bruno Pontecorvo 5\\ 56127, Pisa, Italy}
\email{valentino.magnani@unipi.it}

\address[F. Tripaldi]%
{Department of Pure Mathematics, University of Leeds, Woodhouse, LS2 9JT Leeds, UK} 
\email{f.tripaldi@leeds.ac.uk}

\date{}

\begin{document}
\maketitle
\begin{abstract}
  This paper studies the validity of Stokes’ theorem for differential subcomplexes naturally adapted to the noncommutative geometry of positively graded Lie groups, with particular emphasis on Carnot groups. We introduce geometric conditions under which Stokes-type formulae hold for the Rumin complex and for a new family of spectral complexes associated with the homogeneous weight filtration of the de Rham complex. In particular, the spectral complexes allow us to recover the validity of Stokes’ theorem on locally smooth intrinsic graphs. This is achieved by showing that the corresponding Stokes' formulae are governed entirely by the degree of the underlying submanifolds.
  
  Our approach also reveals that both the Rumin complex and the spectral complexes can be interpreted directly in terms of the classical de Rham complex through the Leibniz rule and integration over suitable classes of submanifolds, namely $R$-manifolds and spectral manifolds, respectively. 
  
  Finally, motivated by this interaction between homogeneous weights and degrees of submanifolds, we propose a notion of currents naturally adapted to these subcomplexes.
\end{abstract}

\tableofcontents

\section{Introduction}

Stokes' theorem represents one of the most important results in integration theory on oriented manifolds. It expresses a fundamental principle in differential geometry: global geometric information can be recovered from local differential data via boundary interactions. By relating the integral of a differential form over a manifold to the integral of the form itself over its boundary, it provides a unifying framework that generalises classical results such as the fundamental theorem of calculus, Green’s theorem, and the divergence theorem, and represents one of the central bridges between analysis, geometry, and topology.

The importance of Stokes' theorem goes far beyond mathematics, with applications in several areas of theoretical physics. It underlies the passage from macroscopic integral laws to local differential equations. It allows global balance principles, formulated as fluxes across the boundary of a region, to be expressed in terms of pointwise differential relations, thus linking conservation laws with their differential counterparts, as in the classical formulation of Maxwell’s equations and fluid dynamics.

In the broad context of differential geometry, Stokes' theorem underpins the topological properties of the de Rham cohomology of smooth manifolds. Indeed, it implies that the integration of differential forms over singular chains vanishes on boundaries and depends only on cohomology classes, thereby inducing a natural isomorphism between the de Rham cohomology of a smooth manifold and its singular homology with real coefficients. Stokes’ theorem also implies the homotopy invariance of de Rham cohomology: homotopies produce chains whose boundary measures the difference between the endpoints, so the pullbacks of a closed form along homotopic maps define the same cohomology class and therefore have the same integrals over corresponding cycles.

In the context of de Rham currents, Stokes' theorem is the basic principle for the definition of the boundary operator. Viewed as a generalised $k$-dimensional manifold, a $k$-current $T$, or de Rham current, is a continuous linear functional defined on any smooth, compactly supported $k$-form $\alpha$. Its boundary is precisely introduced assuming the validity of Stokes' theorem $\partial T(\alpha)=T(d\alpha)$. Any smooth, oriented, $k$-dimensional submanifold $\Sigma$ with boundary induces a $k$-current $[[\Sigma]]$, and Stokes' theorem shows that the boundary current is defined by the geometric boundary $\partial\Sigma$ of $\Sigma$ along with its induced orientation: 
\begin{equation*}
\partial [[\Sigma]](\alpha)=\int_{\Sigma}d\alpha=\int_{\partial\Sigma}\alpha = [[\partial \Sigma]](\alpha)\,.
\end{equation*}
The abstract notion of boundary allows one to formulate the celebrated Plateau problem, namely the existence of a mass minimising current among all currents with prescribed boundary. The solution of this problem is a foundational result of geometric measure theory in Euclidean spaces; see, for instance, \cite{Federer69,Morgan2016-bk}.

In the context of geometric measure theory in Carnot groups the situation is considerably subtler. Finding the proper geometric integration requires the
right domains of integration as intrinsic smooth objects with respect to the
geometry of the group. Since the first developments of geometric measure theory in 
Heisenberg groups, it appeared that the right smooth objects are 
intrinsic graphs, as the building blocks for the notion of intrinsic rectifiability 
\cite{FSSC01,FSSC6}. An intrinsic graph is a set associated with a pair $(\W,\V)$ of complementary subgroups (Definition~\ref{def: complementary subgroups}). It is a set of the form $\{w\phi(w):w\in A\}$, where $A\subset\W$ is an open set and $\phi:A\to\V$ is the defining mapping.
Intrinsic graphs, introduced and studied in \cite{FMS14,FranchiSerapioni}, are intrinsically regular sets when their defining mapping is continuously intrinsically differentiable. They also appear as regular sets arising from an intrinsic differential calculus in Carnot groups, using the notion of Pansu differentiability, \cite{Mag14}. In sum, when the ambient space is a Carnot group, or more generally a positively graded group, the geometry of regular subsets is much more rigid, and intrinsic graphs with intrinsically differentiable defining mapping become the proper class of intrinsically $C^1$ smooth submanifolds. 
Crucially, not every differential form of the de Rham complex interacts well with smooth intrinsic graphs from the point of view of integration. By contrast, the forms that appear in the Rumin complex $(E_0^\bullet,d_c)$ are compatible with the underlying sub-Riemannian structure, given their fundamental role in the definition of intrinsic graphs, as shown in \cite{FranchiSerapioni}. 

This observation naturally leads to the question of whether the Rumin differential $d_c$ admits a Stokes-type theorem analogous to the classical one. Beyond the fact that simple Rumin forms are in a one-to-one correspondence with intrinsic graphs (as observed explicitly in Section \ref{section intrinsic graphs}), the Rumin complex plays a central role in the sub-Riemannian setting, by providing a differential complex whose operators reflect the grading of the underlying Lie algebra while retaining many of the structural properties of the de Rham complex. Our approach to establishing conditions under which a Stokes' formula holds for the Rumin complex on an orientable $C^1$ manifold $\Sigma$ with boundary $\partial\Sigma$ is based on relating the operator $d_c$
 to the exterior derivative $d$, thereby allowing us to exploit the validity of the classical Stokes' theorem for differential forms.

Since the Rumin differential is defined as $d_c=\Pi_0d\Pi_E\alpha$, one of the main difficulties in deriving a Stokes-type formula from the classical one lies in the presence of the projection operators $\Pi_0$ and $\Pi_E$. A natural strategy is therefore to identify a class of manifolds on which the contribution of this projection becomes unnecessary or can be controlled directly.

The key ingredient underlying our approach is the characterisation of the space of smooth differential forms $\Omega^\bullet(\G)$ on a positively graded group $\G$ though the algebraic differential $d_0$ and its adjoint $\delta_0$. The latter is defined once a suitable scalar product on $\Omega^\bullet(\G)$ is fixed (see Subsection \ref{subsection: introducing a scalar product}). In particular, the associated algebraic Laplacian $\Box_0:=d_0\delta_0+\delta_0d_0$ yields a Hodge-type orthogonal decomposition of $\Omega^\bullet(\G)$ (as shown in Proposition \ref{prop: Hodge decomposition for Box0}), so that
\begin{align*}
    \Omega^\bullet(\G)=\operatorname{Im}d_0\oplus\ker\Box_0\oplus\operatorname{Im}\delta_0\,.
\end{align*}
This decomposition allows us to isolate the component of a differential form that effectively contributes to the Rumin differential and, crucially, to identify those whose vanishing renders the projection operators irrelevant in the context of integration. More precisely,
\begin{itemize}
    \item \(E_0^\bullet=\ker\Box_0\cap\Omega^\bullet(\G)\);
    \item for every \(\alpha\in E_0^\bullet\), one has $\Pi_E\alpha=\alpha+\operatorname{Im}\delta_0$;
    \item for every \(\alpha\in E_0^\bullet\), $d\Pi_E\alpha
        \in
        \big(\operatorname{Im}d_0\big)^\perp\cap\Omega^\bullet(\G)
        =
        \big(\ker\Box_0\oplus\operatorname{Im}\delta_0\big)\cap\Omega^\bullet(\G)$.
\end{itemize}
These observations suggest introducing a class of submanifolds on which forms belonging to \(\operatorname{Im}\delta_0\) integrate to zero, thereby eliminating the contribution of the projection operators in Stokes-type formulas for the Rumin complex.
Indeed, if this is the case, we obtain Stokes' theorem for the Rumin differential $d_c$ directly from the classical Stoke's formula, since 
\begin{align*}
        \int_{\partial\Sigma}\alpha\underbrace{=}_{(\ast)}\int_{\partial\Sigma}\Pi_E\alpha=\int_{\Sigma}d\Pi_E\alpha\underbrace{=}_{(\ast)}\int_{\Sigma}d_c\alpha\,,
    \end{align*}
    where by $(\ast)$ we denote the equalities that follow from the fact that the integrals of forms in $\operatorname{Im}\delta_0$ over  $\partial\Sigma$ and $\Sigma$ vanish.
\begin{definAlph}
    [$R$-manifolds, Definition \ref{def: R manifolds}]
    Let \(\Sigma\subset\mathbb{G}\) be an oriented \(k\)-dimensional \(C^1\)-manifold, with or without boundary. We say that \(\Sigma\) is an \(R\)-manifold if
    \begin{align*}
        \int_\Sigma\eta=0
        \qquad
        \text{for every }
        \eta\in\Omega^k(\G)\cap\operatorname{Im}\delta_0 .
    \end{align*}
\end{definAlph}

\begin{theoremAlph}
    [Stokes' theorem on \(R\)-manifolds, Theorem \ref{thm: stokes on R manifolds}]\label{thm: A}
    
    Let \(\Sigma\subset\G\) be a \(k\)-dimensional \(R\)-manifold whose boundary \(\partial\Sigma\) is also an \(R\)-manifold. Then
    \begin{align*}
        \int_{\partial\Sigma}\alpha
        =
        \int_\Sigma d_c\alpha
        \qquad
        \text{for all }
        \alpha\in E_0^{k-1}.
    \end{align*}
\end{theoremAlph}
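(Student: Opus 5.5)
The argument follows the chain of equalities sketched in the introduction, so the plan is to justify each link using the three structural facts listed there. Fix $\alpha\in E_0^{k-1}$. I assume, as I expect the paper's definitions give, that $\alpha$ and hence $\Pi_E\alpha$ have compact support, or at least that $\Sigma$ is compact, so that all integrals make sense and the classical Stokes' theorem applies to the $C^1$ oriented manifold $\Sigma$ with boundary $\partial\Sigma$ and its induced orientation.

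\emph{Step 1: the boundary term.} By the second bullet, $\Pi_E\alpha=\alpha+\eta$ with $\eta\in\Omega^{k-1}(\G)\cap\operatorname{Im}\delta_0$. Since $\partial\Sigma$ is a $(k-1)$-dimensional $R$-manifold, $\int_{\partial\Sigma}\eta=0$, and therefore
\begin{align*}
\int_{\partial\Sigma}\alpha=\int_{\partial\Sigma}\Pi_E\alpha .
\end{align*}
One small point needs checking: $\eta=\Pi_E\alpha-\alpha$ must be a genuine smooth form, with compact support if required, so that it lies in the class of test forms used in the definition of $R$-manifold. This follows because $\Pi_E$ is a differential operator, which preserves smoothness and supports.

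\emph{Step 2: classical Stokes.} Applying the classical Stokes formula to the smooth $(k-1)$-form $\Pi_E\alpha$ gives
\begin{align*}
\int_{\partial\Sigma}\Pi_E\alpha=\int_\Sigma d\Pi_E\alpha .
\end{align*}
Regularity is not an issue here: $\Sigma$ is $C^1$, and Stokes' theorem holds for $C^1$ forms and $C^1$ manifolds with boundary.

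\emph{Step 3: the interior term, which is the main obstacle.} I must show that $d\Pi_E\alpha-d_c\alpha\in\operatorname{Im}\delta_0$. The third bullet together with Proposition~\ref{prop: Hodge decomposition for Box0} gives
\begin{align*}
d\Pi_E\alpha=\beta+\gamma,\qquad \beta\in\ker\Box_0,\quad \gamma\in\operatorname{Im}\delta_0 .
\end{align*}
By definition $d_c\alpha=\Pi_0 d\Pi_E\alpha$, where $\Pi_0$ is the projection onto $E_0^\bullet=\ker\Box_0$ associated with the Hodge decomposition. I need $\Pi_0$ to be the orthogonal projection with respect to the chosen scalar product, or at least a projection that kills $\operatorname{Im}d_0\oplus\operatorname{Im}\delta_0$. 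Granting this, $\Pi_0 d\Pi_E\alpha=\beta$, and so $d\Pi_E\alpha-d_c\alpha=\gamma\in\Omega^k(\G)\cap\operatorname{Im}\delta_0$.

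If instead $\Pi_0$ is defined as a projection along $\operatorname{Im}d_0$ alone, I would first use the third bullet, $d\Pi_E\alpha\perp\operatorname{Im}d_0$, to see that the $\operatorname{Im}d_0$ component vanishes. I would then check that the $\operatorname{Im}\delta_0$ component is exactly what $\Pi_0$ removes, which is what the decomposition in Proposition~\ref{prop: Hodge decomposition for Box0} is designed to give.

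\emph{Conclusion.} Since $\Sigma$ is a $k$-dimensional $R$-manifold, $\int_\Sigma\gamma=0$, hence $\int_\Sigma d\Pi_E\alpha=\int_\Sigma d_c\alpha$. Chaining Steps 1–3 yields
\begin{align*}
\int_{\partial\Sigma}\alpha=\int_\Sigma d_c\alpha .
\end{align*}
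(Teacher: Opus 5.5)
Your proof is correct and matches the paper's argument step for step. The boundary discrepancy $\Pi_E\alpha-\alpha\in\operatorname{Im}\delta_0$ integrates to zero on $\partial\Sigma$, classical Stokes moves the integral to $d\Pi_E\alpha$, and since $d\Pi_E\alpha\in(\operatorname{Im}d_0)^\perp$ one has $d\Pi_E\alpha-d_c\alpha=d_0^{-1}d_0\,d\Pi_E\alpha\in\operatorname{Im}\delta_0$, which integrates to zero on $\Sigma$. Your contingency branch is unnecessary, because the paper defines $\Pi_0=\operatorname{Id}-d_0d_0^{-1}-d_0^{-1}d_0=\operatorname{pr}_{E_0}$, the orthogonal projection, so it annihilates both $\operatorname{Im}d_0$ and $\operatorname{Im}\delta_0$.
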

In view of Theorem \ref{thm: A}, the Rumin complex, and in particular the action of the Rumin differential, admits an interpretation through Stokes' theorem. From this perspective, the Rumin complex can be understood not as a genuinely new differential complex, but rather as the de Rham complex viewed through integration along suitable classes of manifolds. More precisely, the differential $d_c$
 arises from the exterior derivative once forms are tested only against manifolds for which the contributions coming from $\operatorname{Im}\delta_0$ vanish. Section \ref{section: Rumin leibniz} makes this viewpoint precise by showing that the Rumin complex can be reconstructed from the de Rham complex using only the Leibniz rule for the exterior derivative together with integration over appropriate manifolds, such as $R$-manifolds.

A distinctive feature of the Heisenberg groups is that this reconstruction does not require selecting preferred representatives in $\operatorname{Im}\delta_0$
 when applying the Leibniz rule, reflecting the intrinsic nature of the Rumin complex in this setting (see Subsection \ref{subsection: Rumin complex Leibniz Hn}). In contrast, already for $\H^1\times\R$ (see Subsection \ref{subsection: rumin on HtimesR}), recovering the Rumin complex from the Leibniz rule and integration necessarily involves a choice of representatives in $\operatorname{Im}\delta_0$. This highlights a phenomenon already well known for the Rumin complex on arbitrary Carnot groups, namely that its construction depends explicitly on the choice of scalar product introduced in Subsection \ref{subsection: introducing a scalar product}.

As observed in Proposition \ref{prop: when R manifold is not needed}, requiring both \(\Sigma\) and \(\partial\Sigma\) to be \(R\)-manifolds is a rather restrictive assumption, and one that is not always necessary. In particular, this requirement can be avoided whenever
\[
    \Pi_E\alpha=\alpha
    \ \text{ or }\
    d\Pi_E\alpha\in\ker\Box_0\cap\Omega^\bullet(\G)\ \text{ for a given }\ \alpha\in E_0^\bullet\,.
\]
This phenomenon already appears in the simple setting of Heisenberg groups \(\mathbb H^n\). Indeed, it is well known that Stokes' theorem for the Rumin complex is valid on what we define as \textit{locally smooth intrinsic graphs with boundary}, that is $C^1$ orientable manifolds $\Sigma$ with boundary $\partial\Sigma$, where both $\Sigma$ and $\partial\Sigma$ are, up to a negligible set, locally the images of a $C^2$ intrinsic graph (see Definition \ref{def:LSIG}). We also recall that the same conclusion also holds for $C^1_{\mathbb H}$-regular submanifolds with boundary by means of smooth approximation arguments \cite{StokesFranchi,dimarco2025submanifoldsboundarysubriemannianheisenberg}. However, as we show in Proposition \ref{prop: stokes on heisenberg groups}, there is a substantial difference between smooth intrinsic graphs of low dimension and those of low codimension. More precisely, in the \((2n+1)\)-dimensional Heisenberg group \(\mathbb H^n\),
\begin{itemize}
    \item if \(0<k\le n\), smooth intrinsic graphs are \(R\)-manifolds;
    \item if \(n+1\le k<2n\), smooth intrinsic graphs are generally \textit{not} \(R\)-manifolds.
\end{itemize}
Nevertheless, the special properties of the Rumin complex in $\H^n$ compensates for this failure. Indeed, for every \(\alpha\in E_0^k\), one has $\Pi_E\alpha=\alpha$ for $k\ge n+1$, while $
    d\Pi_E\alpha
    \in
    \ker\Box_0\cap\Omega^{k+1}(\mathbb H^n)$ for $k\ge n$.
These two properties are sufficient to recover the validity of Stokes' theorem on locally smooth intrinsic graphs with boundary also in the high-dimensional regime.

The situation changes drastically for general positively graded groups, and even for Carnot groups. In Proposition \ref{prop: stokes intrinsic graphs H1timesR}, we explicitly show that Stokes' theorem for the Rumin complex may fail on locally smooth intrinsic graphs with boundary in the four-dimensional Carnot group \(\mathbb H^1\times\mathbb R\).

A remarkable feature of our approach is that all the smooth intrinsic graphs that are shown in this paper to be $R$-manifolds are characterised solely through their degree. This concept appears in \cite{Mag13Vit} to study explicit integration formulas to compute the spherical measure constructed from a homogeneous distance of the group when it is restriced to smooth submanifolds. Area formulas for the spherical measures in Carnot groups or general positively graded group show that the degree of a submanifold coincides with its Hausdorff dimension with respect to
any fixed homogeneous distance of the group.

Indeed, Corollary \ref{cor: integrating forms on smooth intrinsic graphs} shows that integration over a smooth intrinsic graph annihilates all differential forms whose weight exceeds the degree of the graphing subgroup. Consequently, the problem of determining whether a given locally smooth intrinsic graph is an $R$-manifold reduces to comparing the degree of the of the submanifold with the possible weights of forms appearing in $\operatorname{Im}\delta_0$. In particular, the validity of Stokes-type formulas for these classes of locally smooth intrinsic graphs with boundary is governed entirely by the grading structure of the group and by the degree of the submanifold, rather than by finer geometric properties of the graph itself.

This degree-related perspective underlying the notion of $R$-manifold naturally extends beyond the Rumin complex and leads to the study of a new family of differential subcomplexes, namely the spectral complexes
$$\{(E_{j,l}^{\bullet,\bullet},\Delta_j)\}_{j\in I_{\bullet,\bullet}}$$
introduced in \cite{tripaldi2026spectralcomplexestruncatedmulticomplexes}. These complexes arise by interpreting the de Rham complex of a positively graded Lie group as a truncated multicomplex and considering the spectral sequence associated with the filtration by homogeneous weight. In this framework, the exterior differential decomposes according to the grading \eqref{eq: decomposition of d} and the corresponding spectral sequence isolates, page by page, the components of the differential responsible for specific jumps in weight.

To describe these spectral complexes, one introduces the spaces $Z_r^{p,\bullet}$ and $B_r^{p,\bullet}$, which provide explicit representatives for the pages of the spectral sequence and admit a natural reinterpretation in terms of Rumin forms and the Rumin differential. More precisely, the decomposition
\begin{align*}
    d_c=\sum_{j\in I_{p,k}}d_c^j
\end{align*}
of the Rumin differential into homogeneous operators of bidegree $(j,1-j)$ gives rise to a corresponding family of spectral differentials
$$\Delta_j\colon E_{j,l}^{p,\bullet}\longrightarrow E_{m,j}^{p+j,\bullet}$$
each encoding the contribution of the exterior derivative associated with a weight increase of exactly $j$. The resulting spectral complexes therefore refine the Rumin complex by separating the different homogeneous components of the differential structure.

Similarly to what happens for the Rumin complex, these operators admit a direct interpretation through Stokes' theorem. Indeed, we show that each differential $\Delta_j$ coincides, up to exact forms and terms of sufficiently high weight, with the classical exterior derivative. Consequently, when integrating over an orientable $C^1$ manifold $\Sigma$ with boundary such that $\operatorname{deg}(\Sigma)-\operatorname{deg}(\partial\Sigma)=j$, all irrelevant terms vanish for purely degree-theoretic reasons, and one obtains a Stokes' formula associated specifically with the spectral differential $\Delta_j$. In this way, the geometry of the manifold determines which component of the spectral decomposition governs the integration process. More precisely, this can be rephrased as follows.
\begin{theoremAlph}
[Stokes' theorem for spectral complexes, Theorem \ref{thm: stokes for spectral complexes}] \label{thm: B}Fix a bidegree $(p,k-1)$ for which the operator $$\Delta_j\colon E_{j,l}^{p,k-1-p}\longrightarrow E_{m,j}^{p+j,k-p-j}$$
is nonzero, for some admissible choice of $l,m\in\mathbb{Z}^+$. Let $\Sigma\subset\mathbb{G}$ be an orientable $k$-dimensional $C^1$ manifold such that $\operatorname{deg}(\Sigma)=p+j$, and assume that $\partial\Sigma$ is also an orientable $C^1$ manifold with $\operatorname{deg}(\partial\Sigma)=p$. Then
\begin{align*}
    \int_{\partial\Sigma}\alpha=\int_\Sigma \Delta_j\alpha\ \text{ for every }\alpha\in E_{j,l}^{p,k-1-p}\,.
\end{align*}
\end{theoremAlph}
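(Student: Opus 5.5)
The plan is to reduce the statement to the classical Stokes' theorem on $\Sigma$, using two facts: the spectral differential $\Delta_j$ agrees with the exterior derivative up to controllable terms, and integration over a submanifold of given degree kills every form of larger weight. The degree fact is the content of Corollary~\ref{cor: integrating forms on smooth intrinsic graphs} and of the general principle behind the notion of degree: if $\deg(\Sigma)=q$, then $\int_\Sigma\eta=0$ for every $\eta$ whose homogeneous weight exceeds $q$. I would first make this precise for an arbitrary orientable $C^1$ manifold. Its tangent $k$-vectors project to zero on all components of weight $>\deg(\Sigma)$, so the pullback of any homogeneous form of weight $>\deg(\Sigma)$ vanishes pointwise. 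This already contains the fact that $\deg(\Sigma)$ is the maximal weight seen by $\Sigma$.

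The main structural step is to write, for $\alpha\in E_{j,l}^{p,k-1-p}$, an explicit representative of $\Delta_j\alpha$ built from $\alpha$ through the spaces $Z_r^{p,\bullet}$ and $B_r^{p,\bullet}$. Concretely, I would take a representative $\tilde\alpha\in Z_j^{p,\bullet}$ of the class of $\alpha$ of the form
\[
\tilde\alpha=\alpha+\beta,\qquad \beta \text{ of weight}\ \ge p+1 .
\]
This representative is chosen so that $d\tilde\alpha$ has no components of weight $p+1,\dots,p+j-1$, and its weight-$(p+j)$ component equals $\Delta_j\alpha$ modulo $B_j^{p+j,\bullet}$. Following the reinterpretation through the homogeneous pieces $d_c^i$ of the Rumin differential, I expect
\[
d\tilde\alpha=\Delta_j\alpha+d\gamma+\rho ,
\]
where $\gamma$ is a $(k-1)$-form of weight $\ge p+1$ and $\rho$ is a $k$-form whose components all have weight $>p+j$.

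Granting this identity, the rest is bookkeeping. On the boundary, $\int_{\partial\Sigma}\tilde\alpha=\int_{\partial\Sigma}\alpha$, because $\beta$ has weight $>p=\deg(\partial\Sigma)$. On $\Sigma$, the classical Stokes' theorem gives
\[
\int_{\partial\Sigma}\tilde\alpha=\int_\Sigma d\tilde\alpha
=\int_\Sigma \Delta_j\alpha+\int_{\partial\Sigma}\gamma+\int_\Sigma\rho .
\]
Here $\int_\Sigma\rho=0$ since $\rho$ has weight $>p+j=\deg(\Sigma)$, and $\int_{\partial\Sigma}\gamma=0$ since $\gamma$ has weight $>p=\deg(\partial\Sigma)$. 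The exact term can in fact be absorbed by replacing $\tilde\alpha$ with $\tilde\alpha-\gamma$, which is still a legitimate representative because $\gamma$ has weight $\ge p+1$. This yields the claim.

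The obstacle I expect is the structural identity for $d\tilde\alpha$. The issue is that $\Delta_j$ is defined on a page of the spectral sequence, hence only modulo $B_j$. The representative $E_{m,j}^{p+j,\bullet}$ is a specific complement, so the discrepancy between the weight-$(p+j)$ part of $d\tilde\alpha$ and $\Delta_j\alpha$ is an element of $B_j^{p+j,\bullet}$. I would handle it by unwinding the definition of $B_j^{p+j}$: its elements are the weight-$(p+j)$ parts of $d\xi$ with $\xi$ of weight between $p+1$ and $p+j-1$, up to higher weight. Such an element is therefore $d\xi$ minus terms of weight $>p+j$, and $d\xi$ is exact with $\xi$ of weight $>p$, which is exactly what the degree argument requires.

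The nonvanishing assumption on $\Delta_j$ and the admissibility of $l,m$ only guarantee that the statement is non-vacuous and that the relevant representatives are well defined. I would check carefully that the truncation of the multicomplex, with weights bounded by the step of the group, causes no boundary effects in this argument.
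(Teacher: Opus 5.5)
Your proof is correct and follows essentially the same route as the paper: write $\Delta_j\alpha$ as $d(\alpha-z_{p+1}-\cdots-z_{p+j-1})$ corrected by exact terms $d\gamma$ with $\gamma$ of weight $\ge p+1$ (coming from $B_j^{p+j}=dZ_{j-1}^{p+1}+\Omega^{\ge p+j+1}$) and by forms of weight $\ge p+j+1$, then kill these corrections with the degree hypotheses on $\partial\Sigma$ and $\Sigma$ after applying the classical Stokes' theorem. One small inaccuracy: the primitives describing $B_j^{p+j}$ have weights from $p+1$ up to $p+j$, not $p+j-1$, since the term $d_0c_{p+j}$ is allowed; this does not affect your argument, because you only use that their weight exceeds $p$.
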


In contrast with the Rumin complex, the validity of Stokes' theorem for spectral complexes depends crucially on the degree of the underlying submanifold. In Remark~\ref{rmk: spectral stokes no on R manifolds}, we show that the operators $\Delta_j\colon E_{j,l}^{p,\bullet}\longrightarrow E_{m,j}^{p+j,\bullet}$ do not, in general, satisfy a Stokes' formula on arbitrary $R$-manifolds. The reason is that the representatives defining the action of $\Delta_j$ are only determined modulo higher-weight terms and $d$-exact forms whose primitives need not vanish under integration on $R$-manifolds as soon as $j\ge 2$. This shows that the correct framework for Stokes' theorem for spectral complexes is instead dictated by degree compatibility.
This phenomenon naturally motivates the introduction of distinguished classes of submanifolds adapted to spectral complexes, for which a Stokes-type theorem for the operators $\Delta_j$
 becomes valid.

\begin{definAlph}
[Spectral manifolds, Definition \ref{def: spectral manifold}]\em 
Let $\Sigma\subset\mathbb G$ be a $k$-dimensional orientable $C^1$ manifold with boundary $\partial\Sigma$. Denote by $P_h$ the set of weights of nontrivial Rumin forms $E_0^h$ in degree $h$. We say that $\Sigma$ is a \emph{spectral manifold with boundary} if
\[
\deg(\Sigma)\in P_k
\qquad\text{and}\qquad
\deg(\partial\Sigma)\in P_{k-1}.
\]
In case $\Sigma$ has no boundary, so it is orientable and only the condition $\deg(\Sigma)\in P_k$ is satisfied, we say that $\Sigma$ is a {\em spectral manifold}.
\end{definAlph}

Crucially, by Lemma \ref{lem: deg boundary less}, for a spectral manifold with boundary, we have necessarily that  $\operatorname{deg}(\partial\Sigma)<\operatorname{deg}(\Sigma)$. This means that, by construction of the spectral complexes, there exists some choices of $l,m\in\mathbb Z^+$ so that $$\Delta_j\colon E_{j,l}^{\operatorname{deg}(\partial\Sigma),k-1-\operatorname{deg}(\partial\Sigma)}\longrightarrow E_{m,j}^{\operatorname{deg}(\Sigma),k-\operatorname{deg}(\Sigma)}$$
since $j=\operatorname{deg}(\Sigma)-\operatorname{deg}(\partial\Sigma)\ge 1$. This property directly implies that locally smooth intrinsic graphs are indeed spectral manifolds with boundary.

\begin{theoremAlph}
 [Stokes' theorem on locally smooth intrinsic graphs with boundary, Corollary \ref{cor: stokes thm for intrinsic graphs}]
Let $\Sigma\subset\mathbb{G}$ be an orientable $k$-dimensional locally smooth intrinsic graph with boundary $\partial\Sigma$, then
\[
\int_{\partial\Sigma}\alpha
=
\int_\Sigma \Delta_j \alpha
\quad
\text{for every } \alpha\in E_{j,l}^{p,k-1-p}\,,
\]
where $p=\operatorname{deg}(\Sigma)$, $j=\operatorname{deg}(\Sigma)-\operatorname{deg}(\partial
\Sigma)$, and some $l\in\mathbb{N}$ such that $E_{j,l}^{p,k-1-p}\neq 0$.
\end{theoremAlph}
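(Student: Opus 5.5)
The plan is to deduce the statement from Theorem~\ref{thm: B}. Once its hypotheses are checked for a locally smooth intrinsic graph with boundary, the formula follows at once. There is an index mismatch to resolve first. In Theorem~\ref{thm: B} the bidegree of $\alpha$ is indexed by $\operatorname{deg}(\partial\Sigma)$, while here $p=\operatorname{deg}(\Sigma)$. I read the statement with $\alpha\in E_{j,l}^{\operatorname{deg}(\partial\Sigma),\,k-1-\operatorname{deg}(\partial\Sigma)}$, which is the only reading compatible with $\Delta_j$ landing in forms of weight $\operatorname{deg}(\Sigma)$. So set $q=\operatorname{deg}(\partial\Sigma)$ and $j=\operatorname{deg}(\Sigma)-q$. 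We must check that $\Delta_j\colon E^{q,k-1-q}_{j,l}\to E^{q+j,k-q-j}_{m,j}$ is a legitimate, nonzero spectral differential for some admissible $l,m$.

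\textbf{Step 1: a locally smooth intrinsic graph with boundary is a spectral manifold with boundary.} By Definition~\ref{def:LSIG}, $\Sigma$ and $\partial\Sigma$ are, up to negligible sets, locally $C^2$ intrinsic graphs over subgroups $\W$ and $\W'$ of dimensions $k$ and $k-1$. I will use the following facts from the earlier sections:
\begin{itemize}
\item the degree of a smooth intrinsic graph equals the homogeneous dimension of the graphing subgroup;
\item by the correspondence of Section~\ref{section intrinsic graphs}, the Rumin form $E_0^{k}$ dual to $\W$ (the simple form defining the graph) is nontrivial and has weight equal to that homogeneous dimension.
\end{itemize}
Together these give $\operatorname{deg}(\Sigma)\in P_k$, and similarly $\operatorname{deg}(\partial\Sigma)\in P_{k-1}$. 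The degree is constant on the pieces of a connected graph. I expect it to be constant on $\Sigma$ as well, with the negligible set irrelevant for integration; if not, I would argue chart by chart with a partition of unity.

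\textbf{Step 2: positivity of $j$ and existence of the spectral differential.} By Lemma~\ref{lem: deg boundary less}, $q<\operatorname{deg}(\Sigma)$, so $j\ge1$. By the construction of the spectral complexes, the differentials $\Delta_j$ connect weight $q\in P_{k-1}$ to weight $q+j\in P_k$, and for a suitable $l$ (with a matching $m$) the space $E_{j,l}^{q,k-1-q}$ is nonzero. The statement already assumes such an $l$ is fixed.

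\textbf{Step 3: conclusion.} Theorem~\ref{thm: B} now applies with $p$ there equal to $q$ and $\operatorname{deg}(\Sigma)=q+j$. Its proof only uses that integration over a $C^1$ manifold of degree $d$ kills forms of weight greater than $d$ (Corollary~\ref{cor: integrating forms on smooth intrinsic graphs}) and that exact terms integrate correctly. Both remain valid after removing a negligible set, because the integral of a continuous form over a $C^1$ manifold ignores sets of measure zero.

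I expect Step 1 to be the main obstacle. It requires identifying the degree of the graph with the weight of a nonzero Rumin form in the correct form degree. If this is not explicit earlier, I would establish it using the $C^2$ parametrisation $w\mapsto w\phi(w)$: the tangent $k$-vector has maximal-weight component equal to the horizontal $k$-vector of $\W$, and the dual covector of $\W$ lies in $\ker\Box_0$ because intrinsic graphs are modelled on Rumin forms.
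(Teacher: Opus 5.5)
Your proposal is correct and matches the paper's proof essentially step for step. The paper also reindexes with $p=\deg(\partial\Sigma)$, obtains nontrivial Rumin forms in weights $\deg(\Sigma)$ and $\deg(\partial\Sigma)$ from the graph/Rumin-form correspondence (Lemma \ref{lem: subalgebras}), gets $j\ge 1$ from Lemma \ref{lem: deg boundary less}, asserts existence of a nonzero $\Delta_j$ just as you do, and then invokes Theorem \ref{thm: stokes for spectral complexes}. One side remark: constancy of the degree on $\Sigma$ is not needed, and need not hold, since Theorem \ref{thm: stokes for spectral complexes} uses $\deg(\Sigma)$ only as a maximum, which is attained on some graph chart. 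Your partition-of-unity fallback is therefore unnecessary, and it would in any case fail because cut-offs do not preserve $E_{j,l}^{\bullet,\bullet}$.
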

The fact that locally smooth intrinsic graphs with boundary satisfy this degree condition is a direct consequence of Proposition \ref{prop: complementary subgroups vs Rumin forms}, so that for $\operatorname{graph}(\phi)$ a $k$-dimensional $(\W,\V)$-graph, we have that $\operatorname{deg}(\operatorname{graph}(\phi))=\operatorname{deg}(\W)\in P_k$ for each degree.

In Section \ref{section: spectral Leibniz}, we reinterpret the spectral complexes $\{(E_{j,l}^{\bullet,\bullet},\Delta_j)\}_{j\in I_{\bullet,\bullet}}$ directly in terms of the de Rham complex and Theorem \ref{thm: B}. More precisely, we show that the action of the spectral differentials $\Delta_j$ can be recovered from the exterior derivative through the Leibniz rule together with integration over suitable spectral manifolds with boundary. In contrast with the situation for the Rumin complex, no choice of preferred representatives is required when applying the Leibniz rule. The construction is entirely determined by the homogenous weight of the forms involved and the de Rham differential itself, highlighting the fact that even though the spaces of forms $E_{j,l}^{p,\bullet}=Z_j^{p,\bullet}\cap\big(B_l^{p,\bullet}\big)^\perp$ are defined using an appropriate scalar product, the action of the spectral differential does not depend on it.

Spectral manifolds, spectral complexes, and Stokes' theorem also serve as our guiding ideas to find the associated theory of currents. In a few words, as the de Rham currents are associated with the de Rham complex, in positively graded groups one should expect to define currents by duality from suitable subcomplexes of the de Rham complex.

In Heisenberg groups, currents were studied in \cite{FSSC6}, as dual of the Rumin complex, using intrinsic $\H$-regular submanifolds as the intrinsically smooth domains of integration. Then a number of interesting results on Heisenberg currents subsequently appeared, with applications to geometric measure theory, in \cite{StokesFranchi,FSSC6,Can21jga,Vit22,JulNGolVit2023,DiMJulNGoloVit25,franchi2025currents}. A notion of Rumin currents in Carnot groups has also been proposed and studied in \cite{julia2023flat}. 

In Section~\ref{section: currents}, we define spectral currents as duals of spectral complexes. 
The fact that the boundary is also a spectral current is not an obvious fact and requires careful considerations on the defining properties of the forms appearing in these subcomplexes, viewed as quotient spaces (see Remark \ref{eq: Z and B as quotients}). Just like in the definition of spectral manifold, the degree, or weight, of the current plays a crucial role in their construction and their properties. This idea is encoded in the concept of weighted comass (see Definition \ref{def:weightedcomass}) and allows us to define the mass of a $(p+j,k,m,j)$-spectral current $T\colon \mathscr D_{m,j}^{p+j,k-p-j}\longrightarrow\R$. Note that the boundary $\partial^jT$ of a $(p+j,k,m,j)$-spectral current $T$ turns out to be a $(p,k-1,j,l)$-spectral current.

In the case of an oriented $k$-dimensional $C^1$ spectral manifold $\Sigma$ with boundary $\partial\Sigma$, then by Theorem \ref{thm: B}, we get
\begin{align*}
    [[\partial^j\Sigma]]_p(\omega)=\int_{\partial\Sigma}\omega=\int_\Sigma\Delta_j\omega=[[\Sigma]]_{p+j}(\Delta_j\omega)\ \text{ for any }\omega\in \mathscr D_{j,l}^{p,k-1-p}\,,
\end{align*}
where $\operatorname{deg}(\partial\Sigma)=p$ and $\operatorname{deg}(\Sigma)=p+j$.

This appears to be a unifying view including the well known theories of currents in Heisenberg groups.


\section{The de Rham complex on positively graded Lie groups}
In this paper, we will be studying the validity of Stokes' theorem for both the Rumin complex as well as the spectral complexes introduced in \cite{tripaldi2026spectralcomplexestruncatedmulticomplexes}. In order to make this exposition as self-contained as possible, we provide a brief overview of the main properties of the de Rham complex $(\Omega^\bullet(\G),d)$ on a given connected, simply-connected positively gradable Lie group $\G$. Crucially, we will show that the de Rham complex is a truncated multicomplex, which is the critical property needed for constructing spectral complexes. 

Let us start by fixing some notation. Given an $n$-dimensional real Lie algebra $\mathfrak{g}$ with basis {$(X_1,\ldots,X_n)$}, 
{we will denote by $\mathfrak{g}^\ast$ the dual space, and by $(\theta_1,\ldots,\theta_n)$ the dual basis, namely $\theta_i(X_j)=\delta_{ij}$. 
Real-valued linear functionals on $\mathfrak{g}$, that are elements of $\mathfrak g^\ast$, are also known as 1-covectors.}
\begin{definition}[{Homogeneous and }positively graded Lie groups]\label{def: hom Lie group}
  {If $\mathfrak{g}$ admits a family $\delta_\lambda\colon\mathfrak{g}\to\mathfrak{g}$ of automorphisms for $\lambda>0$, we say that the connected, simply-connected Lie group $\G$ with Lie algebra $\mathfrak{g}$ is \textit{homogeneous}. These automorphisms are called 
  \textit{dilations}, and have the form $\delta_\lambda=\operatorname{Exp}(A\ln\lambda)$, where $A$ is a diagonalisable operator on $\mathfrak{g}$ with positive eigenvalues and $\operatorname{Exp}$ is the exponential on all the endomorphisms of the vector space $\mathfrak{g}$. 

  Let $(X_1,\ldots,X_n)$ be a basis of eigenvectors of $A$. In this basis, $\delta_\lambda$ is represented by the diagonal matrix $\operatorname{Mat}(\delta_\lambda)=\operatorname{diag}(\lambda^{\nu_1},\ldots,\lambda^{\nu_n})$. The eigenvalues repeated according with their multiplicity are $\nu_1,\ldots,\nu_n\in\R$. The distinct eigenvalues $w_1,\ldots,w_s$ are called the \textit{weights} of the dilations and are labelled in increasing order $0<w_1<\cdots<w_s$.} We set $\mathfrak{g}_{w_j}$ as the $w_j$-eigenspace for $A$. Then $\mathfrak{g}$ decomposes into a direct sum
  \begin{align}\label{eq: grading direct sum decomposition}
      \mathfrak{g}=\bigoplus_{j=1}^s\mathfrak{g}_{w_j}\ \text{ such that }[\mathfrak{g}_{w_i},\mathfrak{g}_{w_j}]\subseteq\mathfrak{g}_{w_i+w_j}\ ,\ 1\le i,j\le s\,.
  \end{align}
  This direct sum decomposition is often referred to as a homogeneous structure.

  {When the weights of the homogenous structure \eqref{eq: grading direct sum decomposition} are positive integers, the group $\G$ is said to be \textit{positively graded}. This is equivalent to requiring the weights to be positive rationals, see} \cite[Lemma 3.1.9]{Fischer2016}.

  If in addition one can take $w_1=1$, with $\mathfrak{g}_1$ generating the whole Lie algebra $\mathfrak{g}$ via iterated brackets, the Lie group $\G$ is called \textit{stratifiable} and we refer to the corresponding direct sum decomposition \eqref{eq: grading direct sum decomposition} as {the} stratification (given a stratifiable group $\G$, the choice of stratification is unique up to Lie algebra automorphisms \cite[Proposition 9.2.9]{donne2024metric}). 
  
\end{definition}
As a straightforward consequence of the fact that the direct sum decomposition \eqref{eq: grading direct sum decomposition} is finite, we get that $\mathfrak{g}$ (and hence also the homogeneous Lie group $\G$ with Lie algebra $\mathfrak{g}$) is nilpotent. Note that a given {nilpotent} Lie group $\G$ may admit several non-equivalent homogeneous structures \cite{hakavuori2022gradings}.

Since the {positively graded group} $\G$ we are considering is connected, simply-connected and nilpotent, its exponential map $\exp\colon\mathfrak{g}\to\G$ is a bijection and a global diffeomorphism. This then allows us to extend the dilations to the group. We will keep the same notation for the dilations on the group, so that $\delta_\lambda\colon\G\to\G$ with $\delta_\lambda(\exp X)=\exp(\delta_\lambda X)$ for any $X\in\mathfrak{g}$.


\begin{definition}[Carnot groups]\label{def: carnot groups} {A stratifiable Lie group $\G$ is \textit{Carnot} if it is equipped with a stratification} {and a scalar product on $\mathfrak{g}_1$}. As such, Carnot groups represent the simplest examples of {sub-Riemannian manifolds}. 
\end{definition}
Although Carnot groups are the most important setting for the applications of the results proved in this paper, the subcomplexes we will be constructing require a suitable scalar product to be defined on the whole Lie algebra $\mathfrak{g}$ (see Subsection \ref{subsection: introducing a scalar product}). In the case of a non-stratifiable positively gradable Lie group, it is standard to work with a Riemannian metric on $\G$ adapted to the homogeneous structure \eqref{eq: grading direct sum decomposition}. However, in the case of a Carnot group, where the scalar product is only defined on the first layer $\mathfrak{g}_1$, one should extend it to a scalar product on the whole of $\mathfrak{g}$ by taking a Riemannian metric on $\G$ that \textit{tames} the given {sub-Riemannian} one \cite[Chapter 1]{montgomery2002tour}. There are of course an infinite amount of such Riemannian metrics, but for our purposes it is sufficient to fix one which is adapted to the homogeneous structure (the stratification in this case).

The dilations $\delta_\lambda\colon\mathfrak{g}\to\mathfrak{g}$ with $\lambda>0$ naturally extend to the exterior algebra of covectors $\bigwedge^\bullet\mathfrak{g}^\ast$ and to the space of smooth forms $\Omega^\bullet(\G)$ via the formula
\begin{align*}
    \left(\delta_\lambda\alpha\right)_x(V_1,\ldots,V_k):=\alpha_{{\delta_\lambda x}}(\delta_\lambda V_1,\ldots,\delta_\lambda V_k)\ \text{ where }\alpha\in\Omega^k(\G),\,x\in\G\text{ and }V_1,\ldots,V_k\in\Gamma(T\G)\big\vert_x\cong\mathfrak{g}\,.
\end{align*}

We keep the same notation $\delta_\lambda$ for these extensions. One can check that the $\delta_\lambda$s respect the wedge product, that is
\begin{align*}
    \delta_\lambda(\alpha_1\wedge\alpha_2)=(\delta_\lambda\alpha_1)\wedge(\delta_\lambda\alpha_2)\ \text{ for all }\alpha_1,\alpha_2\in\Omega^\bullet(\G)\,.
\end{align*}
\begin{definition}[Weights of forms]\label{def: weights of forms}
    We say that a form $\alpha\in\Omega^\bullet(\G)$ is \textit{homogeneous} if there exists a $p\in\R^+$ such that $\delta_\lambda\alpha=\lambda^p\alpha$ for all $\lambda>0$. We refer to the number $p$ as the \textit{weight} of $\alpha$ and we will use the shorthand notation $w(\alpha)=p$. An alternative way of expressing that a form $\alpha$ is such that $w(\alpha)=p$ is to say that $\alpha$ has \textit{pure weight }$p$. 
\end{definition}
Notice that 0-forms have weight $0$, while the volume form $\operatorname{vol}$ has weight $Q$ equal to the homogeneous dimension $\sum_{j=1}^sw_j\operatorname{dim}\mathfrak{g}_{w_j}$ of the group $\G$. Such dimension precisely coincides with the Hausdorff dimension of with respect to a fixed homogeneous distance, according to the next definition.

\begin{definition}
Given a positively graded group $\G$, a distance $\rho$ on $\G$ is {\em homogeneous} if 
\begin{align*}
    \rho(xp_1,xp_2)=\rho(p_1,p_2)\ \text{ and }\ \rho(\delta_\lambda(p_1),\delta_\lambda(p_2))=\lambda\rho(p_1,p_2)\ \text{ for all }x,p_1,p_2\in\G\quad \text{and}\quad\lambda>0.
\end{align*}
\end{definition}
Homogeneous distances can be constructed in any homogeneous group, \cite{HebSik90}, and also other general constructions are available, \cite{FSSC5}. In H-type groups, the Cygan-Koranyi distance is another well known example of homogeneous distance, \cite{Cygan81}.

If $\G$ is stratified and equipped with a left invariant sub-Riemannian metric, then the well-known {\em Carnot-Carath\'eodory distance} can be defined and it is another example of homogeneous distance.

 We will denote by $\mathcal{H}^d$ and $\mathcal{S}^d$ the $d$-dimensional Hausdorff and spherical Hausdorff measures induced by a fixed homogenous distance $\rho$, respectively. We will denote by $Q$ the Hausdorff dimension of $\G$ with respect to $\rho$ (which always coincides with the weight of the volume form of $\G$).

\begin{remark}
    When dealing with a Lie group $\G$, one can consider the subcomplex of the de Rham complex consisting of left-invariant differential forms $\Omega^\bullet_L(\G)$. A left-invariant $k$-form is uniquely determined by its value at the identity, where it defines a linear map $\bigwedge^k\mathfrak{g}\to\R$, by identifying the tangent space at the identity with the Lie algebra $\mathfrak{g}$, and so $\Omega^\bullet_L(\G)=\bigwedge^\bullet\mathfrak{g}^\ast$.

    Moreover, in the case of a homogeneous Lie group $\G$, we can identify the tangent space $T_x\G$ to $\G$ at any point $x\in\G$ with $\mathfrak{g}\cong T_e\G$ by means of the isomorphism $dL_x$, where $L_x$ denotes the left-translation by $x\in\G$. For $\xi\in\bigwedge^k\mathfrak{g}^\ast$ and $f\in C^\infty(\G)$, we can regard $f\otimes\xi$ as a smooth $k$-form by $(f\otimes\xi)_x=f(x)(L_{x^{-1}})^*\xi$, giving rise to the isomorphism
    \begin{align*}
        \operatorname{Hom}_\R\left(\bigwedge\nolimits^k\mathfrak{g},C^\infty(\G)\right)\cong C^\infty(\G)\otimes\bigwedge\nolimits^k\mathfrak{g}^\ast\to\Gamma\left(\bigwedge\nolimits^k(T^*\G)\right)=\Omega^k(\G)\,.
    \end{align*}
\end{remark}
Without loss of generality, {one can consider a \textit{graded basis} $(X_1,\ldots,X_n)$, namely a basis of $\mathfrak g$ adapted to the direct sum decomposition \eqref{eq: grading direct sum decomposition}: } 
\begin{align*}
    \mathfrak{g}_{w_1}=\operatorname{span}_\R\{X_1,\ldots,X_{m_1}\}\ \text{ and }\ \mathfrak{g}_{w_i}=\operatorname{span}_\R\{X_{m_{i-1}+1},\ldots,X_{m_i}\}\ \text{ for }2\le i\le s\,.
\end{align*}
Furthermore, its dual basis $(\theta_1,\ldots\theta_n)$ also reflects the direct sum decomposition in terms of weights, since $w(\theta_i)=p$ if and only if $X_i\in\mathfrak{g}_{p}$.

\begin{lemma}[Forms of different weight are linearly independent]\label{lem: different weight implies lin indep} Let us consider $\alpha_1,\alpha_2\in\Omega^k(\G)$ two arbitrary smooth $k$-forms. If they are both homogeneous with $w(\alpha_1)\neq w(\alpha_2)$, then they are linearly independent.
\end{lemma}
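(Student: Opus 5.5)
The plan is to use nothing more than the linearity of the dilation action on forms and the scaling identity in Definition~\ref{def: weights of forms}. We implicitly assume $\alpha_1,\alpha_2\neq 0$. The zero form satisfies $\delta_\lambda 0=\lambda^p 0$ for every $p$, so it has no well-defined weight, and the statement is only meaningful for nonzero forms. Write $p_1=w(\alpha_1)$ and $p_2=w(\alpha_2)$, with $p_1\neq p_2$.

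First I would record that $\alpha\mapsto\delta_\lambda\alpha$ is $\R$-linear on $\Omega^k(\G)$. This is immediate from the defining formula $(\delta_\lambda\alpha)_x(V_1,\ldots,V_k)=\alpha_{\delta_\lambda x}(\delta_\lambda V_1,\ldots,\delta_\lambda V_k)$, which is linear in $\alpha$ pointwise.

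Next, suppose $c_1\alpha_1+c_2\alpha_2=0$ for some $c_1,c_2\in\R$. Applying $\delta_\lambda$ and using homogeneity gives
\begin{equation*}
c_1\lambda^{p_1}\alpha_1+c_2\lambda^{p_2}\alpha_2=0\quad\text{for all }\lambda>0.
\end{equation*}
Multiplying the original relation by $\lambda^{p_1}$ and subtracting yields $c_2(\lambda^{p_2}-\lambda^{p_1})\alpha_2=0$. Choosing $\lambda=2$ gives $2^{p_2}\neq 2^{p_1}$ because $p_1\neq p_2$, hence $c_2\alpha_2=0$. Since $\alpha_2\neq 0$, this forces $c_2=0$. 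The original relation then reduces to $c_1\alpha_1=0$, so $c_1=0$ as well.

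There is no real obstacle here. The only points needing care are the nonzero assumption on the forms and the linearity of $\delta_\lambda$, both of which follow directly from the definitions. The same argument, a Vandermonde-type elimination, would extend to finitely many homogeneous forms with pairwise distinct weights.
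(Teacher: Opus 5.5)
Your proof is correct, but it takes a different route from the paper's. The paper argues through a graded dual basis, in three steps:
\begin{enumerate}
\item left-invariant $1$-forms of different weight lie in different summands $\mathfrak g^\ast_{p}$ of the layer decomposition;
\item homogeneous $k$-covectors of different weight are spanned by disjoint sets of monomials $\theta_{i_1}\wedge\cdots\wedge\theta_{i_k}$;
\item the result transfers to smooth forms written as $\sum_j f_j\otimes\xi_j$ with $f_j\in C^\infty(\G)$.
\end{enumerate}
You instead treat homogeneous forms as eigenvectors of the linear maps $\delta_\lambda$ with eigenvalues $\lambda^{p}$, and you separate distinct eigenvalues. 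This buys several things:
\begin{itemize}
\item it is basis-free;
\item it extends verbatim to finitely many pairwise distinct weights, which is the form actually invoked in Proposition~\ref{prop: de Rham is multicomplex};
\item it makes explicit the nonzero hypothesis that the paper leaves implicit;
\item it works equally for the pullback action written in Definition~\ref{def: weights of forms} and for the fibrewise action $f\otimes\xi\mapsto f\otimes\delta_\lambda\xi$, under which the paper later reads $\Omega^{p,k-p}(\G)$ as $C^\infty(\G)\otimes\bigwedge^{p,k-p}\mathfrak g^\ast$.
\end{itemize}
What the paper's basis argument buys in return is that explicit description of each weight space. From it, the existence of the weight decomposition $\Omega^k(\G)=\bigoplus_p\Omega^{p,k-p}(\G)$ is immediate, whereas the eigenvector argument alone only shows that the sum of the weight spaces is direct.
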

\begin{proof}
    Let us first prove the claim for two left-invariant 1-forms $\theta_1,\theta_2\in\mathfrak{g}^\ast$. If $w(\theta_1)=p_1\neq w(\theta_2)=p_2$, by definition of weights we have that $\theta_1\in\mathfrak{g}^\ast_{p_1}$ and $\theta_2\in\mathfrak{g}^\ast_{p_2}$ and they are therefore linearly independent by the direct sum decomposition \eqref{eq: grading direct sum decomposition}. 
    
    Given $\xi_1,\xi_2\in\bigwedge^k\mathfrak{g}^\ast$ with $k>1$, if $w(\xi_1)\neq w(\xi_2)$ then without loss of generality we can assume $\xi_1=\theta_{i_1}\wedge\cdots\wedge\theta_{i_k}$ and $\xi_2=\theta_{j_1}\wedge\cdots\wedge\theta_{j_k}$ with
    \begin{align*}
        w(\xi_1)=w(\theta_{i_1})+\cdots+w(\theta_{i_k})\neq w(\xi_2)=w(\theta_{j_1})+\cdots+w(\theta_{j_k})\,.
    \end{align*}
    This means that there exists at least one $l\in\{1,\ldots,k\}$ such that $w(\theta_{i_l})=p_1\neq w(\theta_{j_l})=p_2$, i.e. $\theta_{i_l}\in\mathfrak{g}_{p_1}^\ast$ and $\theta_{j_l}\in\mathfrak{g}_{p_2}^\ast$ belong to different subspaces of the direct sum decomposition.

    Finally, the claim follows from the fact that a smooth form $\alpha\in\Omega^k(\G)$ is homogeneous of weight $p$ if $\alpha=\sum_{j}f_j\otimes\xi_j$ where $f_j\in C^\infty(\G)$ and $w(\xi_j)=p$ for each $p$.
\end{proof}

As a direct consequence, the space of smooth forms $\Omega^k(\G)$ admits a direct sum decomposition given by the weights. Throughout this paper, we will express this decomposition using the following notation
\begin{align}
    \Omega^k(\G)=\Gamma\left(\bigwedge\nolimits^k\mathfrak{g}^\ast\right)=\bigoplus_{\substack{p+q=k\\ 0\le p\le Q}}\Gamma\left(\bigwedge\nolimits^{p,q}\mathfrak{g}^\ast\right)=\bigoplus_{\substack{p+q=k\\ 0\le p\le Q}}C^\infty(\G)\otimes\bigwedge\nolimits^{p,q}\mathfrak{g}^\ast=\bigoplus_{\substack{p+q=k\\0\le p\le Q}}\Omega^{p,q}(\G)\,,
\end{align}
where $\bigwedge^{p,q}\mathfrak{g}^\ast$ and $\Omega^{p,q}(\G)$ denote respectively the space of left-invariant and smooth forms of weight $p$ and degree $p+q=k$.

 Given a smooth form $\alpha\in\Omega^\bullet(\G)$ which is \textit{not} homogeneous, that is $\alpha=\sum_{j=1}^{\kappa}\alpha_j$ with each summand having pure weight $w(\alpha_j)=p_j$, we will denote by $(\alpha)_{p_j}$ the projection of $\alpha$ onto $\Omega^{p_j,\bullet}(\G)$, that is $(\alpha)_{p_j}=\alpha_j$.

\begin{remark}
    When working with differential forms on Carnot groups, $k$-forms of weight $p$ are typically denoted by $\Omega^{k,p}(\G)$. However, since spectral sequence techniques are needed to define the spectral complexes used later in the paper, we adopt the notation that is standard in that context.
\end{remark}
\begin{lemma}\label{lem: diving exterior derivative}
    Given a {positively graded} group $\G$, the direct sum decomposition \eqref{eq: grading direct sum decomposition} of its Lie algebra $\mathfrak{g}$ induces a decomposition of the exterior differential $d$ acting on smooth forms, which can be easily expressed in terms of the weight increase. {For an arbitrary} $\alpha\in\Omega^{p,k-p}(\G)$ of pure weight $p$ and degree $k$, one can write
    \begin{align}\label{eq: decomposition of d}
        d\alpha=d_0\alpha+d_{w_1}\alpha+\cdots+d_{w_s}\alpha
    \end{align}
    where $d_0\alpha\in\Omega^{p,k+1-p}(\G)$ and $d_{w_i}\alpha\in\Omega^{p+w_i,k+1-w_i-p}(\G)$ for each $i=1,\ldots,s$.
\end{lemma}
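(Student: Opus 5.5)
We write $\alpha=\sum_j f_j\otimes\xi_j$ with $f_j\in C^\infty(\G)$ and $\xi_j\in\bigwedge^{p,k-p}\mathfrak g^\ast$ left-invariant of pure weight $p$; this is possible by the weight decomposition of $\Omega^k(\G)$ that follows from Lemma~\ref{lem: different weight implies lin indep}. By the Leibniz rule,
\begin{align*}
d\alpha=\sum_j df_j\wedge\xi_j+\sum_j f_j\, d\xi_j\,.
\end{align*}
We treat the two sums separately and collect their components according to weight.

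For the algebraic part, recall that on left-invariant forms $d$ is the Chevalley--Eilenberg differential. On a $1$-form, $d\theta(X,Y)=-\theta([X,Y])$. By \eqref{eq: grading direct sum decomposition}, $[\mathfrak g_{w_i},\mathfrak g_{w_l}]\subseteq\mathfrak g_{w_i+w_l}$. Hence if $\theta\in\mathfrak g^\ast_{w}$, then $d\theta$ is a combination of $\theta_a\wedge\theta_b$ with $w(\theta_a)+w(\theta_b)=w$.

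An equivalent and cleaner argument uses that the group dilations $\delta_\lambda$ are Lie group automorphisms. So $\delta_\lambda$, acting by pullback as in the definition preceding Definition~\ref{def: weights of forms}, commutes with $d$, giving $\delta_\lambda d\xi=d\delta_\lambda\xi=\lambda^p d\xi$. Thus $d\xi_j$ is homogeneous of weight $p$ and degree $k+1$, so $d\xi_j\in\bigwedge^{p,k+1-p}\mathfrak g^\ast$. We set $d_0\alpha:=\sum_j f_j\,d\xi_j\in\Omega^{p,k+1-p}(\G)$.

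For the differential part, expand $df_j=\sum_{i=1}^n (X_if_j)\,\theta_i$ in the graded basis. Then $(X_if_j)\theta_i\wedge\xi_j$ has pure weight $p+w(\theta_i)$. Grouping the indices $i$ with $X_i\in\mathfrak g_{w_l}$, we define
\begin{align*}
d_{w_l}\alpha:=\sum_j\sum_{X_i\in\mathfrak g_{w_l}}(X_if_j)\,\theta_i\wedge\xi_j\in\Omega^{p+w_l,k+1-p-w_l}(\G)\,.
\end{align*}
Summing the two parts gives \eqref{eq: decomposition of d}.

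The main point requiring care is that the $d_{w_l}$ and $d_0$ are well defined, independently of the representation $\sum_j f_j\otimes\xi_j$. This follows by taking each of them to be the projection $(d\alpha)_{p+w_l}$, respectively $(d\alpha)_p$, onto the weight components. These projections are intrinsic thanks to the direct sum decomposition by weight (Lemma~\ref{lem: different weight implies lin indep}). The explicit formulas above only show which weights occur, namely $p$ and $p+w_l$. The other delicate point is verifying that $d$ preserves weight on left-invariant forms. I expect to handle this through $\delta_\lambda$ being an automorphism, so that $\delta_\lambda^\ast$ commutes with $d$, or directly through the grading condition $[\mathfrak g_{w_i},\mathfrak g_{w_l}]\subseteq\mathfrak g_{w_i+w_l}$.
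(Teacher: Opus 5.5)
Your proposal is correct and follows essentially the same route as the paper's proof. You write $\alpha=\sum_j f_j\xi_j$, split $d\alpha$ via the Leibniz rule, group $df_j\wedge\xi_j$ by the layers of the graded basis to get the $d_{w_i}$, and show that $d$ preserves weight on left-invariant forms because it commutes with the dilations. The paper checks this commutation on $1$-forms through $d\theta(X,Y)=-\theta([X,Y])$ and then inducts with the Leibniz rule, whereas you invoke directly that pullback by the group automorphism $\delta_\lambda$ commutes with $d$ in every degree; the two arguments are equivalent.
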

\begin{proof}
    Given an arbitrary $k$-form of weight $p$, we have $\alpha=\sum_jf_j\otimes\xi_j$ with each $f_j\in C^\infty(\G)$ and $\xi_j\in\bigwedge^{p,k-p}\mathfrak{g}^\ast$. The exterior differential applied to $\alpha$ has the following expression
    \begin{align*}
        \begin{split}
            d\bigg(\sum_jf_j\otimes\xi_j\bigg)\left(V_1,\ldots,V_{k+1}\right)=&\sum_j\sum_{i=1}^{k+1}(-1)^{i-1}V_if_j\otimes\xi_j(V_1,\ldots,\hat{V}_i,\ldots,V_{k+1})+\\&+\sum_j\sum_{1\le i<l\le k+1}(-1)^{i+l}f_j\otimes\xi_j\left([V_i,V_l],V_1,\ldots,\hat{V}_i,\ldots,\hat{V}_l,\ldots ,V_{k+1}\right)
        \end{split}
    \end{align*}   
    for any $V_1,\ldots,V_{k+1}\in\Gamma(T\G)$.

    Using the more streamlined notation $\alpha=\sum_jf_j\xi_j$ to express the $k$-form, we get the formula
    \begin{align*}
d\bigg(\sum_jf_j\xi_j\bigg)=\sum_j\left(df_j\wedge\xi_j+f_jd\xi_j\right)=\sum_jdf_j\wedge\xi_j+\sum_jf_jd\xi_j\,.
    \end{align*}
    If we consider {a graded basis $(X_1,\ldots,X_n)$} with dual basis $(\theta_1,\ldots,\theta_n)$, we obtain a very explicit expression for the first summand, that is
    \begin{align*}
        \sum_jdf_j\wedge\xi_j=\sum_j\sum_{l=1}^nX_lf_j\theta_l\wedge\xi_j=\sum_{i=1}^s\sum_{X_l\in\mathfrak{g}_{w_i}}\sum_j X_lf_j\theta_l\wedge\xi_j=\sum_{i=1}^sd_{w_i}\alpha\,.
    \end{align*}
    For each $i=1,\ldots,s$, we see that
    \begin{align*}
        d_{w_i}\alpha=\sum_{X_l\in\mathfrak{g}_{w_i}}\sum_jX_lf_j\theta_l\wedge\xi_j=\big(d\alpha\big)_{p+w_i}\in\Omega^{p+w_i,k+1-p-w_i}(\G)
    \end{align*}
    since each $X_l\in\mathfrak{g}_{w_i}$ and so $w(\theta_l)=w_i$. 

    Regarding the second summand, one needs to show that unless $d\xi_j$ vanishes, we have $w(d\xi_j)=w(\xi_j)=p$, i.e. it keeps the weight constant. In the case of homogeneous Lie groups, one can show this by using the group's dilations. Indeed, this second summand coincides with the action of the exterior derivative on left-invariant forms, which can be seen as
    \begin{align*}
        d\xi_j(X_1,\ldots,X_{k+1})=\sum_{1\le i<l\le k+1}(-1)^{i+l}\xi_j\left([X_i,X_l],X_1,\ldots,\hat{X}_i,\ldots,\hat{X}_l,\ldots,X_{k+1}\right)\text{ for all }X_i\in\mathfrak{g}\,.
    \end{align*}
    This formula, combined with the fact that dilations $\delta_\lambda$ are automorphisms of the Lie algebra $\mathfrak{g}$, readily implies that $d$ applied to $\bigwedge^k\mathfrak{g}^\ast$ commutes with the dilations. More explicitly, given a left-invariant 1-form $\theta_j\in\mathfrak{g}_{w_i}$, we have that for any $X_{i_1},X_{i_2}\in\mathfrak{g}$
    \begin{align*}
        \delta_\lambda\left(d\theta_j\right)(X_{i_1},X_{i_2})=&d\theta_j(\delta_\lambda X_{i_1},\delta_\lambda X_{i_2})=-\theta_j\left([\delta_\lambda X_{i_1},\delta_\lambda X_{i_2}]\right)=-\theta_j\left(\delta_\lambda[X_{i_1},X_{i_2}]\right)\\=&-\delta_\lambda\theta_j([X_{i_1},X_{i_2}])=d(\delta_\lambda\theta_j)(X_{i_1},X_{i_2})\,.
    \end{align*}
    Proceeding by induction, if we assume that $d(\delta_\lambda\xi_j)=\delta_\lambda (d\xi_j)$ for a given left-invariant $k$-form $\xi_j\in\bigwedge^k\mathfrak{g}^\ast$, by the Leibniz rule we have
    \begin{align*}
        \delta_\lambda(d(\theta_j\wedge\xi_j))=&\delta_\lambda(d\theta_j\wedge\xi_j-\theta_j\wedge d\xi_j)=\delta_\lambda(d\theta_j)\wedge\delta_\lambda\xi_j-\delta_\lambda\theta_j\wedge\delta_\lambda (d\xi_j)=d(\delta_\lambda\theta_j)\wedge\delta_\lambda\xi_j-\delta_\lambda\theta_j\wedge d(\delta_\lambda\xi_j)\\=&d\left(\delta_\lambda\theta_j\wedge\delta_\lambda\xi_j\right)=d(\delta_\lambda(\theta_j\wedge\xi_j))\,.
    \end{align*}
    This commutativity then implies that $w(d\xi_j)=w(\xi_j)=p$ and so
    $d_0\alpha=\sum_jf_jd\xi_j\in\Omega^{p,k+1-p}(\G)$.
    \end{proof}    
Before being able to construct spectral complexes in this setting, we first need to show that the cochain complex that we are working on, namely the de Rham complex, is indeed a truncated multicomplex.
\begin{definition}[Definition 2.1 in \cite{livernet2020spectral}] \label{def: truncated multicomplex}Let $\mathcal{K}$ be a commutative unital ground ring. An $s$-multicomplex (also known as twisted chain complex) is a $(\Z,\Z)$-graded $\mathcal{K}$ module $C$ equipped with maps $d_i\colon C\to C$ for $i\ge 0$ of bidegree $|d_i|=(i,1-i)$ such that
\begin{align}\label{eq: multicomplex maps rules}
    \sum_{i+j=n}d_id_j=0\ \text{ for all }n\ge 0\text{ and }d_k=0\text{ for all }k>s\,.
\end{align}
\end{definition}
We should also mention that in Definition \ref{def: truncated multicomplex} we are choosing a cohomological sign and degree convention for the differentials $d_i$.

\begin{proposition}\label{prop: de Rham is multicomplex} The de Rham complex $(\Omega^\bullet(\G),d)$ on a positively graded group with direct sum decomposition $\mathfrak{g}=\mathfrak{g}_{w_1}\oplus\cdots\oplus\mathfrak{g}_{w_s}$ is a truncated $w_s$-multicomplex.
\end{proposition}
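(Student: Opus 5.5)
We plan to take the bigrading $C^{p,q}=\Omega^{p,q}(\G)$, where $p$ is the weight and $p+q$ the degree, and to define the maps $d_i$ from the decomposition in Lemma \ref{lem: diving exterior derivative}. We set $d_0$ as in \eqref{eq: decomposition of d}. For an integer $i\ge 1$ we set $d_i:=d_{w_l}$ if $i=w_l$ for some $l$, and $d_i:=0$ otherwise. Since $\G$ is positively graded, all weights $w_l$ are positive integers, so this indexing is legitimate.

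By Lemma \ref{lem: diving exterior derivative}, $d_i$ maps $\Omega^{p,q}(\G)$ into $\Omega^{p+i,q+1-i}(\G)$. This is exactly bidegree $(i,1-i)$. Because $w_s$ is the largest weight, $d_k=0$ for all $k>w_s$, which gives the truncation condition. Each $d_i$ is $\R$-linear, and we extend it to non-homogeneous forms through the direct sum decomposition of $\Omega^k(\G)$ by weight that follows from Lemma \ref{lem: different weight implies lin indep}. With these conventions, $d=\sum_{i\ge 0}d_i$ holds on all of $\Omega^\bullet(\G)$.

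The main step is the relations $\sum_{i+j=n}d_id_j=0$ for every $n\ge 0$. We would derive them from $d^2=0$. Take $\alpha\in\Omega^{p,q}(\G)$. Then
\[
0=d^2\alpha=\sum_{i,j\ge 0}d_id_j\alpha=\sum_{n\ge 0}\Big(\sum_{i+j=n}d_id_j\alpha\Big),
\]
and the $n$-th bracket is a form of pure weight $p+n$, all of the same degree $p+q+2$. By Lemma \ref{lem: different weight implies lin indep}, equivalently by directness of the weight decomposition of $\Omega^{p+q+2}(\G)$, each weight component of $0$ must vanish. Hence $\sum_{i+j=n}d_id_j\alpha=0$ for each $n$. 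By linearity this extends from homogeneous $\alpha$ to arbitrary forms.

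The only delicate point is making sure each $d_i$ is well defined and really lands in a single weight. In particular, $d_0$ must preserve weight, and $d_{w_l}$ must be exactly the weight-$(p+w_l)$ component of $d\alpha$. Both facts were established in the proof of Lemma \ref{lem: diving exterior derivative}: $d_0$ preserves weight because it commutes with dilations, and $d_{w_l}\alpha=(d\alpha)_{p+w_l}$. Once these are granted, the remainder is bookkeeping, and the relations for small $n$ follow directly. For instance, $n=0$ gives $d_0^2=0$, and $n=w_1$ gives $d_0d_{w_1}+d_{w_1}d_0=0$.
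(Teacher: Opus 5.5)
Your proposal is correct and follows the paper's proof essentially step for step. You define the $d_i$ from the weight decomposition of Lemma \ref{lem: diving exterior derivative}, with $d_i=0$ unless $i=w_l$, so the multicomplex is truncated at $w_s$. You then split $d^2\alpha=0$ into components of weight $p+n$ and conclude that each $\sum_{i+j=n}d_id_j\alpha$ vanishes. Your appeal to the directness of the weight decomposition of $\Omega^{p+q+2}(\G)$, rather than to pairwise linear independence, is the precise form of Lemma \ref{lem: different weight implies lin indep} that this last step actually needs.
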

\begin{proof}
    The direct sum decomposition according to weights given in \eqref{eq: grading direct sum decomposition} endows the space of smooth forms $\Omega^\bullet(\G)$ with a bigrading. In Lemma \ref{lem: different weight implies lin indep}, we established the existence of differential maps $d_{j}\colon\Omega^\bullet(\G)\to\Omega^\bullet(\G)$ of bidegree $|d_{j}|=(j,1-j)$. Note that, except for $d_0$ which is a $C^\infty(\G)$-linear map, the differential maps $d_{w_i}$ with $i=1,\ldots,s$ are $\R$-linear. For this reason, even though each $\Omega^{p,q}(\G)$ is a $C^\infty(\G)$-module of bidegree $(p,q)\in\N\times\Z$, in this setting $\mathcal{K}=\R$. This fact will play a role when constructing the subspaces of forms $Z_r^{\bullet,\bullet}$ and $B_r^{\bullet,\bullet}$ introduced in Definition \ref{Z and B defined}, which will turn out to be $\R$-submodules of $\Omega^{\bullet,\bullet}(\G)$. 
    
    We are left to prove that the formulae in \eqref{eq: multicomplex maps rules} also hold.
    These follow directly from the fact that $(\Omega^\bullet(\G),d)$ is a complex, i.e. $d^2=0$, and that forms of different weight are linearly independent (Lemma \ref{lem: different weight implies lin indep}).  
Let us first express the exterior derivative as $d=d_0+d_1+\cdots+d_{w_s}$, where for each $j\in\Z^+$ $d_j$ is the zero map unless $j= w_{i}$ for some $i=1,\ldots,s$.
    Indeed, given a $k$-form $\alpha\in\Omega^{p,k-p}(\G)$, if we expand the expression for $d^2\alpha$ according to the weight, we get
    \begin{align*}
        d^2\alpha=&d\left(d_0\alpha+d_{1}\alpha+\cdots+d_{w_s}\alpha\right)=(d_0+d_{1}+\cdots+d_{w_s})\left(d_0\alpha+d_{1}\alpha+\cdots+d_{w_s}\alpha\right)\\=&d_0^2\alpha+\left(d_0d_{1}+d_{1}d_0\right)\alpha+\left(d_0d_2+d_1^2+d_2d_0\right)\alpha+\cdots+d_{w_s}^2\alpha=\sum_{n=0}^{2w_s}\sum_{i+j=n}d_jd_i\alpha=0\,.
    \end{align*}
    Each summand $\sum_{i+j=n}d_jd_i\alpha$ has weight $p+n$, that is
    \begin{align*}
        \sum_{i+j=n}d_jd_i\alpha\in\Omega^{p+n,k+1-p-n}(\G)\ \text{ for each }n=0,\ldots,2w_s\,.
    \end{align*}
    By Lemma \ref{lem: different weight implies lin indep}, we get that each summand must be zero.
\end{proof}

Finally, in the context of the de Rham complex on positively graded groups, weights of differential forms are always non-negative. In other words, the space of smooth forms $\Omega^\bullet(\G)$ is an $(\Z^+,\Z)$-graded $C^\infty(\G)$-module. Not only this, but since the group $\G$ is finite-dimensional, the range for the weights is finite and takes integer values between 0 and the weight $Q$ of the volume form.
\subsection{Introducing a scalar product on $\Omega^\bullet(\G)$}\label{subsection: introducing a scalar product}
The Rumin complex $(E_0^\bullet,d_c)$ (as presented in Proposition \ref{prop: Rumin computing de RHam}) is defined as subspaces of the space of smooth forms $E_0^\bullet\subset\Omega^\bullet(\G)$. These $E_0^\bullet$ are isomorphic to the cohomology of the complex $(\Omega^\bullet(\G),d_0)$, however since we want to consider them as subspaces of smooth forms and not as quotients, we require a way of identifying complements of the subspace $\operatorname{Im}d_0$. This can be easily obtained by introducing a scalar product on $\mathfrak{g}$ adapted to the homogeneous structure \eqref{eq: grading direct sum decomposition}, which canonically extends to the space of left-invariant forms $\Omega^\bullet_L(\G)=\bigwedge^\bullet\mathfrak{g}^\ast$. Since the scalar product $\langle\cdot,\cdot\rangle_k$ defined on each $\bigwedge^k\mathfrak{g}^\ast$ is adapted to the positive grading, we have that forms of different weight are orthogonal, i.e. for each degree $k$ 
\begin{align}\label{eq: left diff weight implies orthogonal}
    \text{ given }\theta\in\bigwedge\nolimits^{p_1,k-p_1}\mathfrak{g}^\ast\ \text{ and }\ \xi\in\bigwedge\nolimits^{p_2,k-p_2}\mathfrak{g}^\ast\,,\ \text{ if }p_1\neq p_2\text{ then }\langle\theta,\xi\rangle_k=0\,.
\end{align}
It is then possible to define the Hodge-$\star$ operator as the linear map acting on $\bigwedge^\bullet\mathfrak{g}^\ast$ as
\begin{align*}
    \star\colon\bigwedge\nolimits^k\mathfrak{g}^\ast\xrightarrow[]{\cong}\bigwedge\nolimits^{n-k}\mathfrak{g}^\ast\ ,\ \theta\wedge\star\xi:=\langle\theta,\xi\rangle_k\operatorname{vol}.
\end{align*}
The scalar product can be further extended to the space of smooth forms $\Omega^\bullet(\G)$ viewed as a $C^\infty(\G)$-module generated by $\bigwedge^\bullet\mathfrak{g}^\ast$.   In particular, when considering the subspace $\Omega^\bullet_c(\G)$ of compactly supported smooth forms, one can introduce a scalar product, the so-called $L^2$-inner product on forms, defined as
\begin{align*}
    \langle\alpha,\beta\rangle_{L^2(\Omega^\bullet(\G))}:=\int_\G\alpha\wedge\star\beta\,.
\end{align*}
Here the Hodge-$\star$ operator is naturally extended to a $C^\infty(\G)$-linear map $\star\colon\Omega^k(\G)\longrightarrow\Omega^{n-k}(\G)$ by viewing the space of smooth forms simply as a $C^\infty(\G)$-module generated by $\bigwedge^\bullet\mathfrak{g}^\ast$.
It then follows directly from the definition of the $L^2$-inner product on forms and \eqref{eq: left diff weight implies orthogonal} that also in the case of smooth differential forms, forms of different weight are orthogonal.

To reduce the notation used throughout the paper, we will keep $\langle\cdot,\cdot\rangle_k$ to also denote the $L^2$-inner product between two smooth differential forms of degree $k$. 
Finally, given a subspace $S$ of $\Omega^\bullet(\G)$, we will denote the orthogonal projection onto $S$ by $\operatorname{pr}_S$.
\begin{definition}[The adjoint of $d_0$]\label{def: adjoint of d_0} We can use the scalar product just introduced to define the formal transpose (adjoint) of $d_0$, which we will denote by $\delta_0$. In other words, the map $\delta_0\colon\Omega^{p,k-p}(\G)\to\Omega^{p,k-1-p}(\G)$ is defined by imposing
\begin{align*}
    \langle d_0\alpha,\beta\rangle_{k+1}=\langle\alpha,\delta_0\beta\rangle_k\ \text{ for any }\alpha\in\Omega^k(\G)\text{ and }\beta\in\Omega^{k+1}(\G)\,.
\end{align*}
The fact that $\delta_0$ keeps the weight of forms constant, i.e. $w(\delta_0\alpha)=w(\alpha)$, is a direct consequence of the fact that elements of different weight are orthogonal and that $d_0$ keeps the weight of forms constant.
\end{definition}
Another crucial operator that can be defined from $d_0$ using the scalar product is its partial inverse. This operator will be central to the construction of the so-called Rumin differential $d_c$.
\begin{definition}[The partial inverse $d_0^{-1}$]\label{def: d_0^{-1}} The map $d_0$ being $C^\infty(\G)$-linear implies that it acts as a bijection from $\left(\ker d_0\right)^\perp=\operatorname{Im}\delta_0$ onto $\operatorname{Im}d_0$. Following Rumin's construction, it is customary to use the shorthand notation $d_0^{-1}$ to denote the linear map given by $$d_0^{-1}:=d_0^{-1}\operatorname{pr}_{\operatorname{Im}d_0}\colon\Omega^{p,k-p}(\G)\longrightarrow\Omega^{p,k-1-p}(\G)\,.$$
In particular, it readily follows that $(d_0^{-1})^2=0$ and $\ker d_0^{-1}=\ker\delta_0=\left(\operatorname{Im}d_0\right)^\perp$.
\end{definition}
Before defining the space of Rumin forms, let us stress that the map $d_0$ is an extension of the Chevalley-Eilenberg differential to the space of smooth forms, so in particular it is a linear map between finite dimensional spaces (we refer to \cite{F+T1} for a more thorough explanation of this fact). In particular, this implies that the map $d_0\colon\Omega^{p,k-p}(\G)\to\Omega^{p,k-1-p}(\G)$ has closed range, a critical property that will be used when working with the algebraic Laplacian associated with $d_0$.
\begin{definition}[The space of Rumin forms] As shown in Proposition \ref{prop: de Rham is multicomplex}, the de Rham complex $\Omega^\bullet(\G)$ on a positively graded group $\G$ with differential $d=d_0+d_{w_1}+\cdots+d_{w_s}$ is a truncated multicomplex. In particular, equation \eqref{eq: multicomplex maps rules} for $n=0$ gives that $d_0^2=0$, that is $(\Omega^\bullet(\G),d_0)$ is also a complex. Therefore, it is possible to compute its cohomology. As already mentioned, we are interested in defining subspaces of forms and not quotients. Using the $L^2$-scalar product introduced in Subsection \ref{subsection: introducing a scalar product}, one can define the space of Rumin forms $E_0^\bullet$ associated with the truncated multicomplex $(\Omega^\bullet(\G),d=d_0+d_{w_1}+\cdots+d_{w_s})$ as
\begin{align}\label{eq: Rumin forms defined}
    E_0^k=\ker d_0\cap\left(\operatorname{Im}d_0\right)^\perp\cap\Omega^k(\G)=\ker d_0\cap\ker\delta_0\cap\Omega^k(\G)\,,
\end{align}
the last equality following from the closed range theorem.
\end{definition}
\begin{definition}[Orthogonal projection onto $E_0^\bullet$]\label{def: Pi_0} Let us consider the operator 
\begin{align*}
    \Pi_0:=\operatorname{Id}-d_0d_0^{-1}-d_0^{-1}d_0\colon\Omega^{p,k-p}(\G)\to\Omega^{p,k-p}(\G)\,.
\end{align*}
From the definition of the partial inverse map $d_0^{-1}$, we have that
\begin{itemize}
    \item $\Pi_0$ maps elements of $\Omega^{p,k-p}(\G)$ onto elements of $\Omega^{p,k-p}(\G)$, i.e. it preserves both the weight and the degree of forms;
    \item $d_0^{-1}d_0=\operatorname{pr}_{\operatorname{Im}\delta_0}$ and $d_0d_0^{-1}=\operatorname{pr}_{\operatorname{Im}d_0}$;
    \item $\Pi_0^2=(\operatorname{Id}-d_0d_0^{-1}-d_0^{-1}d_0)(\operatorname{Id}-d_0d_0^{-1}-d_0^{-1}d_0)=\operatorname{Id}-d_0d_0^{-1}-d_0^{-1}d_0=\Pi_0$.
\end{itemize}
Therefore, the map $\Pi_0$ is an orthogonal projection onto the subspace
\begin{align*}
    \operatorname{Im}\Pi_0=\ker d_0\cap\ker d_0^{-1}=\ker d_0\cap\ker\delta_0=E_0\,,
\end{align*}
that is $\Pi_0=\operatorname{pr}_{E_0}$.
\end{definition}
A useful characterisation of the space of Rumin forms is as the ``harmonic forms'' of the algebraic Laplacian defined using the operator $d_0$ and its adjoint $\delta_0$. Moreover, such a Laplacian admits a Hodge-decomposition thanks to the fact that $d_0$ has closed range.
\begin{definition}[The algebraic Laplacian $\Box_0$]\label{def: box 0} Let us consider the Laplacian operator defined using the algebraic differential $d_0$ and its adjoint $\delta_0$, that is
\begin{align*}
    \Box_0:=d_0\delta_0+\delta_0d_0\colon\Omega^{p,k-p}(\G)\longrightarrow\Omega^{p,k-p}(\G)\,.
\end{align*}
Just like the projection $\Pi_0$, $\Box_0$ preserves both the weight and the degree of forms. It is a symmetric operator since $\langle \Box_0\alpha_1,\alpha_2\rangle_k=\langle\alpha_1,\Box_0\alpha_2\rangle_k$ for any $\alpha_1,\alpha_2\in\Omega^k(\G)$, and its kernel coincides with the space of Rumin forms
\begin{align*}
    \ker\Box_0=\ker d_0\cap\ker\delta_0=E_0\,.
\end{align*}
The inclusion $\ker d_0\cap\ker \delta_0\subset\ker\Box_0$ is direct, while the converse one follows from the fact that if $\alpha\in\ker\Box_0\cap\Omega^k(\G)$ then
\begin{align*}
    0=\langle\Box_0\alpha,\alpha\rangle_k=\langle d_0\delta_0\alpha,\alpha\rangle_k+\langle\delta_0d_0\alpha,\alpha\rangle_k=|d_0\alpha|^2_k+|\delta_0\alpha|^2_k\ \Longrightarrow\ d_0\alpha=\delta_0\alpha=0\,.
\end{align*}
\end{definition}
\begin{proposition}[Hodge decomposition for $\Box_0$]\label{prop: Hodge decomposition for Box0}
    The space of smooth forms $\Omega^\bullet(\G)$ admits an orthogonal direct sum (or a Hodge) decomposition in terms of $\Box_0$, $d_0$ and its adjoint $\delta_0$. More explicitly,
    \begin{align}\label{eq: hodge decompo Box_0}
        \Omega^\bullet(\G)=\operatorname{Im}d_0\oplus\ker\Box_0\oplus\operatorname{Im}\delta_0\,.
    \end{align}
\end{proposition}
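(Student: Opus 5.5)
The plan is to reduce the statement to finite-dimensional linear algebra, using the fact recalled before the definition of $\Box_0$: $d_0$ is the $C^\infty(\G)$-linear extension of the Chevalley--Eilenberg differential on $\bigwedge^\bullet\mathfrak{g}^\ast$, and it preserves weight. First fix a degree $k$ and a weight $p$, and work on the finite-dimensional space $\bigwedge^{p,k-p}\mathfrak{g}^\ast$ with the scalar product $\langle\cdot,\cdot\rangle_k$. On this space $\delta_0$ is the genuine transpose of $d_0$, and $\Box_0=d_0\delta_0+\delta_0d_0$ is a symmetric positive semidefinite endomorphism. The three subspaces $\operatorname{Im}d_0$, $\ker\Box_0$ and $\operatorname{Im}\delta_0$ are then pairwise orthogonal:
\begin{itemize}
\item $\langle d_0\alpha,\delta_0\beta\rangle=\langle d_0^2\alpha,\beta\rangle=0$, because $d_0^2=0$ by Proposition \ref{prop: de Rham is multicomplex} with $n=0$;
\item $\ker\Box_0=\ker d_0\cap\ker\delta_0$ by Definition \ref{def: box 0}, and an element of $\ker\delta_0$ is orthogonal to $\operatorname{Im}d_0$, while an element of $\ker d_0$ is orthogonal to $\operatorname{Im}\delta_0$.
\end{itemize}

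Next I show that the three subspaces span everything. In finite dimensions $(\operatorname{Im}d_0)^\perp=\ker\delta_0$, so $\bigwedge^{p,k-p}\mathfrak{g}^\ast=\operatorname{Im}d_0\oplus\ker\delta_0$. Similarly $(\operatorname{Im}\delta_0)^\perp=\ker d_0$, and applying this inside $\ker\delta_0$ gives
\[
\ker\delta_0=(\ker\delta_0\cap\ker d_0)\oplus\operatorname{Im}\delta_0 .
\]
This uses $\operatorname{Im}\delta_0\subset\ker\delta_0$, which holds since $\delta_0^2=0$ as the adjoint of $d_0^2=0$. Combining the two splittings yields the decomposition at the left-invariant level for each bidegree. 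Summing over weights, which are mutually orthogonal by \eqref{eq: left diff weight implies orthogonal}, and over degrees gives it on all of $\bigwedge^\bullet\mathfrak{g}^\ast$. Alternatively, one can diagonalise the symmetric operator $\Box_0$ and note that $\operatorname{Im}\Box_0\subset\operatorname{Im}d_0\oplus\operatorname{Im}\delta_0$, with $\operatorname{Im}\Box_0=(\ker\Box_0)^\perp$.

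Finally, I pass to smooth forms $\Omega^\bullet(\G)=C^\infty(\G)\otimes\bigwedge^\bullet\mathfrak{g}^\ast$. Because $d_0$, $\delta_0$ and $\Box_0$ are $C^\infty(\G)$-linear and act only on the $\bigwedge^\bullet\mathfrak{g}^\ast$ factor, the orthogonal projections onto the three left-invariant summands extend $C^\infty(\G)$-linearly. This splits any $\alpha=\sum_j f_j\xi_j$ pointwise into smooth components. The images of these projections are exactly $\operatorname{Im}d_0$, $\ker\Box_0$ and $\operatorname{Im}\delta_0$ in $\Omega^\bullet(\G)$. For instance, if $\alpha=\sum_j f_jd_0\eta_j$, then $\alpha=d_0\big(\sum_j f_j\eta_j\big)$, and conversely every element of $\operatorname{Im}d_0$ has this form. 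Orthogonality of the pieces then holds pointwise and hence in $L^2$ for compactly supported forms.

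The main obstacle is this last step: making sure $\operatorname{Im}d_0$ and $\operatorname{Im}\delta_0$ on smooth forms are the $C^\infty(\G)$-modules generated by their left-invariant counterparts. I would handle it using the $C^\infty$-linearity of $d_0$ together with the explicit partial inverse $d_0^{-1}$ of Definition \ref{def: d_0^{-1}}. This also checks that the projections $\operatorname{pr}_{\operatorname{Im}d_0}=d_0d_0^{-1}$ and $\operatorname{pr}_{\operatorname{Im}\delta_0}=d_0^{-1}d_0$ are consistent with the decomposition, and that $\Pi_0=\operatorname{Id}-d_0d_0^{-1}-d_0^{-1}d_0$ is the projection onto the middle summand.
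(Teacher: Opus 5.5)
Your argument is correct and is essentially the paper's. Both rest on applying ``orthogonal complement of a kernel equals the image of the adjoint'' twice, together with $d_0^2=\delta_0^2=0$. The paper splits $\ker d_0\oplus\operatorname{Im}\delta_0$ first and then $\ker d_0=\ker\Box_0\oplus\operatorname{Im}d_0$; you do the mirror-image splitting through $\ker\delta_0$.

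The one substantive difference is that you run the linear algebra on the finite-dimensional fibres $\bigwedge^{p,k-p}\mathfrak g^\ast$ and extend $C^\infty(\G)$-linearly. This makes explicit the finite-dimensionality that the paper's appeal to closed range on $\Omega^k(\G)$ leaves implicit. Your direct computation $\sum_j f_jd_0\eta_j=d_0\big(\sum_j f_j\eta_j\big)$ already settles that passage, so the closing appeal to $d_0^{-1}$ is unnecessary.
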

\begin{proof}
    The statement follows from the fact that the orthogonal complement of the kernel of a map with closed range coincides with the range of its adjoint: for each degree $k$ we have
    \begin{align*}
        \Omega^k(\G)=&\ker d_0\oplus\left(\ker d_0\right)^\perp=\ker d_0\oplus\operatorname{Im}\delta_0\\=&\ker d_0\cap\ker\delta_0\oplus\ker d_0\cap\left(\ker\delta_0\right)^\perp\oplus\operatorname{Im}\delta_0\\=&\ker\Box_0\oplus\ker d_0\cap\operatorname{Im}d_0\oplus\operatorname{Im}\delta_0=\ker\Box_0\oplus\operatorname{Im}d_0\oplus\operatorname{Im}\delta_0\,.
    \end{align*}
\end{proof}
\section{The Rumin complex on $R$-manifolds}
The aim of this section is to introduce a special class of submanifoldsc on which Stokes' theorem is valid for the Rumin complex $(E_0^\bullet,d_c)$. To this end, we briefly recall the construction of the cochain maps $d_c\colon E_0^k\to E_0^{k+1}$, highlighting the properties that will be needed to state the main result of the section, namely Theorem \ref{thm: stokes on R manifolds}.

The presentation of the Rumin complex adopted here follows the approach developed in \cite{F+T1}. Equivalent formulations can be found in the original works of Rumin \cite{rumin1999differential,rumin_grenoble,rumin_palermo}, as well as in the later expositions \cite{CompensatedCompactness,FranchiTesi,FT}.
\begin{lemma}\label{lem: stricly increasing weights implies nilpotent} If $B\colon\Omega^{\bullet}(\G)\to\Omega^\bullet(\G)$ is an operator that strictly increases the weight of forms so that
\begin{align*}
    \text{given }\alpha\in\Omega^{p,\bullet}(\G),\ \text{ we have that }B\alpha\in\Omega^{p',\bullet}(\G)\text{ with }p'>p\,,
\end{align*}
then $B$ is nilpotent, i.e. there exists $N\in\N$ independent of $B$ such that $B^N\equiv 0$.
\end{lemma}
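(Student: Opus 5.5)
We argue using only the boundedness of the weight range, established just before the scalar product was introduced. Weights of forms in $\Omega^\bullet(\G)$ are integers between $0$ and $Q$, where $Q$ is the homogeneous dimension (the weight of the volume form). Moreover, by Lemma \ref{lem: different weight implies lin indep} every form decomposes uniquely as a finite sum of pure-weight pieces:
\begin{align*}
\alpha=\sum_{p=0}^{Q}(\alpha)_p\,,\qquad (\alpha)_p\in\Omega^{p,\bullet}(\G)\,.
\end{align*}
The hypothesis is meant for operators that are additive, so $B\alpha=\sum_p B(\alpha)_p$. This holds for every operator the paper will apply it to, such as compositions of $d_0^{-1}$ with $d-d_0$, which are $\R$-linear.

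The key step is an induction on the number of applications. First observe that if $\beta\in\Omega^{p',\bullet}(\G)$ and $p'>Q$, then $\beta=0$, because $\Omega^{p',\bullet}(\G)=0$ outside $0\le p'\le Q$. Since weights are integers, $p'>p$ means $p'\ge p+1$. We then claim that for every $\alpha\in\Omega^{p,\bullet}(\G)$ and every $m\ge 0$, the form $B^m\alpha$ is a sum of pure-weight pieces, each of weight at least $p+m$, or is zero. The hypothesis as stated places $B\alpha$ in a single $\Omega^{p',\bullet}(\G)$, but to be safe we allow sums of pieces of weight $>p$; additivity then carries the claim through the induction. Taking $N=Q+1$ gives weights at least $p+Q+1>Q$, so $B^N\alpha=0$ for every homogeneous $\alpha$. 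By additivity, $B^N\equiv 0$ on all of $\Omega^\bullet(\G)$.

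The constant $N=Q+1$ depends only on $\G$, namely on its homogeneous dimension, and not on $B$, which is exactly the independence asserted in the statement. If one wants a sharper bound, note that weights within a fixed degree $k$ range over a set of at most $Q+1$ values, so the same argument applies degree by degree.

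The main obstacle is making precise the implicit assumption that $B$ respects the weight decomposition, namely additivity. This is automatic for linear maps. If $B$ only increases the minimal weight of each homogeneous input, the induction on the minimal weight still works verbatim. The argument also uses integrality of the weights. For a merely homogeneous group with real weights, the same bound holds with $N=\lceil Q/\epsilon\rceil+1$, where $\epsilon>0$ is the minimal positive gap between attainable weights; this gap is positive because only finitely many weights occur.
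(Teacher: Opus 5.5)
Your argument is correct and matches the paper's: nilpotency follows because weights are confined to the bounded range $[0,Q]$, so repeated strict increases eventually leave it. You also make explicit what the paper's one-line proof leaves implicit, namely that integrality of the weights gives the uniform exponent $N=Q+1$, and that linearity of $B$ is needed to pass from homogeneous inputs to arbitrary forms and to handle outputs spread over several weights (as happens for $b=-d_0^{-1}(d-d_0)$).
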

\begin{proof}
    The nilpotency of any operator that strictly increases the weight of forms follows from the assumption that the range of possible weights is bounded, from 0 up to the weight of the volume form $Q=w(\operatorname{vol})$. In particular, the statement holds for any such operator $B$, regardless of how $B$ acts on the degree of forms.
\end{proof}
\begin{lemma}\label{lem: inverting nilpotent operators}
    Let $A_0\colon\Omega^\bullet(\G)\to\Omega^\bullet(\G)$ be an invertible linear map that preserves the weights, while $B\colon\Omega^\bullet(\G)\to\Omega^\bullet(\G)$ is an operator that strictly increases the weights. Then the operator $A:=A_0-B\colon\Omega^\bullet(\G)\to\Omega^\bullet(\G)$ is invertible and its inverse $A^{-1}\colon\Omega^\bullet(\G)\to\Omega^\bullet(\G)$ has the form
    \begin{align*}
        A^{-1}=A_0^{-1}\sum_{j=0}^{N-1}\left(BA_0^{-1}\right)^j=\bigg(\sum_{j=0}^{N-1}\left(A_0^{-1}B\right)^j\bigg)A_0^{-1}\,,
    \end{align*}
    with $N\in\N$ the nilpotency exponent of $B$, as discussed in Lemma \ref{lem: stricly increasing weights implies nilpotent}.
\end{lemma}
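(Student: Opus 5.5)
We prove Lemma \ref{lem: inverting nilpotent operators} with the finite Neumann series: since $B$ strictly increases weights, every composition of $B$ with weight-preserving maps also strictly increases weights. Lemma \ref{lem: stricly increasing weights implies nilpotent} then makes the series terminate, so no convergence issues arise. The plan is to factor $A$ through $A_0$ and invert the resulting unipotent factor.

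First, factor $A=A_0-B=A_0(\operatorname{Id}-A_0^{-1}B)=(\operatorname{Id}-BA_0^{-1})A_0$. Since $A_0$ is invertible and preserves weights, its inverse $A_0^{-1}$ also preserves weights. Indeed, $A_0$ maps each $\Omega^{p,\bullet}(\G)$ into itself, and invertibility together with the direct sum decomposition by weights (Lemma \ref{lem: different weight implies lin indep}) forces $A_0$ to restrict to a bijection on each weight component. Consequently $A_0^{-1}B$ and $BA_0^{-1}$ both strictly increase weights. By Lemma \ref{lem: stricly increasing weights implies nilpotent}, $(A_0^{-1}B)^N=0=(BA_0^{-1})^N$, where $N$ depends only on the finite range $\{0,\ldots,Q\}$ of weights. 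Concretely, $N=Q+1$ works, since each application raises the weight by at least $1$ and weights are integers.

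Next, apply the telescoping identity $(\operatorname{Id}-C)\sum_{j=0}^{N-1}C^j=\sum_{j=0}^{N-1}C^j(\operatorname{Id}-C)=\operatorname{Id}-C^N=\operatorname{Id}$ with $C=A_0^{-1}B$ and with $C=BA_0^{-1}$. This shows that $\operatorname{Id}-C$ is invertible with two-sided inverse $\sum_{j=0}^{N-1}C^j$. Hence
\begin{align*}
A^{-1}=\Big(\sum_{j=0}^{N-1}(A_0^{-1}B)^j\Big)A_0^{-1}=A_0^{-1}\sum_{j=0}^{N-1}(BA_0^{-1})^j.
\end{align*}
The two expressions agree either by uniqueness of the two-sided inverse or directly from the identity $(A_0^{-1}B)^jA_0^{-1}=A_0^{-1}(BA_0^{-1})^j$.

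The only delicate point is that $A_0^{-1}$ preserves weights, which is needed for $A_0^{-1}B$ to strictly increase weights. We handle it by the component-wise argument above: write $\alpha=\sum_p(\alpha)_p$. Then $A_0\big(\sum_p\beta_p\big)=\alpha$ with $A_0\beta_p\in\Omega^{p,\bullet}(\G)$ forces $A_0\beta_p=(\alpha)_p$, so $A_0^{-1}$ maps $\Omega^{p,\bullet}(\G)$ into itself. If "invertible" is intended only as invertible on each weight piece, this step is immediate.
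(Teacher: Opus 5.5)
Your proof is correct and follows essentially the same route as the paper: factor $A=A_0(\operatorname{Id}-A_0^{-1}B)=(\operatorname{Id}-BA_0^{-1})A_0$, use Lemma \ref{lem: stricly increasing weights implies nilpotent} to get nilpotency of $A_0^{-1}B$ and $BA_0^{-1}$, and invert the unipotent factors with a terminating Neumann series. You also check explicitly that $A_0^{-1}$ preserves weights, using the weight decomposition and injectivity of $A_0$, which fills in a step the paper uses without comment.
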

\begin{proof}
    By construction, we have $A=A_0-B=\left(\operatorname{Id}-BA_0^{-1}\right)A_0=A_0\left(\operatorname{Id}-A_0^{-1}B\right)$. 

    The operators $BA_0^{-1}$ and $A_0^{-1}B$ act on $\Omega^\bullet(\G)$ by strictly increasing weights, hence they are both nilpotent by Lemma \ref{lem: stricly increasing weights implies nilpotent}. Consequently, we get invertibility via Neumann series: both $\operatorname{Id}-BA_0^{-1}$ and $\operatorname{Id}-A_0^{-1}B$ are invertible with 
    \begin{align*}
        \left(\operatorname{Id}-BA_0^{-1}\right)^{-1}=\sum_{j=0}^{N-1}\left(BA_0^{-1}\right)^j\ \text{ and }\ \left(\operatorname{Id}-A_0^{-1}B\right)^{-1}=\sum_{j=0}^{N-1}\left(A_0^{-1}B\right)^j\,.
    \end{align*}
    The claim follows directly from the fact that $A^{-1}=A_0^{-1}\left(\operatorname{Id}-BA_0^{-1}\right)^{-1}=\left(\operatorname{Id}-A_0^{-1}B\right)^{-1}A_0^{-1}$. Again, just like in Lemma \ref{lem: stricly increasing weights implies nilpotent}, the statement holds for any such operators $A_0$ and $B$, regardless of how they act on the degree of forms.
\end{proof}
Using these two lemmata, we are now able to construct the necessary projection maps to define the Rumin differential $d_c$.
\begin{lemma}\label{lem: Pi_E}
    Let us consider the operator $b:=d_0^{-1}d_0-d_0^{-1}d=-d_0^{-1}(d-d_0)$ acting on the space of smooth forms $\Omega^\bullet(\G)$.
    \begin{enumerate}
        \item The map $b\colon\Omega^{\bullet}(\G)\longrightarrow\Omega^\bullet(\G)$ preserves the degree, it is nilpotent and hence $\operatorname{Id}-b$ is invertible.
        \item The operator
        \begin{align}\label{eq: Pi projection}
            \Pi:=\left(\operatorname{Id}-b\right)^{-1}d_0^{-1}d+d\left(\operatorname{Id}-b\right)^{-1}d_0^{-1}
        \end{align}
        is the projection of $\Omega^\bullet(\G)$ onto
        \begin{align*}
            F:=\operatorname{Im}d_0^{-1}+\operatorname{Im}dd_0^{-1}=\operatorname{Im}\delta_0+\operatorname{Im}d\delta_0
        \end{align*}
        along
        \begin{align*}
            E:=\ker d_0^{-1}\cap\ker d_0^{-1}d=\ker\delta_0\cap\ker\delta_0d\,.
        \end{align*}
    \end{enumerate}
\end{lemma}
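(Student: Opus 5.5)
The plan is to handle part (1) with the weight lemmas already established, and then to verify the projection property in (2) algebraically, using the weight filtration together with the identities $d^2=0$, $(d_0^{-1})^2=0$ and $\ker d_0^{-1}=\ker\delta_0$.

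For (1): $d_0^{-1}$ lowers the degree by one and preserves weight, while $d-d_0=\sum_i d_{w_i}$ raises the degree by one and strictly increases weight. Hence $b=-d_0^{-1}(d-d_0)$ preserves the degree and strictly increases the weight. Lemma \ref{lem: stricly increasing weights implies nilpotent} then gives nilpotency of $b$. Lemma \ref{lem: inverting nilpotent operators}, applied with $A_0=\operatorname{Id}$ and $B=b$, gives the invertibility of $\operatorname{Id}-b$ with inverse $\sum_{j<N}b^j$.

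For (2), set $P:=(\operatorname{Id}-b)^{-1}d_0^{-1}$.

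\emph{Key identity.} Since $b=d_0^{-1}d_0-d_0^{-1}d$, we have $\operatorname{Id}-b=\operatorname{Id}-d_0^{-1}d_0+d_0^{-1}d$. Because $(d_0^{-1})^2=0$ and $\operatorname{Im}d_0^{-1}=\operatorname{Im}\delta_0$, the operator $b$ maps into $\operatorname{Im}\delta_0$ and satisfies $d_0^{-1}b=0$. Therefore $d_0^{-1}(\operatorname{Id}-b)^{-1}=d_0^{-1}$, which gives $d_0^{-1}P=0$, and $P$ takes values in $\operatorname{Im}\delta_0$.

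\emph{Relating $P$ to $d$.} On $\operatorname{Im}\delta_0$ one has $d_0^{-1}d_0=\operatorname{Id}$, so there $\operatorname{Id}-b$ acts as $d_0^{-1}d$. Hence $d_0^{-1}dP=(\operatorname{Id}-b)P=d_0^{-1}$. This is the central identity; it says that $P$ is a right inverse of $d_0^{-1}d$ in the appropriate sense.

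\emph{Computing $\Pi^2$.} With $\Pi=Pd+dP$, expand $\Pi^2=PdPd+PddP+dPPd+dPdP$. The middle term vanishes because $d^2=0$. For $dPPd$, note that $PP=(\operatorname{Id}-b)^{-1}d_0^{-1}P=0$, since $d_0^{-1}P=0$. For the remaining two terms, use $dP=(\operatorname{Id}-b)\circ(\ldots)$-type manipulations, or more directly show $PdP=P$. To see this, apply $(\operatorname{Id}-b)$: we get $(\operatorname{Id}-b)PdP=d_0^{-1}dP=d_0^{-1}=(\operatorname{Id}-b)P$, and invertibility of $\operatorname{Id}-b$ gives $PdP=P$. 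Then $PdPd=Pd$ and $dPdP=dP$, so $\Pi^2=\Pi$.

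\emph{Image and kernel.} The image of $\Pi$ lies in $\operatorname{Im}\delta_0+d\operatorname{Im}\delta_0$. Conversely, $\Pi$ fixes $F$: for $\beta=d_0^{-1}\gamma$ we have $\beta=P(\operatorname{Id}-b)\ldots$. More precisely, elements of $\operatorname{Im}\delta_0$ are of the form $\beta=P\eta$ with $\eta=d\beta$, because $Pd\beta=(\operatorname{Id}-b)^{-1}d_0^{-1}d\beta=(\operatorname{Id}-b)^{-1}(\operatorname{Id}-b)\beta=\beta$. Hence $\Pi\beta=\beta+dP\beta$, and $P\beta=0$ since $P\beta\in\operatorname{Im}P$ applied to $\operatorname{Im}\delta_0$, and $d_0^{-1}$ kills $\operatorname{Im}\delta_0$. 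Also $\Pi d\beta=dPd\beta=d\beta$. So $\operatorname{Im}\Pi=F$.

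For the kernel, if $\alpha\in E$ then $d_0^{-1}\alpha=0$ and $d_0^{-1}d\alpha=0$, so $P\alpha=Pd\alpha=0$ and $\Pi\alpha=0$. Conversely, if $\Pi\alpha=0$, apply $d_0^{-1}$: since $d_0^{-1}dP=d_0^{-1}$ and $d_0^{-1}P=0$, we get $d_0^{-1}\alpha=0$. Then $\Pi\alpha=Pd\alpha=0$, and applying $(\operatorname{Id}-b)$ gives $d_0^{-1}d\alpha=0$. Finally, the identifications of $F$ and $E$ in terms of $\delta_0$ follow from $\operatorname{Im}d_0^{-1}=\operatorname{Im}\delta_0$ and $\ker d_0^{-1}=\ker\delta_0$.

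\textbf{Main obstacle.} The delicate point is the bookkeeping of the identity $d_0^{-1}dP=d_0^{-1}$ and of $P\beta=0$ for $\beta\in\operatorname{Im}\delta_0$. Both must be checked carefully against the definition of the partial inverse, namely $d_0^{-1}=d_0^{-1}\operatorname{pr}_{\operatorname{Im}d_0}$, which vanishes on $(\operatorname{Im}d_0)^\perp$.
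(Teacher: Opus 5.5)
Your proof is correct and takes essentially the same route as the paper. The paper also shows that $\operatorname{Im}\Pi\subseteq F$, that $\Pi$ is the identity on $\operatorname{Im}\delta_0$ and on $d(\operatorname{Im}\delta_0)$, and that $d_0^{-1}\Pi=d_0^{-1}$ and $d_0^{-1}d\Pi=d_0^{-1}d$; your operator $P=(\operatorname{Id}-b)^{-1}d_0^{-1}$, with $d_0^{-1}P=0$ and $d_0^{-1}dP=d_0^{-1}$, just makes these checks explicit.

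Two minor points. Your separate computation of $\Pi^2$ is redundant: $\Pi|_F=\operatorname{Id}$ together with $\operatorname{Im}\Pi\subseteq F$ already gives $\Pi^2=\Pi$, which is how the paper concludes. Also, your justification of $P\beta=0$ for $\beta\in\operatorname{Im}\delta_0$ is garbled; the reason is simply that $d_0^{-1}\beta=0$, because $\operatorname{Im}\delta_0\perp\operatorname{Im}d_0$.
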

\begin{proof}$\phantom{=}$
    \begin{enumerate}
        \item By definition, the operator $b$ strictly increases the weight and so by Lemma \ref{lem: stricly increasing weights implies nilpotent} it is nilpotent, i.e. there exists $N\in\N$ such that $b^N=0$. We can now apply Lemma \ref{lem: inverting nilpotent operators} taking $A_0=\operatorname{Id}$ and $B=b$, so the operator $\operatorname{Id}-b$ is invertible and its inverse is given by
        \begin{align}\label{eq: inverse Id-b}
            \left(\operatorname{Id}-b\right)^{-1}=\sum_{j=0}^{N-1}b^j=\sum_{j=0}^{N-1}\left[-d_0^{-1}(d-d_0)\right]^j\,.
        \end{align}
        This also implies that $\Pi$ as defined in \eqref{eq: Pi projection} is a well-defined operator on $\Omega^\bullet(\G)$.
        \item  By \eqref{eq: Pi projection} and \eqref{eq: inverse Id-b}, we have that the image of $\Pi$ is included in the subspace $F\subset\Omega^\bullet(\G)$ defined in the statement. Moreover, using the properties of $d$, $d_0$ and $d_0^{-1}$ we get that
        \begin{align*}
            \Pi d_0^{-1}=d_0^{-1}\ \text{ and }\ \Pi dd_0^{-1}=dd_0^{-1}
        \end{align*}
        so that $\Pi=\operatorname{Id}$ on $F$ and $\Pi^2=\Pi$, i.e. $\Pi$ is a projection onto $\operatorname{Im}\Pi=F$ along $\ker\Pi$. Finally, $\ker\Pi$ contains the subspace $E$ introduced in the statement. Since we also have the equalities
        \begin{align*}
            d_0^{-1}\Pi=d_0^{-1}\ \text{ and }\ d_0^{-1}d\Pi=d_0^{-1}d\,,
        \end{align*}
        we obtain that $\ker\Pi=E$.
    \end{enumerate}
\end{proof}
Following Rumin's notation, we denote the two projections onto $F$ and $E$ respectively as
\begin{align*}
    \Pi_F:=\Pi\ \text{ and }\ \Pi_E:=\operatorname{Id}-\Pi\,,
\end{align*}
where $\Pi$ is the projection defined in \eqref{eq: Pi projection}.
\begin{lemma}\label{lem: properties of proejctions}
    The projector operators $\Pi_E$, $\Pi_F$, and $\Pi_0$ enjoy the following properties:
    \begin{enumerate}
        \item $d_0^{-1}\Pi_E=\Pi_Ed_0^{-1}=0$;
        \item $d\Pi_F=\Pi_Fd$ and $d\Pi_E=\Pi_Ed$;
        \item $\Pi_0\Pi_E\Pi_0=\Pi_0$ and $\Pi_E\Pi_0\Pi_E=\Pi_E$.
    \end{enumerate}
\end{lemma}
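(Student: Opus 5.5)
We prove the three items in order, relying only on Lemma \ref{lem: Pi_E}, on the identities $d_0^{-1}d_0=\operatorname{pr}_{\operatorname{Im}\delta_0}$, $d_0d_0^{-1}=\operatorname{pr}_{\operatorname{Im}d_0}$, $(d_0^{-1})^2=0$, and on $d^2=0$.

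\textbf{Item (1).} Since $E=\ker d_0^{-1}\cap\ker d_0^{-1}d$ and $\operatorname{Im}\Pi_E=E$, we get $d_0^{-1}\Pi_E=0$ immediately. For the other identity, $\operatorname{Im}d_0^{-1}\subset\operatorname{Im}\delta_0\subset F$. Lemma \ref{lem: Pi_E} shows $\Pi d_0^{-1}=d_0^{-1}$, so $\Pi_E d_0^{-1}=(\operatorname{Id}-\Pi)d_0^{-1}=0$.

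\textbf{Item (2).} It suffices to prove $d\Pi=\Pi d$, because $\Pi_E=\operatorname{Id}-\Pi$ then commutes with $d$ as well. Writing $L:=(\operatorname{Id}-b)^{-1}d_0^{-1}$, formula \eqref{eq: Pi projection} reads $\Pi=Ld+dL$. Using $d^2=0$,
\begin{align*}
d\Pi=dLd+d^2L=dLd\,,\qquad \Pi d=Ld^2+dLd=dLd\,.
\end{align*}
So the two sides agree identically, and no structural input beyond $d^2=0$ is needed.

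\textbf{Item (3).} We need two facts.
\begin{itemize}
\item[(a)] $\Pi_0\Pi_F\Pi_0=0$. Since $\Pi_0=\operatorname{pr}_{E_0}$ with $E_0=\ker d_0\cap\ker\delta_0$, we have $\Pi_0 d_0^{-1}=0$, because $\operatorname{Im}d_0^{-1}\subset\operatorname{Im}\delta_0\perp E_0$. We also have $d_0^{-1}\Pi_0=0$, since $E_0\subset\ker\delta_0=\ker d_0^{-1}$. Expanding $\Pi_F=Ld+dL$ with $L=\sum_j b^j d_0^{-1}$, the term $\Pi_0 L d\Pi_0$ vanishes because $L$ ends with $d_0^{-1}$ on the left. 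More precisely, $L=d_0^{-1}(\operatorname{Id}-\tilde b)^{-1}$, where $\tilde b:=-(d-d_0)d_0^{-1}$, because $b\,d_0^{-1}=d_0^{-1}\tilde b$. The term $\Pi_0 dL\Pi_0$ vanishes because $L$ begins, on the right, with $d_0^{-1}$, which kills $\Pi_0$.
\item[(b)] $\Pi_F\Pi_0\Pi_F=0$.
\end{itemize}

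Granting (a), $\Pi_0\Pi_E\Pi_0=\Pi_0-\Pi_0\Pi_F\Pi_0=\Pi_0$.

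For the second identity, $\Pi_E\Pi_0\Pi_E=\Pi_E$, I would show that $\Pi_E\Pi_0=\Pi_E$ on $E$. Take $e\in E$. Then $\delta_0e=0$, so $e\in\ker\delta_0=E_0\oplus\operatorname{Im}d_0$ and $e-\Pi_0e\in\operatorname{Im}d_0$. It therefore suffices to show that $\Pi_E$ kills $\operatorname{Im}d_0$, that is, $\operatorname{Im}d_0\subset F$. Write $d_0\gamma$ with $\gamma\in\operatorname{Im}\delta_0=\operatorname{Im}d_0^{-1}\subset F$. Then $d\gamma\in dF\subset F$, since $d\operatorname{Im}d_0^{-1}\subset F$ and $d(dd_0^{-1})=0$. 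Hence
\[
d_0\gamma=d\gamma-(d-d_0)\gamma\,,
\]
where the term $(d-d_0)\gamma$ has strictly higher weight.

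\textbf{Main obstacle.} The remaining step is this weight-filtration induction: we must show that the higher-weight remainder $(d-d_0)\gamma$ also lies in $F$ modulo $E$-components. I would run a descending induction on weight. Decompose the remainder via \eqref{eq: hodge decompo Box_0}: the part in $\operatorname{Im}\delta_0$ is in $F$, the part in $\operatorname{Im}d_0$ is handled by the induction hypothesis at higher weight, and the $\ker\Box_0$ part must be absorbed. Alternatively, and more cleanly, I would use that $\Pi_E$ and $\Pi_0$ induce mutually inverse isomorphisms $E\cong E_0$ (Rumin's theorem), checking injectivity of $\Pi_0|_E$ by the weight argument. If $e\in E$ and $\Pi_0e=0$, the lowest-weight component of $e$ lies in $\ker\delta_0\cap(E_0)^\perp=\operatorname{Im}d_0$, say $d_0\gamma$. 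The condition $\delta_0de=0$ then forces $\delta_0d_0\gamma=0$ at lowest weight, so $d_0\gamma=0$. Hence $e=0$, and the second identity of item (3) follows.
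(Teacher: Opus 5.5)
Your items (1) and (2), and your proof of $\Pi_0\Pi_E\Pi_0=\Pi_0$, are correct and match the paper. The paper also reduces that identity to $\Pi_0\Pi\,\Pi_0=0$, using that $\Pi\Pi_0$ lands in $\operatorname{Im}d_0^{-1}\perp E_0$.

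The gap is in $\Pi_E\Pi_0\Pi_E=\Pi_E$. You write $\ker\delta_0=E_0\oplus\operatorname{Im}d_0$, but that decomposition is of $\ker d_0$. In fact $\ker\delta_0=(\operatorname{Im}d_0)^\perp=E_0\oplus\operatorname{Im}\delta_0$. As a result you reduce to ``$\Pi_E$ kills $\operatorname{Im}d_0$'', i.e.\ $\operatorname{Im}d_0\subset F$. That statement is false, so no weight induction can establish it. In $\H^1$ one has $F\cap\Omega^2(\H^1)=\{d(f\tau)\}=\{df\wedge\tau-f\,dx\wedge dy\}$. A direct computation from \eqref{eq: Pi projection} then gives $\Pi_E(g\,dx\wedge dy)=dg\wedge\tau\neq0$ for nonconstant $g$.

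With the correct decomposition the step is immediate. For $e\in E$ one has $d_0^{-1}e=0$, so $e-\Pi_0e=d_0^{-1}d_0e\in\operatorname{Im}d_0^{-1}$, which $\Pi_E$ annihilates by item (1). This is exactly the paper's one-line expansion
\[
\Pi_E\Pi_0\Pi_E=\Pi_E-\Pi_Ed_0\big(d_0^{-1}\Pi_E\big)-\big(\Pi_Ed_0^{-1}\big)d_0\Pi_E=\Pi_E .
\]

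Your alternative route is logically sound: injectivity of $\Pi_0|_E$, applied to $\Pi_E\Pi_0e-e\in E$ together with the first identity, gives the result. However, you cannot invoke Rumin's isomorphism $E\cong E_0$ here, because the paper deduces that bijection from item (3); that would be circular. Your direct injectivity check also repeats the same misidentification. The lowest-weight component $e_p$ of $e$ lies in $\ker\delta_0\cap E_0^\perp=\operatorname{Im}\delta_0$, not in $\operatorname{Im}d_0$. Once corrected, the argument works. The weight-$p$ part of $\delta_0de=0$ is $\delta_0d_0e_p=0$, hence $d_0e_p=0$, and so $e_p\in\ker d_0\cap\operatorname{Im}\delta_0=\{0\}$.

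Finally, fact (b), $\Pi_F\Pi_0\Pi_F=0$, is announced as needed but is never proved or used. You should drop it.
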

\begin{proof}
    The equalities in (1) follow from the fact that $d_0^{-1}\Pi=\Pi d_0^{-1}$, while those in (2) can easily be checked by carrying out the explicit computations that show $d\Pi=d(\operatorname{Id}-b)^{-1}d_0^{-1}d=\Pi d$.

    Finally, to get the equalities in (3), we need to apply the fact that $\operatorname{Im}\Pi\subset\operatorname{Im}d_0^{-1}\subset\left(\ker\Box_0\right)^\perp$ and the equalities in (1), so that
    \begin{align*}
        \Pi_E\Pi_0\Pi_E=\Pi_E^2-\Pi_E\left(d_0d_0^{-1}+d_0^{-1}d_0\right)\Pi_E=\Pi_E\ \text{ and }\ \Pi_0\Pi_E\Pi_0=\Pi_0^2-\Pi_0\Pi\,\Pi_0=\Pi_0\,.
    \end{align*}
\end{proof}
As a consequence of Lemma \ref{lem: properties of proejctions}, we get the following result.
\begin{proposition}[Theorem 2.6 in \cite{rumin_grenoble}]\label{prop: Rumin computing de RHam} The de Rham complex $(\Omega^\bullet(\G),d=d_0+d_{w_1}+\cdots+d_{w_s})$ splits into two subcomplexes $(E^\bullet,d)$ and $(F^\bullet,d)$. Moreover, the following operator
\begin{align}\label{eq: formula d_c}
    d_c:=\Pi_0d\Pi_E\colon E_0^\bullet\subset\Omega^k(\G)\longrightarrow E_0^\bullet\subset\Omega^{k+1}(\G)
\end{align}
    satisfies $d_c^2=0$, the complex $(E_0^\bullet,d_c)$ computes the same cohomology as the initial de Rham complex $(\Omega^\bullet(\G),d)$ and it is known as the Rumin complex.
\end{proposition}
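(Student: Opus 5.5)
The plan is to prove the three claims in order: $(E^\bullet,d)$ and $(F^\bullet,d)$ are subcomplexes splitting the de Rham complex; $d_c^2=0$; and $d_c$ computes de Rham cohomology via chain homotopy equivalences built from $\Pi_E$ and $\Pi_0$. For the splitting, I will use Lemma \ref{lem: Pi_E}, which gives $\Omega^\bullet(\G)=E\oplus F$ with projections $\Pi_E=\operatorname{Id}-\Pi$ and $\Pi_F=\Pi$. Lemma \ref{lem: properties of proejctions}(2) gives $d\Pi_E=\Pi_Ed$ and $d\Pi_F=\Pi_Fd$. Hence $d$ maps $E=\operatorname{Im}\Pi_E$ into $E$ and $F$ into $F$, so both are subcomplexes and the de Rham complex is their direct sum. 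The complex $(F^\bullet,d)$ is acyclic. Indeed, by \eqref{eq: Pi projection}, $\Pi=hd+dh$ with $h:=(\operatorname{Id}-b)^{-1}d_0^{-1}$. Restricted to $F$, where $\Pi=\operatorname{Id}$, this exhibits the identity as null-homotopic, provided $h$ maps $F$ into $F$; I expect this to follow from $h=\Pi h$, since $\operatorname{Im}h\subset\operatorname{Im}d_0^{-1}\subset F$. Therefore $H^\bullet(\Omega,d)\cong H^\bullet(E,d)$.

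Next I will show that $\Pi_0\colon E\to E_0$ and $\Pi_E\colon E_0\to E$ are mutually inverse isomorphisms of complexes, with $d_c$ being the conjugate of $d|_E$. Lemma \ref{lem: properties of proejctions}(3) gives $\Pi_0\Pi_E\Pi_0=\Pi_0$ and $\Pi_E\Pi_0\Pi_E=\Pi_E$. On $E_0$ we have $\Pi_0=\operatorname{Id}$, so $\Pi_0\Pi_E=\operatorname{Id}$ on $E_0$. On $E$ we have $\Pi_E=\operatorname{Id}$, so $\Pi_E\Pi_0=\operatorname{Id}$ on $E$. Thus $\Pi_E|_{E_0}\colon E_0\to E$ is a linear isomorphism with inverse $\Pi_0|_E$.

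The differential transported from $E$ to $E_0$ is then $\Pi_0 d\Pi_E$, which is exactly $d_c$. Consequently
\begin{align*}
d_c^2=\Pi_0d\Pi_E\Pi_0d\Pi_E=\Pi_0d\,d\Pi_E=0,
\end{align*}
where the middle step uses $\Pi_E\Pi_0=\operatorname{Id}$ on $E\ni d\Pi_E\alpha$. Moreover, $\Pi_E\colon(E_0,d_c)\to(E,d)$ is a cochain isomorphism, because $d\Pi_E=\Pi_E\Pi_0d\Pi_E=\Pi_Ed_c$ on $E_0$. Composing with the inclusion $(E,d)\hookrightarrow(\Omega,d)$, which is a quasi-isomorphism by the acyclicity of $F$, shows that $(E_0^\bullet,d_c)$ computes de Rham cohomology.

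The main obstacle I expect is the homotopy step on $F$, together with verifying carefully that $\Pi_E$ maps $E_0$ into $E$ and $\Pi_0$ maps $E$ onto $E_0$. The former is clear since $\Pi_E$ projects onto $E$. For the latter, I will check that $\Pi_0$ preserves $E$-related kernels up to the identities in Lemma \ref{lem: properties of proejctions}(1); this uses $\operatorname{Im}\Pi\subset(\ker\Box_0)^\perp$, as in the proof of part (3).
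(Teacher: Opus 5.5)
Your proof is correct and follows the paper's route. You use Lemma \ref{lem: properties of proejctions}(2) for the splitting, (3) to get that $\Pi_E|_{E_0}$ and $\Pi_0|_E$ are mutually inverse, and conjugation of $d|_E$ to obtain $d_c$ and $d_c^2=0$. Your explicit homotopy $\Pi=hd+dh$ with $h=(\operatorname{Id}-b)^{-1}d_0^{-1}$, together with $\operatorname{Im}h\subset\operatorname{Im}d_0^{-1}\subset F$, supplies the step the paper compresses into ``by property (2), $\Pi_E$ is a homotopical equivalence'' (property (2) alone only makes $\Pi_E$ a chain map). The obstacle you flag at the end is not a real one: $\Pi_0(E)\subset E_0$ because $\Pi_0$ projects onto $E_0$, and surjectivity follows from $\Pi_0\Pi_E=\operatorname{Id}$ on $E_0$, which you already established.
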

\begin{proof}
    The fact that $(E_0^\bullet,d_c)$ is a complex follows from applying properties (2) and (3) of Lemma \ref{lem: properties of proejctions} to the explicit formula \eqref{eq: formula d_c} together with the fact that $d^2=0$. By property (2) in Lemma \ref{lem: properties of proejctions}, we get that $\Pi_E$ is a homotopical equivalence between the de Rham complex $(\Omega^\bullet(\G),d)$ and the subcomplex $(E^\bullet,d)$. Finally, property (3) in Lemma \ref{lem: properties of proejctions} further implies that $E$ and $E_0$ are in bijection, and that $\Pi_E$ restricted to $E_0$ and $\Pi_0$ restricted to $E$ are inverse maps of each other. Hence, the complex $(E^\bullet,d)$ is conjugated to $(E_0^\bullet,d_c)$ with $d_c=\Pi_0d\Pi_E$, as defined in the statement.
\end{proof}
The relations which occur between the three complexes $(\Omega^\bullet(\G),d)$, $(E^\bullet,d)$, and $(E_0^\bullet,d_c)$ can be gathered in a more visual way through the following diagram:
\begin{center}

\begin{tikzpicture}[node distance=2.5cm, auto]

\pgfmathsetmacro{\shift}{0.3ex}

\node (A) {$\Omega^\bullet(\G)$};
\node(B)[right of=A] {$\Omega^\bullet(\G)$};
\node (C) [below of=A] {$E^\bullet$};
\node (D) [right of=C] {$E^\bullet$};
\node (E) [below of=C] {$  E_0^\bullet$};
\node (F) [below of=D] {$  E_0^\bullet$};

\draw[->](A) to node {\small $ {d}$}(B);
\draw[transform canvas={xshift=-0.5ex},->] (A) --(C) node[left,midway] {\footnotesize $\Pi_E$};
\draw[transform canvas={xshift=.5ex},->](C) -- (A) node[right,midway] {\footnotesize $\iota$};
\draw[transform canvas={xshift=-0.5ex},->] (B) --(D) node[left,midway] {\footnotesize $\Pi_E$};
\draw[transform canvas={xshift=.5ex},->](D) -- (B) node[right,midway] {\footnotesize $\iota$};
\draw[->](C) to node {\small $ {d}$}(D);
\draw[transform canvas={xshift=-0.5ex},->] (C) --(E) node[left,midway] {\footnotesize $\Pi_{  0}$};
\draw[transform canvas={xshift=.5ex},->](E) -- (C) node[right,midway] {\footnotesize $\Pi_E$};
\draw[transform canvas={xshift=-0.5ex},->] (D) --(F) node[left,midway] {\footnotesize $\Pi_{  0}$};
\draw[transform canvas={xshift=.5ex},->](F) -- (D) node[right,midway] {\footnotesize $\Pi_E$};
\draw[->](E) to node {\small $ {d}_c$}(F);
\end{tikzpicture}
\end{center}
In order to study the properties of the Rumin differential needed to justify the introduction of special manifolds on which Stokes' theorem holds for $(E_0^\bullet,d_c)$, we now describe more explicitly the action of the differential $d_c$. Throughout the computations, we will also adopt the shorthand notation introduced in \cite{tripaldi2026spectralcomplexestruncatedmulticomplexes} in order to simplify the expressions involved.
\begin{definition}\label{def: partial operators}
    Given the maps $d_0,\,d_{1},\ldots,d_{w_s}$ that make up the exterior derivative $d$ as expressed in \eqref{eq: decomposition of d} and satisfy the multicomplex relations \eqref{eq: multicomplex maps rules}, together with the partial inverse $d_0^{-1}$ introduced in Definition \ref{def: d_0^{-1}}, we are going to define by induction the following maps on $\Omega^\bullet(\G)$:
    \begin{align}\label{eq: partial maps}
        \partial_1=d_1\ \text{ and }\ \partial_r=d_r-\sum_{j=1}^{r-1}d_{r-j}d_0^{-1}\partial_j\ \text{ for }r\ge 2\,.
    \end{align}
\end{definition}
For clarity, let us see the explicit expression of the operator $\partial_4$:
\begin{align*}
    \partial_4=&d_4-\sum_{j=1}^3d_{3-j}d_0^{-1}\partial_j=d_4-d_3d_0^{-1}\partial_1-d_2d_0^{-1}\partial_2-d_1d_0^{-1}\partial_3\\=&d_4-d_3d_0^{-1}d_1-d_2d_0^{-1}\left(d_2-d_1d_0^{-1}\partial_1\right)-d_1d_0^{-1}\left(d_3-d_2d_0^{-1}\partial_1-d_1d_0^{-1}\partial_2\right)\\=&d_4-d_3d_0^{-1}d_1-d_2d_0^{-1}\left(d_2-d_1d_0^{-1}d_1\right)-d_1d_0^{-1}\left(d_3-d_2d_0^{-1}d_1-d_1d_0^{-1}d_2+d_1d_0^{-1}d_1d_0^{-1}d_1\right)
\end{align*}
Note that by definition, each differential operator $\partial_r$ increases the weight of a form by $r$ and its degree by 1. This can be more compactly expressed by stating that $\partial_r$ has bidegree $|\partial_r|=(r,1-r)$, that is:
\begin{align*}
    \text{ for any }\alpha\in\Omega^{p,k-p}(\G)\ \text{ we have }\ \partial_r\alpha\in\Omega^{p+r,k+1-p-r}(\G)\,.
\end{align*}
\begin{lemma}[Expressing $d_c$ on $E_0$]\label{lem: expressing d_c on E_0} Using the operators introduced in Definition \ref{def: partial operators}, one can express the operator $d_c$ acting on $E_0^\bullet$ as follows:
\begin{align}\label{eq: formula for d_c in terms of partials}
    d_c\alpha=\Pi_0\sum_{i=1}^{N-1}\partial_r\alpha=\sum_{r=1}^{N-1}\partial_r\alpha-d_0d_0^{-1}\sum_{r=1}^{N-1}\partial_r\alpha-d_0^{-1}d_0\sum_{r=1}^{N-1}\partial_r\alpha\ \text{ for any }\alpha\in E_0^\bullet\,.
\end{align}
Here is $N\in\mathbb{N}$ taken so that $\partial_N\alpha=0$ for every $\alpha\in E_0^\bullet$.
\end{lemma}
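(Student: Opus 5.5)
The plan is to compute $\Pi_E\alpha$ explicitly for $\alpha\in E_0^\bullet$, then apply $d$ and read off the result weight by weight. Since $\alpha\in E_0=\ker d_0\cap\ker\delta_0$, we have $d_0^{-1}\alpha=0$ and $d_0\alpha=0$. Hence
\[
\Pi_F\alpha=(\operatorname{Id}-b)^{-1}d_0^{-1}d\alpha+d(\operatorname{Id}-b)^{-1}d_0^{-1}\alpha=(\operatorname{Id}-b)^{-1}d_0^{-1}(d-d_0)\alpha=-(\operatorname{Id}-b)^{-1}b\,\alpha .
\]
Therefore
\[
\Pi_E\alpha=\alpha+(\operatorname{Id}-b)^{-1}b\alpha=(\operatorname{Id}-b)^{-1}\alpha=\sum_{j\ge0}b^j\alpha,
\]
using the Neumann series \eqref{eq: inverse Id-b}. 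Write $\Pi_E\alpha=\sum_{m\ge0}\beta_m$, where $\beta_m$ is the component of weight $p+m$ when $\alpha$ has pure weight $p$; the general case follows by linearity. Then $\beta_0=\alpha$, and since $b=-d_0^{-1}\sum_{i\ge1}d_i$, the components satisfy the recursion
\[
\beta_m=-d_0^{-1}\sum_{i=1}^{m}d_i\beta_{m-i}.
\]

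The main step is to show by induction on $r$ that
\[
\sum_{i=1}^{r}d_i\beta_{r-i}=\partial_r\alpha .
\]
This is precisely how Definition \ref{def: partial operators} is designed. With this identity, $\beta_m=-d_0^{-1}\partial_m\alpha$ for $m\ge1$. Substituting this into $\sum_i d_i\beta_{r-i}$ gives
\[
d_r\alpha-\sum_{j=1}^{r-1}d_{r-j}d_0^{-1}\partial_j\alpha,
\]
which is $\partial_r\alpha$ by \eqref{eq: partial maps}. The base case $r=1$ is $d_1\alpha=\partial_1\alpha$. So the induction closes as soon as the recursion for the $\beta_m$ is established.

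Next, compute $d\Pi_E\alpha$ and sort it by weight. Its weight-$(p+r)$ component is
\[
d_0\beta_r+\sum_{i=1}^r d_i\beta_{r-i}=d_0\beta_r+\partial_r\alpha .
\]
For $r=0$ this component is $d_0\alpha=0$. For $r\ge1$ it equals $-d_0d_0^{-1}\partial_r\alpha+\partial_r\alpha$. The first term lies in $\operatorname{Im}d_0$, so it is killed by $\Pi_0$, which is the orthogonal projection onto $\ker\Box_0$ and is orthogonal to $\operatorname{Im}d_0$ by Proposition \ref{prop: Hodge decomposition for Box0}. It follows that
\[
d_c\alpha=\Pi_0\sum_{r\ge1}\partial_r\alpha .
\]
The second equality in \eqref{eq: formula for d_c in terms of partials} is then just the definition $\Pi_0=\operatorname{Id}-d_0d_0^{-1}-d_0^{-1}d_0$. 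Finally, the sum is finite: $\partial_r$ raises the weight by $r$, and weights are bounded by $Q$ (Lemma \ref{lem: stricly increasing weights implies nilpotent}), so $\partial_N\alpha=0$ for $N$ large, uniformly on $E_0^\bullet$.

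The main obstacle is the bookkeeping of the recursion for $\beta_m$. It must be checked that the component of $(\operatorname{Id}-b)^{-1}\alpha$ of weight $p+m$ really satisfies $\beta_m=\sum_{i}(b)_i\beta_{m-i}$, where $(b)_i=-d_0^{-1}d_i$ is the part of $b$ raising weight by $i$. This follows by applying the identity $(\operatorname{Id}-b)\Pi_E\alpha=\alpha$ and separating weights via Lemma \ref{lem: different weight implies lin indep}. The remaining steps are formal manipulations.
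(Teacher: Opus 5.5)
Your proposal is correct and follows essentially the same route as the paper: show $\Pi_E\alpha=\alpha-\sum_{r}d_0^{-1}\partial_r\alpha$, deduce that the weight-$(p+r)$ component of $d\Pi_E\alpha$ is $\partial_r\alpha-d_0d_0^{-1}\partial_r\alpha$, and then apply $\Pi_0$. Your weight-by-weight recursion for $(\operatorname{Id}-b)^{-1}\alpha$, closed by strong induction, is a more rigorous version of the identification that the paper obtains by expanding the Neumann series term by term and matching the pattern.
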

\begin{proof}
The existence of some $N\in\N$ for which $\partial_N\alpha=0$ for every $\alpha\in E_0^\bullet$ is guaranteed by the fact that the range of possible weights is bounded. Note that, in general, there will be no relationship between $w_s$ and $N$. Depending on the group $\G$ (and hence the multicomplex structure of the de Rham complex), but also on the weight and degree of the form $\alpha\in\Omega^\bullet(\G)$, one could have $w_s$ bigger, smaller, or equal to $N$. The only thing that is certain for any given positively gradable group $\G$ is that both $w_s$ and $N$ are finite positive integers.

Let us now focus on the action of $\Pi$ on elements of $E_0$ as defined in \eqref{eq: Pi projection}. If $\alpha\in E_0^\bullet$, then $\alpha\in\ker d_0\cap\ker\delta_0=\ker d_0\cap\ker d_0^{-1}$, and so the action of $\Pi$ simplifies to
\begin{align*}
    \Pi\alpha=&\left(\operatorname{Id}-b\right)^{-1}d_0^{-1}d\alpha=\sum_{j=0}^{N-1}\left[-d_0^{-1}(d-d_0)\right]^jd_0^{-1}(d-d_0)\alpha=\sum_{j=1}^{N-1}(-1)^{j-1}\left[d_0^{-1}(d-d_0)\right]^j\alpha\\=&d_0^{-1}(d-d_0)\alpha-\left[d_0^{-1}(d-d_0)\right]^2\alpha+\left[d_0^{-1}(d-d_0)\right]^3\alpha+\cdots+(-1)^{N-2}\left[d_0^{-1}(d-d_0)\right]^{N-1}\alpha\\=&\underbrace{d_0^{-1}d_1}_{d_0^{-1}\partial_1}\alpha+\underbrace{d_0^{-1}(d_2-d_1d_0^{-1}d_1)}_{d_0^{-1}\partial_2}\alpha+\underbrace{d_0^{-1}\left(d_3-d_2d_0^{-1}d_1-d_1d_0^{-1}d_2+d_1d_0^{-1}d_1d_0^{-1}d_1\right)}_{d_0^{-1}\partial_3}\alpha+\cdots\\=&\sum_{r=1}^{N-1}d_0^{-1}\partial_r\alpha
\end{align*}
so that
\begin{align*}
    d\Pi_E\alpha=&d\bigg(\alpha-\sum_{r=1}^{N-1}d_0^{-1}\partial_r\alpha\bigg)=(d-d_0)\alpha-(d-d_0)d_0^{-1}\sum_{r=1}^{N-1}\partial_r\alpha-d_0d_0^{-1}\sum_{r=1}^{N-1}\partial_r\alpha\\=&\sum_{j=1}^{w_s}d_j\alpha-\sum_{j=1}^{w_s}d_jd_0^{-1}\sum_{r=1}^{N-1}\partial_r\alpha-d_0d_0^{-1}\sum_{r=1}^{N-1}\partial_r\alpha\\=&d_1\alpha+\cdots+d_{w_s}\alpha-d_1d_0^{-1}\sum_{r=1}^{N-1}\partial_r\alpha-d_2d_0^{-1}\sum_{r=1}^{N-1}\partial_r\alpha-\cdots-d_{w_s}d_0^{-1}\sum_{r=1}^{N-1}\partial_r\alpha-d_0d_0^{-1}\sum_{r=1}^{N-1}\partial_r\alpha\\=&d_1\alpha+\underbrace{d_2\alpha-d_1d_0^{-1}\partial_1\alpha}_{\partial_2\alpha}+\underbrace{d_3\alpha-d_2d_0^{-1}\partial_1\alpha-d_1d_0^{-1}\partial_2\alpha}_{\partial_3\alpha}+\cdots+\partial_{N-1}\alpha-d_0d_0^{-1}\sum_{r=1}^{N-1}\partial_r\alpha\\=&\sum_{r=1}^{N-1}\partial_r\alpha-d_0d_0^{-1}\sum_{r=1}^{N-1}\partial_r\alpha=\sum_{r=1}^{N-1}\partial_r\alpha-\operatorname{pr}_{\operatorname{Im}d_0}\sum_{r=1}^{N-1}\partial_r\alpha=\operatorname{pr}_{(\operatorname{Im}d_0)^\perp}\sum_{r=1}^{N-1}\partial_r\alpha
\end{align*}
Finally, since $d_0^{-1}$ acts trivially on $(\operatorname{Im}d_0)^\perp$ and we have just shown that $d\Pi_E\alpha\in\ker\Box_0\oplus\operatorname{Im}\delta_0$, we get
\begin{align*}
    \Pi_0d\Pi_E\alpha=&\left(\operatorname{Id}-d_0^{-1}d_0\right)d\Pi_E\alpha=\left(\operatorname{Id}-d_0^{-1}d_0\right)\bigg(\sum_{r=1}^{N-1}\partial_r\alpha-d_0d_0^{-1}\sum_{r=1}^{N-1}\partial_r\alpha\bigg)\\=&\sum_{r=1}^{N-1}\partial_r\alpha-d_0d_0^{-1}\sum_{r=1}^{N-1}\partial_r\alpha-d_0^{-1}d_0\sum_{r=1}^{N-1}\partial_r\alpha\,.
\end{align*}    
\end{proof}
\begin{remark}\
    The fact that $d\Pi_E\alpha$ belongs to $\left(\operatorname{Im}d_0\right)^\perp=\ker\Box_0\oplus\operatorname{Im}\delta_0$ also follows from the fact that $d\Pi_E\alpha=\Pi_Ed\alpha\in E\subset\ker\delta_0=\left(\operatorname{Im} d_0\right)^\perp$ by Lemma \ref{lem: Pi_E}. This property will be fundamental in the definition of the appropriate manifolds over which we should integrate in order to obtain Stokes' theorem for $d_c$ (see Definition \ref{def: R manifolds}).
\end{remark}
Since $d_0d_0^{-1}$ and $d_0^{-1}d_0$ are two projections of bidegree $(0,0)$, i.e. they keep both the weight and the degree constant, the operator $d_c$ consists of a sum of operators $\partial_r-d_0d_0^{-1}\partial_r-d_0^{-1}d_0\partial_r$ of bidegree $|\partial_r|=(r,1-r)$. Since we will be interested in each one of such operators, we introduce the following notation
\begin{definition}\label{def: d_c^r}
    Given the Rumin differential $d_c$ whose explicit expression is given in \eqref{eq: formula for d_c in terms of partials}, we introduce the notation
    \begin{align*}
        d_c^r=\partial_r-d_0d_0^{-1}\partial_r-d_0^{-1}d_0\partial_r\ \text{ for each }r=1,\ldots,N-1
    \end{align*}
    to denote each summand of $d_c$ of bidegree $|d_c^r|=(r,1-r)$.
\end{definition}
Using the properties of the Rumin differential $d_c$, and in particular the fact that $E\subset(\operatorname{Im}d_0)^\perp=\ker\Box_0\oplus\operatorname{Im}\delta_0$, we can now introduce the concept of $R$-manifolds, that is manifolds for which Stokes' theorem holds for the Rumin complex $(E_0^\bullet,d_c)$.
\begin{definition}[$R$-manifolds]\label{def: R manifolds}
    Consider $\Sigma\subset\mathbb{G}$ an oriented $k$-dimensional $C^1$-manifold (with or without boundary). We say that $\Sigma$ is an $R$-manifold if 
    \begin{align*}
        \int_\Sigma\eta=0\ \text{ for any }\eta\in\Omega^k(\G)\cap\operatorname{Im}\delta_0\,.
    \end{align*}
\end{definition}
Since our results throughout the paper rely on being able to apply the classical Stokes' theorem, we require sufficient Euclidean regularity of the manifold with boundary we are considering.
\begin{theorem}[Stokes' theorem on $R$-manifolds]\label{thm: stokes on R manifolds}
    Let $\Sigma\subset\G$ be a $k$-dimensional $R$-manifold whose boundary $\partial\Sigma$ is also an $R$-manifold, then Stokes' theorem holds for the Rumin complex $(E_0^\bullet,d_c)$, that is
    \begin{align*}
        \int_{\partial\Sigma}\alpha=\int_\Sigma d_c\alpha\ \text{ for all }\alpha\in E_0^{k-1}\,.
    \end{align*}
\end{theorem}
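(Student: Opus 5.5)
The plan follows the chain of equalities sketched in the introduction. Fix $\alpha\in E_0^{k-1}$ and write $\Pi_E\alpha=\alpha-\Pi\alpha$. By the computation in the proof of Lemma~\ref{lem: expressing d_c on E_0},
\begin{align*}
\Pi\alpha=\sum_{r=1}^{N-1}d_0^{-1}\partial_r\alpha .
\end{align*}
Each summand lies in $\operatorname{Im}d_0^{-1}=\operatorname{Im}\delta_0$, since $d_0^{-1}$ maps into $(\ker d_0)^\perp=\operatorname{Im}\delta_0$. Hence $\Pi\alpha\in\Omega^{k-1}(\G)\cap\operatorname{Im}\delta_0$. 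Because $\partial\Sigma$ is a $(k-1)$-dimensional $R$-manifold, $\int_{\partial\Sigma}\Pi\alpha=0$, and therefore
\begin{align*}
\int_{\partial\Sigma}\alpha=\int_{\partial\Sigma}\Pi_E\alpha .
\end{align*}

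The second step applies the classical Stokes' theorem to the smooth $(k-1)$-form $\Pi_E\alpha$ on the oriented $C^1$ manifold $\Sigma$:
\begin{align*}
\int_{\partial\Sigma}\Pi_E\alpha=\int_\Sigma d\Pi_E\alpha .
\end{align*}
Here I would implicitly assume what the classical theorem needs, namely compact support of $\alpha$ or compactness of $\Sigma$. I would note this as part of the standing hypotheses, since the statement is silent on it.

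The third step compares $d\Pi_E\alpha$ with $d_c\alpha=\Pi_0 d\Pi_E\alpha$. By the Remark following Lemma~\ref{lem: expressing d_c on E_0}, $d\Pi_E\alpha\in(\operatorname{Im}d_0)^\perp=\ker\Box_0\oplus\operatorname{Im}\delta_0$. By the Hodge decomposition of Proposition~\ref{prop: Hodge decomposition for Box0}, $\Pi_0=\operatorname{pr}_{\ker\Box_0}$, so
\begin{align*}
d\Pi_E\alpha-d_c\alpha=\operatorname{pr}_{\operatorname{Im}\delta_0}d\Pi_E\alpha=d_0^{-1}d_0\,d\Pi_E\alpha .
\end{align*}
The explicit formula gives this difference as $d_0^{-1}d_0\sum_r\partial_r\alpha$, which is a smooth $k$-form in $\operatorname{Im}\delta_0$. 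Since $\Sigma$ is a $k$-dimensional $R$-manifold, its integral over $\Sigma$ vanishes, giving $\int_\Sigma d\Pi_E\alpha=\int_\Sigma d_c\alpha$. Chaining the three equalities yields the claim.

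The main obstacle is a technical point rather than a conceptual one. We must check that the $\operatorname{Im}\delta_0$ components really are smooth forms to which the $R$-manifold hypothesis applies. This means verifying that $d_0^{-1}$ and $\operatorname{pr}_{\operatorname{Im}\delta_0}$ are $C^\infty(\G)$-linear pointwise algebraic operators, because $d_0$ acts on the finite-dimensional fibres $\bigwedge^{\bullet}\mathfrak g^\ast$. It follows that they preserve smoothness, and compact support where needed. Everything else is a direct assembly of Lemma~\ref{lem: Pi_E}, Lemma~\ref{lem: expressing d_c on E_0}, and the Hodge decomposition.
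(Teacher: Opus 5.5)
Your proposal is correct and follows the paper's proof essentially verbatim. You discard $\Pi\alpha=\sum_r d_0^{-1}\partial_r\alpha\in\operatorname{Im}\delta_0$ on $\partial\Sigma$, apply the classical Stokes' theorem to $\Pi_E\alpha$, and discard $d_0^{-1}d_0\,d\Pi_E\alpha\in\operatorname{Im}\delta_0$ on $\Sigma$, exactly as the paper does; your remarks on compact support and on the fibrewise ($C^\infty(\G)$-linear) nature of $d_0^{-1}$ and $\operatorname{pr}_{\operatorname{Im}\delta_0}$ make explicit hypotheses that the paper leaves implicit.
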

\begin{proof}
    Let $\alpha\in E_0^{k-1}$ be Rumin form of degree $k-1$, then on one hand
    \begin{align*}
        \Pi_E\alpha=\alpha-\Pi\alpha=\alpha-d_0^{-1}\sum_{r=1}^{N-1}\partial_r\alpha\in\ker\Box_0\oplus\operatorname{Im}\delta_0
    \end{align*}
    while on the other
    \begin{align*}
        d\Pi_E\alpha=d\Pi_E\alpha-d_0^{-1}d_0d\Pi_E\alpha+d_0^{-1}d_0d\Pi_E\alpha=d_c\alpha+d_0^{-1}d_0d\Pi_E\alpha\in\ker\Box_0\oplus\operatorname{Im}\delta_0\,.
    \end{align*}
    Therefore, the claim directly follows from the classical Stokes' theorem since
    \begin{align*}
        \int_{\partial\Sigma}\alpha\underbrace{=}_{(\ast)}\int_{\partial\Sigma}\Pi_E\alpha=\int_{\Sigma}d\Pi_E\alpha=\int_{\Sigma}\left(d_c\alpha+d_0^{-1}d_0d\Pi_E\alpha\right)\underbrace{=}_{(\ast)}\int_{\Sigma}d_c\alpha\,,
    \end{align*}
    where by $(\ast)$ we denote the equalities that follow from $\partial\Sigma$ and $\Sigma$ being $R$-manifolds.
\end{proof}
\begin{remark}
    As a side note, we remark the same statement can be reached by using the properties listed in Lemma \ref{lem: properties of proejctions} since $$d\Pi_E\alpha=\Pi_E d\alpha=\Pi_E\Pi_0\Pi_Ed\alpha=\Pi_E\Pi_0d\Pi_E\alpha=\Pi_E d_c\alpha=d_c\alpha+\operatorname{Im}\delta_0\,.$$
    This is the idea behind the definition of the boundary operator $\partial_c$ introduced in \cite{julia2023flat}.
\end{remark}
Note that the condition that a manifold $\Sigma$ be an $R$-manifold, as in Definition~\ref{def: R manifolds}, is stronger than what is actually required for the validity of Stokes' theorem for the Rumin complex. Indeed, when dealing with the boundary term, the assumption that $\partial\Sigma$ is an $R$-manifold is unnecessary whenever $\Pi_E\alpha=\alpha$. Similarly, the requirement that $\Sigma$ itself be an $R$-manifold can be dropped whenever $d\Pi_E\alpha\in\ker\Box_0$.
Both situations admit natural characterisations in terms of the homogeneous weights of the forms involved.
\begin{proposition}\label{prop: when R manifold is not needed}
    Given $\G$ a positively graded group, let us consider a Rumin form $\alpha\in E_0^{k-1}$ of homogeneous weight $w(\alpha)=p$, then
    \begin{enumerate}
        \item if $p=\max \{w(\beta)\mid \beta\in\Omega^{k-1}(\G)\}$, then $\Pi_E\alpha=\alpha$;
        \item if $d\Pi_E\alpha=(d\Pi_E\alpha)_{p+j_1}+\cdots+(d\Pi_E\alpha)_{p+j_s}\in\oplus_{i=1}^s\Omega^{p+j_i,k-p-j_i}(\G)$ then $\left(d\Pi_E\alpha\right)_{p+j_1}\in E_0^k$\,.
    \end{enumerate}
\end{proposition}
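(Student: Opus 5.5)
For both parts I will use the explicit formulae of Lemma \ref{lem: expressing d_c on E_0}: for $\alpha\in E_0^{k-1}$,
\begin{align*}
\Pi_E\alpha=\alpha-\sum_{r=1}^{N-1}d_0^{-1}\partial_r\alpha,\qquad d\Pi_E\alpha=\operatorname{pr}_{(\operatorname{Im}d_0)^\perp}\sum_{r=1}^{N-1}\partial_r\alpha .
\end{align*}
The whole argument is bookkeeping of weights.

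For (1), note that $\partial_r$ has bidegree $(r,1-r)$, so $\partial_r\alpha$ is a $k$-form of weight $p+r$. Also $d_0^{-1}$ preserves weight and lowers degree by one. Hence $d_0^{-1}\partial_r\alpha$ is a $(k-1)$-form of weight $p+r>p$. Since $p$ is the maximal weight attained by $(k-1)$-forms, $\Omega^{p+r,k-1-p-r}(\G)=0$ for every $r\ge1$. By Lemma \ref{lem: different weight implies lin indep} and the weight decomposition of $\Omega^{k-1}(\G)$, each such term vanishes. Therefore $\Pi\alpha=0$, i.e.\ $\Pi_E\alpha=\alpha$. There is a small subtlety: $\partial_r\alpha$ itself may be nonzero as a $k$-form, so I have to argue that its image under $d_0^{-1}$ lies in a zero space. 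This holds because $d_0^{-1}$ maps $\Omega^{p+r,k-p-r}$ into $\Omega^{p+r,k-1-p-r}$, which is zero.

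For (2), let $p+j_1$ be the lowest weight appearing in $d\Pi_E\alpha$. I will show that $\beta:=(d\Pi_E\alpha)_{p+j_1}$ lies in both $\ker\delta_0$ and $\ker d_0$.

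First, $d\Pi_E\alpha\in(\operatorname{Im}d_0)^\perp=\ker\delta_0$. Since $\delta_0$ preserves weight and forms of different weight are linearly independent, each homogeneous component of $d\Pi_E\alpha$ lies in $\ker\delta_0$. In particular $\delta_0\beta=0$.

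Second, $d(d\Pi_E\alpha)=0$. Expand this using $d=d_0+d_{w_1}+\dots+d_{w_s}$ from Lemma \ref{lem: diving exterior derivative} and collect terms by weight. The lowest weight that can occur is $p+j_1$: every $d_{w_i}$ strictly raises weight, and all other components of $d\Pi_E\alpha$ have weight $>p+j_1$. The only contribution in weight $p+j_1$ is therefore $d_0\beta$. By Lemma \ref{lem: different weight implies lin indep} this component must vanish, so $d_0\beta=0$.

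Combining the two facts, $\beta\in\ker d_0\cap\ker\delta_0=E_0^k$. The main point needing care is exactly this lowest-weight extraction from $d^2=0$, i.e.\ checking that no other term lands in weight $p+j_1$. This is immediate once the $j_i$ are ordered increasingly.
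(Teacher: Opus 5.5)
Your proof is correct. Part (1) is the paper's argument: each $d_0^{-1}\partial_r\alpha$ lands in $\Omega^{p+r,k-1-p-r}(\G)=0$.

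For part (2) you take a genuinely different route. The paper works inside the $\partial_r$ calculus. It invokes Lemma \ref{lem: d_0 of partial_r}, the recursive identity $d_0\partial_r\alpha=-\sum_{i=1}^{r-1}d_i(\partial_{r-i}-d_0d_0^{-1}\partial_{r-i})\alpha$ obtained from the multicomplex relations. It uses $(d\Pi_E\alpha)_{p+l}=\partial_l\alpha-d_0d_0^{-1}\partial_l\alpha$, so the vanishing of these components for $l<j_1$ forces $d_0\partial_{j_1}\alpha=0$. It then concludes that $\partial_{j_1}\alpha-d_0d_0^{-1}\partial_{j_1}\alpha\in\ker d_0\cap(\operatorname{Im}d_0)^\perp=E_0^k$.

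You instead apply $d^2=0$ once, globally, to the exact form $d\Pi_E\alpha$ and read off its lowest-weight component. That is exactly the information the lemma extracts weight by weight. What your version buys:
\begin{itemize}
\item It is shorter.
\item It shows the conclusion does not depend on the particular shape of $\Pi_E$: the lowest-weight component of any $d$-closed form in $\ker\delta_0$ is a Rumin form.
\item Since $d\Pi_E\beta=\Pi_E d\beta\in E\subset\ker\delta_0$ for every form $\beta$, neither homogeneity of $\alpha$ nor $\alpha\in E_0^{k-1}$ is really needed for (2).
\end{itemize}

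What the paper's route buys is that everything stays expressed through the operators $\partial_r$ and $d_c^r$ used later in the spectral-sequence description. It also exhibits the lowest component as $d_c^{j_1}\alpha$, with $\partial_{j_1}\alpha$ already $d_0$-closed. Both facts can also be recovered from your conclusion. Since $\Pi_0$ preserves weight and fixes $E_0$, one gets $(d\Pi_E\alpha)_{p+j_1}=d_c^{j_1}\alpha$. Writing that component as $\partial_{j_1}\alpha-d_0d_0^{-1}\partial_{j_1}\alpha$ and using $d_0$-closedness then gives $d_0\partial_{j_1}\alpha=0$.
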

Before proceeding with the proof of this Proposition, we need a technical lemma regarding the behaviour of the differentials $\partial_r$ (see also \cite[Lemma 3.3]{tripaldi2026spectralcomplexestruncatedmulticomplexes}).
\begin{lemma}\label{lem: d_0 of partial_r}
    Given an arbitrary $k$-form $\alpha\in\Omega^{p,k-p}(\G)$ with $d_0\alpha=0$, i.e. $\alpha\in\ker d_0$, we have
    \begin{align*}
        d_0\partial_1\alpha=0\ \text{ and }\ d_0\partial_r\alpha=-\sum_{i=1}^{r-1}d_i\left(\partial_{r-i}-d_0d_0^{-1}\partial_{r-i}\right)\alpha\ \text{ for any }r\ge 2\,.
    \end{align*}
\end{lemma}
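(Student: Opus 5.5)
The plan is to argue by strong induction on $r$, using the multicomplex relations \eqref{eq: multicomplex maps rules}, $\sum_{i+j=n}d_id_j=0$, together with the recursive definition \eqref{eq: partial maps} of $\partial_r$. The base case is immediate. Since $\partial_1=d_1$, the relation for $n=1$ gives $d_0d_1+d_1d_0=0$, so $d_0\partial_1\alpha=-d_1d_0\alpha=0$ whenever $d_0\alpha=0$.

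For the inductive step I would set $\pi:=\operatorname{Id}-d_0d_0^{-1}=\operatorname{pr}_{(\operatorname{Im}d_0)^\perp}$. Two facts about $d_0d_0^{-1}$ will be used repeatedly. First, $d_0d_0^{-1}d_0=d_0$, since $d_0d_0^{-1}=\operatorname{pr}_{\operatorname{Im}d_0}$. Second, $d_0^{-1}$ vanishes on $(\operatorname{Im}d_0)^\perp$. Applying $d_0$ to the definition of $\partial_r$ gives
\[
d_0\partial_r\alpha=d_0d_r\alpha-\sum_{j=1}^{r-1}d_0d_{r-j}d_0^{-1}\partial_j\alpha .
\]
Each term can be rewritten with the multicomplex relation $d_0d_m=-\sum_{i=1}^{m}d_id_{m-i}$ (which holds for $m\ge1$, splitting off the $d_md_0$ term). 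For the first term, $d_0d_r\alpha=-\sum_{i=1}^{r-1}d_id_{r-i}\alpha$, because $d_rd_0\alpha=0$. For the terms in the sum, $d_0d_{r-j}d_0^{-1}\partial_j\alpha=-\sum_{i=1}^{r-j-1}d_id_{r-j-i}d_0^{-1}\partial_j\alpha-d_{r-j}d_0d_0^{-1}\partial_j\alpha$.

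The main bookkeeping step is to regroup the resulting double sum by the outer operator $d_i$. For fixed $i$, the coefficient of $d_i$ should assemble into
\[
-d_i\Big(d_{r-i}-\sum_{j=1}^{r-i-1}d_{r-i-j}d_0^{-1}\partial_j\Big)\alpha+d_i d_0d_0^{-1}\partial_{r-i}\alpha=-d_i\big(\partial_{r-i}-d_0d_0^{-1}\partial_{r-i}\big)\alpha .
\]
Here the last piece comes from the terms $d_{r-j}d_0d_0^{-1}\partial_j$ after relabelling $i=r-j$. Summing over $i=1,\dots,r-1$ gives exactly the claimed formula.

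So the identity appears to reduce to pure algebra, and the real content is checking that the index ranges match. I expect the main obstacle to be exactly this reindexing of the double sum: the range $1\le i\le r-j-1$, $1\le j\le r-1$ has to be rearranged as $1\le j\le r-i-1$, $1\le i\le r-1$. Boundary terms with $r-j-i=0$ have already been separated off as the $d_{r-j}d_0$ contributions, and those are the ones producing the correction $d_0d_0^{-1}\partial_{r-i}$. Strong induction would only be needed if one chose instead to simplify $d_0d_0^{-1}\partial_j$ using earlier cases. The direct computation above does not need that, and the hypothesis $d_0\alpha=0$ enters only through $d_rd_0\alpha=0$. If the regrouping fails to close, the fallback is to verify the cases $r=2,3$ explicitly against the expansion of $\partial_4$ given after Definition \ref{def: partial operators}, and let them guide the general pattern.
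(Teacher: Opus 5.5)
Your proposal is correct and follows essentially the same route as the paper. You apply $d_0$ to the recursive definition of $\partial_r$ and rewrite each $d_0d_m$ via the multicomplex relation $d_0d_m=-\sum_{i=1}^{m}d_id_{m-i}$, using $d_rd_0\alpha=0$; then you regroup the double sum by the outer $d_i$ so that each bracket reassembles into $\partial_{r-i}-d_0d_0^{-1}\partial_{r-i}$. As you note yourself, no induction is needed, and the auxiliary facts about $\pi$, $d_0d_0^{-1}d_0=d_0$ and the vanishing of $d_0^{-1}$ on $(\operatorname{Im}d_0)^\perp$ are never actually used.
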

\begin{proof}
    The first claim readily follows from the fact that $\alpha\in\ker d_0$ and the multicomplex relations \eqref{eq: multicomplex maps rules} for the maps $d_i$ since
    \begin{align*}
        d_0\partial_1\alpha=d_0d_1\alpha=-d_1d_0\alpha=0\ \Longrightarrow\ \partial_1\alpha\in\ker d_0\,.
    \end{align*}
    Proceeding similarly, if $r=2$, we get
    \begin{align*}
        d_0\partial_2\alpha=d_0(d_2-d_1d_0^{-1}d_1)\alpha=-d_1^2\alpha-d_2d_0\alpha+d_1d_0d_0^{-1}d_1\alpha=-d_1(d_1-d_0d_0^{-1}d_1)\alpha=-d_1(\partial_1-d_0d_0^{-1}\partial_1)\alpha\,.
    \end{align*}
    In general, for $r\ge 2$, the multicomplex relations \eqref{eq: multicomplex maps rules} can be re-expressed as
    \begin{align}\label{eq: d_0 of d_r}
        d_0d_r=-\sum_{i=1}^{r-1}d_id_{r-i}-d_rd_0
    \end{align}
    and so
    \begin{align*}
        d_0\partial_r\alpha=&d_0\bigg(d_r-\sum_{i=1}^{r-1}d_{r-i}d_0^{-1}\partial_i\bigg)\alpha=-\sum_{i=1}^{r-1}d_id_{r-i}\alpha-d_rd_0\alpha+\sum_{i=1}^{r-1}\sum_{j=1}^{r-i-1}d_jd_{r-i-j}d_0^{-1}\partial_i\alpha+\sum_{i=1}^{r-1}d_{r-i}d_0d_0^{-1}\partial_i\alpha\\=&-\sum_{j=1}^{r-1}d_jd_{r-j}\alpha+\sum_{j=1}^{r-2}\sum_{i=1}^{r-j-1}d_jd_{r-i-j}d_0^{-1}\partial_i\alpha+\sum_{j=1}^{r-1}d_{j}d_0d_0^{-1}\partial_{r-j}\alpha\\=&-d_{r-1}d_1\alpha+d_{r-1}d_0d_0^{-1}\partial_1\alpha-\sum_{j=1}^{r-2}d_j\bigg(d_{r-j}-\sum_{i=1}^{r-j-1}d_{(r-j)-i}d_0^{-1}\partial_i\bigg)\alpha+\sum_{j=1}^{r-2}d_jd_0d_0^{-1}\partial_{r-j}\alpha\\=&-d_{r-1}(\partial_1-d_0d_0^{-1}\partial_1)\alpha-\sum_{j=1}^{r-2}d_j\left(\partial_{r-j}-d_0d_0^{-1}\partial_{r-j}\right)\alpha=-\sum_{j=1}^{r-1}d_j\left(\partial_{r-j}-d_0d_0^{-1}\partial_{r-j}\right)\alpha\,.
    \end{align*}
\end{proof}
\begin{proof}[Proof of Proposition \ref{prop: when R manifold is not needed}]
    The first property follows from the assumption that $\alpha$ is a form of maximum weight amongst all forms in degree $k-1$. Indeed, since $\Omega^{w,k-1-w}(\G)=0$ for every $w>p=w(\alpha)$, we have
    \begin{align*}
        \Pi_E\alpha=\alpha-\underbrace{d_0^{-1}\sum_{r\ge 1}\partial_r\alpha}_{\text{weight }\ge p+1}=\alpha\in\ker\Box_0\cap\Omega^{k-1}(\G)\,.
    \end{align*}
    The second property requires applying the formulae found in Lemma \ref{lem: d_0 of partial_r} to prove, as a first step, that if $\left(d\Pi_E\alpha\right)_{p+l}=0$ for each $l=1,\ldots,j_1-1$, then $\partial_{j_1}\alpha\in\ker d_0\cap\Omega^k(\G)$. This can be quickly seen using the equality $\left(d\Pi_E\alpha\right)_{r}=\partial_r\alpha-d_0d_0^{-1}\partial_r\alpha$ for each $r\in\N$.
    
    If $j_1=1$, then by Lemma \ref{lem: d_0 of partial_r} we directly have that $d_1\alpha\in\ker d_0\cap\Omega^k(\G)$. For $j_1>1$, assuming $\left(d\Pi_E\alpha\right)_{p+l}=0$ for each $l=1,\ldots,j_1-1$ means that $\partial_l\alpha-d_0d_0^{-1}\partial_l\alpha=0$, and so Lemma \ref{lem: d_0 of partial_r} gives
    \begin{align*}
        d_0\partial_{j_1}\alpha=-\sum_{l=1}^{j_1-1}d_{j_1-l}\left(\partial_l-d_0d_0^{-1}\partial_l\right)\alpha=0\ \Longrightarrow\ \partial_{j_1}\alpha\in\ker d_0\,.
    \end{align*}

    Given $\alpha\in E_0^{k-1}$ of homogeneous weight $p$, assuming that
    \begin{align*}
        d\Pi_E\alpha=\left(d\Pi_E\alpha\right)_{p+j_1}+\left(d\Pi_E\alpha\right)_{p+j_2}+\cdots+\left(d\Pi_E\alpha\right)_{p+j_s}\in\bigoplus_{i=1}^s\Omega^{p+j_i,k-p-j_i}(\G)\,.
    \end{align*}implies that the non-trivial component of lowest weight is $\left(d\Pi_E\alpha\right)_{p+j_1}$, that is $\left(d\Pi_E\alpha\right)_{p+l}=0$ for every $l=1,\ldots,j_1$. This might either happen because $\ker\Box_0\cap\Omega^{p+l,k-p-l}(\G)=0$, $d_c^l\alpha=0$, or a combination of the two, for each $l=1,\ldots,j_1-1$.

    By the previous computations, we get $\partial_{j_1}\alpha\in\ker d_0\cap\Omega^k(\G)$, and so $\left(d\Pi_E\alpha\right)_{p+j_1}=\partial_{j_1}\alpha-d_0d_0^{-1}\partial_{j_1}\alpha\in\ker\Box_0\cap\Omega^k(\G)$. 
    
    As a side note, once we know that $\left(d\Pi_E\alpha\right)_{p+j_1}\neq 0$, there is no way of controlling whether $(d\Pi_E\alpha)_{p+i}$ belongs to $\ker\Box_0\cap\Omega^k(\G)$ for higher weights. Indeed, in general, $(d\Pi_E\alpha)_{p+i}\in\big(\ker\Box_0\oplus\operatorname{Im}\delta_0\big)\cap\Omega^k(\G)$ for any $i>j_1$ (see Subsection \ref{subsection: rumin on HtimesR} for an explicit example where this holds true).
\end{proof}

\section{Re-expressing the Rumin complex in view of Stokes' theorem}\label{section: Rumin leibniz}

In view of Theorem \ref{thm: stokes on R manifolds}, it is possible to reinterpret the Rumin complex, and in particular the action of the Rumin differential, in terms of Stokes' theorem. The aim of this section is to show that the Rumin complex can be recovered from the de Rham complex solely through the Leibniz rule for the exterior derivative together with integration along suitable manifolds, such as \(R\)-manifolds.

A key feature of the Heisenberg groups is that no choice of ``special representatives'' in \(\operatorname{Im}\delta_0\) is required when applying the Leibniz rule. This highlights the naturality of the Rumin complex in this setting (see Subsection \ref{subsection: Rumin complex Leibniz Hn}). By contrast, already in the case of \(\H^1\times\R\) (see Subsection \ref{subsection: rumin on HtimesR}), recovering the Rumin complex from the Leibniz rule and integration does require choosing representatives in \(\operatorname{Im}\delta_0\). Consequently, the construction depends explicitly on the choice of scalar product introduced in Subsection \ref{subsection: introducing a scalar product}.

The general form of this construction on positively graded groups is stated in Proposition \ref{prop: d_c from Leibniz}.

\subsection{The Rumin complex on $\H^n$}\label{subsection: Rumin complex Leibniz Hn} Let us start by considering the 3-dimensional Heisenberg group, denoting by $(X,Y,T)$ the orthonormal basis of the Lie algebra $\mathfrak{h}^1$ with non-trivial bracket relation $[X,Y]=T$ and by $(dx,dy,\tau)$ its dual basis. It is well-known that $\H^1$ is stratifiable with a 2-step stratification $\mathfrak{h}^1=V_1\oplus V_2$ given by $V_1=\operatorname{span}_\R\{X,Y\}$ and $V_2=\operatorname{span}_\R\{T\}$. Let us study the space of smooth forms $\Omega^\bullet(\H^1)$ in terms of the Hodge decomposition \eqref{eq: hodge decompo Box_0} in each degree
\begin{itemize}
    \item $\Omega^0(\H^1)=C^\infty(\H^1)=\ker\Box_0\cap\Omega^0(\H^1)$;
    \item $\Omega^1(\H^1)=(\ker\Box_0\oplus\operatorname{Im}\delta_0)\cap\Omega^1(\H^1)$, where $\ker\Box_0\cap\Omega^1(\H^1)=\Omega^{1,0}(\H^1)=\operatorname{span}_{C^\infty(\H^1)}\{dx,dy\}$, $\operatorname{Im}\delta_0\cap\Omega^1(\H^1)=\Omega^{2,-1}(\H^1)=\operatorname{span}_{C^\infty(\H^1)}\{\tau\}$, and $\operatorname{Im}d_0\cap\Omega^1(\H^1)=0$;
    \item $\Omega^{2}(\H^1)=(\ker\Box_0\cap\operatorname{Im}d_0)\cap\Omega^2(\H^1)$, where $\ker\Box_0\cap\Omega^2(\H^1)=\Omega^{3,-1}(\H^1)=\operatorname{span}_{C^\infty(\H^1)}\{dx\wedge\tau,dy\wedge\tau\}$, $\operatorname{Im}d_0\cap\Omega^2(\H^1)=\Omega^{2,0}(\H^1)=\operatorname{span}_{C^\infty(\H^1)}\{dx\wedge dy\}$, and $\operatorname{Im}\delta_0\cap\Omega^2(\H^1)=0$;
    \item $\Omega^3(\H^1)=\Omega^{4,-1}(\H^1)=\ker\Box_0\cap\Omega^3(\H^1)=\operatorname{span}_{C^\infty(\H^1)}\{dx\wedge
     dy\wedge\tau\}$.
\end{itemize}

Let us consider an arbitrary Rumin 1-form $\alpha=f_1dx+f_2dy\in E_0^1$, then
\begin{align*}
    d\alpha=&(Xf_2-Yf_1)dx\wedge dy-Tf_1dx\wedge\tau-Tf_2dy\wedge\tau=-(Xf_2-Yf_1)d\tau-Tf_1dx\wedge\tau-Tf_2dy\wedge\tau\\=&-d\big[(Xf_2-Yf_1)\tau\big]+d(Xf_2-Yf_1)\wedge \tau-Tf_1 dx\wedge\tau-Tf_2dy\wedge
    \tau\\=&-d\big[(Xf_2-Yf_1)\tau\big]+\big[X(Xf_2-Yf_1)-Tf_1\big]dx\wedge\tau+\big[Y(Xf_2-Yf_1)-Tf_2\big]dy\wedge
    \tau
\end{align*}
where we only used that fact that $dx\wedge dy\in\operatorname{Im}d_0$ and $$(Xf_2-Yf_1)dx\wedge dy=-d_0\big[(Xf_2-Yf_1)\tau\big]=-(Xf_2-Yf_1)d_0\tau=-(Xf_2-Yf_1)d\tau\,.$$ It is therefore sufficient to use the Leibniz rule, namely $gd\tau=d(g\tau)-dg\wedge\tau$ for any $g\in C^\infty(\H^1)$.

Rearranging the final formula, we get
\begin{align*}
    d\big[f_1 dx+f_2dy+(Xf_2-Yf_1)\tau\big]=\big[X(Xf_2-Yf_1)-Tf_1\big]dx\wedge\tau+\big[Y(Xf_2-Yf_1)-Tf_2\big]dy\wedge
    \tau\,.
\end{align*}
This can easily be re-expressed in terms of the Rumin differential, since by the Leibniz rule we have
\begin{align*}
    gd\tau=&d(g\tau)-dg\wedge \tau=d\big[gd_0^{-1}d\tau\big]-dg\wedge d_0^{-1}d\tau=d\big[d_0^{-1}(gd\tau)\big]-(d-d_0)d_0^{-1}(gd\tau)
\end{align*}
and so
\begin{align*}
    d\alpha=\underbrace{(Xf_2-Yf_1)dx\wedge dy}_{d_1\alpha}\underbrace{-Tf_1dx\wedge\tau-Tf_2dy\wedge\tau}_{d_2\alpha}=d\big[d_0^{-1}d_1\alpha\big]-(d-d_0)d_0^{-1}d_1\alpha+d_2\alpha\,,
\end{align*}
that is
\begin{align*}
    d\big[\underbrace{\alpha-d_0^{-1}d_1\alpha}_{\Pi_E\alpha}\big]=d_2\alpha-d_1d_0^{-1}d_1\alpha \longleftrightarrow d\Pi_E\alpha=d_c\alpha\,.
\end{align*}

In other words, up to an exact form \(d\eta\) with \(\eta\in\operatorname{Im}\delta_0\subset\Omega^1(\H^1)\), the Rumin differential \(d_c\) coincides with the exterior derivative \(d\). Therefore, when integrating over an \(R\)-manifold \(\Sigma\) whose boundary \(\partial\Sigma\) is also an \(R\)-manifold, Theorem \ref{thm: stokes on R manifolds} follows directly from the classical Stokes' theorem for the exterior derivative \(d\).

The same phenomenon holds more generally on any \(\H^n\); see Proposition \ref{prop: stokes on heisenberg groups} for a detailed discussion of the validity of Stokes' theorem in \(\H^n\).

It is important to note that, in some situations, it is essential that the initial form \(\alpha\in E_0^{k-1}\) be a Rumin form. To illustrate this phenomenon, it suffices to consider the case of \(\H^2\), although analogous considerations extend readily to every \(\H^n\) with \(n\geq 1\). Let us denote by $(X_1,X_2,Y_1,Y_2,T)$ the orthonormal basis of the 5-dimensional Lie algebra $\mathfrak{h}^2$ with non-trivial bracket relations $[X_1,Y_1]=[X_2,Y_2]=T$ and by $(dx_1,dx_2,dy_1,dy_2,\tau)$ its dual basis. Then $\H^2$ is stratifiable with a 2-step stratification $\mathfrak{h}^2=V_1\oplus V_2$ given by $V_1=\operatorname{span}_\R\{X_1,X_2,Y_1,Y_2\}$ and $V_2=\operatorname{span}_\R\{T\}$. Then the direct sum decomposition \eqref{eq: decomposition of d} in this case is as follows:
\begin{itemize}
    \item $\Omega^0(\H^2)=C^\infty(\H^2)=\ker\Box_0\cap\Omega^0(\H^2)$;
    \item $\Omega^1(\H^2)=(\ker\Box_0\oplus\operatorname{Im}\delta_0)\cap\Omega^1(\H^2)$, where $\ker\Box_0\cap\Omega^1(\H^2)=\Omega^{1,0}(\H^2)=\operatorname{span}_{C^\infty(\H^2)}\{dx_1,dx_2,dy_1,dy_2\}$, $\operatorname{Im}\delta_0\cap\Omega^1(\H^2)=\Omega^{2,-1}(\H^2)=\operatorname{span}_{C^\infty(\H^2)}\{\tau\}$, and $\operatorname{Im}d_0\cap\Omega^1(\H^2)=0$;
    \item $\Omega^2(\H^2)=(\operatorname{Im}d_0\oplus\ker\Box_0\oplus\operatorname{Im}\delta_0)\cap\Omega^2(\H^2)$, where $\ker\Box_0\cap\Omega^{2}(\H^2)=\operatorname{span}_{C^\infty(\H^2)}\{dx_1\wedge dx_2,dx_1\wedge dy_2,dx_2\wedge dy_1, dy_1\wedge dy_2,dx_1\wedge dx_2-dy_1\wedge dy_2\}$, $\operatorname{Im}d_0\cap\Omega^2(\H^2)=\operatorname{span}_{C^\infty(\H^2)}\{d\tau\}\subset\Omega^{2,0}(\H^2)$, and $\operatorname{Im}\delta_0\cap\Omega^2(\H^2)=\Omega^{3,-1}(\H^2)=\operatorname{span}_{C^\infty(\H^2)}\{dx_1\wedge\tau,dx_2\wedge\tau,dy_1\wedge\tau,dy_2\wedge\tau\}$;
    \item $\Omega^3(\H^2)=(\operatorname{Im}d_0\oplus\ker\Box_0\oplus\operatorname{Im}\delta_0)\cap\Omega^3(\H^2)$, where $\ker\Box_0\cap\Omega^3(\H^2)=\operatorname{span}_{C^\infty(\H^2)}\{dx_1\wedge dx_2\wedge\tau, dx_1\wedge dy_2\wedge\tau, dx_2\wedge dy_1\wedge\tau, dy_1\wedge dy_2\wedge\tau, (dx_1\wedge dy_1-dx_2\wedge dy_2)\wedge\tau\}\subset\Omega^{4,-1}(\H^2)$, $\operatorname{Im}\delta_0\cap\Omega^3(\H^2)=\operatorname{span}_{C^\infty(\H^2)}\{d\tau\wedge\tau\}$, and $\operatorname{Im}d_0\cap\Omega^3(\H^2)=\Omega^{3,0}(\H^2)=\operatorname{span}_{C^\infty(\H^2)}\{dx_1\wedge dx_2\wedge dy_1,dx_1\wedge dx_2\wedge dy_2,dx_1\wedge dy_1\wedge dy_2, dx_2\wedge dy_1\wedge dy_2\}$;
    \item $\Omega^4(\H^2)=(\ker\Box_0\oplus\operatorname{Im}d_0)\cap\Omega^4(\H^2)$, where $\ker\Box_0\cap\Omega^4(\H^2)=\Omega^{5,-1}(\H^2)=\operatorname{span}_{C^\infty(\H^2)}\{dx_1\wedge dx_2\wedge dy_1\wedge\tau, dx_1\wedge dx_2\wedge dy_2\wedge\tau, dx_1\wedge dy_1\wedge dy_2\wedge\tau,dx_2\wedge dy_1\wedge dy_2\wedge\tau\}$, $\operatorname{Im}d_0\cap\Omega^4(\H^2)=\Omega^{4,0}(\H^2)=\operatorname{span}_{C^\infty(\H^2)}\{dx_1\wedge dx_2\wedge dy_1\wedge dy_2\}$, and $\operatorname{Im}\delta_0\cap\Omega^4(\H^2)=0$;
    \item $\Omega^5(\H^2)=\ker\Box_0\cap\Omega^5(\H^2)=\Omega^{6,-1}(\H^2)=\operatorname{span}_{C^\infty(\H^2)}\{dx_1\wedge dx_2\wedge dy_1\wedge dy_2\wedge\tau\}$.
\end{itemize}

Let us consider an arbitrary horizontal \(2\)-form
\[
\alpha=fd\tau+\overline{\alpha}\in\ker d_0\ \text{ where }\ \overline{\alpha}=\Pi_0\alpha\in E_0^2\,.
\]Using reasoning analogous to the one above, we obtain
\begin{align*}
    d\alpha
    &=d(fd\tau)+d\overline{\alpha}=d\big[d(f\tau)-df\wedge\tau\big]
      +d\big[d_0^{-1}d_1\overline{\alpha}\big]
      +d_c\overline{\alpha}=d\big(d_0^{-1}d_1\overline{\alpha}-df\wedge\tau\big)
      +d_c\overline{\alpha}\\
    &\longleftrightarrow
    d\Big(\alpha-\underbrace{d_0^{-1}d_1\overline{\alpha}+df\wedge\tau}_{\in\operatorname{Im}\delta_0}\Big)
    =d_c\overline{\alpha}\,.
\end{align*}

The crucial point is that the correction term belongs to \(\operatorname{Im}\delta_0\). Consequently, when integrating over an \(R\)-manifold \(\Sigma\) whose boundary \(\partial\Sigma\) is also an \(R\)-manifold, this term does not contribute and we obtain
\begin{align*}
    \int_{\partial\Sigma}\alpha
    =\int_{\partial\Sigma}(fd\tau+\overline{\alpha})
    =\int_\Sigma d_c\overline{\alpha}
    =\int_{\partial\Sigma}\overline{\alpha}.
\end{align*}

Consider now an arbitrary \(3\)-form $\alpha\in\ker d_0\cap\Omega^3(\H^2)$,
for instance
\[
\alpha=fdx_1\wedge d\tau+\overline{\alpha}\ \text{ where }\
\overline{\alpha}=\Pi_0\alpha\in E_0^3\subset\Omega^{4,-1}(\H^2)\ \text{ and }\
fdx_1\wedge d\tau\in\operatorname{Im}d_0\subset\Omega^{3,0}(\H^2).
\]
Then
\begin{align*}
    d\alpha
    &=d(fdx_1\wedge d\tau)+d\overline{\alpha}=d\big[d(fdx_1\wedge\tau)-df\wedge dx_1\wedge\tau\big]
      +d_c\overline{\alpha}=d\big(df\wedge dx_1\wedge\tau\big)
      +d_c\overline{\alpha}\\
    &\longleftrightarrow
    d\Big(\alpha+\underbrace{df\wedge dx_1\wedge\tau}_{\notin\operatorname{Im}\delta_0}\Big)
    =d_c\overline{\alpha}\,.
\end{align*}

In this case, the previous argument breaks down since, in general,
$df\wedge dx_1\wedge\tau\in\Omega^{4,-1}(\H^2)$
does not belong to \(\operatorname{Im}\delta_0\). Hence its integral over \(\partial\Sigma\) does not vanish in general. Therefore, for an arbitrary \(R\)-manifold \(\Sigma\) whose boundary \(\partial\Sigma\) is also an \(R\)-manifold, one obtains
\[
\int_{\partial\Sigma}\alpha
-\int_{\partial\Sigma}df\wedge dx_1\wedge\tau
=\int_{\Sigma}d_c\overline{\alpha}
=\int_{\partial\Sigma}\overline{\alpha}
\neq
\int_{\partial\Sigma}\alpha\,.
\]

In this sense, when studying Stokes' theorem on \(\H^2\), it becomes necessary to restrict to Rumin forms when considering degrees at least 3.

The same reasoning applies more generally on \(\H^n\). For \(k\leq n+1\), given an arbitrary form
\[
\alpha\in\ker d_0\cap\Omega^{k-1}(\H^n)\ \text{ with }\
\alpha=\Pi_0\alpha+\operatorname{Im}d_0,
\]
one obtains
\[
\int_{\partial\Sigma}\alpha
=
\int_{\Sigma}d_c\Pi_0\alpha
=
\int_{\partial\Sigma}\Pi_0\alpha,
\]
provided that both \(\Sigma\) and \(\partial\Sigma\) are \(R\)-manifolds.

Equivalently, up to an exact form whose primitive belongs to \(\operatorname{Im}\delta_0\), the exterior derivative of $\alpha\in\ker d_0\cap\Omega^{k-1}(\H^n)$
coincides with the Rumin differential \(d_c\) of $\Pi_0\alpha\in\ker\Box_0\cap\Omega^{k-1}(\H^n)$.

By contrast, when \(k>n+1\), one must restrict to Rumin forms
\[
\alpha\in\ker\Box_0\cap\Omega^{k-1}(\H^n)
\]
in order to obtain Stokes' theorem for \(d_c\).

\subsection{The Rumin complex on $\mathbb H^1\times\R$}\label{subsection: rumin on HtimesR}Let us consider the 4-dimensional connected, simply-connected nilpotent Lie group $\mathbb H^1\times\R$, whose Lie algebra $\mathfrak{h}^1\times\R$ is spanned by the orthonormal basis $(X_1,X_2,X_3,X_4)$ with only one non-trivial Lie bracket $[X_1,X_2]=X_4$. For completeness, we also recall the non-commutative group law $(\mathbb H^1\times\R,\ast)$, together with the corresponding left-invariant vector fields and 1-forms (see also \cite{LeDonneTripaldi2021}). Given $x=(x_1,x_2,x_3,x_4)$ and $y=(y_1,y_2,y_3,y_4)\in\mathbb H^1\times\R$, one has
\begin{align*}
    (x_1,x_2,x_3,x_4)\ast(y_1,y_2,y_3,y_4)=\big(x_1+y_1,x_2+y_2,x_3+y_3,x_4+y_4+\tfrac{x_1y_2-x_2y_1}{2}\big)
\end{align*}
and
\begin{align*}
    X_1=&\partial_1-\frac{x_2}{2}\partial_4\ ,\ X_2=\partial_2+\frac{x_1}{2}\partial_4\ ,\ X_3=\partial_3\ ,\ X_4=\partial_4\\
    \theta_1=dx_1&\ ,\ \theta_2=dx_2\ ,\ \theta_3=dx_3\ ,\ \theta_4=dx_4-\frac{x_1}{2}dx_2+\frac{x_2}{2}dx_1\,.
\end{align*}
Using the fact that $d\theta_1=d\theta_2=d\theta_3=0$, while $d\theta_4=-\theta_1\wedge\theta_2$, one readily obtains an explicit description of the space of smooth forms through the direct sum decomposition \eqref{eq: hodge decompo Box_0} in each degree. To simplify the otherwise cumbersome notation, we will denote the group $\H^1\times\R$ simply by $\G$ thoughout the remainder of this subsection.
\begin{itemize}
    \item $\Omega^0(\G)=C^\infty(\G)=\ker\Box_0\cap\Omega^0(\G)$;
    \item $\Omega^1(\G)=(\ker\Box_0\oplus \operatorname{Im}\delta_0)\cap\Omega^1(\G)$, where $\ker\Box_0\cap\Omega^1(\G)=\Omega^{1,0}(\G)=\operatorname{span}_{C^\infty(\G)}\{\theta_1,\theta_2,\theta_3\}$, $\operatorname{Im}\delta_0\cap\Omega^1(\G)=\operatorname{span}_{C^\infty(\G)}\{\theta_4\}$, and $\operatorname{Im}d_0\cap\Omega^1(\G)=0$;
    \item $\Omega^2(\G)=(\operatorname{Im}d_0\oplus\ker\Box_0\oplus\operatorname{Im}\delta_0)\cap\Omega^2(\G)$, where $\ker\Box_0\cap\Omega^{2}(\G)=\operatorname{span}_{C^\infty(\G)}\{\theta_1\wedge\theta_3,\theta_2\wedge\theta_3,\theta_1\wedge\theta_4,\theta_2\wedge\theta_4\}\subset\Omega^{2,0}(\G)\oplus\Omega^{3,-1}(\G)$, $\operatorname{Im}d_0\cap\Omega^2(\G)=\operatorname{span}_{C^\infty(\G)}\{\theta_1\wedge\theta_2\}\subset\Omega^{2,0}(\G)$, and $\operatorname{Im}\delta_0\cap\Omega^2(\G)=\operatorname{span}_{C^\infty(\G)}\{\theta_3\wedge\theta_4\}\subset\Omega^{3,-1}(\G)$;
    \item $\Omega^3(\G)=(\ker\Box_0\oplus\operatorname{Im}d_0)\cap\Omega^3(\G)$, where $\ker\Box_0\cap\Omega^3(\G)=\Omega^{4,-1}(\G)=\operatorname{span}_{C^\infty(\G)}\{\theta_1\wedge\theta_2\wedge\theta_4,\theta_1\wedge\theta_3\wedge\theta_4,\theta_2\wedge\theta_3\wedge\theta_4\}$, $\operatorname{Im}d_0\cap\Omega^3(\G)=\Omega^{3,0}(\G)=\operatorname{span}_{C^\infty(\G)}\{\theta_1\wedge\theta_2\wedge\theta_3\}$, and $\operatorname{Im}\delta_0\cap\Omega^3(\G)=0$;
    \item $\Omega^4(\G)=\ker\Box_0\cap\Omega^4(\G)=\Omega^{5,-1}(\G)=\operatorname{span}_{C^\infty(\G)}\{\theta_1\wedge\theta_2\wedge\theta_3\wedge\theta_4\}$.
\end{itemize}
Similarly to what was done in the case of $\H^n$, let us first consider an arbitrary horizontal 1-form $\alpha=f_1\theta_1+f_2\theta_2+f_3\theta_3\in\ker\Box_0\cap\Omega^1(\G)$:
\begin{align*}
    d\alpha=&(X_1f_2-X_2f_1)\theta_1\wedge\theta_2+(X_1f_3-X_3f_1)\theta_1\wedge\theta_3+(X_2f_3-X_3f_2)\theta_2\wedge\theta_3+\\&-X_4f_1\theta_1\wedge\theta_4-X_4f_2\theta_2\wedge\theta_4-X_4f_3\theta_3\wedge\theta_4\\=&-(X_1f_2-X_2f_1)d\theta_4+(X_1f_3-X_3f_1)\theta_1\wedge\theta_3+(X_2f_3-X_3f_2)\theta_2\wedge\theta_3+\\&-X_4f_1\theta_1\wedge\theta_4-X_4f_2\theta_2\wedge\theta_4-X_4f_3\theta_3\wedge\theta_4\\=&-d\big[(X_1f_2-X_2f_1)\theta_4\big]+d(X_1f_2-X_2f_1)\wedge\theta_4+(X_1f_3-X_3f_1)\theta_1\wedge\theta_3+(X_2f_3-X_3f_2)\theta_2\wedge\theta_3+\\&-X_4f_1\theta_1\wedge\theta_4-X_4f_2\theta_2\wedge\theta_4-X_4f_3\theta_3\wedge\theta_4\\=&-d\big[(X_1f_2-X_2f_1)\theta_4\big]+(X_1f_3-X_3f_1)\theta_1\wedge\theta_3+(X_2f_3-X_3f_2)\theta_2\wedge\theta_3+\\&+\big[X_1(X_1f_2-X_2f_1)-X_4f_1\big]\theta_1\wedge\theta_4+\big[X_2(X_1f_2-X_2f_1)-X_4f_2\big]\theta_2\wedge\theta_4+\\&+\big[X_3(X_1f_2-X_2f_1)-X_4f_3\big]\theta_3\wedge\theta_4
\end{align*}
so that by rearranging the expression we get
\begin{align*}
    d\big[\alpha+\underbrace{(X_1f_2-X_2f_1)\theta_4}_{\operatorname{Im}\delta_0}\big]=d\big[\underbrace{\alpha-d_0^{-1}d_1\alpha}_{\Pi_E\alpha}\big]=&d_1\alpha-d_0d_0^{-1}d_1\alpha+(d_2-d_1d_0^{-1}d_1)\alpha\\=&d_c^1\alpha+d_c^2\alpha+\underbrace{\big[X_3(X_1f_2-X_2f_1)-X_4f_3\big]\theta_3\wedge\theta_4}_{\operatorname{Im}\delta_0}\\=&d_c\alpha+\underbrace{\big[X_3(X_1f_2-X_2f_1)-X_4f_3\big]\theta_3\wedge\theta_4}_{\operatorname{Im}\delta_0}\,.
\end{align*}
This identity is not as straightforward as the one obtained for the Rumin differential on \(\H^n\). In the present setting, the exterior derivative of \(\alpha\) does not coincide with the Rumin differential \(d_c\alpha\) modulo an exact form whose primitive belongs to \(\operatorname{Im}\delta_0\). Instead, one has
\[
d\alpha=d_c\alpha+\zeta \ \mod d(\operatorname{Im}\delta_0)\ ,\ \text{ where }
\zeta\in\operatorname{Im}\delta_0\,.
\]
Nevertheless, similarly to the previous situation, both correction terms vanish after integrating on \(R\)-manifolds. Consequently, provided that both \(\Sigma\) and \(\partial\Sigma\) are \(R\)-manifolds, the Stokes' theorem of Theorem \ref{thm: stokes on R manifolds} again follows directly from the classical Stokes' theorem for the exterior derivative \(d\).

Interestingly, the situation becomes even more delicate when dealing with \(2\)-forms. Let us consider a horizontal Rumin \(2\)-form
\[
\alpha=f_1\theta_1\wedge\theta_3+f_2\theta_2\wedge\theta_3
\in\ker\Box_0\cap\Omega^{2,0}(\G).
\]
A direct computation gives
\begin{align*}
    d\alpha
    ={}&(X_1f_2-X_2f_1)\theta_1\wedge\theta_2\wedge\theta_3+X_4f_1\theta_1\wedge\theta_3\wedge\theta_4
    +X_4f_2\theta_2\wedge\theta_3\wedge\theta_4\,.
\end{align*}
Let us focus on the term $\theta_1\wedge\theta_2\wedge\theta_3\in\operatorname{Im}d_0\cap\Omega^3(\G)$.
Since $
d_0^{-1}(\theta_1\wedge\theta_2\wedge\theta_3)
=
\theta_3\wedge\theta_4
\in\operatorname{Im}\delta_0\cap\Omega^2(\G)$,
one could repeat the same reasoning as before and write
\[
(X_1f_2-X_2f_1)d(\theta_3\wedge\theta_4)
=
(X_1f_2-X_2f_1)\theta_1\wedge\theta_2\wedge\theta_3\,,
\] before applying the Leibniz rule to recover the expression of the Rumin differential up to the exact form
\[
d\big[(X_1f_2-X_2f_1)\theta_3\wedge\theta_4\big].
\]

However, unlike the case of \(\H^n\), there is no canonical reason to choose the primitive belonging to \(\operatorname{Im}\delta_0\). Indeed, for every \(c_1,c_2\in\mathbb R\),
\begin{align*}
    \theta_1\wedge\theta_2\wedge\theta_3
    &=
    d(\theta_3\wedge\theta_4)=
    d(c_1\theta_1\wedge\theta_4
    +c_2\theta_2\wedge\theta_4
    +\theta_3\wedge\theta_4)
\end{align*} since $\theta_1\wedge\theta_4,\,
\theta_2\wedge\theta_4
\in
\ker\Box_0\cap\Omega^{3,-1}(\G)$.

Notice that choosing an arbitrary horizontal \(2\)-form would generate a recursive chain of corrections, which is precisely the obstruction avoided by the definition of the projection \(\Pi_E\). By contrast, choosing a primitive in \(\ker d_0\cap\Omega^{3,-1}(\G)\) is perfectly admissible and leads to genuinely different expressions.

Indeed, proceeding in this way yields
\begin{align*}
    d\alpha
    ={}&
    (X_1f_2-X_2f_1)
    d(c_1\theta_1\wedge\theta_4
    +c_2\theta_2\wedge\theta_4
    +\theta_3\wedge\theta_4)+X_4f_1\theta_1\wedge\theta_3\wedge\theta_4
    +X_4f_2\theta_2\wedge\theta_3\wedge\theta_4\\
    ={}&
    d\Big[(X_1f_2-X_2f_1)
    (c_1\theta_1\wedge\theta_4
    +c_2\theta_2\wedge\theta_4
    +\theta_3\wedge\theta_4)\Big]\\
    &-d(X_1f_2-X_2f_1)
    \wedge
    (c_1\theta_1\wedge\theta_4
    +c_2\theta_2\wedge\theta_4
    +\theta_3\wedge\theta_4)+X_4f_1\theta_1\wedge\theta_3\wedge\theta_4
    +X_4f_2\theta_2\wedge\theta_3\wedge\theta_4\\
    ={}&
    d\Big[(X_1f_2-X_2f_1)
    (c_1\theta_1\wedge\theta_4
    +c_2\theta_2\wedge\theta_4
    +\theta_3\wedge\theta_4)\Big]\\
    &+\big[c_1X_2(X_1f_2-X_2f_1)
    -c_2X_1(X_1f_2-X_2f_1)\big]
    \theta_1\wedge\theta_2\wedge\theta_4\\
    &+\big[X_4f_1
    -c_1X_3(X_1f_2-X_2f_1)
    +X_1(X_1f_2-X_2f_1)\big]
    \theta_1\wedge\theta_3\wedge\theta_4\\
    &+\big[X_4f_2
    -c_2X_3(X_1f_2-X_2f_1)
    +X_2(X_1f_2-X_2f_1)\big]
    \theta_2\wedge\theta_3\wedge\theta_4\,.
\end{align*}

Rewriting the previous identity in terms of the projected representative \(\Pi_E\alpha\), we obtain
\begin{align*}
    d\Big[\Pi_E\alpha
    +(X_1f_2-X_2f_1)&
    \underbrace{(c_1\theta_1\wedge\theta_4
    +c_2\theta_2\wedge\theta_4)}_{\ker\Box_0\cap\Omega^{3,-1}(\G)}
    \Big]
    ={}
    d_c\alpha\\
    &+\big[c_1X_2(X_1f_2-X_2f_1)
    -c_2X_1(X_1f_2-X_2f_1)\big]
    \theta_1\wedge\theta_2\wedge\theta_4\\
    &-c_1X_3(X_1f_2-X_2f_1)
    \theta_1\wedge\theta_3\wedge\theta_4-c_2X_3(X_1f_2-X_2f_1)
    \theta_2\wedge\theta_3\wedge\theta_4\,.
\end{align*}

In particular, the resulting expression depends explicitly on the choice of primitive for the \(\operatorname{Im}d_0\)-component. This phenomenon sharply contrasts with the situation in \(\H^n\), where the Leibniz rule canonically recovers the Rumin differential.

In other words, even if both \(\Sigma\) and \(\partial\Sigma\) are \(R\)-manifolds, integrating the previous identity gives
\begin{align*}
    \int_{\partial\Sigma}\alpha
    &+
    \int_{\partial\Sigma}(X_1f_2-X_2f_1)
    (c_1\theta_1\wedge\theta_4+c_2\theta_2\wedge\theta_4)\\
    &=
    \int_\Sigma d_c\alpha+
    \int_\Sigma
    \big[c_1X_2(X_1f_2-X_2f_1)
    -c_2X_1(X_1f_2-X_2f_1)\big]
    \theta_1\wedge\theta_2\wedge\theta_4\\
    &\quad-
    \int_{\Sigma}
    c_1X_3(X_1f_2-X_2f_1)
    \theta_1\wedge\theta_3\wedge\theta_4-
    \int_{\Sigma}
    c_2X_3(X_1f_2-X_2f_1)
    \theta_2\wedge\theta_3\wedge\theta_4 .
\end{align*}
Therefore, in general, the Stokes' theorem of Theorem \ref{thm: stokes on R manifolds} is recovered only for the distinguished choice \(c_1=c_2=0\), namely the choice of primitive lying in \(\operatorname{Im}\delta_0\).

Finally, as in the case of \(\H^n\) in degrees \(k>n\), one sees that it is necessary to start with a Rumin form
\[
\alpha\in\ker\Box_0\cap\Omega^{k-1}(\G)
\]
in each degree (see also Subsection \ref{subsection: intrinsic graphs vs R mfld on H1timesR}). For instance, let us consider an arbitrary \(2\)-form
\[
\alpha=f\,d\theta_4+\overline{\alpha}_2+\overline{\alpha}_3\in\ker d_0\cap\Omega^2(\G),
\]
where
\[
\overline{\alpha}_2=(\Pi_0\alpha)_2\in\ker\Box_0\cap\Omega^{2,0}(\G)\ \text{ and }\
\overline{\alpha}_3=(\Pi_0\alpha)_3\in\ker\Box_0\cap\Omega^{3,-1}(\G)\,.
\]
Then
\begin{align*}
    d\alpha
    &=
    d(f\,d\theta_4)+d\overline{\alpha}_2+d\overline{\alpha}_3=
    d\big[d(f\theta_4)-df\wedge\theta_4\big]
    +d\big[d_0^{-1}d_1\overline{\alpha}_2\big]
    +d_c^2\overline{\alpha}_2
    +d_c^1\overline{\alpha}_3 .
\end{align*}
Equivalently,
\begin{align*}
    d\Big[
    \alpha
    &-
    \underbrace{d_0^{-1}d_1\overline{\alpha}_2
    +X_3f\,\theta_3\wedge\theta_4}_{\operatorname{Im}\delta_0}
    +
    \underbrace{X_1f\,\theta_1\wedge\theta_4
    +X_2f\,\theta_2\wedge\theta_4}_{\ker\Box_0}
    \Big]
    =
    d_c\overline{\alpha},
\end{align*}
where \(\overline{\alpha}=\Pi_0\alpha=\overline{\alpha}_2+\overline{\alpha}_3\).

Thus, even if both \(\Sigma\) and \(\partial\Sigma\) are \(R\)-manifolds, the \(\ker\Box_0\)-component does not disappear in general, and
\[
    \int_{\partial\Sigma}\alpha
    +
    \int_{\partial\Sigma}\Pi_0(df\wedge\theta_4)
    =
    \int_\Sigma d_c\overline{\alpha}
    =
    \int_{\partial\Sigma}\overline{\alpha}
    \neq
    \int_{\partial\Sigma}\alpha \,.
\]
This shows that, in order to recover Stokes' theorem for the Rumin differential, one must restrict to Rumin forms rather than arbitrary \(d_0\)-closed representatives.

\subsection{The Rumin differential from the Leibniz rule on arbitrary Carnot groups}

The computations carried out in the previous subsections for \(\H^n\) and \(\H^1\times\R\) show that the Rumin differential can be recovered recursively from the classical exterior derivative by repeatedly applying the Leibniz rule and separating the resulting terms according to the Hodge decomposition \eqref{eq: hodge decompo Box_0} induced by \(d_0\), \(\delta_0\), and \(\Box_0\). 

In the case of Heisenberg groups, this procedure is canonical and naturally reproduces the structure of the Rumin complex. By contrast, the example of \(\H^1\times\R\) highlights that, on more general positively graded groups, the reconstruction process may depend on the choice of primitives for the \(\operatorname{Im}d_0\)-components, and that the role of the projection onto \(\ker\Box_0\) becomes essential.

The aim of this subsection is to formalise this phenomenon in full generality. More precisely, we show that, starting from a Rumin form \(\alpha\in\ker\Box_0\cap\Omega^{k-1}(\G)\), the whole expression of the Rumin differential \(d_c\alpha\) can be reconstructed recursively from the Leibniz rule, modulo terms belonging to \(\operatorname{Im}\delta_0\).
\begin{proposition}[The Rumin complex from the Leibniz rule]\label{prop: d_c from Leibniz}
    Let $\G$ be a positively gradable Lie group and let $\alpha\in\ker\Box_0\cap\Omega^{k-1}(\G)$ be a Rumin form. Then one can retrieve the whole expression of the Rumin differential from applying the Leibniz rule recursively, up to forms in $\operatorname{Im}\delta_0$.
\end{proposition}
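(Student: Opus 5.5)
The plan is to make precise the recursive procedure used in the examples of $\H^n$ and $\H^1\times\R$, and to show by induction on the weight increase $r$ that it reproduces exactly the operators $\partial_r$ of Definition~\ref{def: partial operators}. The final identity to reach is
\begin{align*}
d\Big(\alpha-\sum_{r=1}^{N-1}d_0^{-1}\partial_r\alpha\Big)=d_c\alpha+d_0^{-1}d_0\sum_{r=1}^{N-1}\partial_r\alpha ,
\end{align*}
that is, $d\Pi_E\alpha=d_c\alpha$ modulo $\operatorname{Im}\delta_0$. The left-hand side is $d$ applied to $\alpha$ plus primitives in $\operatorname{Im}\delta_0$. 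The right-hand side is $d_c\alpha$ plus a term in $\operatorname{Im}\delta_0$, using $\operatorname{Im}d_0^{-1}=\operatorname{Im}\delta_0$.

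The single step of the procedure is the Leibniz rule in the form already used for $\H^1$. Let $\gamma$ be a form of pure weight whose $\operatorname{Im}d_0$-component is $d_0d_0^{-1}\gamma$. Since $d_0$ is $C^\infty(\G)$-linear and built from the left-invariant differential, writing $d_0^{-1}\gamma=\sum_j g_j\xi_j$ with $\xi_j\in\operatorname{Im}\delta_0$ left-invariant gives
\begin{align*}
d_0d_0^{-1}\gamma=\sum_j g_j\,d\xi_j=d\big(d_0^{-1}\gamma\big)-(d-d_0)d_0^{-1}\gamma .
\end{align*}
So the $\operatorname{Im}d_0$-part of $\gamma$ is traded for an exact form with primitive $d_0^{-1}\gamma\in\operatorname{Im}\delta_0$, minus new terms $-\sum_{i\ge1}d_id_0^{-1}\gamma$ of strictly higher weight. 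This is where the distinguished choice of representative enters: the primitive is taken in $\operatorname{Im}\delta_0$, as forced by the $\H^1\times\R$ example.

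Now I run the recursion. Start from $d\alpha=\sum_{i\ge1}d_i\alpha$, since $d_0\alpha=0$. At weight $p+1$ the component is $\partial_1\alpha=d_1\alpha$. Split it by Proposition~\ref{prop: Hodge decomposition for Box0}, keep $\partial_1\alpha-d_0d_0^{-1}\partial_1\alpha$, and apply the step to $\gamma=\partial_1\alpha$. This adds $-d_id_0^{-1}\partial_1\alpha$ in weight $p+1+i$. Inductively, suppose that after treating weights $p+1,\dots,p+r-1$ the weight $p+r$ component of the accumulated remainder is
\begin{align*}
d_r\alpha-\sum_{j=1}^{r-1}d_{r-j}d_0^{-1}\partial_j\alpha .
\end{align*}
This is exactly $\partial_r\alpha$ by \eqref{eq: partial maps}. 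Applying the step to it produces the weight $p+r+i$ contributions $-d_id_0^{-1}\partial_r\alpha$, which is precisely what the recursion for $\partial_{r+i}$ requires. The process terminates after $N-1$ steps by Lemma~\ref{lem: stricly increasing weights implies nilpotent}.

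Collecting terms gives $d(\alpha-\sum_r d_0^{-1}\partial_r\alpha)=\sum_r(\partial_r\alpha-d_0d_0^{-1}\partial_r\alpha)$. By the Hodge decomposition, each summand equals $d_c^r\alpha+d_0^{-1}d_0\partial_r\alpha$, where the second term lies in $\operatorname{Im}\delta_0$. This matches the computation in Lemma~\ref{lem: expressing d_c on E_0} and yields $d_c=\sum_r d_c^r$ up to $\operatorname{Im}\delta_0$.

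The main obstacle is the bookkeeping in the inductive step. One must check that, at each weight, the accumulated coefficient is exactly $\partial_r\alpha$, and not some variant containing $\operatorname{Im}\delta_0$- or $\ker\Box_0$-components that were already discarded. The point is that only the $\operatorname{Im}d_0$-part is ever fed back, through $d_0^{-1}$, and $d_0^{-1}$ annihilates $(\operatorname{Im}d_0)^\perp$. Hence feeding $\partial_j\alpha$ or its $\operatorname{Im}d_0$-projection gives the same result, and the recursion coincides with \eqref{eq: partial maps}.
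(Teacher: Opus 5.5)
Your argument is correct and is essentially the paper's proof: the same identity $d_0d_0^{-1}\xi=d(d_0^{-1}\xi)-(d-d_0)d_0^{-1}\xi$ is iterated until nilpotency stops it, and the result is identified with $d\Pi_E\alpha=d_c\alpha+d_0^{-1}d_0\sum_r\partial_r\alpha$. The only difference is bookkeeping: the paper feeds back the whole inhomogeneous remainder at each step, obtaining the series $\sum_i(-1)^i[d_0^{-1}(d-d_0)]^{i}$ and then matching it with \eqref{eq: inverse Id-b} and Lemma~\ref{lem: expressing d_c on E_0}, whereas you proceed one weight at a time and so recover the recursion \eqref{eq: partial maps} directly (this requires first reducing to homogeneous $\alpha$, which is harmless since $\Box_0$ preserves weights).
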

\begin{proof}
    Let $\alpha\in\ker\Box_0\cap\Omega^{k-1}(\G)$, then
    \begin{align*}
        d\alpha=&d_1\alpha+d_2\alpha+\cdots+d_{w_s}\alpha=d_0d_0^{-1}(d-d_0)\alpha+\Pi_0(d-d_0)\alpha+d_0^{-1}d_0(d-d_0)\alpha\\=&d\big[d_0^{-1}(d-d_0)\alpha\big]-(d-d_0)d_0^{-1}(d-d_0)\alpha+\Pi_0(d-d_0)\alpha+d_0^{-1}d_0(d-d_0)\alpha\\=&d\big[d_0^{-1}d\alpha\big]-d_0d_0^{-1}\big[(d-d_0)d_0^{-1}d\big]\alpha+\Pi_0\big[d\alpha-(d-d_0)d_0^{-1}d\alpha\big]+d_0^{-1}d_0\big[d\alpha-(d-d_0)d_0^{-1}d\alpha\big]\\=&d\big[d_0^{-1}d\alpha-d_0^{-1}(d-d_0)d_0^{-1}d\alpha\big]+(d-d_0)d_0^{-1}(d-d_0)d_0^{-1}d\alpha+\Pi_0\big[d\alpha-(d-d_0)d_0^{-1}d\alpha\big]+\\&+d_0^{-1}d_0\big[d\alpha-(d-d_0)d_0^{-1}d\alpha\big]=\cdots=\\=&d\big[\sum_{i=0}^N(-1)^i[d_0^{-1}(d-d_0)]^{i+1}\alpha\big]+\Pi_0(d-d_0)\big[\sum_{i=0}^N(-1)^i[d_0^{-1}(d-d_0)]^{i}\big]\alpha+\\&+d_0^{-1}d_0(d-d_0)\big[\sum_{i=0}^N(-1)^i[d_0^{-1}(d-d_0)]^{i}\big]\alpha
    \end{align*}
    where the fact that there exists $N\in\mathbb N$ such that $[d_0^{-1}(d-d_0)]^{N+1}=0$ is due to the fact that the range of possible weights of forms in each degree is finite. From the explicit expressions of $\Pi_E=\operatorname{Id}-\Pi$ as expressed in \eqref{eq: inverse Id-b}, we get that
    \begin{align*}
        \alpha-\sum_{i=0}^N(-1)^i[d_0^{-1}(d-d_0)]^{i+1}\alpha=\Pi_E\alpha
    \end{align*}
    while from Lemma \ref{lem: expressing d_c on E_0} we get that
    $$(d-d_0)\big[\sum_{i=0}^N(-1)^i[d_0^{-1}(d-d_0)]^{i}\big]\alpha=\sum_{i=1}^N\partial_i\alpha$$
    so that, once we rearrange the whole formula we get
    \begin{align*}
        d\big[\Pi_E\alpha\big]=&d\bigg[\alpha-\underbrace{\sum_{i=0}^N(-1)^i[d_0^{-1}(d-d_0)]^{i+1}\alpha}_{\operatorname{Im}\delta_0}\bigg]\\=&\Pi_0\bigg(\sum_{i=1}^N\partial_i\alpha\bigg)+d_0^{-1}d_0\bigg(\sum_{i=1}^N\partial_i\alpha\bigg)=d_c\alpha+\underbrace{d_0^{-1}d_0\bigg(\sum_{i=1}^N\partial_i\alpha\bigg)}_{\operatorname{Im}\delta_0}\,.
    \end{align*}
\end{proof}

\begin{remark}
    Observe that in the proof of Proposition \ref{prop: d_c from Leibniz} we crucially used the identity
    \begin{align*}
        d_0d_0^{-1}\xi
        =
        d\big[d_0^{-1}\xi\big]
        -(d-d_0)d_0^{-1}\xi\ \text{ for every }\xi\in\Omega^k(\G)\,.
    \end{align*}
    However, the choice of the primitive \(d_0^{-1}\xi\) is not unique. Indeed, for every \(\beta\in\ker d_0\), one also has
    \begin{align*}
        d_0d_0^{-1}\xi
        =
        d_0(\beta+d_0^{-1}\xi)
        =
        d\big(\beta+d_0^{-1}\xi\big)
        -(d-d_0)\big(\beta+d_0^{-1}\xi\big).
    \end{align*}

    Consequently, the recursive reconstruction of the exterior derivative in terms of the Rumin differential depends essentially on the distinguished choice of primitive given by \(d_0^{-1}\). In general, arbitrary choices of \(\beta\) may introduce additional terms for which the recursive procedure is no longer controlled, even if one assumes \(w(\beta)\geq w(\xi)\), which is the condition ensuring the nilpotency of the operator \(d_0^{-1}(d-d_0)\).
\end{remark}

\section{Intrinsic graphs}\label{section intrinsic graphs}
In this section, we present some properties of oriented integration on submanifolds, with a special emphasis on the integration of homogeneous differential forms on smooth intrinsic graphs. Oriented integration can also be represented using a Riemannian metric and the corresponding Riemannian surface measure. However, the idea of developing a theory of currents as duals of the Rumin complex leads one to consider a surface measure modelled on the weights of the complex, \cite{DiMJulNGoloVit25,Vit22,Can21jga,FSSC6,Mag06}. 
The spherical (or the Hausdorff) measure constructed from a homogeneous distance of the group naturally takes into account the weights of the Rumin forms. Actually, this was one of the motivations to find explicit area formulas for the spherical measure of submanifolds, that is strictly related to the degree (or weight) of the submanifold, as previously formulated in \cite{Mag12A}.

The notion of \textit{degree} as introduced in \cite{Mag13Vit} turns out to be crucial (see also \cite{Magnani2019Area,Mag22RS,CorMag25} for further developments). As we will see in Definition \ref{def: weight of manifold}, the notion of degree of a homogeneous $k$-vector is perfectly analogous to the notion of weight of a homogeneous differential form as presented in Definition \ref{def: weights of forms}. For this reason, the degree of a $k$-vector and its weight will be understood as being the same, and the two terms will be used interchangeably, as long as the terminology is clear from the context.

The aim of this section is to study the relationship between smooth intrinsic graphs and the $R$-manifolds introduced above. To this end, we first recall the definition of an intrinsic graph, its main properties and the notion of the degree of a submanifold. This will allow us to determine whether a given smooth intrinsic graph can be an $R$-manifold. 
To introduce intrinsic graphs, we need to refer to a specific factorisation of $\G$.

\begin{definition}[Complementary subgroups]\label{def: complementary subgroups} Given a positively graded group $\G$ together with two homogeneous subgroups $\W$ and $\V$ of $\G$, we say that $\W$ and $\V$ are {\em complementary subgroups} of $\G$ if
\begin{align*}
    \W\cap\V=\{e\}\ \text{ and }\ \G=\W\V.
\end{align*}
To emphasize the order of factorisation, we may say that $(\W,\V)$ is a {\em pair of complementary subgroups}.
\end{definition}
In equivalent terms, $\W$ and $\V$ are complementary subgroups if and only if for each $p\in\G$ there exists a unique pair $(w,v)\in\W\times\V$ such that $p=wv$. 
\begin{definition}[Group projections]\label{def: group proj} 
If $(\W,\V)$ is a pair of complementary subgroups, we know that for any $p\in\G$ there exists a unique pair $(w,v)\in\W\times\V$ such that $w v=g$. Consequently the following group projections are well-defined
\begin{align}
    \pi_\W\colon\G\to\W\ ,\ \pi_\W(w v)=w\ \text{ and }\  \pi_\V\colon\G\to\V\ ,\ \pi_\V(w v)=v.
\end{align}
\end{definition}
Notice that the order in the choice of $\W$ and $\V$ matters. Indeed, if $\W$ and $\V$ are complementary subgroups in $\G$, i.e. $\G=\W\V$, then it is also true that $\G=\V\W$. However the non-commutative group operation yields two different group projections. For this reason, indicating the ordered pair $(\W,\V)$ of complementary subgroups uniquely determines the group projections of Definition \ref{def: group proj}. 

We are now in the position to introduce the notion of intrinsic graph.
\begin{definition}[Intrinsic graphs]\label{def: intrinsic graphs} Let $(\W,\V)$ be a pair of complementary subgroups of $\G$, and let $A\subset\W$. We define the intrinsic graph of $\phi\colon A\to\V$ as the set
\begin{align*}
    \operatorname{graph}(\phi):=\{w\phi(w)\mid w\in A\}\subset\G\,.
\end{align*}
The \textit{graph map} $\Phi\colon A\to\G$ of $\phi$ is defined as $\Phi(w):=w\phi(w)$ for all $w\in A$.
\end{definition}
Intrinsic graphs are quite natural objects in the development of geometric measure theory of Carnot groups. They originally appeared in finding the proper version of the De Giorgi's rectifiability theorem in Heisenberg groups, \cite{FSSC01}. Subsequently, intrinsic graphs in Carnot groups have been thoroughly studied in \cite{FMS14,FranchiSerapioni}. The intrinsic regularity of an intrinsic graphs comes from the intrinsic differentiability of the defining mapping. Actually, in many cases this request is equivalent to require that the uniformly intrinsically differentiable intrinsic graph is locally the non-critical level set of a Pansu differentiable mapping, \cite{arena2009intrinsic,DiD21,ADDDLD24,CorniPhD}, although the problem is still open for arbitrary uniformly intrinsically differentiable graphs.

The relationship between simple left-invariant Rumin forms and pairs of complementary subgroups was already studied in Section 2 of \cite{FranchiSerapioni}. We will include a shorter version here, in order to make this presentation more self-contained.

\begin{lemma}[Proposition 2.9 in \cite{FranchiSerapioni}]\label{lem: subalgebras}
Let $\G$ be a Lie group of dimension $n$ and denote by $\mathfrak{g}$ the Lie algebra of left invariant vector fields on $\G$. Without loss of generality, we may assume that a scalar product is fixed in $\mathfrak{g}$. Let $\mathfrak{w}$ be a {$k$-dimensional subspace of $\mathfrak{g}$}, and let $Z_1,\ldots,Z_{n-k}$ be a basis of $\mathfrak{w}^\perp$. If we set $\theta_i:=Z_i^\ast$ for $i=1,\ldots,n-k$ and $\theta:=\theta_1\wedge\cdots\wedge\theta_{n-k}$, then we have
    \begin{itemize}
        \item [i.] $\mathfrak{w}=\{X\in\mathfrak{g}\mid \iota_X\theta=0\}=\{X\in\mathfrak{g}\mid \star\theta\wedge X^\ast=0\}$.
        \item[ii.] {If $\mathfrak w$ is a Lie subalgebra, then there} exists $\eta\in\mathfrak{g}^\ast$ such that $d\theta=\eta\wedge\theta$.
        \item[iii.]
        {Conversely,} if $\theta=\theta_1\wedge\cdots\wedge\theta_{n-k}\in\bigwedge^{n-k}\mathfrak{g}^\ast$ is a simple left invariant form such that $d\theta=\eta\wedge\theta$ for some $\eta\in\mathfrak{g}^\ast$, then
        $\mathfrak{w}:=\{X\in\mathfrak{g}\mid \iota_X\theta=0\}$ is a Lie subalgebra of $\mathfrak{g}$.
    \end{itemize}
    Here $\iota_X$ denotes the interior product. Notice that  if $\mathfrak g$ is nilpotent, then $\mathfrak w$ is a Lie subalgebra if and only if $d\theta=0$. 
\end{lemma}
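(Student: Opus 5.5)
The plan is to treat the three items separately, working throughout with a basis adapted to $\mathfrak w$. I will complete $Z_1,\ldots,Z_{n-k}$ by a basis $W_1,\ldots,W_k$ of $\mathfrak w$ and use the dual basis $(\omega_1,\ldots,\omega_k,\theta_1,\ldots,\theta_{n-k})$. By Gram--Schmidt we may assume $Z_1,\ldots,Z_{n-k}$ is orthonormal. A change of basis of $\mathfrak w^\perp$ only multiplies $\theta$ by a nonzero constant, and none of the three statements is affected by this.

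\emph{Item i.} This is linear algebra. One has $\iota_X\theta=\sum_i(-1)^{i-1}\theta_i(X)\,\theta_1\wedge\cdots\widehat{\theta_i}\cdots\wedge\theta_{n-k}$. Since the $(n-k-1)$-fold wedges are linearly independent, $\iota_X\theta=0$ holds if and only if $\theta_i(X)=\langle Z_i,X\rangle=0$ for all $i$, that is, $X\in\mathfrak w$. For the second description, $\star\theta=\pm\,\omega_1\wedge\cdots\wedge\omega_k$, where the $\omega_a$ are the metric duals of an orthonormal basis of $\mathfrak w$. A simple nonzero $k$-form satisfies $\star\theta\wedge X^\ast=0$ exactly when $X^\ast\in\operatorname{span}\{\omega_a\}$, and this is equivalent to $X\in\mathfrak w$.

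\emph{Item ii.} I will use the Cartan formula $d\theta_i(X,Y)=-\theta_i([X,Y])$. Expanding $d\theta_i$ in the adapted basis, the coefficient of $\omega_a\wedge\omega_b$ is $-\theta_i([W_a,W_b])$. This coefficient vanishes because $\mathfrak w$ is a subalgebra, so $d\theta_i=\sum_j\theta_j\wedge\eta_{ij}$ for suitable $\eta_{ij}\in\mathfrak g^\ast$. Next I apply the Leibniz rule to $d(\theta_1\wedge\cdots\wedge\theta_{n-k})$. In the $i$-th summand, every $\theta_j\wedge\eta_{ij}$ with $j\neq i$ is killed by the $\theta_j$ already present. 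The survivor is $\theta_1\wedge\cdots\wedge(\theta_i\wedge\eta_{ii})\wedge\cdots$, which equals $\pm\,\eta_{ii}\wedge\theta$ with a sign depending on $i$. This gives $d\theta=\eta\wedge\theta$ with $\eta=\sum_i\pm\eta_{ii}$.

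\emph{Item iii.} I plan the following coordinate-free argument. Take $X,Y\in\mathfrak w$. Use the identity $\iota_{[X,Y]}=[\mathcal L_X,\iota_Y]$ together with Cartan's magic formula $\mathcal L_X=\iota_Xd+d\iota_X$. Since $\iota_X\theta=\iota_Y\theta=0$ by item i, this gives
\[
\iota_{[X,Y]}\theta=-\iota_Y\iota_X d\theta=-\iota_Y\iota_X(\eta\wedge\theta).
\]
Expanding the interior products, every term contains $\iota_X\theta$ or $\iota_Y\theta$, hence vanishes. Therefore $[X,Y]\in\mathfrak w$.

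\emph{The nilpotent case.} This is the step I expect to need the most care. One direction is immediate: if $d\theta=0=0\wedge\theta$, item iii gives that $\mathfrak w$ is a subalgebra. For the converse, I will show that $\eta$ may be taken to vanish on $\mathfrak w$. Then $\eta\in\operatorname{span}\{\theta_j\}$ and so $\eta\wedge\theta=0$. To do this, I evaluate $\eta_{ii}$ on $W\in\mathfrak w$. The $\theta_i\wedge\omega_a$-coefficient of $d\theta_i$ is $\pm\theta_i([W_a,Z_i])$, so $\eta(W)$ equals $\pm\sum_i\theta_i([W,Z_i])$. This is the trace of the map induced by $\operatorname{ad}_W$ on $\mathfrak g/\mathfrak w$, which is well defined since $\mathfrak w$ is $\operatorname{ad}_W$-invariant. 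When $\mathfrak g$ is nilpotent, $\operatorname{ad}_W$ is nilpotent, so the induced map is nilpotent and its trace is zero. The delicate points are tracking signs and checking that the components of $\eta$ along the $\theta_j$ are harmless. They are, because any such component wedges to zero against $\theta$.
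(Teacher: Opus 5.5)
The paper gives no proof of this lemma; it quotes it from Franchi--Serapioni and uses it as a black box. Your argument therefore has to stand on its own, and it does.

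Item i is correct linear algebra. The ideal argument in item ii is correct. The Cartan-calculus computation $\iota_{[X,Y]}\theta=-\iota_Y\iota_X(\eta\wedge\theta)=0$ is a clean proof of item iii. (There, $\iota_X\theta=\iota_Y\theta=0$ holds by the definition of $\mathfrak w$, not by item i.)

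Your route also justifies the nilpotent remark, which the paper only asserts. The key identity is $\eta(W)=-\operatorname{tr}(\operatorname{ad}_W \text{ on } \mathfrak g/\mathfrak w)$ for $W\in\mathfrak w$. It shows more generally that, when $\mathfrak w$ is a subalgebra, $d\theta=0$ holds exactly when all these traces vanish. Nilpotency of $\mathfrak g$ is just one sufficient condition.

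One thing needs fixing. In item ii you say the $i$-th surviving term is $\pm\eta_{ii}\wedge\theta$ with a sign depending on $i$. In fact the sign is the same for every $i$. The Leibniz rule contributes $(-1)^{i-1}$, and moving $\eta_{ii}$ to the front past $\theta_1,\ldots,\theta_i$ contributes $(-1)^{i}$. Hence $d\theta=-\sum_i\eta_{ii}\wedge\theta$. This uniformity is exactly what your nilpotent step needs: with $i$-dependent signs, $\sum_i\pm\theta_i([W,Z_i])$ would not be a trace and the argument would collapse.

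Your worry about the choice of the $\eta_{ij}$ also disappears. Evaluating $d\theta_i=\sum_j\theta_j\wedge\eta_{ij}$ on $(Z_i,W)$ gives $\eta_{ii}(W)=d\theta_i(Z_i,W)=\theta_i([W,Z_i])$, whatever decomposition you chose. Moreover, $\eta|_{\mathfrak w}$ is forced by $d\theta$ itself, so there is no freedom in ``taking'' it to vanish: for orthonormal $Z_i$ one has $\eta(W)=d\theta(W,Z_1,\ldots,Z_{n-k})$. Evaluating this directly with the invariant formula for $d$ on left-invariant forms gives $-\sum_j\theta_j([W,Z_j])$ in one line, and avoids the $\eta_{ij}$ bookkeeping altogether.
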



The previous lemma gives the following proposition, also taken from \cite{FranchiSerapioni}.
\begin{proposition}[Theorem 2.10 in \cite{FranchiSerapioni}]\label{prop: complementary subgroups vs Rumin forms}
    Given two $\xi\in E_0^k$ and $\theta\in E_0^{n-k}$ two simple left invariant Rumin forms with $1\le k<n$ such that $\xi\wedge\theta\neq 0$, then
    \begin{align*}
        \mathfrak{w}:=\{X\in\mathfrak{g}\mid \iota_X\theta=0\}\ \text{ and }\ \mathfrak{v}:=\{X\in\mathfrak{g}\mid \iota_X\xi=0\}
    \end{align*}
    are both Lie subalgebras of $\mathfrak{g}$. Moreover, $\operatorname{dim}\mathfrak{w}=k$, $\operatorname{dim}\mathfrak{v}=n-k$, and $\mathfrak{g}=\mathfrak{w}\oplus\mathfrak{v}$. If, in addition, $\xi=\xi_1\wedge\cdots\wedge\xi_k$ and $\theta=\theta_1\wedge\cdots\wedge\theta_{n-k}$, where all the $\xi_i$s and the $\theta_i$s all have homogeneous weights $p_i$ and $q_i$ respectively, then both $\mathfrak{w}$ and $\mathfrak{v}$ are homogeneous Lie subalgebras of $\mathfrak{g}$, and so if we set
    \begin{align*}
        \W:=\exp(\mathfrak{w})\ \text{ and }\ \V:=\exp(\mathfrak{v})
    \end{align*}
    then $\W$ and $\V$ are complementary subgroups. In particular, since $\star E_0^k=E_0^{n-k}$, if $\xi\in E_0^k$, we can choose $\theta=\star\xi$, and in this case $\mathfrak{w}$ and $\mathfrak{v}$ are orthogonal.

    Conversely, suppose $\mathfrak{w}$ and $\mathfrak{v}$ are homogeneous Lie subalgebras of $\mathfrak{g}$ such that $\operatorname{dim}\mathfrak{w}=k$ and $\operatorname{dim}\mathfrak{v}=n-k$, and $\mathfrak{g}=\mathfrak{w}\oplus\mathfrak{v}$, then there exists a scalar product $\langle\cdot,\cdot\rangle$ on $\mathfrak{g}$, $\xi\in E_0^k$, and $\theta\in E_0^{n-k}$ such that $\xi\wedge\theta\neq 0$ and
    \begin{align*}
        \mathfrak{w}:=\{X\in\mathfrak
        g\mid \iota_X\theta=0\}\ \text{ and }\ \mathfrak{v}:=\{X\in\mathfrak{g}\mid \iota_X\xi=0\}\,.
    \end{align*}
\end{proposition}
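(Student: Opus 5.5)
The plan is to reduce every statement to linear algebra on left-invariant forms, using Lemma~\ref{lem: subalgebras} and the weight bookkeeping of Lemma~\ref{lem: diving exterior derivative}.

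\emph{Direct implication.} The proof of Lemma~\ref{lem: diving exterior derivative} shows that on left-invariant forms the exterior derivative preserves weight. Hence $d=d_0$ on $\bigwedge^\bullet\mathfrak g^\ast$. Since $\xi,\theta\in E_0\subset\ker d_0$ are left invariant, we get $d\xi=d\theta=0$. Lemma~\ref{lem: subalgebras}(iii), applied with $\eta=0$, then shows that $\mathfrak w$ and $\mathfrak v$ are Lie subalgebras.

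A nonzero simple $(n-k)$-form $\theta_1\wedge\cdots\wedge\theta_{n-k}$ has $\{X:\iota_X\theta=0\}=\bigcap_i\ker\theta_i$, which has dimension $k$. Likewise $\dim\mathfrak v=n-k$.

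For the direct sum, take $X\in\mathfrak w\cap\mathfrak v$. Then
\[
\iota_X(\xi\wedge\theta)=\iota_X\xi\wedge\theta\pm\xi\wedge\iota_X\theta=0 .
\]
Since $\xi\wedge\theta$ is a nonzero multiple of $\operatorname{vol}$ and $\iota_X\operatorname{vol}=0$ forces $X=0$, we get $\mathfrak w\cap\mathfrak v=0$. By the dimension count, $\mathfrak g=\mathfrak w\oplus\mathfrak v$.

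\emph{Homogeneity and complementary subgroups.} If each $\theta_i$ has pure weight $q_i$, then $\theta_i\in\mathfrak g_{q_i}^\ast$ and $\theta_i(\delta_\lambda X)=\lambda^{q_i}\theta_i(X)$. So $\bigcap_i\ker\theta_i$ is $\delta_\lambda$-invariant, i.e.\ $\mathfrak w$ is homogeneous; the same holds for $\mathfrak v$. Hence $\W=\exp\mathfrak w$ and $\V=\exp\mathfrak v$ are homogeneous subgroups, and $\W\cap\V=\exp(\mathfrak w\cap\mathfrak v)=\{e\}$ because $\exp$ is a global bijection.

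The step I expect to need care is $\G=\W\V$. The multiplication map $\W\times\V\to\G$ is injective, because $w_1v_1=w_2v_2$ gives $w_2^{-1}w_1=v_2v_1^{-1}\in\W\cap\V$. Its differential at $(e,e)$ is the isomorphism $\mathfrak w\oplus\mathfrak v\to\mathfrak g$, so its image contains a neighbourhood $U$ of $e$. The image is invariant under dilations, since $\delta_\lambda(wv)=\delta_\lambda w\,\delta_\lambda v$. Therefore the image contains $\bigcup_{\lambda>0}\delta_\lambda U=\G$.

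For the choice $\theta=\star\xi$, I use the fact recorded in the statement that $\star E_0^k=E_0^{n-k}$. If $\xi=\xi_1\wedge\cdots\wedge\xi_k$ with the $\xi_i$ orthonormal, then $\star\xi$ is the wedge of an orthonormal basis of $\operatorname{span}\{\xi_i\}^\perp$. Consequently $\mathfrak w=\operatorname{span}\{\xi_i^\sharp\}$ and $\mathfrak v=\bigcap\ker\xi_i=\mathfrak w^\perp$, so the two subalgebras are orthogonal.

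\emph{Converse.} I would fix a scalar product on $\mathfrak g$ in which $\mathfrak w\perp\mathfrak v$ and the layers $\mathfrak g_{w_j}$ are mutually orthogonal. This is possible since both subalgebras are homogeneous: $\mathfrak w=\bigoplus_j(\mathfrak w\cap\mathfrak g_{w_j})$, and similarly for $\mathfrak v$. Take a homogeneous basis $W_1,\dots,W_k$ of $\mathfrak w$ and $V_1,\dots,V_{n-k}$ of $\mathfrak v$ with dual covectors $W_i^\ast,V_j^\ast$. Set $\xi=\bigwedge_i W_i^\ast$, so that $\{X:\iota_X\xi=0\}=\mathfrak v$, and $\theta=\bigwedge_j V_j^\ast$, so that $\{X:\iota_X\theta=0\}=\mathfrak w$. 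Then $\xi\wedge\theta\neq0$.

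By the nilpotent case of Lemma~\ref{lem: subalgebras}, $d\xi=d\theta=0$, hence $d_0\xi=d_0\theta=0$. Moreover, $\xi$ is not $d_0$-exact: if $\xi=d\beta$, then $\xi\wedge\theta=d(\beta\wedge\theta)$ would be a nonzero exact left-invariant top form, impossible on a nilpotent, hence unimodular, Lie algebra. The same argument applies to $\theta$.

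The main obstacle is the remaining condition $\delta_0\xi=\delta_0\theta=0$, which depends on the scalar product. Non-exactness only shows that the classes are nonzero, so it is not enough by itself. I would first try to verify $\delta_0$-closedness directly for the orthogonal graded choice above. If that fails, I would modify the scalar product inside each layer, keeping $\mathfrak w\perp\mathfrak v$, so that $\xi$ becomes orthogonal to the homogeneous piece of $\operatorname{Im}d_0$ of its weight, and symmetrically for $\theta$. At that point I would lean on the construction in \cite{FranchiSerapioni}.
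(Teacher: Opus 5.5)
Your direct implication is sound. On left-invariant forms $d=d_0$, so $d\xi=d\theta=0$, and Lemma~\ref{lem: subalgebras}(iii) then gives the two subalgebras. Contracting $\xi\wedge\theta$ gives $\mathfrak w\cap\mathfrak v=0$, and your inverse-function-plus-dilation argument for $\G=\W\V$ is correct. (The paper gives no proof of this statement; it only cites \cite{FranchiSerapioni}.)

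The gap is in the converse. The statement asserts $\xi\in E_0^k=\ker d_0\cap\ker\delta_0$ and $\theta\in E_0^{n-k}$, but you only establish $d_0$-closedness. As you note yourself, non-exactness does not give membership in $E_0$. You then leave $\delta_0\xi=\delta_0\theta=0$ open, falling back on modifying the scalar product or on the literature. As written, the converse is not proved.

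The missing step is short, and your first option already works. Take the graded scalar product with $\mathfrak w\perp\mathfrak v$. Then the annihilators $\mathfrak v^\circ=\operatorname{span}\{W_i^\ast\}$ and $\mathfrak w^\circ=\operatorname{span}\{V_j^\ast\}$ are orthogonal complements in $\mathfrak g^\ast$. Hence $\star\xi$ is a nonzero constant multiple of $\theta$, and $\star\theta$ is a nonzero constant multiple of $\xi$. For every $\beta\in\bigwedge^{k-1}\mathfrak g^\ast$,
\[
\langle d_0\beta,\xi\rangle_k\operatorname{vol}=d\beta\wedge\star\xi=d(\beta\wedge\star\xi)-(-1)^{k-1}\beta\wedge d\star\xi=0 .
\]
The first term vanishes because $d$ is zero on $\bigwedge^{n-1}\mathfrak g^\ast$ by unimodularity, which is the fact you already used. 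The second vanishes because $d\star\xi$ is a multiple of $d\theta=0$. Since $d_0$ is $C^\infty(\G)$-linear and acts fibrewise as the Chevalley--Eilenberg differential, this gives $\xi\perp\operatorname{Im}d_0$, i.e. $\delta_0\xi=0$. The same computation, using $\star\theta\propto\xi$ and $d\xi=0$, gives $\delta_0\theta=0$. No modification of the scalar product is needed, and your non-exactness argument becomes superfluous.
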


\begin{remark}\label{rmk: example Rumin forms nonsimple}
Proposition \ref{prop: complementary subgroups vs Rumin forms} highlights a clear correspondence between the space of homogeneous simple Rumin forms and complementary subgroups.
    A natural question is whether one can always find a basis of Rumin forms made of homogeneous simple left-invariant forms. Indeed, this is not the case. Take for example the 9-dimensional 2-step nilpotent Lie group $\G$ whose Lie algebra $\mathfrak{g}$ has the following non-trivial brackets:
    \begin{align*}
        [X_1,X_2]=[X_3,X_4]=T_1\ ,\ [X_1,X_3]=T_2\ ,\ [X_1,X_4]=T_3\ ,\ [X_2,X_3]=T_4\ ,\ [X_2,X_4]=T_5\,.
    \end{align*}
    Then, if $(X_1,X_2,X_3,X_4,T_1,T_2,T_3,T_4,T_5)$ is taken as an orthonormal basis of $\mathfrak{g}$ and we use the notation $dx_i=X_i^\ast$ and $\tau_j=T_j$ to denote the dual orthonormal basis of $\mathfrak{g}^\ast$, straightforward computations show that
    \begin{align*}
        E_0^2=\operatorname{span}_{C^\infty(\G)}\{dx_1\wedge dx_2-dx_3\wedge dx_4\}\oplus \Theta^{3,-1}_{16}
    \end{align*}
    where $\Theta^{3,-1}_{16}$ denotes a 16-dimensional subspace of $\Omega^{3,-1}(\G)$, the space of smooth 2-forms of weight 3. In this case, it is clear that one cannot find a basis of simple covectors to span the 1-dimensional subspace of Rumin 2-forms of weight 2.
\end{remark}

To prove the statements needed to study whether smooth intrinsic graphs are $R$-manifolds, we will first introduce some crucial concepts. We refer for instance to \cite{Magnani2019Area,corni2026minimal} for more information. 

\begin{definition}[Definition 2.1 in \cite{Mag13Vit}] Let $(X_1,\ldots,X_n)$ be a basis of $\mathfrak{g}$ adapted to the grading \eqref{eq: grading direct sum decomposition}. The degree $\operatorname{deg}(j)$ of $X_j$ is the unique integer $p$ such that $X_j\in V_p$. Moreover, given $X_J:=X_{j_1}\wedge\cdots\wedge X_{j_k}$ be a simple $k$-vector of $\bigwedge^k\mathfrak{g}$, then the degree of $X_J$ is the integer $\operatorname{deg}(J)$ defined as the sum $\operatorname{deg}(j_1)+\cdots+\operatorname{deg}(j_k)$.    
\end{definition}
By comparing this concept of degree of a simple $k$-vector with Definition \ref{def: weights of forms}, it is clear that the two concepts coincide.

Let us now fix a graded metric $g$ on $\G$, namely a left-invariant Riemannian metric on $\G$ such that the subspaces $V_{w_i}$ in \eqref{eq: grading direct sum decomposition} are orthogonal. It is easy to observe that all left-invariant Riemannian metrics such that the basis $(X_1,\ldots,X_n)$ adapted to the grading is orthogonal are graded metrics and the family of $\{X_{J_k}=X_{j_1}\wedge\cdots\wedge X_{j_k}\}_{J_k}$ forms an orthonormal basis of $\bigwedge^k\mathfrak{g}$ with respect to the induced metric. The norm induced by $g$ on $\bigwedge^k\mathfrak{g}$ will be simply denoted by $|\cdot|$.
\begin{definition}[Definition 2.3 in \cite{Mag13Vit}] Let $\tau\in\bigwedge^k\mathfrak{g}$ be a simple $k$-vector and let $1\le p\le Q$ be a natural number. Let $\tau=\sum_{J_k}\tau_{J_k}X_{J_k}$ be expressed in terms of a fixed orthonormal basis $(X_1,\ldots,X_n)$ adapted to the grading. Similarly to the notation introduced in the case of covectors (and forms), the projection of $\tau$ to the component of degree $r$ is defined as
\begin{align*}
    \big(\tau\big)_r=\sum_{d(J_k)=r}\tau_{J_k}X_{J_k}\,.
\end{align*}
The \textit{degree} of $\tau$ is then defined as the maximum degree, that is
\begin{align*}
    \operatorname{deg}(\tau)=\max\{r\in\mathbb N\mid (\tau)_r\neq 0\}\,.
\end{align*}
\end{definition}
The same concept can be expressed in terms of weights, that is, given a non-homogeneous $k$-vector $\tau$ expressed as a linear combination of simple $k$-vectors, the degree of $\tau$ is understood as the maximal homogeneous weight of the basis $k$-vectors with non-zero coefficients.

We are now ready to define the degree of a submanifold.
\begin{definition}[Degree of a submanifold]\label{def: weight of manifold}
    Let $\Sigma$ be a $C^1$ $k$-dimensional submanifold of a positively graded Lie group $\G$. For $x\in\Sigma$, let $\tau_\Sigma(x)\in\bigwedge^k\mathfrak{g}$ be any unit tangent $k$-vector to $\Sigma$ at $x\in\Sigma$ with respect to an arbitrary auxiliary Riemannian metric $\Tilde{g}$, i.e. $\vert\tau_\Sigma(x)\vert_{\Tilde{g}}=1$. Then the degree of the point $x\in\Sigma$ is defined as
    \begin{align*}
        \operatorname{deg}_\Sigma(x)=\operatorname{deg}(\tau_\Sigma(x))\,,
    \end{align*}
    while the degree of the submanifold $\Sigma$ is defined as
    \begin{align*}
        \operatorname{deg}(\Sigma)=\max\{\operatorname{deg}_\Sigma(x)\mid x\in\Sigma\}\,.
    \end{align*}
    We will say that the point $x\in\Sigma$ has maximum degree if $\operatorname{deg}_\Sigma(x)=\operatorname{deg}(\Sigma)$.
\end{definition}
It is not difficult to show that this definition is independent of the choice of the auxiliary metric $\Tilde{g}$, since the degree is invariant under multiplication by non-zero scalars. Moreover, it is independent of the choice of the fixed adapted basis $(X_1,\ldots,X_n)$ and depends only on the tangent spaces $T_x\Sigma$ and the homogeneous structure \eqref{eq: grading direct sum decomposition} of $\mathfrak{g}$, i.e. on the homogeneous weight/degree of $\tau_\Sigma(x)$.

\begin{lemma}\label{lem: deg boundary less}
Let $\Sigma\subset\G$ be a $C^1$ oriented manifold with boundary.  Then $\deg(\partial \Sigma)<\deg(\Sigma)$.
\end{lemma}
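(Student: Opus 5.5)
The plan is to reduce the statement to a pointwise linear-algebra fact about a $k$-dimensional subspace $P\subset\mathfrak g$ and a hyperplane $Q\subset P$. At a boundary point $x\in\partial\Sigma$, the tangent space $T_x\Sigma$ of the manifold with boundary is still $k$-dimensional and contains $T_x\partial\Sigma$ as a hyperplane. Both are identified with subspaces of $\mathfrak g$ via $dL_{x^{-1}}$. If we show $\deg(Q)<\deg(P)$ for every such pair, then we can conclude as follows. Pick $x\in\partial\Sigma$ realising $\deg(\partial\Sigma)=\deg_{\partial\Sigma}(x)$; the maximum exists because degrees take finitely many integer values. Then
\[
\deg(\partial\Sigma)=\deg_{\partial\Sigma}(x)<\deg_\Sigma(x)\le\deg(\Sigma).
\]

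To compare the two degrees I will use a filtration formula for the degree of a subspace. Set $G_i:=\bigoplus_{w_j\le i}\mathfrak g_{w_j}$, an increasing filtration of $\mathfrak g$, and define
\[
m_i(P):=\dim(P\cap G_i)-\dim(P\cap G_{i-1}).
\]
The claim is that $\deg(P)=\sum_i i\,m_i(P)$.

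To prove it, choose a basis of $P$ adapted to the flag $P\cap G_1\subset P\cap G_2\subset\cdots$, so that $m_i(P)$ basis vectors lie in $P\cap G_i\setminus G_{i-1}$. Call the projection of such a vector onto $\mathfrak g_i$ its \emph{leading part}. The key point is that for each fixed $i$ these leading parts are linearly independent. Indeed, a nontrivial vanishing combination would produce a nonzero element of $P\cap G_{i-1}$ that is not in the span of the earlier basis vectors, contradicting the adapted choice.

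Now expand the wedge product $v_1\wedge\cdots\wedge v_k$ according to weights. The component of weight $\sum_i i\,m_i(P)$ is exactly the wedge of all leading parts. It is nonzero, because leading parts from different layers lie in different direct summands, and within each layer they are independent. Every other term in the expansion has strictly smaller weight. Hence the tangent $k$-vector has degree $\sum_i i\,m_i(P)$. This step is the main obstacle: one must check carefully that lower-order terms can never reach the top weight, and that no cancellation occurs in the top-weight component.

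With the formula in hand, the comparison is elementary. Since $Q$ is a hyperplane of $P$, each quantity $\dim(Q\cap G_i)$ equals either $\dim(P\cap G_i)$ or $\dim(P\cap G_i)-1$. The difference $\dim(P\cap G_i)-\dim(Q\cap G_i)$ is nondecreasing in $i$, starts at $0$ for small $i$, and reaches $1$ at $G_{w_s}=\mathfrak g$. So it jumps from $0$ to $1$ at exactly one level $i_0$. Therefore $m_i(Q)=m_i(P)$ for $i\neq i_0$ and $m_{i_0}(Q)=m_{i_0}(P)-1$. Applying the filtration formula to both subspaces gives
\[
\deg(P)-\deg(Q)=i_0\ge w_1>0.
\]
Combined with the first paragraph, this yields $\deg(\partial\Sigma)<\deg(\Sigma)$.
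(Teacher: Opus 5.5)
Your proof is correct, but it takes a different route from the paper.

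\textbf{The paper's route.} The paper stays with Pl\"ucker coordinates at a single boundary point $p$ that realises $\deg(\partial\Sigma)$. It picks a basis $e_1,\dots,e_k$ of $T_p\Sigma$ with $e_1,\dots,e_{k-1}$ spanning $T_p\partial\Sigma$, and a multi-index $\beta\in I(k-1,n)$ of degree $\deg(\partial\Sigma)$ whose coordinate in the tangent $(k-1)$-vector of $\partial\Sigma$ is nonzero. A rank argument on the $n\times k$ coefficient matrix $C$ then shows that some $k$-index $\alpha\supset\beta$ has $\tau^\Sigma_\alpha\neq 0$. Otherwise every row of $C$ would lie in the span of the $k-1$ rows indexed by $\beta$, forcing $\operatorname{rank} C<k$. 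Since $\alpha$ consists of $\beta$ plus one extra index, $\deg(\alpha)>\deg(\beta)$.

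\textbf{Your route.} You first prove the intrinsic formula $\deg(P)=\sum_i i\,m_i(P)$ in terms of the filtration $G_i$, via the leading-part computation. You then compare the jump sequences of $P$ and of its hyperplane $Q$.

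\textbf{What each buys.} The paper's argument is shorter and needs no structural lemma. It only yields the pointwise lower bound $\deg_\Sigma(p)\ge\deg_{\partial\Sigma}(p)+\deg(j)$ for the extra index $j$. Your argument costs more work, but it is manifestly independent of the adapted basis. It also gives a sharper pointwise statement: $\deg_\Sigma(x)-\deg_{\partial\Sigma}(x)$ is exactly one weight $w_i$ of the grading, namely the first level $i_0$ at which $T_x\Sigma\cap G_{i}\not\subset T_x\partial\Sigma$.

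\textbf{A shared convention.} Both proofs evaluate $\deg_\Sigma$ at a point of $\partial\Sigma$. This is legitimate because $T_x\Sigma$ is $k$-dimensional there. If the maximum defining $\deg(\Sigma)$ is meant over interior points only, lower semicontinuity of $x\mapsto\deg_\Sigma(x)$ closes the gap: the condition $\deg_\Sigma(x)\ge r$ is open for $C^1$ manifolds.
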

\begin{proof}
Let $k$ be the dimension of $\Sigma$. Let us consider $p\in\partial \Sigma$ such that $d_{\partial\Sigma}(p)=\deg(\partial \Sigma)$, let $(e_1,\ldots,e_k)$ be a basis of $T_p\Sigma$, such that $(e_1,\ldots,e_{k-1})$ is a basis of $T_p(\partial \Sigma)$. Whenever $1\le k'\le n$, we set
\[
I(k',n)=\{(j_1,j_2,\ldots,j_{k'})\in(\Z^+)^{k'}\mid  1\le j_1<j_1<\cdots<j_{k'}\le n \}
\]
and observe that there exists $\beta=(\beta_1,\ldots,\beta_{k-1})\in I(k-1,n)$ such that $\deg(\beta)=\deg(\partial \Sigma)$.
Consider now the matrix
\[
\left(
\begin{array}{cccc}
c_1^1 & c_2^1 & \cdots  & c_k^1 \\
\vdots & \vdots & \cdots & \vdots \\
c_1^n & c_2^n & \cdots  & c_k^n
\end{array}
\right)
=
C,
\]
where $e_i=\sum_{j=1}^n c_i^j X_j(p)$. We consider the $k$-tangent vector 
\[
\tau_{\Sigma}(p)=\sum_{\gamma\in I(k,n)}\tau_{\gamma}^{\Sigma}\; X_{\gamma}
=\sum_{\gamma\in I_{\beta}(k,n)}\tau_{\gamma}^{\Sigma} X_{\gamma}
+\sum_{\gamma\in I(k,n)\setminus I_{\beta}(k,n)}\tau_{\gamma}^{\Sigma} X_{\gamma}
\in\Lambda^k(T_p\Sigma)
\]
and we have defined
\[
I_{\beta}(k,n)
=
\left\{
(j_1,\ldots,j_k)\in (\mathbb N^+)^k
\;\middle|\;
1\leq j_1<\cdots<j_k\leq n,\;
\{\beta_1,\ldots,\beta_{k-1}\}
\subset
\{j_1,\ldots,j_k\}
\right\}.
\]
By contradiction, if we have 
\[
\tau_{\gamma}^{\bar\Sigma}=0
\qquad
\text{for all } \gamma\in I_{\beta}(k,n),
\]
then all rows of \(C\) are linear combinations of the rows
\[
(c_1^{\beta_1},\ldots,c_k^{\beta_1}), \; 
(c_1^{\beta_2},\ldots,c_k^{\beta_2}), \; 
\ldots,
(c_1^{\beta_{k-1}},\ldots,c_k^{\beta_{k-1}})
\]
of \(C\), therefore proving that the rank of $C$ is less than $k$. 
This conflicts with the fact that $\tau_\Sigma(p)\neq0$.
We have proved that there exists $\alpha\in I_{\beta}(k,n)$ such that 
$\tau^\Sigma_{\alpha}\neq0$, hence 
\[
d(\Sigma)\ge \deg(\alpha)>\deg(\beta)=\deg(\partial \Sigma),
\]
where the strict inequality holds since $\alpha$ contains all indexes of $\beta$.
This completes the proof.
\end{proof}

\begin{lemma}\label{lem: forms of higher weight, integral vanishes}
Let $\alpha\in\Omega^{p,k-p}(\G)$ be a $k$-form of homogeneous weight $w(\alpha)=p$ and let $\Sigma\subset\G$ be an oriented $k$-dimensional compact submanifold. 
If $p>\deg(\Sigma)$, then $\int_\Sigma\alpha=0$.
\end{lemma}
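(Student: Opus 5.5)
The integral is pointwise zero, so I will reduce it to a linear-algebra fact at each point of $\Sigma$: the integrand $\langle\alpha_x,\tau_\Sigma(x)\rangle$ vanishes identically, because $\alpha_x$ only sees $k$-vectors of degree $p$ while $\tau_\Sigma(x)$ has no component of degree above $\deg(\Sigma)<p$.

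First I write $\alpha=\sum_j f_j\otimes\xi_j$ with $f_j\in C^\infty(\G)$ and $\xi_j\in\bigwedge^{p,k-p}\mathfrak g^\ast$. Fixing a graded basis $(X_1,\ldots,X_n)$ with dual basis $(\theta_1,\ldots,\theta_n)$, each $\xi_j$ is a combination of simple covectors $\theta_{I}=\theta_{i_1}\wedge\cdots\wedge\theta_{i_k}$ with $w(\theta_I)=\deg(I)=p$. This holds by the weight decomposition following Lemma \ref{lem: different weight implies lin indep} and the identification of weights of covectors with degrees of $k$-vectors noted after Definition 2.1 of \cite{Mag13Vit}.

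Next, via the left-translation identification $T_x\G\cong\mathfrak g$, the pairing at $x$ is
\begin{align*}
\langle\alpha_x,\tau_\Sigma(x)\rangle=\sum_j f_j(x)\sum_{\gamma\in I(k,n)}\tau^\Sigma_\gamma(x)\,\xi_j(X_\gamma).
\end{align*}
Since $\theta_I(X_\gamma)=\delta_{I\gamma}$, only multi-indices $\gamma$ with $\deg(\gamma)=p$ contribute. Concretely, $\langle\alpha_x,\tau_\Sigma(x)\rangle=\langle\alpha_x,(\tau_\Sigma(x))_p\rangle$.

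By Definition \ref{def: weight of manifold}, for every $x\in\Sigma$ we have $\deg(\tau_\Sigma(x))=\deg_\Sigma(x)\le\deg(\Sigma)<p$. Hence $(\tau_\Sigma(x))_r=0$ for all $r>\deg_\Sigma(x)$, and in particular $(\tau_\Sigma(x))_p=0$. This holds for any choice of unit tangent $k$-vector, since the degree is invariant under nonzero rescaling. Therefore the pullback of $\alpha$ to $\Sigma$ vanishes at every point.

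Finally, the oriented integral satisfies $\int_\Sigma\alpha=\int_\Sigma\langle\alpha_x,\tau_\Sigma(x)\rangle\,d\mathcal H^k_{\tilde g}(x)$ with respect to any auxiliary Riemannian metric, where $\tau_\Sigma$ is the oriented unit tangent $k$-vector. Since the integrand is identically zero, $\int_\Sigma\alpha=0$. Compactness is used only to make the integral well defined.

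The only step needing care is the bookkeeping that the degree of $\tau_\Sigma(x)$ is computed in the same left-invariant frame $X_\gamma(x)$ in which $\alpha$ is expanded. The paper's definitions use exactly this frame, so I expect no genuine obstacle.
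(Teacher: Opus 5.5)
Your proposal is correct and follows essentially the same route as the paper. The paper also expands $\alpha=\sum_{I} f_I\theta_I$ over covectors of weight $p$ and the oriented unit tangent $k$-vector over $X_J$ with $\deg(J)\le\deg(\Sigma)$, then concludes that every pairing $\theta_I(X_J)$ vanishes, so the integrand $\alpha(\tau)$ is identically zero on $\Sigma$.
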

\begin{proof}
We set $N=\deg(\Sigma)$. By assumption, $w(\alpha)=p$ and so we may write $\alpha=\sum_{I\colon w(\theta_I)=p}f_I\theta_I$. We can orient $\Sigma$ by a unit tangent $k$-vector field $\tau$ on $\Sigma$, where $\tau=\sum_{J\colon w(X_J)\le N}\tau_JX_J$. Here, both $f_I$ and $\tau_J$ are smooth functions on $\Sigma$. Then we have
    \begin{align*}
        \int_\Sigma\alpha=
\int_\Sigma \alpha(\tau)\,d\mathrm{vol}_k=
\sum_{I:\,w(\theta_I)=p}
\sum_{J:\,w(X_J)\le N}
\int_\Sigma f_I\tau_J\,\theta_I(X_J)\,d\mathrm{vol}_k
    \end{align*}
    where $\operatorname{vol}_k$ is the $k$-dimensional Riemannian surface measure on $\Sigma$, and \(\{\theta_I\}\) is the dual basis to \(\{X_I\}\). Since \(w(\theta_I)=p>w(X_J)\) for each $J$ by the assumption $p>N$, we readily get that \(I\neq J\) and hence $\theta_I(X_J)=0$ for every such pair $I,J$ by Lemma \ref{lem: different weight implies lin indep}. Thus the claim holds true.
\end{proof}

\begin{remark}
Let $\W\subset \mathbb G$ be any homogeneous subgroup of a positively graded group $\mathbb G$. Since $\W$ is invariant under the group dilations of $\mathbb G$, it inherits a natural structure of homogeneous positively graded Lie group. In particular, if the Lie algebra $\mathfrak w$ of $\W$ admits the graded decomposition $\mathfrak w = W_{w_1} \oplus \cdots \oplus W_{w_s}$, then it is straightforward to notice that the Jacobian of the dilations $\{\delta_\lambda|_\W\}_{\lambda>0}$ is $\lambda^{Q_\W}$, where 
\[
Q_\W= \sum_{i=1}^s w_i\, \dim W_{w_i}
\]
is the {\em homogeneous dimension} of $\W$. 
If we endowed $\W$ with a homogeneous distance $\rho$, then the uniqueness of the Haar measure yields $C>0$ such that
\[
\mathcal{S}^{Q_\W}\big(B_\W(w,r)\big)= C\, r^{Q_\W},
\]
where $B_\W(w,r)$ denotes the metric ball in $\W$ of center $w$ and radius $r$ with respect to $\rho$, 
and $\mathcal S^{Q_\W}$ is the $Q_\W$-dimensional spherical measure constructed from $\rho$. 
In particular, the Hausdorff dimension $\operatorname{dim}_H(\W)$ of $\W$ with respect to $\rho$ satisfies
\[
\dim_H(\W) = Q_\W\,.
\]
Moreover, we choose a graded basis $(Y_1,\ldots,Y_k)$ of $\mathfrak w$, where 
$k=\operatorname{dim}(\W)$ and $\operatorname{dim}(\W)$ denotes the topological dimension of $\W$. Due to the left invariance of $Y_j$, we have
\[
\deg\big(Y_1(w)\wedge \cdots \wedge Y_k(w)\big)=Q_\W=\dim_H\W
\]
for every $w\in\W$. We may finally conclude that 
\[
\deg(\W)=Q_\W=\dim_H(\W).
\]
\end{remark}
The next proposition proves the constancy of the pointwise degree on a smooth intrinsically differentiable intrinsic graph $\Sigma$. It mainly relies on two area formulas for the spherical measure of a submanifold. In particular the equality $\dim_H\Sigma=\dim_H\W$ will arise as an important tool. Here $\W$ denotes the first factor of the factorising pair $(\W,\V)$.
\begin{proposition}\label{prop: smooth intrinsic graphs constant degree}
Let $\Sigma=\operatorname{graph}(\phi)\subset\G$ be a $(\W,\V)$-graph, with $\phi\colon A\to\V$, $A\subset\W$ open, and $\phi$ of class $C^2$. If $\phi$ is also continuously intrinsically differentiable, then $\operatorname{deg}_\Sigma(x)=\deg(\W)$ for every $x\in\Sigma$.
\end{proposition}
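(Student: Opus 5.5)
We prove that the tangent $k$-vector of $\Sigma$ at each point has degree exactly $Q_\W=\deg(\W)$ by a direct computation of $d\Phi$, where $\Phi(w)=w\phi(w)$ is the graph map. The area-formula heuristic mentioned before the statement ($\dim_H\Sigma=\dim_H\W$) is only a guide here and will not be used.

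First, fix a graded basis $(Y_1,\ldots,Y_k)$ of $\mathfrak w$ and complete it with a graded basis of $\mathfrak v$ to a graded basis of $\mathfrak g$. At $w\in A$ we write
\[
d\Phi_w(Y_i)=dR_{\phi(w)}Y_i(w)+dL_w\, d\phi_w(Y_i).
\]
Left-translating by $\Phi(w)^{-1}$, this becomes
\[
\operatorname{Ad}_{\phi(w)^{-1}}Y_i+d\phi_w(Y_i),
\]
where the second term is read in the left trivialisation of $T\V$. Since $\G$ is nilpotent and graded, $\operatorname{Ad}_{v^{-1}}Y_i=Y_i+(\text{terms of weight}>w(Y_i))$. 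Continuous intrinsic differentiability of $\phi$ means that the intrinsic differential
\[
D^\phi\phi(w)\colon\W\to\V
\]
is a homogeneous homomorphism. Hence the $\mathfrak v$-component of the left-trivialised tangent vector, projected onto weight $w(Y_i)$, is a linear map $L_w$ of $\mathfrak w$ into $\mathfrak v$ preserving weights. All other corrections to that vector have weight strictly larger than $w(Y_i)$.

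Concretely, the plan is to show
\[
\tau_i:=dL_{\Phi(w)^{-1}}d\Phi_w(Y_i)=Y_i+L_wY_i+R_i,\qquad R_i\in\bigoplus_{p>w(Y_i)}\mathfrak g_p.
\]
This is the main obstacle. It requires identifying the weight-$w(Y_i)$ part of $d\phi_w(Y_i)$ with the intrinsic gradient. Using $C^2$ smoothness, I would write $\phi$ in exponential coordinates and differentiate the intrinsic difference quotient $\phi(w)^{-1}\phi(w\,\delta_t y)$ along $\delta_t$. For a $C^1$ map $\phi$, the components of lower weight in $\mathfrak v$ can only come from $d\phi_w(Y_i)$ with weight $<w(Y_i)$. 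Intrinsic differentiability forces these components to vanish: otherwise the ratio
\[
\rho\big(\phi(w)^{-1}\phi(wy),\,D^\phi\phi(w)(y)\big)/\rho(y)
\]
would blow up along $y=\delta_t\exp(sY_i)$. The $C^2$ hypothesis gives the uniformity in $w$ needed to pass from this limit to the pointwise derivative. Continuity of $D^\phi\phi$ guarantees that the $C^1$ vector field picture is consistent across points.

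Given the displayed form, we expand
\[
\tau_\Sigma=\tau_1\wedge\cdots\wedge\tau_k=(Y_1+L_wY_1)\wedge\cdots\wedge(Y_k+L_wY_k)+(\text{terms of larger weight}).
\]
Two facts remain. All terms have weight $\ge Q_\W$, since the $\tau_i$ have components only in weight $\ge w(Y_i)$. In fact we want weight $\le Q_\W$, because the degree is the maximum, so a second look is needed. Here I expect the correct formulation is reversed: components of $d\Phi(Y_i)$ of weight larger than $w(Y_i)$ must be shown to vanish, while lower-weight ones may survive. In the left trivialisation, $\operatorname{Ad}$ produces higher terms, but these are absorbed because $\Phi(w\,\delta_t y)$ is controlled homogeneously. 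So I will run the argument symmetrically, applying the estimate to both the lower- and higher-weight parts.

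The coefficient of $Y_1\wedge\cdots\wedge Y_k$ equals $1$, since $\mathfrak w\cap\mathfrak v=0$ and $L_w$ maps into $\mathfrak v$. So $(\tau_\Sigma)_{Q_\W}\neq0$, and combined with the weight bound this gives
\[
\deg_\Sigma(\Phi(w))=Q_\W=\deg(\W)
\]
at every point, as claimed.
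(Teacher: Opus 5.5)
\textbf{There is a genuine gap.} It lies in the central step, the weight structure of $\tau_i=dL_{\Phi(w)^{-1}}d\Phi_w(Y_i)$. Both versions you state are false for these vectors, and the estimate you invoke does not encode the hypothesis.

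Take $\H^1$ with law $(x,y,t)(x',y',t')=(x+x',y+y',t+t'+\tfrac12(xy'-yx'))$, $\W=\{(0,y,t)\}$, $\V=\{(x,0,0)\}$, $(Y_1,Y_2)=(Y,T)$ and $\phi(0,y,t)=(\epsilon t,0,0)$. This $\phi$ is smooth and continuously intrinsically differentiable, since its graph is an everywhere non-characteristic $C^1$ surface. A direct computation gives $\tau_1=Y-\epsilon t\,T$ and $\tau_2=T+\epsilon X$.
\begin{itemize}
\item $\tau_2$ keeps a component of weight $1<w(T)$, which contradicts your first claim. Your blow-up heuristic only detects components of weight \emph{larger} than $w(Y_i)$, not smaller.
\item $\tau_1$ keeps a component of weight $2>w(Y)$, which contradicts the ``reversed'' claim. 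Running the estimate ``symmetrically'' would make each $\tau_i$ homogeneous, which is false.
\end{itemize}
The degree is still $3=\deg(\W)$, but only after a change of basis in the tangent plane: $\tau_1+\epsilon t\,\tau_2=Y+\epsilon^2 t\,X$.

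The root cause is that $\rho(\phi(w)^{-1}\phi(wy),D^\phi\phi(w)(y))/\rho(y)$ is the Pansu difference quotient of $\phi\colon\W\to\V$. Intrinsic differentiability at $w$ is instead defined through the translated map $\phi_{\Phi(w)^{-1}}(u)=\pi_\V(\Phi(w)u)^{-1}\phi(\pi_\W(\Phi(w)u))$. The reparametrisation $u\mapsto\pi_\W(\Phi(w)u)$ built into this definition is exactly the change of basis that disposes of the $\operatorname{Ad}_{\phi(w)^{-1}}$ terms you call ``absorbed''. With your quotient the statement actually fails. Take $\W$ the $x$-axis, $\V=\{(0,y,t)\}$ and $\phi\equiv(0,1,0)$. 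Then $\phi(w)^{-1}\phi(wy)\equiv e$, yet the graph $\{(x,1,x/2)\}$ has tangent $X+T$, hence degree $2\neq 1=\deg(\W)$.

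\textbf{How to repair it.} Remove $\operatorname{Ad}$ before estimating. At $q=\Phi(w)$, the set $q^{-1}\Sigma$ is the intrinsic graph of $\tilde\phi:=\phi_{q^{-1}}$, a $C^1$ map with $\tilde\phi(e)=e$. Hence $dL_{q^{-1}}T_q\Sigma=\{Y+AY: Y\in\mathfrak w\}$ with $A=d\tilde\phi_e\colon\mathfrak w\to\mathfrak v$. Intrinsic differentiability at $w$ gives $\|\tilde\phi(u)\|=O(\|u\|)$ for a homogeneous norm.

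Suppose $(AY)_r\neq 0$ for some $Y\in\mathfrak w_s$ and $r>s$. Along $u=\exp(\lambda^sY)=\delta_\lambda\exp Y$, the weight-$r$ exponential coordinate of $\tilde\phi(u)$ is $\lambda^s(AY)_r+o(\lambda^s)$. So $\|\tilde\phi(u)\|\gtrsim\lambda^{s/r}\gg\lambda\simeq\|u\|$, a contradiction. Hence $A$ does not raise weights, and each $Y_i+AY_i$ has components of weight at most $w(Y_i)$. It follows that every component of $\bigwedge_i(Y_i+AY_i)$ has weight at most $Q_\W$. The weight-$Q_\W$ component contains $Y_1\wedge\cdots\wedge Y_k$ with coefficient $1$, so it is nonzero. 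The lower-weight parts of $AY_i$ survive and are harmless.

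Completed this way, your route differs from the paper's. The paper compares the Corni--Magnani intrinsic area formula with the Magnani--Vittone area formula for $\mathcal S^{N}$ to get $\deg(\Sigma)=\deg(\W)$. It then spreads maximal degree over $A$ using continuity of the Jacobians and connectedness. The direct argument needs only $C^1$ regularity and pointwise intrinsic differentiability.
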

\begin{proof}
Let $\Phi: A \subset W \to \mathbb G$ be the intrinsic graph map defined by $\Phi(w) = w  \phi(w)$.
We set $N=\deg(\W)$.
Since $\phi$ is continuously intrinsically differentiable, we can apply the area formula for intrinsic graphs (see \cite[Theorem 1.2]{CorMag23pr}). Moreover, by \cite[Proposition 7.2]{CorMag23pr}, the intrinsic Jacobian $J\Phi$ is continuous and strictly positive on $A$.

Therefore, for every open, bounded, and sufficiently small set $\Omega \subset \mathbb G$ such that $\Sigma_\Omega := \Sigma \cap \Omega \neq \emptyset$, we have $
0 < \mathcal S^N(\Sigma_\Omega) < \infty$,
where $N$ is the Hausdorff dimension of $\W$.

Let $N_0 := \operatorname{deg}(\Sigma)$ be the degree of $\Sigma$, and define $\Sigma_0 := \{x \in \Sigma : \operatorname{deg}_\Sigma(x) = N_0\}$,
which is a nonempty open subset of $\Sigma$. Set
$A_0 := \Phi^{-1}(\Sigma_0) \subset A$,
which is a nonempty open subset of $A$.

Since $\Phi$ is a $C^1$ diffeomorphism onto $\Sigma$, it provides a global parametrization of $\Sigma$. Hence, for every open, bounded, and sufficiently small set $\Omega' \subset \mathbb G$ such that $\emptyset \neq \Sigma_{\Omega'} := \Sigma \cap \Omega' \subset \Sigma_0$, we can apply the area formula for submanifolds (see \cite[(1.4)]{Mag13Vit}) and obtain $0 < \mathcal S^{N_0}(\Sigma_{\Omega'}) < \infty$.
Since a set cannot have finite positive spherical measure in two different dimensions, it follows that $N=N_0$.

Next, combining the intrinsic area formula with the classical Euclidean area formula, we obtain that there exists a constant $C>0$ such that for every open set $A' \subset A_0$,
\begin{align*}
\int_{A'} J\Phi(w)\, d\mathcal H^k_{|\cdot|}
=
\int_{\Phi(A')} \frac{J\Phi(\Phi^{-1}(x))}{J_g\Phi(\Phi^{-1}(x))}\, d\mathrm{vol}_k(x)
\le
C \int_{\Phi(A')} |\tau_\Sigma^N(x)|\, d\mathrm{vol}_k(x),
\end{align*}
where $J_g\Phi$ denotes the classical $k$-dimensional Jacobian of $\Phi$ with respect to the fixed scalar product $g$. Thus, taking the averaged integrals on $\Sigma_0\cap B_r(x)$ with respect to $\mathrm{vol}_k$ and letting $r\to 0^+$, we get  $J\Phi(\Phi^{-1}(x))\le C\vert\tau_\Sigma^N(x)\vert\cdot J_g\Phi(\Phi^{-1}(x))$. 
Since both $J\Phi$ and $J_g\Phi$ are continuous and strictly positive on $A$, it follows that
$
|\tau_\Sigma^N(\Phi(w))| > 0 \quad \text{for all } w \in A_0$.

We now prove that $A_0= A$. Assume by contradiction that $A_0 \subsetneq A$. Since $A$ is open and connected and $A_0$ is nonempty and open, there exists a point $w_0 \in \partial A_0 \cap A$.
Let $x_0 := \Phi(w_0) \in \Sigma$. Since $w_0 \notin A_0$, we have $x_0 \notin \Sigma_0$, hence
\[
\operatorname{deg}_\Sigma(x_0) < N = N_0\ \Longrightarrow\ 
|\tau_\Sigma^N(x_0)| = 0\,.
\]

On the other hand, taking a sequence $w_j \in A_0$ such that $w_j \to w_0$, and using the continuity of all terms, we can pass to the limit in $J\Phi(w_j) \le C\, |\tau_\Sigma^N(\Phi(w_j))| \, J_g\Phi(w_j)$
to obtain
\[
J\Phi(w_0) \le C\, |\tau_\Sigma^N(x_0)| \, J_g\Phi(w_0) = 0,
\]
which contradicts the fact that $J\Phi(w_0) > 0$.
Therefore, $A_0 = A$, and so every point of $\Sigma$ has degree $N$.
\end{proof}
As a direct consequence of Lemma \ref{lem: forms of higher weight, integral vanishes} together with Proposition \ref{prop: smooth intrinsic graphs constant degree}, we get the following result.
\begin{corollary}\label{cor: integrating forms on smooth intrinsic graphs}Let $\Sigma=\operatorname{graph}(\phi)\subset\G$ be a $(\W,\V)$-graph, $A\subset\W$ open, and $\phi\colon A\to\V$ of class $C^2$. If $\phi$ is intrinsically differentiable with continuous intrinsic differential, then if $k=\operatorname{dim}\W$ we have
\begin{align*}
    \int_\Sigma\alpha=0\ \text{ for any }\alpha\in\Omega^{p,k-p}(\G)\ \text{ such that }p>\operatorname{deg}(\W)\,.
\end{align*}
    
\end{corollary}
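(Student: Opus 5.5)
The plan is to combine Proposition \ref{prop: smooth intrinsic graphs constant degree} with Lemma \ref{lem: forms of higher weight, integral vanishes}; the only real work is to make the two hypotheses match.

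First I would apply Proposition \ref{prop: smooth intrinsic graphs constant degree} to the $(\W,\V)$-graph $\Sigma=\operatorname{graph}(\phi)$. Its hypotheses are that $\phi$ is $C^2$ and continuously intrinsically differentiable, which are exactly the assumptions here. It yields $\operatorname{deg}_\Sigma(x)=\operatorname{deg}(\W)$ for every $x\in\Sigma$, hence $\operatorname{deg}(\Sigma)=\operatorname{deg}(\W)$. Note also that $\Phi$ is a $C^1$ (indeed $C^2$) diffeomorphism of $A$ onto $\Sigma$, so $\Sigma$ is a $k$-dimensional $C^1$ submanifold with $k=\dim\W$. I would fix on $\Sigma$ the orientation induced by $\Phi$ from a fixed orientation of $\W$.

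Second, given $\alpha\in\Omega^{p,k-p}(\G)$ with $p>\operatorname{deg}(\W)=\operatorname{deg}(\Sigma)$, I would invoke Lemma \ref{lem: forms of higher weight, integral vanishes}. The heart of that lemma is pointwise: at each $x\in\Sigma$ the unit tangent $k$-vector $\tau_\Sigma(x)$ has components only along $X_J$ with $w(X_J)\le\operatorname{deg}(\Sigma)<p$. The covector $\alpha_x$ has components only along $\theta_I$ with $w(\theta_I)=p$, so $\theta_I(X_J)=0$ for every such pair. Hence $\alpha(\tau_\Sigma)\equiv0$ on $\Sigma$ and $\int_\Sigma\alpha=\int_\Sigma\alpha(\tau_\Sigma)\,d\mathrm{vol}_k=0$.

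The main obstacle is a mismatch of hypotheses. The lemma is stated for compact submanifolds, while $\Sigma$ need not be compact, since $A$ is only open. I would handle this by noting that the argument above is pointwise, so it requires only that $\int_\Sigma\alpha$ make sense, for instance $\alpha$ compactly supported, or $\alpha$ integrable on $\Sigma$. Alternatively, one can exhaust $A$ by relatively compact open sets $A_j$, apply the vanishing of the integrand on $\Phi(A_j)$, and pass to the limit. In either case the integrand $\alpha(\tau_\Sigma)$ vanishes identically, so the conclusion holds whenever the integral is defined.
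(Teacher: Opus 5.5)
Your proposal is correct and follows the paper's argument: the paper simply combines Proposition~\ref{prop: smooth intrinsic graphs constant degree}, which gives $\deg(\Sigma)=\deg(\W)$, with Lemma~\ref{lem: forms of higher weight, integral vanishes}. You also note that the lemma's vanishing is pointwise ($\alpha(\tau_\Sigma)\equiv 0$ on $\Sigma$), so its compactness hypothesis only serves to make the integral well defined; this correctly fills in a detail the paper leaves implicit.
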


Due to the importance of intrinsic graphs in geometric measure theory on Carnot groups, we introduce a class of surface that are at an intermediate step between smooth surfaces and intrinsically regular surfaces.
\begin{definition}[Locally smooth intrinsic graphs]\label{def:LSIG}
We say that $\Sigma\subset\G$, an orientable $C^1$ submanifold, is a \textit{locally smooth intrinsic graph} if, for any point $p\in\Sigma$ outside a negligible set, there exists a neighbourhood $U$ of $p$ such that $U\cap\Sigma$ is a $C^2$ intrinsic graph.

We say that an orientable $C^1$ manifold with boundary $\partial\Sigma$ is a \textit{locally smooth intrinsic graph with boundary} if both $\Sigma$ and $\partial\Sigma$ are locally smooth intrinsic graphs.

\end{definition}

\subsection{Locally smooth intrinsic graphs vs. $R$-manifolds on Heisenberg groups}\label{subsection: R mflds Heisenberg}
Let us first study the simple case of $\G$ being the $2n+1$-dimensional Heisenberg group $\mathbb H^n$. It is well-known that Stokes' theorem for the Rumin complex holds on locally smooth intrinsic graphs with boundary given by a smooth intrinsic graph, as also shown in \cite{StokesFranchi,FranchiSerapioni,DiMJulNGoloVit25}. 

In this subsection, we will rephrase the same result in a slightly different way: we will first show that locally  smooth intrinsic graphs of topological dimension $k\le n$ are indeed $R$-manifolds, while those of topological dimension $k> n$ in general are not. Crucially, Stokes' theorem also holds in all dimensions, because for $k>n$ we are precisely in case (1) of Proposition~\ref{prop: when R manifold is not needed}, and for each $0<k<2n+1$ the Rumin differential is homogeneous, so we are also in case (2) of Proposition \ref{prop: when R manifold is not needed}. 
\begin{proposition}[Stokes' theorem for the Rumin complex on locally intrinsic graphs in $\H^n$]\label{prop: stokes on heisenberg groups}
    Let $\Sigma\subset\mathbb H^n$ be a $k$-dimensional locally smooth intrinsic graph $\Sigma$ with boundary $\partial\Sigma$, then
    \begin{align*}
        \int_{\partial \Sigma}\alpha=\int_\Sigma d_c\alpha\ \text{ for any }\alpha\in E_0^{k-1}\,.
    \end{align*}
\end{proposition}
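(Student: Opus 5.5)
The plan is to start from the chain used in Theorem~\ref{thm: stokes on R manifolds},
\begin{align*}
\int_{\partial\Sigma}\alpha=\int_{\partial\Sigma}\Pi_E\alpha=\int_\Sigma d\Pi_E\alpha=\int_\Sigma d_c\alpha .
\end{align*}
The middle equality is the classical Stokes theorem on the $C^1$ manifold $\Sigma$. The outer equalities will be justified by degree and weight arguments, using Corollary~\ref{cor: integrating forms on smooth intrinsic graphs} and Proposition~\ref{prop: when R manifold is not needed}, rather than by assuming the $R$-manifold property. Since the statement is local up to negligible sets, I will first cover $\Sigma$ and $\partial\Sigma$ (outside null sets) by $C^2$ intrinsic graph patches. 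Integrals of smooth forms over the negligible remainder vanish. On each patch, Proposition~\ref{prop: smooth intrinsic graphs constant degree} gives constant degree equal to $\deg(\W)$, and Lemma~\ref{lem: forms of higher weight, integral vanishes} applies patchwise. It therefore suffices to reason with the degree of each patch.

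First I record the structure of $\H^n$. The weights are $1$ for horizontal and $2$ for $\tau$. The Rumin forms are $E_0^h\subset\Omega^{h,0}$ for $h\le n$ and $E_0^h\subset\Omega^{h+1,-1}$ for $h\ge n+1$. Also $\operatorname{Im}\delta_0\cap\Omega^h=\tau\wedge(\ldots)$ has weight $h+1$ for $h\le n$, and vanishes for $h\ge n+1$ (its pieces lie in the complement of $E_0$ among $\tau$-forms, which for $h\ge n+1$ are all Rumin or $\operatorname{Im}d_0$-free; this follows from the Lefschetz decomposition). Homogeneous subgroups $\W$ of dimension $h\le n$ complementary to some $\V$ are horizontal (isotropic), so $\deg(\W)=h$. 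For $h\ge n+1$ they contain the centre, so $\deg(\W)=h+1$.

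\emph{Case $k\le n$.} Patches of $\Sigma$ have degree $k$ and patches of $\partial\Sigma$ have degree $k-1$. Forms in $\operatorname{Im}\delta_0\cap\Omega^k$ have weight $k+1>k$, and those in $\operatorname{Im}\delta_0\cap\Omega^{k-1}$ have weight $k>k-1$. Corollary~\ref{cor: integrating forms on smooth intrinsic graphs} then makes $\Sigma$ and $\partial\Sigma$ $R$-manifolds, and Theorem~\ref{thm: stokes on R manifolds} applies directly.

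\emph{Case $k\ge n+2$.} Here $\alpha\in E_0^{k-1}$ has maximal weight $k$ in degree $k-1$, so $\Pi_E\alpha=\alpha$ by Proposition~\ref{prop: when R manifold is not needed}(1). Moreover $d\Pi_E\alpha=d\alpha$ is a homogeneous $k$-form of weight $k+1$. By Proposition~\ref{prop: when R manifold is not needed}(2) with a single weight component, this form lies in $E_0^k$, so $d\alpha=d_c\alpha$ and no $R$-condition is needed.

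\emph{Case $k=n+1$.} This is the main obstacle, the middle degree. Here $\alpha\in E_0^n\subset\Omega^{n,0}$ and the boundary has dimension $n$, so $\partial\Sigma$ is an $R$-manifold by the first case and $\int_{\partial\Sigma}\alpha=\int_{\partial\Sigma}\Pi_E\alpha$. On $\Sigma$, $d\Pi_E\alpha$ has weights at least $n+2$, and the weight $n+2$ is the maximal weight in degree $n+1$. The component of weight $n+1$ is $d_c^1\alpha$, which is zero because $E_0^{n+1}$ has weight $n+2$. Hence $d\Pi_E\alpha$ is homogeneous of weight $n+2$. Proposition~\ref{prop: when R manifold is not needed}(2) puts it in $E_0^{n+1}$, so it equals $d_c\alpha$ (the second-order operator $\partial_2-\ldots$). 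The only delicate points are checking that the weight-$(n+1)$ component really vanishes and justifying the patchwise use of Lemma~\ref{lem: forms of higher weight, integral vanishes} near the boundary, together with the negligible-set gluing.
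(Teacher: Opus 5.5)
Your three-case argument is essentially the paper's proof: for $k\le n$ you use the $R$-manifold property and Theorem~\ref{thm: stokes on R manifolds}; for $k\ge n+2$ you use parts (1) and (2) of Proposition~\ref{prop: when R manifold is not needed}; and in the mixed case $k=n+1$ only $\partial\Sigma$ is an $R$-manifold, while $d\Pi_E\alpha=(d\Pi_E\alpha)_{n+2}=d_c\alpha$. One correction, to a side remark that your argument never actually uses: for $n\ge 2$ and $n+1\le h\le 2n-1$, $\operatorname{Im}\delta_0\cap\Omega^h(\H^n)$ does \emph{not} vanish. For example, $\tau\wedge d\tau\in\operatorname{Im}\delta_0\cap\Omega^3(\H^2)$ has nonzero integral over the vertical plane $\{y_1=y_2=0\}$, which is exactly why low-codimension intrinsic graphs fail to be $R$-manifolds and why your case analysis is needed.
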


       In the case of $\mathbb H^n$, there is a straightforward characterisation of the spaces of Rumin forms $E_0^k$ in terms of their weight (being horizontal or vertical). Moreover, the weight of forms will also determine which space in the direct sum decomposition \eqref{eq: hodge decompo Box_0} they belong to.
       
    Let us denote by $(X_i,Y_i,T)_{i=1}^n$ denote the orthonormal basis of the Lie algebra $\mathfrak{h}^n$ with non-trivial bracket relations $[X_i,Y_i]=T$ for each $i=1,\ldots,n$. We will use $(dx_i,dy_i,\tau)_{i=1}^n$ for its dual basis. It is well-known that $\mathbb H^n$ is stratifiable with a 2-step stratification $\mathfrak{h}^n=V_1\oplus V_2$ given by $V_1=\operatorname{span}_\R\{X_i,Y_i\}_{i=1}^n$ and $V_2=\operatorname{span}_\R\{T\}$, so that for each degree $k=1,\ldots,2n$ the space of smooth forms splits into \textit{horizontal} and \textit{vertical} forms, that is $\Omega^k(\G)=\Omega^{k,0}(\G)\oplus\Omega^{k+1,-1}(\G)$. In degree $k=0$, the space of 0-forms is made of smooth functions that only have weight 0, while in top degree the volume form has weight $2n+2$. Furthermore
    \begin{itemize}
        \item for $1\le k\le n$, $\operatorname{Im}\delta_0=\Omega^{k+1,-1}(\G)$;
        \item for $k=1$, $\operatorname{Im}d_0\cap\Omega^1(\G)=0$, $\ker \Box_0\cap\Omega^1(\G)=\Omega^{1,0}(\G)$, and $\Omega^{1}(\G)\cap\operatorname{Im}\delta_0=\Omega^{2,-1}(\G)$;
        \item for $1<k\le n$, $\Omega^{k,0}(\G)=\big(\operatorname{Im}d_0\oplus\ker\Box_0\big)\cap\Omega^k(\G)$, while $\Omega^{k+1,-1}(\G)=\operatorname{Im}\delta_0\cap\Omega^k(\G)$;
        \item for $n+1\le k\le 2n$,  $\Omega^{k,0}(\G)=\operatorname{Im}d_0\cap\Omega^{k}(\G)$, and $\Omega^{k+1,-1}(\G)=\big(\ker\Box_0\oplus\operatorname{Im}\delta_0\big)\cap\Omega^k(\G)$;
        \item for $k=2n$, $\operatorname{Im}\delta_0\cap\Omega^{2n}(\G)=0$, $\Omega^{2n,0}(\G)=\operatorname{Im}d_0\cap\Omega^{2n}(\G)$, and $\ker\Box_0\cap\Omega^{2n}(\G)=\Omega^{2n+1,-1}(\G)$.
    \end{itemize}
    Crucially, in each degree the space $\operatorname{Im}\delta_0\cap\Omega^k(\G)$ is always a subspace of the space of vertical forms $\Omega^{k+1,-1}(\G)$, that is the space of $k$-forms of weight $k+1$.
    
    On the other hand, intrinsic graphs on $\mathbb H^n$ of low dimension are always ``horizontal'' submanifolds, that is given $\operatorname{graph}(\phi)\subset\H^n$ a $(\W,\V)$-graph with $\operatorname{dim}\W=k\le n$, then $\operatorname{deg}(\operatorname{graph}(\phi))=\deg(\W)=k$. By Corollary \ref{cor: integrating forms on smooth intrinsic graphs}, we obtain that, as long as $1\le k\le n$,
    \begin{align*}
        \int_{\operatorname{graph}(\phi)}\eta=0\ \text{ for any }\eta\in\operatorname{Im}\delta_0\cap\Omega^{k}(\G)
    \end{align*}
    since $\operatorname{Im}\delta_0\cap\Omega^k(\G)=\Omega^{k+1,-1}(\G)$ if $k\le n$.
    As a direct consequence, we have that for a locally smooth intrinsic graph $\Sigma$ of dimension $1\le k\le n$, we have $\operatorname{deg}(\Sigma)=k$, since the degree of $\Sigma$ is the maximum amongst all possible degrees, which are necessarily all equal to $k$. By Corollary \ref{cor: integrating forms on smooth intrinsic graphs}, this implies that such low-dimensional locally smooth intrinsic graphs $\Sigma$ are indeed $R$-manifolds as defined in Definition \ref{def: R manifolds}. By Theorem \ref{thm: stokes on R manifolds}, this implies that for $1\le k\le n$, given $\Sigma$ an orientable $k$-dimensional locally smooth intrinsic graph with boundary $\partial\Sigma$, Stokes' theorem holds and
    \begin{align*}
        \int_{\partial\Sigma}\alpha=\int_\Sigma d_c\alpha\ \text{ for all }\alpha\in E_0^{k-1}\ \text{ with }0\le k<n\,.
    \end{align*}

    In the high-dimensional case (or, equivalently, low codimension as usually expressed in the literature), this is not the case. Indeed, for $n<k<2n$, we have that $\operatorname{Im}\delta_0\cap\Omega^k(\G)$ is a non-trivial subspace of $\Omega^{k+1,-1}(\G)$, and the integration of these forms on intrinsic smooth submanifolds of low codimension might not vanishing.
    
    We first notice that given $\operatorname{graph}(\phi)\subset\H^n$ a $(\W,\V)$-graph with $\operatorname{dim}\W=k\ge n+1$, then $\operatorname{deg}(\W)=k+1$ and so Corollary \ref{cor: integrating forms on smooth intrinsic graphs} does not apply. Indeed, already in $\H^2$, one can easily come up with an example of a smooth intrinsic graph of dimension $3$ over which the integral of the form $d\tau\wedge\tau\in\operatorname{Im}\delta_0$ does not vanish. 

    It suffices to consider the trivial smooth and continuously intrinsically differentiable intrinsic graph
    \[
    \Sigma_0=\{(x_1,0,x_2,0,t):x_1,x_2,t\in\R\}
    \]
    with $\phi\equiv0$ and coordinates $(x_1,y_1,x_2,y_2,t)$.
    We have that, since the 3-form $\tau\wedge d\tau\in\operatorname{Im}\delta_0$ can be expressed explicitly as
    \begin{align*}
    \tau\wedge d\tau=&\tau\wedge (dy_1\wedge dx_1+dy_2\wedge dx_2)=dt\wedge dy_1\wedge dx_1+dt\wedge dy_2\wedge dx_2+\\&+\frac{y_1 dx_1\wedge dy_2\wedge dx_2 -x_1 dy_1\wedge dy_2\wedge dx_2+y_2 dx_2\wedge dy_1\wedge dx_1-x_2 dy_2\wedge dy_1\wedge dx_1}{2}\,, 
    \end{align*}it is straightforward to check that for every nonnegative and nonvanishing $\varphi\in C_c^0(\Sigma_0)$, one has
    \[
    \int_{\Sigma_0} \varphi\, \tau\wedge d\tau=\int_{\operatorname{supp}\varphi} \varphi\,dt\wedge dx_3\wedge dx_1\neq0.
    \]
    In other words, in low codimension smooth and continuously intrinsically differentiable graphs \textit{need not be} $R$-manifolds, although Stokes' theorem for the Rumin complex still holds for them. This is due to the following two facts: for forms of degree $k>n$ we have
    \begin{itemize}
        \item $E_0^k\subset\Omega^{k+1,-1}(\G)$, that is Rumin forms have maximal weight, and so $\Pi_E\alpha=\alpha$ for every $\alpha\in E_0^k$ by (1) of Proposition \ref{prop: when R manifold is not needed};
        \item given $\alpha\in E_0^k\subset\Omega^{k+1,-1}(\G)$, we have $d\Pi_E\alpha =\big(d\Pi_E\alpha\big)_{k+2}$ and so $d\Pi_E\alpha\in\ker\Box_0$ by (2) of Proposition \ref{prop: when R manifold is not needed}, i.e. $d_c\alpha=d\Pi_E\alpha$ (in this specific case, since $\Pi_E\alpha=\alpha$, we have a further simplification, as $d_c\alpha=d\alpha$).
    \end{itemize}
    Hence, given a locally smooth intrinsic graph $\Sigma$ with boundary $\partial\Sigma$, we have
    \begin{align*}
        \int_{\partial\Sigma}\alpha=\int_{\Sigma}d\alpha=\int_\Sigma d_c\alpha\ \text{ for all }\alpha\in E_0^{k-1}\text{ with }n<k\le 2n+1\,,
    \end{align*}
    and both integrals are non-trivial since $\operatorname{deg}(\partial\Sigma)=k$ and $\operatorname{deg}(\Sigma)=k+1$, and we are integrating vertical forms $\alpha$ and $d_c\alpha$ on them respectively (has $\alpha$ and $d_c\alpha$ been horizontal form, both integrals would vanish by Corollary \ref{cor: integrating forms on smooth intrinsic graphs}).

    Finally, the case of degree $n$, where the Rumin differential $d_c\colon E_0^n\to E_0^{n+1}$ has differential order 2, is a mixture of both cases. Let $\Sigma$ be an $(n+1)$-dimensional locally smooth intrinsic graph with boundary $\partial\Sigma$. Since $\alpha\in E_0^{n}\subset\Omega^{n,0}(\G)$, we have $\Pi_E\alpha\neq \alpha$, however $\operatorname{deg}(\partial\Sigma)=n$, and so $\partial\Sigma$ is  an $R$-manifold. The $n+1$-dimensional $\Sigma$ is not since $\operatorname{deg}(\Sigma)=n+2$. However, $d\Pi_E\alpha=(d\Pi_E\alpha)_{n+2}$ for any $\alpha\in E_0^n$, and so by (2) in Proposition \ref{prop: when R manifold is not needed}  we have that $d\Pi_E\alpha\in\ker\Box_0\cap\Omega^{n+1}(\G)$ and $d_c\alpha=d\Pi_E\alpha$. We can therefore directly apply Stokes' theorem to get
    \begin{align*}
        \int_{\partial\Sigma}\alpha\underbrace{=}_{\partial\Sigma\; R\text{-mfld}}\int_{\partial\Sigma}\Pi_E\alpha\underbrace{=}_{\text{Stokes' thm}}\int_{\Sigma}d\Pi_E\alpha=\int_{\Sigma}d_c\alpha\ \text{ for all }\alpha\in E_0^n\,.
    \end{align*}

\subsection{Locally smooth intrinsic graphs vs. $R$-manifolds on the group $\mathbb H^1\times\R$}\label{subsection: intrinsic graphs vs R mfld on H1timesR}
Let us consider the 4-dimensional connected, simply-connected nilpotent Lie group $\H^1\times\R$ introduced in Subsection \ref{subsection: rumin on HtimesR}.
As already stated in Proposition \ref{prop: complementary subgroups vs Rumin forms}, the splittings of the Lie algebra are in a one-to-one correspondence with the space of simple Rumin forms. Let us use the computations already carried out in Subsection \ref{subsection: rumin on HtimesR} to study whether locally smooth intrinsic graphs in $\H^1\times\R$ are $R$-manifolds.
\subsubsection*{$\operatorname{dim}(\W)=1$} In degree 1, the space of Rumin 1-forms is 3-dimensional and spanned by $\{\theta_1,\theta_2,\theta_3\}$, 3 simple horizontal 1-forms which correspond to the following three choices of complementary subgroups for smooth intrinsic $(\W,\V)$-graphs:
\begin{itemize}
    \item $(\W,\V)$ with $\exp(\W)=\operatorname{span}_\R\{X_1\}$ and $\exp(\V)=\operatorname{span}_\R\{X_2,X_3,X_4\}$;
    \item $(\W,\V)$ with $\exp(\W)=\operatorname{span}_\R\{X_2\}$ and $\exp(\V)=\operatorname{span}_\R\{X_1,X_3,X_4\}$;
    \item $(\W,\V)$ with $\exp(\W)=\operatorname{span}_\R\{X_3\}$ and $\exp(\V)=\operatorname{span}_\R\{X_1,X_2,X_4\}$.
\end{itemize}
In each case, $\operatorname{deg}(\W)=1$, while $\operatorname{Im}\delta_0\cap\Omega^1(\H^1\times\R)=\Omega^{2,-1}(\H^1\times\R)$, and so by Corollary \ref{cor: integrating forms on smooth intrinsic graphs} we get that for any 1-dimensional smooth intrinsic graph  
\begin{align*}
    \int_{\operatorname{graph}(\phi)}\eta=0\ \text{ for any }\eta\in\operatorname{Im}\delta_0\cap\Omega^1(\H^1\times\R)
\end{align*}
by Corollary \ref{cor: integrating forms on smooth intrinsic graphs}, since $w(\eta)=2>1=\operatorname{deg}(\operatorname{graph}(\phi))$.
Therefore, every 1-dimensional locally smooth intrinsic graph is an $R$-manifold.
\subsubsection*{$\operatorname{dim}(\W)=3$} Using Hodge-$\star$ duality considerations, The case of 3-dimensional smooth intrinsic graphs is complementary to the 1-dimensional one, so that we get the following three choices of complementary subgroups for $(\W,\V)$-graphs:
\begin{itemize}
    \item $(\W,\V)$ with $\exp(\W)=\operatorname{span}_\R\{X_1,X_2,X_4\}$ and $\exp(\V)=\operatorname{span}_\R\{X_3\}$;
    \item $(\W,\V)$ with $\exp(\W)=\operatorname{span}_\R\{X_1,X_3,X_4\}$ and $\exp(\V)=\operatorname{span}_\R\{X_2\}$;
    \item $(\W,\V)$ with $\exp(\W)=\operatorname{span}_\R\{X_2,X_3,X_4\}$ and $\exp(\V)=\operatorname{span}_\R\{X_1\}$.
\end{itemize}
Interestingly, $\operatorname{Im}\delta_0\cap\Omega^3(\H^1\times\R)=0$ so 3-dimensional smooth intrinsic graphs are trivially $R$-manifolds for any choice of complementary subgroups.
\subsubsection*{$\operatorname{dim}(\W)=2$}
The space of Rumin 2-forms is spanned by the following simple forms $\{\theta_1\wedge\theta_3,\theta_2\wedge\theta_3,\theta_1\wedge\theta_4,\theta_2\wedge\theta_4\}\subset\Omega^{2,0}(\H^1\times\R)\oplus\Omega^{3,-1}(\H^1\times\R)$, which correspond to the following four choices of complementary subgroups for $(\W,\V)$-graphs:
\begin{enumerate}
    \item $(\W,\V)$ with $\exp(\W)=\operatorname{span}_\R\{X_1,X_3\}$ and $\exp(\V)=\operatorname{span}_\R\{X_2,X_4\}$;
    \item $(\W,\V)$ with $\exp(\W)=\operatorname{span}_\R\{X_2,X_3\}$ and $\exp(\V)=\operatorname{span}_\R\{X_1,X_4\}$;
    \item $(\W,\V)$ with $\exp(\W)=\operatorname{span}_\R\{X_1,X_4\}$ and $\exp(\V)=\operatorname{span}_\R\{X_2,X_3\}$;
    \item $(\W,\V)$ with $\exp(\W)=\operatorname{span}_\R\{X_2,X_4\}$ and $\exp(\V)=\operatorname{span}_\R\{X_1,X_3\}$\,.
\end{enumerate}

Note that for the first two choices of complementary subgroups we have that $\operatorname{deg}(\W)=2$, while for the final two $\operatorname{deg}(\W)=3$. Moreover, since $\operatorname{Im}\delta_0\cap\Omega^2(\H^1\times\R)
\subset\Omega^{3,-1}(\H^1\times\R)$, directly from Corollary \ref{cor: integrating forms on smooth intrinsic graphs}, we get that any locally smooth intrinsic graph of degree 2, i.e. constructed using the complementary subgroups (1) or (2), is an $R$-manifold. The same reasoning does not apply to locally smooth intrinsic graphs of degree 3 constructed using the complementary subgroups (3) and (4). It is not hard to construct examples of smooth intrinsic graphs of constant degree 3 that are \textit{not} $R$-manifolds.

\begin{remark}\label{remark: not an R manifold} Let us consider the pair of complementary subgroups $(\W,\V)$ in (3) as above. We are interested in showing that one can find an associated smooth intrinsic graph $\Phi(w)=w\phi(w)$ which \textit{is not} an $R$-manifold. For this claim to hold, it suffices to find a smooth and continuously intrinsically differentiable function $\phi\colon U\to\V$,
where $U\subset\W$ is any open bounded set, such that
\begin{align*}
    \int_{\Phi(U)}\theta_3\wedge\theta_4\neq 0\ \text{ since }\operatorname{Im}\delta_0\cap\Omega^2(\H^1\times\R)=\operatorname{span}_{C^\infty(\H^1\times\R)}\{\theta_3\wedge\theta_4\}\,.
\end{align*}
By the specific choice (3) of complementary subgroups, we get that
\begin{align*}
    \Phi(w)=w\phi(w)=\big(w_1,0,0,w_4\big)\,\big(0,\phi_2(w),\phi_3(w),0\big)=\big(w_1,\phi_2(w),\phi_3(w),w_4+w_1\phi_2(w)/2\big)\,.
\end{align*}
Then, using the explicit expression of the left-invariant 1-forms $\theta_i$ in Subsection \ref{subsection: rumin on HtimesR}, we get that
\begin{align*}
    \int_{\Phi(U)}\theta_3\wedge\theta_4=&\int_{U}\Phi^\ast\big(\theta_3\wedge\theta_4\big)=\int_U\Phi^\ast\big(dx_3\wedge dx_4+\tfrac{x_1}{2}dx_2\wedge dx_3-\tfrac{x_2}{2}dx_1\wedge dx_3\big)\\=&\int_U\big(d\Phi_3\wedge d\Phi_4+\tfrac{\Phi_1}{2}d\Phi_2\wedge d\Phi_3-\tfrac{\Phi_2}{2}d\Phi_1\wedge d\Phi_3\big)=\int_U\big(\partial_1\phi_3(w)-{\phi_2(w)}{}\partial_4\phi_3(w)\big)dx_1\wedge dx_4\,.
\end{align*}
A simple example of a smooth intrinsically differentiable map $\phi\colon U\subset\W\to\V$ for which the integral $\int_{\Phi(U)}\theta_3\wedge\theta_4$ does not vanish is given by $\phi(w)=(0,w_1)$, namely $\phi_2(w)\equiv 0$ and $\phi_3(w_1,w_4)=w_1$, so that $\partial_1\phi_3(w)-\phi_2(w)\partial_4\phi_3(w)=1$. Thus, the integral does not vanish. 
The continuous intrinsic differentiability of $\phi\colon U\to\V$ could be checked directly, or observing that $\Sigma=\Phi(U)$ is also a non-critical level set of a continuously Pansu differentiable function and then apply \cite[Theorem~4.3.7]{CorniPhD}. We observe that
\[
\operatorname{graph}(\phi)=
f^{-1}(0)=\{(w_1,0,w_1,w_4):(w_1,w_4)\in U\}\ , \text{ where }\ f(w_1,w_2,w_3,w_4)=(w_2,w_3-w_1)
\]
and $f:E\to\V\approx\R^2$, with $E=\{(w_1,w_2,w_3,w_4): (w_1,w_4)\in U, w_2,w_3\in\R\}$.
\end{remark}

The previous remark shows that 2-dimensional, smooth intrinsic graphs of degree 3 in $\H^1\times\R$ are not $R$-manifolds in general. However, unlike the case of $\H^n$, we are not able to leverage extra properties of the Rumin complex. In fact, we will see that Stokes' theorem for the Rumin complex will not hold on them.

\begin{proposition}[Stokes' theorem for the Rumin complex on locally smooth intrinsic graphs in $\H^1\times\R$]\label{prop: stokes intrinsic graphs H1timesR}
    Let $\alpha\in\ker\Box_0\cap\Omega^{p,k-p}(\H^1\times\R)$ be a Rumin $k$-form with $k\ge 1$ and let $\Sigma$ be a $(k+1)$-dimensional locally smooth intrinsic graph with boundary $\partial\Sigma$, then
    \begin{itemize}
        \item [$k=1$:] if $\operatorname{deg}(\Sigma)=2$, then $\int_{\partial\Sigma}\alpha=\int_\Sigma d_c\alpha$;
        \item [$k=1$:] if $\operatorname{deg}(\Sigma)=3$, then $\int_{\partial\Sigma}\alpha\neq\int_\Sigma d_c\alpha$;
        \item [$k=2$:] if $\operatorname{deg}(\partial\Sigma)=p=w(\alpha)$, then $\int_{\partial\Sigma}\alpha=\int_\Sigma d_c\alpha$;
        \item [$k=2$:] if $\operatorname{deg}(\partial\Sigma)<p=w(\alpha)$, then $\int_{\partial\Sigma}\alpha\neq\int_\Sigma d_c\alpha$.
        \end{itemize}
\end{proposition}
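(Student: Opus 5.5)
The plan is to treat each bullet separately, using the explicit decomposition of $\Omega^\bullet(\H^1\times\R)$ from Subsection \ref{subsection: rumin on HtimesR}. Throughout, two inputs do most of the work. The first is Lemma \ref{lem: forms of higher weight, integral vanishes} together with Proposition \ref{prop: smooth intrinsic graphs constant degree}: forms of weight larger than the degree integrate to zero on locally smooth intrinsic graphs. The second is the list of degrees of the complementary subgroups computed above. One-dimensional graphs have degree $1$. Two-dimensional graphs have degree $2$ for cases (1)–(2) and degree $3$ for cases (3)–(4). Three-dimensional graphs always have degree $4$.

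\textbf{Case $k=1$, $\deg(\Sigma)=2$.} Here $\partial\Sigma$ is one-dimensional, hence of degree $1$. So $\partial\Sigma$ is an $R$-manifold, because $\operatorname{Im}\delta_0\cap\Omega^1=\Omega^{2,-1}$. Likewise $\Sigma$ is an $R$-manifold, because $\operatorname{Im}\delta_0\cap\Omega^2\subset\Omega^{3,-1}$ and $3>2$. The claim then follows directly from Theorem \ref{thm: stokes on R manifolds}.

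\textbf{Case $k=1$, $\deg(\Sigma)=3$.} I will use the identity derived in Subsection \ref{subsection: rumin on HtimesR}:
$$d\Pi_E\alpha=d_c\alpha+\big[X_3(X_1f_2-X_2f_1)-X_4f_3\big]\theta_3\wedge\theta_4 .$$
The boundary $\partial\Sigma$ still has degree $1$, so $\int_{\partial\Sigma}\alpha=\int_{\partial\Sigma}\Pi_E\alpha=\int_\Sigma d\Pi_E\alpha$. It follows that
$$\int_{\partial\Sigma}\alpha-\int_\Sigma d_c\alpha=\int_\Sigma\big[X_3(X_1f_2-X_2f_1)-X_4f_3\big]\theta_3\wedge\theta_4 .$$
To show this difference is nonzero, I take the graph of Remark \ref{remark: not an R manifold}, namely $\{(w_1,0,w_1,w_4)\}$, and let $\Sigma$ be a compact piece of it. On this graph $\Phi^*(\theta_3\wedge\theta_4)=dw_1\wedge dw_4$. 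I then choose $\alpha=f_3\theta_3$ with $X_4f_3$ of constant sign on $\Sigma$, for instance $f_3=x_4$. The difference is then a nonzero multiple of the area of the parameter domain.

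\textbf{Case $k=2$, $\deg(\partial\Sigma)=p$.} Now $\Sigma$ is three-dimensional and $\operatorname{Im}\delta_0\cap\Omega^3=0$, so $\Sigma$ is trivially an $R$-manifold. If $p=2$, then $\partial\Sigma$ has degree $2$ and is an $R$-manifold, so Theorem \ref{thm: stokes on R manifolds} applies. If $p=3$, then $\alpha$ has maximal weight in degree $2$. Proposition \ref{prop: when R manifold is not needed}(1) gives $\Pi_E\alpha=\alpha$, and since $\Omega^{4,-1}\subset\ker\Box_0$ we get $d_c\alpha=d\alpha$. Classical Stokes then concludes.

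\textbf{Case $k=2$, $\deg(\partial\Sigma)<p$.} This forces $p=3$ and $\deg(\partial\Sigma)=2$. The first step is the same reduction as in the previous case: $\Pi_E\alpha=\alpha$ and $d_c\alpha=d\alpha$. The second step is to observe that $\int_{\partial\Sigma}\alpha=0$ by Lemma \ref{lem: forms of higher weight, integral vanishes}, since $p=3>2=\deg(\partial\Sigma)$.

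This is where I expect the main obstacle, and I do not see how to overcome it. After these two steps, classical Stokes gives
$$\int_\Sigma d_c\alpha=\int_\Sigma d\alpha=\int_{\partial\Sigma}\alpha=0,$$
which would make the two sides equal rather than different. The first thing I would try is to look for a gap in this reduction. In particular I would check whether $\Pi_E\alpha=\alpha$ and $\Omega^{4,-1}\subset\ker\Box_0$ really hold as stated in Subsection \ref{subsection: rumin on HtimesR}. If they do, the inequality cannot be proved as written, and the bullet would have to be reinterpreted. The reading I would adopt is that in this regime the Stokes formula holds only trivially: both sides vanish identically, so the formula carries no information. I cannot establish a literal inequality from the results stated so far.
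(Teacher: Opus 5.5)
Your first and third bullets follow the paper's proof. For the first, both $\Sigma$ and $\partial\Sigma$ are $R$-manifolds by weight versus degree. For the third, the case $w(\alpha)=2$ goes through Theorem~\ref{thm: stokes on R manifolds} and the case $w(\alpha)=3$ uses $\Pi_E\alpha=\alpha$.

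\textbf{Fourth bullet.} Your computation is correct as far as it goes. If $\deg(\partial\Sigma)<w(\alpha)$, then $w(\alpha)=3$, $\Pi_E\alpha=\alpha$ and $d_c\alpha=d\alpha$, so Stokes holds and the bullet as printed is false. (The $w(\alpha)=3$ argument of the third bullet never uses $\deg(\partial\Sigma)$.) The gap is in your fix. The conclusion should not be reread as a vacuous equality; it is the inequality in the hypothesis that is reversed. The paper's proof of this bullet treats $w(\alpha)=2$ with $\deg(\partial\Sigma)=3$, described as ``the only possibility for $w(\alpha)<\deg(\partial\Sigma)$''. Your proposal contains no argument for that case. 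It goes as follows.
\begin{itemize}
\item Write $\alpha=f_1\theta_1\wedge\theta_3+f_2\theta_2\wedge\theta_3$ and $g=X_1f_2-X_2f_1$.
\item Since $d_0(\theta_3\wedge\theta_4)=\theta_1\wedge\theta_2\wedge\theta_3$, one gets $\Pi_E\alpha=\alpha-g\,\theta_3\wedge\theta_4\neq\alpha$.
\item Since $\operatorname{Im}\delta_0\cap\Omega^3(\H^1\times\R)=0$, we have $d\Pi_E\alpha\in(\operatorname{Im}d_0)^\perp\cap\Omega^3=\ker\Box_0\cap\Omega^3$, hence $d\Pi_E\alpha=d_c\alpha$.
\item Classical Stokes then gives
\[
\int_{\partial\Sigma}\alpha-\int_\Sigma d_c\alpha=\int_{\partial\Sigma}g\,\theta_3\wedge\theta_4 .
\]
\end{itemize}
The right-hand side need not vanish, because a boundary of degree $3$ is not an $R$-manifold (Remark~\ref{remark: not an R manifold}). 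So the failure comes from the boundary term, not from $\Sigma$.

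\textbf{Second bullet.} The mechanism is the paper's, but your concrete surface does not satisfy the hypotheses. On $\{(w_1,0,w_1,w_4)\}$ one has $x_2\equiv0$, so $\theta_4$ restricts to $dw_4$. The boundary of any compact piece contains arcs along which $w_4$ varies, and there the tangent has a nonzero $X_4$-component. Hence $\deg(\partial\Sigma)=2$, and $\partial\Sigma$ is not a one-dimensional locally smooth intrinsic graph, which would have to be horizontal. Your claim that it ``still has degree $1$'' is therefore wrong.

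Your computation remains valid for that $\Sigma$, since $\Pi_E(f_3\theta_3)=f_3\theta_3$, but it is not an instance of the bullet. Nor can any compact piece of a single graph over $\exp(\operatorname{span}\{X_1,X_4\})$ serve. There $\Phi^*\theta_4=dw_4+\phi_2\,dw_1$, so $w_1$ is strictly monotone along regular horizontal curves, and no closed horizontal curve exists in the graph. You need a surface whose boundary is a closed horizontal curve and on which $\theta_3\wedge\theta_4$ does not vanish identically, for instance one assembled from several graph patches. The paper itself only points to Remark~\ref{remark: not an R manifold} at this step.
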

\begin{proof}
    Let us first consider the case of $\alpha\in\ker\Box_0\cap\Omega^1(\H^1\times\R)$. We have already shown that 1-dimensional locally smooth intrinsic graphs and 2-dimensional locally smooth intrinsic graphs of degree 2 are $R$-manifolds, hence the first claim directly follows from Theorem \ref{thm: stokes on R manifolds}.

    Let us consider the case where $\partial\Sigma$ is a 1-dimensional smooth intrinsic graph, while $\Sigma$ is a 2-dimensional locally smooth intrinsic graph of degree 3. Then, taking an arbitrary Rumin 1-form $\alpha=f_1\theta_1+f_2\theta_2+f_3\theta_3$ gives
    \begin{align*}
        d\Pi_E\alpha=&d\big(\alpha-d_0^{-1}d_1\alpha\big)=d\big(\alpha+(X_1f_2-X_2f_1)\theta_4\big)=d_c\alpha+[X_3(X_1f_2-X_2f_1)-X_4f_3]\theta_3\wedge\theta_4\,.
    \end{align*}
    Since in general $\int_\Sigma\theta_3\wedge\theta_4\neq 0$ (see Remark \ref{remark: not an R manifold}), we have that
    \begin{align*}
        \int_{\partial\Sigma}\alpha=\int_{\partial\Sigma}\Pi_E\alpha=\int_\Sigma d\Pi_E\alpha=\int_{\Sigma}d_c\alpha+[X_3(X_1f_2-X_2f_1)-X_4f_3]\theta_3\wedge\theta_4\neq\int_\Sigma d_c\alpha
    \end{align*}
    for an appropriate choice of the horizontal 1-form $\alpha$ for which $X_3(X_1f_2-X_2f_1)-X_4f_3\neq 0$.

    The case of $\alpha\in E_0^2$ and $\operatorname{deg}(\partial\Sigma)=w(\alpha)$ covers exactly two cases: when $w(\alpha)=2$ and $\partial\Sigma$ is a locally smooth intrinsic graph of degree 2, and when $w(\alpha)=3$ and $\partial\Sigma$ is a locally smooth intrinsic graph of degree 3. In the first case, since both $\partial\Sigma$ and $\Sigma$ are $R$-manifold, the claim follows directly from Theorem \ref{thm: stokes on R manifolds}. On the other hand, if $w(\alpha)=3$, we have that $\partial\Sigma$ is not an $R$-manifold, however since $3$ is the maximal possible weight of forms in degree 2, we have that property (1) of Proposition \ref{prop: when R manifold is not needed} applies, so that Stokes' theorem holds true since $\Pi_E\alpha=\alpha$.

    Finally, if $w(\alpha)=2$ while $\partial\Sigma$ is a locally smooth intrinsic graph of degree 3 (which is the only possibility for $w(\alpha)<\operatorname{deg}(\partial\Sigma)$ to be true), then $\Pi_E\alpha=\alpha+\operatorname{Im}\delta_0\neq \alpha$ and since in general $\int_{\partial\Sigma}\theta_3\wedge\theta_4\neq 0$ we have that
    \begin{align*}
        \int_{\partial\Sigma}\alpha\neq\int_{\partial\Sigma}\Pi_E\alpha=\int_\Sigma d\Pi_E\alpha=\int_\Sigma d_c\alpha
    \end{align*}
    the last equality following from the fact that $\Sigma$ is an $R$-manifold.
\end{proof}

To recap, in this section we have shown that in general locally smooth intrinsic graphs are not $R$-manifolds. In some cases, (for example in the case of Heisenberg groups) one can rely on some extra properties of the Rumin complex  to obtain Stokes' theorem on locally smooth intrinsic graphs $\Sigma$ with boundary $\partial\Sigma$, i.e. $\int_\Sigma d_c\alpha=\int_{\partial\Sigma}\alpha$. In the case where such properties do not hold, however, one is not able to recover the same formula.

\section{Stokes' Theorem for Spectral Complexes}

In this section, similarly to what we have done for the Rumin complex, we study the validity of Stokes' theorem for the spectral complexes $\{(E_{j,l}^{\bullet,\bullet},\Delta_j)\}_{j\in I_{\bullet,\bullet}}$ introduced in \cite{tripaldi2026spectralcomplexestruncatedmulticomplexes}. These subcomplexes arise from the de Rham complex when the latter is viewed as a truncated multicomplex (see Proposition \ref{prop: de Rham is multicomplex}). 

We begin by describing the main structural properties of these subcomplexes via the graded $\R$-modules $Z_r^{p,\bullet}$ and $B_r^{p,\bullet}$, introduced in \cite{livernet2020spectral}. These subspaces of forms represent an alternative method used to describe the quotient spaces $E_r^{p,\bullet}$ arising at each page of the spectral sequence associated with the filtration given by homogeneous weight. We then show that these spaces admit an equivalent formulation in terms of Rumin forms and the Rumin differential $d_c$.
Finally, once the main properties are covered, the validity of Stokes' theorem on smooth intrinsic graphs for the spectral complexes $\{(E_{j,l}^{\bullet,\bullet},\Delta_j)\}_{j\in I_{\bullet,\bullet}}$ follows as a direct consequence of Corollary \ref{cor: integrating forms on smooth intrinsic graphs}.

Let us quickly recall the definitions of the modules $Z_r^{p,\bullet}$ and $B_r^{p,\bullet}$, together with the induced differentials $\Delta_r\colon Z_r^{p,\bullet}/B_r^{p,\bullet}\to Z_r^{p+r,\bullet}/B_r^{p+r,\bullet}$. Here we adopt the indexing and notation consistent with the bidegree convention used in Definition~\ref{def: truncated multicomplex}. For a more thorough presentation of such submodules we refer to \cite{tripaldi2026spectralcomplexestruncatedmulticomplexes}.

\begin{definition}[Definition 2.6 in \cite{livernet2020spectral}]\label{Z and B defined} Let $\alpha\in\Omega^{p,\bullet}(\G)$ and let $r\ge 1$. We define the graded $\R$-submodules $Z_r^{p,\bullet}$ and $B_r^{p,\bullet}$ of $\ker d_0\cap\Omega^{p,\bullet}(\G)$ as follows.
\begin{align*}
    \alpha\in Z_r^{p,\bullet}\ \Longleftrightarrow&\ \text{for }1\le j\le r-1\,,\text{ there exists }z_{p+j}\in\Omega^{p+j,\bullet}(\G)\text{ such that}\\&\ d_0\alpha=0\text{ and }d_n\alpha=\sum_{i=0}^{n-1}d_iz_{p+n-i}\text{ for all }1\le n\le r-1\,.\\
    \alpha\in B_r^{p,\bullet}\ \Longleftrightarrow&\ \text{for }0\le k\le r-1\text{ there exists }c_{p-k}\in\Omega^{p-k,\bullet}(\G)\text{ such that}\\&\ \alpha=\sum_{k=0}^{r-1}d_kc_{p-k}\text{ and }0=\sum_{k=l}^{r-1}d_{k-l}c_{p-k}\text{ for }1\le l\le r-1\,.
\end{align*}
\end{definition}

\begin{proposition}[Theorem 2.10 in \cite{livernet2020spectral}]\label{prop: expression Delta spectral} The $r^{th}$ differential of the spectral sequence corresponds to the map
\begin{align*}
    \Delta_r\colon Z_r^{p,\bullet}/B_r^{p,\bullet}\longrightarrow Z_r^{p+r,\bullet}/B_r^{p+r,\bullet}\ ,\ \Delta_r\big([x]\big)=\bigg[d_rx-\sum_{i=1}^{r-1}d_iz_{p+r-i}\bigg]
\end{align*}
where $x\in Z_r^{p,\bullet}$ and the family $\{z_{p+j}\}_{1\le j\le r-1}$ is chosen so as to satisfy the relations of Definition \ref{Z and B defined}.    
\end{proposition}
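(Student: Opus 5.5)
The plan is to reduce the statement to the classical description of the spectral sequence of a filtered complex, applied to the weight filtration $F^p\Omega^\bullet(\G):=\bigoplus_{q\ge p}\Omega^{q,\bullet}(\G)$. This filtration is preserved by $d$, since by Lemma \ref{lem: diving exterior derivative} every $d_i$ has non-negative weight jump. There the $r$-th page is $E_r^p=\mathcal Z_r^p/(\mathcal Z_{r-1}^{p+1}+d\mathcal Z_{r-1}^{p-r+1})$, with $\mathcal Z_r^p=\{X\in F^p\mid dX\in F^{p+r}\}$, and $d_r$ is induced by $d$. Everything rests on the multicomplex relations \eqref{eq: multicomplex maps rules} of Proposition \ref{prop: de Rham is multicomplex}, and the work is to translate between filtered elements $X$ and their leading components $x=(X)_p$.

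\emph{Step 1.} First I would show that $Z_r^{p,\bullet}$ of Definition \ref{Z and B defined} is the set of leading components of elements of $\mathcal Z_r^p$. Given $x$ with witnesses $z_{p+1},\dots,z_{p+r-1}$, set $X:=x-\sum_{j=1}^{r-1}z_{p+j}$. Expanding $dX$ by weight, its component of weight $p$ is $d_0x=0$, and its component of weight $p+n$ for $1\le n\le r-1$ is $d_nx-\sum_{i=0}^{n-1}d_iz_{p+n-i}=0$. The component of weight $p+r$ is
\[
d_rx-\sum_{i=1}^{r-1}d_iz_{p+r-i},
\]
since no $z_{p+r}$ appears. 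Conversely, the negatives of the higher-weight components of any $X\in\mathcal Z_r^p$ provide witnesses for $x=(X)_p$. In the same way, $B_r^{p,\bullet}$ is the set of leading components of $d c$ with $c=\sum_k c_{p-k}\in F^{p-r+1}$ and $dc\in F^p$: the constraints $\sum_{k=l}^{r-1}d_{k-l}c_{p-k}=0$ say exactly that the components of $dc$ of weight below $p$ vanish.

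\emph{Step 2.} Next I would check that $y:=d_rx-\sum_{i=1}^{r-1}d_iz_{p+r-i}$ lies in $Z_r^{p+r,\bullet}$. Since $dX\in F^{p+r}$ has leading term $y$ and satisfies $d(dX)=0$, expanding $d(dX)$ by weight gives $d_0y=0$. It also gives
\[
d_ny=-\sum_{i=0}^{n-1}d_i\,(dX)_{p+r+n-i},
\]
so the choice $z'_{p+r+j}:=-(dX)_{p+r+j}$ provides witnesses. This identifies $\Delta_r[x]$ with the leading part of $d$ applied to a lift of $x$, which is the classical $d_r$.

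\emph{Step 3 (main obstacle).} The hard part is well-definedness: independence from the witnesses $z$ and from the representative of $[x]$ modulo $B_r^{p,\bullet}$. Two choices of witnesses give lifts $X,X'$ whose difference lies in $\mathcal Z_{r-1}^{p+1}$. Their images therefore differ by the leading component, of weight $p+r$, of $d(X-X')$. By Step 1 this component is a $d$-boundary of the type defining $B_r^{p+r,\bullet}$, with $c=X-X'\in F^{p+1}$ and $d c\in F^{p+r}$; note that $p+1=(p+r)-r+1$, so this is exactly the admissible range. If $x\in B_r^{p,\bullet}$, I would write $x=(dc)_p$ and use the lift $X=dc$; then $dX=0$, so $\Delta_r[x]=0$ modulo $B_r^{p+r,\bullet}$, once the witnesses have been adjusted using the first part of this step. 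The delicate bookkeeping is matching the index ranges in the constraints for $B_r$ against those coming from $\mathcal Z_{r-1}^{p+1}$ and $d\mathcal Z_{r-1}^{p-r+1}$; I would organise this carefully by weight.

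Once the dictionary $Z_r/B_r\cong E_r$ is in place, the identification of $\Delta_r$ with the $r$-th differential follows, as in \cite[Theorem 2.10]{livernet2020spectral}.
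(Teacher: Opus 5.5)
Your proposal is correct and follows essentially the same route as the paper. The paper does not reprove the statement but cites \cite[Theorem 2.10]{livernet2020spectral}, and it records in \eqref{eq: Z and B as quotients} exactly your leading-component dictionary $Z_r^{p,\bullet}\cong\mathcal Z_r^{p,\bullet}/\mathcal Z_{r-1}^{p+1,\bullet}$ and $B_r^{p,\bullet}\cong\mathcal B_r^{p,\bullet}/\mathcal Z_{r-1}^{p+1,\bullet}$, realised through the lift $x-\sum_j z_{p+j}$. The one point to make explicit in Step 3 is what happens when you use the lift $dc$: the witnesses $z_{p+j}=-(dc)_{p+j}$ give a truncated lift whose image is $-d_0(dc)_{p+r}\in B_1^{p+r,\bullet}\subseteq B_r^{p+r,\bullet}$, so your conclusion $\Delta_r[x]=0$ still holds.
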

\begin{remark}
    It easily follows from the definitions of both $Z_r^{\bullet,\bullet}$ and $B_r^{\bullet,\bullet}$ that 
    \begin{align*}
        \alpha\in B_r^{p,\bullet}\Longleftrightarrow \exists\, c_{p-r+1}\in Z_{r-1}^{p-r+1,\bullet} \text{ such that }d(c_{p-r+1}+c_{p-r+2}+\cdots+c_p)=\alpha+\Omega^{\ge p+1,\bullet}(\G)
    \end{align*}
    where the forms $c_{p-i}\in\Omega^{p-i,\bullet}(\G)$ for $i=0,\ldots,r-2$ satisfy the equations in Definition \ref{Z and B defined}, and $\Omega^{\ge p+1,\bullet}(\G)$ denotes all the forms of weight at least $p+1$.

    This characterisation will play a central role throughout this section and the next one, in particular in the analysis of the validity of Stokes' theorem. With a slight abuse of notation, we summarise it by writing
    \begin{align*}
        B_r^{p,\bullet}=dZ_{r-1}^{p-r+1,\bullet}+\Omega^{\ge p+1,\bullet}(\G)\,.
    \end{align*}
    Similarly, for any $\alpha\in Z_r^{p,\bullet}$, one has
    \begin{align*}
        \Delta_r\alpha=d(\alpha-z_{p+1}-\cdots-z_{p+r-1})+B_r^{p+r,\bullet}+\Omega^{\ge p+r+1,\bullet}(\G)\,,
    \end{align*}
    where the family $\{z_{p+j}\}_{1\le j\le r-1}$ is as in Definition \ref{Z and B defined}.
\end{remark}
\begin{remark}
    Using Definition \ref{Z and B defined}, one readily verifies that for $r=1$,
    \begin{align*}
        &\alpha\in Z_1^{p,\bullet}\ \Longleftrightarrow\ d_0\alpha=0\ \text{ and }\ \alpha\in B_1^{p,\bullet}\ \Longleftrightarrow\ \exists\, \beta\in\Omega^{p,\bullet}(\G)\ \text{ such that }d_0\beta=\alpha \,.
    \end{align*}
    In particular, $$Z_1^{p,\bullet}=\ker d_0\cap\Omega^{p,\bullet}(\G)\ \text{ and }\ B_1^{p,\bullet}=\operatorname{Im}d_0\cap\Omega^{p,\bullet}(\G)$$
    so there is a natural isomorphism between the space of Rumin forms $E_0^\bullet$ and the first page of the spectral sequence, $$E_0^\bullet\cong\bigoplus_{p=0}^QZ_1^{p,\bullet}/B_1^{p,\bullet}\,.$$ Moreover, using the scalar product introduced in Subsection \ref{subsection: introducing a scalar product}, we can express
    \begin{align*}
        E_0^k=\bigoplus_{p=0}^Q\,Z_1^{p,k-1}\cap\big(B_1^{p,k-p}\big)^\perp\,.
    \end{align*}
    A classical property underlying the  construction of spectral complexes is the chain of inclusions
    \begin{align*}
        B_1^{p,\bullet}\subseteq B_2^{p,\bullet}\subseteq\cdots\subseteq B_\infty^{p,\bullet}\subseteq Z_\infty^{p,\bullet}\subseteq\cdots\subseteq Z_2^{p,\bullet}\subseteq Z_1^{p,\bullet}\ \text{ for every }p\in\N\,.
    \end{align*}
    Notice that, when the multicomplex is given by the de Rham complex of a positively graded Lie group $(\Omega^\bullet(\G),d=d_0+d_{w_1}+\cdots+d_{w_s})$, the associated spectral sequence collapses after finitely many steps, i.e. there is exists a positive integer $M\le Q=w(\operatorname{vol})$ such that $B_\infty^{p,k-p}=B_M^{p,k-p}$ and $Z_\infty^{p,k-p}=Z_M^{p,k-p}$ for every $p=0,\ldots,Q$ and every $k=0,\ldots,\operatorname{dim}(\G)$. Moreover, for every $k=1,\ldots,\operatorname{dim}(\G)$ we also have that $B_\infty^{p,k-p}=Z_\infty^{p,k-p}$ at each weight.
\end{remark}
As shown in \cite{tripaldi2026spectralcomplexestruncatedmulticomplexes}, the spaces $Z_r^{p,\bullet}$ and $B_r^{p,\bullet}$, as well as the differentials $\Delta_r$, admit explicit descriptions in terms of Rumin forms and the Rumin differential.
\begin{proposition}[Propositions 3.4, 3.6 and 3.8 in \cite{tripaldi2026spectralcomplexestruncatedmulticomplexes}]\label{prop: spectral things in terms of Rumin}
    Let $Z_r^{p,\bullet}, B_r^{p,\bullet} \subset \Omega^{p,\bullet}(\mathbb{G})$ be as in Definition~\ref{Z and B defined}.
    
    One has $Z_1^{p,\bullet}=\ker d_0\cap\Omega^{p,\bullet}(\G)$. For $r\ge 2$, a form $\alpha\in\Omega^{p,\bullet}(\G)$ belongs to $Z_r^{p,\bullet}$ if and only if $\alpha\in\ker d_0\cap\Omega^{p,\bullet}(\G)$ and there exist forms ${\omega}_{p+i}\in\ker\Box_0\cap\Omega^{p+i,\bullet}(\G)$ with $i=1,\ldots,r-2$ such that
    \begin{align*}
        d_c^i\Pi_0\alpha=\sum_{j=1}^{i-1}d_c^{i-j}\omega_{p+j}\ \text{ for each }i=1,\ldots,r-1\,.
    \end{align*}
    On the other hand, $B_1^{p,\bullet}=\operatorname{Im}d_0\cap\Omega^{p,\bullet}(\G)$. For $r\ge 2$, a form $\alpha\in\Omega^{p,\bullet}(\G)$ belongs to $B_r^{p,\bullet}$ if and only if there exist $c_{p-r+1}\in\ker d_0\cap\Omega^{p-r+1,\bullet}(\G)$ as well as $\omega_{p-r+i}\in\ker\Box_0\cap\Omega^{p-r+i,\bullet}(\G)$ for $i=2,\ldots,r-1$ such that
    \begin{align*}
        d_c^i\Pi_0 c_{p-r+1}=\sum_{j=1}^{i-1}d_c^{i-j}\omega_{p-r+1+j}\ \text{ for each }i=1,\ldots,r-2
    \end{align*}
    and
    \begin{align*}
        \Pi_0\alpha=d_c^{r-1}\Pi_0c_{p-r+1}-\sum_{i=1}^{r-2}d_c^{r-1-i}\omega_{p-r+1+i}\,.
    \end{align*}
    Finally, for any $r\ge 1$ and $\alpha\in Z_r^{p,\bullet}$, one has
    \begin{align*}
        \Delta_r\big([\alpha]\big)=\bigg[d_c^r\Pi_0\alpha-\sum_{i=2}^{r-1}d_c^i\omega_{p+r-i}\bigg]\,,
    \end{align*}
    where the $\omega_{p+r-i}\in\ker\Box_0\cap\Omega^{p+r-i,\bullet}(\G)$ are chosen so that $d_c^j\Pi_0\alpha=\sum_{i=1}^{j-1}d_c^i\omega_{p+j-i}$ for $j=1,\ldots,r-1$.
\end{proposition}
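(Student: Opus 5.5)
The plan is to prove all three characterisations by induction on $r$. The main tool is the Hodge decomposition of Proposition \ref{prop: Hodge decomposition for Box0}, applied weight by weight to the auxiliary forms $z_{p+j}$ and $c_{p-k}$ of Definition \ref{Z and B defined}. We use it to strip out everything that does not contribute modulo $\operatorname{Im}d_0$. The operators $\partial_r$ of Definition \ref{def: partial operators} are the bridge: by Lemma \ref{lem: expressing d_c on E_0}, $d_c^r=\Pi_0\partial_r$ on $E_0$, so the whole problem reduces to comparing the multicomplex relations with iterated $\partial$'s.

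\emph{Step 1 (gauge reduction).} For $\alpha\in\ker d_0\cap\Omega^{p,\bullet}(\G)$, write $\alpha=\Pi_0\alpha+d_0\beta$ with $\beta=d_0^{-1}\alpha$ of weight $p$. Replacing $\alpha$ by $\alpha-d\beta$ changes it only by $-(d-d_0)\beta$, which has weight $\ge p+1$. The multicomplex relations \eqref{eq: multicomplex maps rules} show that this shift can be absorbed by redefining $z_{p+j}\mapsto z_{p+j}-d_j\beta$. Hence $\alpha\in Z_r^{p,\bullet}$ if and only if $\Pi_0\alpha\in Z_r^{p,\bullet}$, and the value of $\Delta_r$ is unchanged. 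The same gauge freedom lets us take each $z_{p+j}$ without $\operatorname{Im}d_0$-component, since $d_0$-exact pieces contribute only $d$-exact terms plus higher-weight corrections. So we may assume $z_{p+j}=d_0^{-1}(\cdots)+\omega_{p+j}$ with $\omega_{p+j}\in\ker\Box_0$.

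\emph{Step 2 (induction for $Z_r$).} For $n=1$, the condition $d_1\alpha=d_0z_{p+1}$ says exactly that $d_1\alpha\in\operatorname{Im}d_0$. Since $d_0d_1\alpha=0$ by Lemma \ref{lem: d_0 of partial_r}, this is equivalent to $\Pi_0d_1\alpha=0$, i.e. $d_c^1\Pi_0\alpha=0$. It forces $z_{p+1}=d_0^{-1}\partial_1\alpha+\omega_{p+1}$.

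Inductively, assume
\begin{align*}
z_{p+j}=d_0^{-1}\Big(\partial_j\alpha-\sum_{i<j}\partial_{j-i}\omega_{p+i}\Big)+\omega_{p+j},
\end{align*}
where the $\partial$'s act on $\Pi_0\alpha$ and on the $\omega$'s, and the lower relations hold. Then the $n$-th relation reads
\begin{align*}
d_0z_{p+n}=\partial_n\alpha-\sum_{i<n}\partial_{n-i}\omega_{p+i}.
\end{align*}
The right-hand side is $d_0$-closed by Lemma \ref{lem: d_0 of partial_r}, applied to $\alpha$ and to each $\omega_{p+i}$, using the lower-order relations to cancel the mixed terms. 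It therefore lies in $\operatorname{Im}d_0$ if and only if its $\Pi_0$-projection vanishes. This is precisely $d_c^n\Pi_0\alpha=\sum_{j}d_c^{n-j}\omega_{p+j}$.

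\emph{Step 3 ($B_r$ and $\Delta_r$).} For $B_r$, we use the remark $B_r^{p,\bullet}=dZ_{r-1}^{p-r+1,\bullet}+\Omega^{\ge p+1,\bullet}(\G)$. We apply Step 2 to $c_{p-r+1}\in Z_{r-1}^{p-r+1,\bullet}$, which yields the listed conditions on $d_c^i\Pi_0c_{p-r+1}$. We then take the weight-$p$ component of $d(c_{p-r+1}+\cdots+c_p)$ and project with $\Pi_0$. The $\Pi_0$ kills $d_0c_p$, and the remaining terms assemble into $d_c^{r-1}\Pi_0c_{p-r+1}-\sum_{i}d_c^{r-1-i}\omega_{p-r+1+i}$. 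For $\Delta_r$, we substitute the normal form of the $z$'s into Proposition \ref{prop: expression Delta spectral}. The expression $d_rx-\sum_i d_iz_{p+r-i}$ becomes $\partial_r\Pi_0\alpha-\sum\partial_{\cdot}\omega_{\cdot}$ modulo $\operatorname{Im}d_0\subset B_r^{p+r,\bullet}$, and projecting with $\Pi_0$ gives the stated class.

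\emph{Main obstacle.} The hardest part is the combinatorial bookkeeping in Step 2. One must verify that the recursively defined $\partial_r$ reproduce exactly the sums $\sum d_iz_{p+n-i}$ once the normal form of the $z$'s is inserted. One must also check that the mixed terms involving the $\omega$'s are $d_0$-closed. I would first handle this by extending Lemma \ref{lem: d_0 of partial_r} to the combined expression $\partial_n\alpha-\sum\partial_{n-i}\omega_{p+i}$, proving the closedness by the same telescoping argument used there.
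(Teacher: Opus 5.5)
Your argument is correct. The paper does not prove this proposition itself (it is imported from \cite{tripaldi2026spectralcomplexestruncatedmulticomplexes}), but your mechanism is exactly what the paper carries out by hand for $j=1,2,3$ in Section~\ref{section: spectral Leibniz}: split each $z_{p+i}$ along \eqref{eq: hodge decompo Box_0}, observe that its $\operatorname{Im}\delta_0$-part is forced to be $d_0^{-1}\big(\partial_i\Pi_0\alpha-\sum_{l<i}\partial_{i-l}\omega_{p+l}\big)$, and let its free $\ker\Box_0$-part play the role of $\omega_{p+i}$. The only difference is that you gauge away the $\operatorname{Im}d_0$-parts of $\alpha$ and of the $z_{p+i}$ by adding to $a=\alpha-z_{p+1}-\cdots-z_{p+r-1}$ exact forms $d\gamma$ with $w(\gamma)\ge p$, which leaves $da$ unchanged; the paper instead carries $\beta_p$ and $\check z_{p+i}$ along.

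A few small loose ends remain.

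(i) With the paper's sign convention the gauge is $z_{p+j}\mapsto z_{p+j}+d_j\beta$, not $-d_j\beta$.

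(ii) The $d_0$-closedness you flag as the main obstacle follows at once from $d^2=0$. By the lower relations, $a=\alpha-z_{p+1}-\cdots-z_{p+n-1}$ has $da\in\Omega^{\ge p+n,\bullet}(\G)$, hence $d_0(da)_{p+n}=(d^2a)_{p+n}=0$. In your normal form, $(da)_{p+n}$ is exactly $\partial_n\Pi_0\alpha-\sum_{l<n}\partial_{n-l}\omega_{p+l}$.

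(iii) Your $\Delta_r$ computation produces an extra term $-d_c^1\omega_{p+r-1}$ that is absent from the stated formula, whose sum runs over $i=2,\dots,r-1$. You need to discard it explicitly. Either note that $\omega_{p+r-1}$ is not constrained by any $Z_r$ relation, so it may be chosen to be $0$, or note that $d_c^1\omega_{p+r-1}\in d_1\omega_{p+r-1}+\operatorname{Im}d_0\subseteq B_2^{p+r,\bullet}\subseteq B_r^{p+r,\bullet}$.

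(iv) For $B_r$ you only treat the forward direction. For the converse, build $c_{p-r+2},\dots,c_{p-1}$ from the $\omega$'s by Step 2. Then $\alpha$ and the weight-$p$ component of $d(c_{p-r+1}+\cdots+c_{p-1})$ have the same $\Pi_0$-part, and the latter is $d_0$-closed by the same $d^2=0$ argument. So they differ by some $d_0c_p$ provided $\alpha\in\ker d_0$. This hypothesis is implicit in $B_r^{p,\bullet}\subset\ker d_0$ but missing from the statement as written: any nonzero $\alpha\in\operatorname{Im}\delta_0\cap\Omega^{p,\bullet}(\G)$ satisfies the displayed conditions with $c_{p-r+1}=0$ and all $\omega$'s equal to $0$, yet does not lie in $B_r^{p,\bullet}$.
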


As already proved explicitly in Lemma \ref{lem: expressing d_c on E_0}, in general, given an arbitrary choice of bidegree $(p,k)$, the Rumin differential $d_c$ applied to $E_0^k\cap\Omega^{p,k-p}(\G)$ will be given by a sum of several operators. Let us introduce the following notation: for a given choice of bidegree $(p,k)$, there exists $I_{p,k}=\{\nu_1,\ldots,\nu_N\}$ with each $\nu_i\in\N$ and $\nu_1<\cdots<\nu_N$ such that
\begin{align*}
    d_c=\sum_{j\in I_{p,k}}d_c^j\colon E_0^{k}\cap\Omega^{p,k-p}(\G)\longrightarrow\bigoplus_{j\in I_{p,k}}E_0^{k+1}\cap\Omega^{p+j,k+1-p-j}(\G)
\end{align*}
where each $d_c^j$ has bidegree $(j,1-j)$. Equivalently, this can also be expressed by saying that at each page $j\in I_{p,k}$, the differentials $\Delta_j$ are non-trivial maps
\begin{align*}
    \Delta_j\colon Z_j^{p,k-p}/B_j^{p,k-p}\longrightarrow Z_j^{p+j,k+1-p-j}/B_j^{p+j,k+1-p-j}\,.
\end{align*}

As shown explicitly in Lemma~\ref{lem: expressing d_c on E_0}, for a fixed bidegree $(p,k)$, the restriction of the Rumin differential $d_c$ to $E_0^k \cap \Omega^{p,k-p}(\mathbb{G})$
may decompose as a sum of several homogeneous components. More precisely, there exists a finite set
\[
I_{p,k}=\{\nu_1,\ldots,\nu_N\}\subset \mathbb{N},
\qquad \nu_1<\cdots<\nu_N,
\]
such that
\[
d_c
=
\sum_{j\in I_{p,k}} d_c^j
\colon
E_0^{k}\cap\Omega^{p,k-p}(\mathbb{G})
\longrightarrow
\bigoplus_{j\in I_{p,k}}
E_0^{k+1}\cap\Omega^{p+j,k+1-p-j}(\mathbb{G}),
\]
where each component $d_c^j$ has bidegree $(j,1-j)$.

In terms of the spectral sequence, these components correspond to the only possibly nonzero differentials
\[
\Delta_j
\colon
Z_j^{p,k-p}/B_j^{p,k-p}
\longrightarrow
Z_j^{p+j,k+1-p-j}/B_j^{p+j,k+1-p-j}\ ,
\quad j\in I_{p,k}.
\]
If there exists $\alpha \in Z_j^{p,\bullet}$ such that $d_c^j \Pi_0 \alpha \notin B_j^{p+j,\bullet}$ (modulo lower-order terms), then the differential $\Delta_j$ is nonzero.

\begin{proposition}\label{prop: Delta well defined}[Proposition 4.2 in \cite{tripaldi2026spectralcomplexestruncatedmulticomplexes}] Fix a bidegree $(p,k)$ and let $I_{p,k}=\{\nu_1,\ldots,\nu_N\}$. Assume that, for each $j\in I_{p,k}$, the component of bidegree $(j,1-j)$
\[
d_c^j \colon E_0^k \cap \Omega^{p,k-p}(\mathbb{G})
\longrightarrow
E_0^{k+1} \cap \Omega^{p+j,k+1-p-j}(\mathbb{G})
\]
is nonzero. 

Then, for each $j \in I_{p,k}$ and for any $m_1,m_2 \in \mathbb{N}$, the map
\begin{align}\label{the good differentials}
\begin{split}
\Delta_j \colon\;&
Z_j^{p,k-p}/B_{m_1}^{p,k-p}
\longrightarrow
Z_{m_2}^{p+j,k+1-p-j}/B_j^{p+j,k+1-p-j},\\
\Delta_j\big([\alpha]\big)
=\;&
\bigg[
d_j \alpha - \sum_{i=1}^{j-1} d_{j-i} z_{p+i}
\bigg]
=
\bigg[
d_c^j \Pi_0 \alpha - \sum_{i=2}^{j-1} d_c^i \omega_{p+j-i}
\bigg],
\end{split}
\end{align}
is well defined.

    Moreover, if for the same $(p,k)$ we have $j \in I_{p,k}$ and $l \in I_{p+j,k+1}$, i.e.
    \begin{align*}
        &d_c^j\colon E_0^k\cap\Omega^{p,k-p}(\G)\longrightarrow E_0^{k+1}\cap\Omega^{p+j,k+1-j-p}(\G)\text{ and }\\&d_c^l\colon E_0^{k+1}\cap\Omega^{p+j,k+1-p-j}(\G)\longrightarrow E_0^{k+2}\cap\Omega^{p+j+l,k+2-p-j-l}(\G)
    \end{align*}
    are non-trivial maps, then for any $m_1,m_2\in\N$, the composition
    \begin{align*}
        Z_j^{p,k-p}/B_{m_1}^{p,k-p}\xrightarrow[]{\Delta_j}Z_l^{p+j,k+1-p-j}/B_j^{p+j,k+1-p-j}\xrightarrow[]{\Delta_l} Z_{m_2}^{p+l+j,k+2-p-l-j}/B_l^{p+l+j,k+2-p-l-j}
    \end{align*}
    satisfies $\Delta_l\circ \Delta_j=0$.
\end{proposition}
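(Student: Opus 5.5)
The plan is to work throughout with the "lift-to-$d$" description of the pages given in the remark after Proposition \ref{prop: expression Delta spectral}:
\[
B_r^{p,\bullet}=dZ_{r-1}^{p-r+1,\bullet}+\Omega^{\ge p+1,\bullet}(\G),\qquad
\Delta_r\alpha=d(\alpha-z_{p+1}-\cdots-z_{p+r-1})+B_r^{p+r,\bullet}+\Omega^{\ge p+r+1,\bullet}(\G).
\]
I will translate to the Rumin-side formula in \eqref{the good differentials} only at the end, via Proposition \ref{prop: spectral things in terms of Rumin}.

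The first step is to use the hypothesis on $I_{p,k}$ to simplify the relevant spaces. Since the only nonzero components $d_c^i$ out of $E_0^k\cap\Omega^{p,k-p}(\G)$ are those with $i\in I_{p,k}$, the constraints defining $Z_r^{p,k-p}$ in Proposition \ref{prop: spectral things in terms of Rumin} are vacuous between consecutive elements of $I_{p,k}$. Consequently, for $\alpha\in Z_j^{p,k-p}$ the class $\Delta_j[\alpha]$ is the standard $j$-th page differential of Proposition \ref{prop: expression Delta spectral}. In particular it is independent of the choice of the family $\{z_{p+i}\}$ modulo $B_j^{p+j,\bullet}$. This is the standard well-definedness of $d_r$ on $Z_r/B_r$ \cite{livernet2020spectral}, which I will invoke rather than reprove.

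Next I check the two enlarged endpoints.

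\emph{Source.} I must show that $\Delta_j$ kills $B_{m_1}^{p,k-p}$ for every $m_1$, not just $B_j$. Take $\alpha\in B_{m_1}^{p,\bullet}$. By the remark, $\alpha=(dc)_p$ for some $c=c_{p-m_1+1}+\cdots+c_p$, with the $d$-image of $c$ vanishing in all weights below $p$. I then choose the auxiliary forms $z_{p+i}$ to be minus the weight-$(p+i)$ components of $dc$. With this choice, $\alpha-\sum_i z_{p+i}$ coincides with $dc$ up to weight $\ge p+j$. Hence $d(\alpha-\sum z)$ agrees with $d(dc)=0$ in weight $p+j$ modulo $B_j^{p+j}$, which gives $\Delta_j[\alpha]=0$. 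Combined with the first step, this shows independence of the chosen lift.

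\emph{Target.} I must show that $\Delta_j\alpha$ lies in $Z_{m_2}^{p+j,\bullet}$ for every $m_2$. The key observation is that $\beta:=d(\alpha-\sum_i z_{p+i})$ is $d$-exact, hence $d$-closed, and its lowest-weight component is $(\beta)_{p+j}$. Taking $z'_{p+j+i}:=-(\beta)_{p+j+i}$ in Definition \ref{Z and B defined}, the equations $d\beta=0$, read weight by weight, are exactly the relations required for membership in $Z_r$ for every $r$. So $(\beta)_{p+j}\in Z_\infty^{p+j,\bullet}\subseteq Z_{m_2}^{p+j,\bullet}$. Its class modulo $B_j^{p+j,\bullet}$ is $\Delta_j[\alpha]$.

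For the composition, I apply $\Delta_l$ to $\gamma:=(\beta)_{p+j}$ using precisely the auxiliary family $z'$ just constructed. This gives $\Delta_l[\gamma]=[(d\beta)_{p+j+l}]=[0]$, and any other admissible choice differs by an element of $B_l^{p+j+l}$ by the first step. It remains to check that the ambiguity of $\gamma$ modulo $B_j^{p+j}$ is harmless. This follows from the source argument applied at bidegree $(p+j,k+1)$ with $m_1=j$: $\Delta_l$ kills $B_j$.

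Finally, I rewrite all classes through Proposition \ref{prop: spectral things in terms of Rumin}, replacing $z$'s by the Rumin forms $\omega$'s, to obtain the second expression in \eqref{the good differentials}.

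I expect the main obstacle to be the source step for $m_1>j$. There one must verify carefully that the $c$'s witnessing $\alpha\in B_{m_1}$ produce an \emph{admissible} family $z_{p+i}$ for $\alpha\in Z_j$, i.e.\ that the weight-by-weight equations of Definition \ref{Z and B defined} hold exactly and not merely up to higher weight. This is where the emptiness of the intermediate pages encoded by $I_{p,k}$ should be used.
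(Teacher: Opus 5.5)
Your argument is correct once the step you flag is filled in, and that step needs nothing beyond what you already do in your Target step. The paper does not prove this statement; it imports it as Proposition~4.2 of \cite{tripaldi2026spectralcomplexestruncatedmulticomplexes}. The facts it records elsewhere are the chain $B_1^{p,\bullet}\subseteq\cdots\subseteq B_\infty^{p,\bullet}\subseteq Z_\infty^{p,\bullet}\subseteq\cdots\subseteq Z_1^{p,\bullet}$ and the identity $dZ_j^{p,\bullet}=B_{j+1}^{p+j,\bullet}$ (Remark~\ref{rmk: spectral stokes no on R manifolds}). Your Source and Target steps establish exactly these, so your route is the natural one.

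Two corrections are needed.

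First, the hypothesis on $I_{p,k}$ is used nowhere, and your first step misdescribes its effect. For $i\notin I_{p,k}$, only the left-hand sides $d_c^i\Pi_0\alpha$ in Proposition~\ref{prop: spectral things in terms of Rumin} vanish. The right-hand sides involve $d_c$ acting on the higher-weight forms $\omega_{p+i}$, which are governed by $I_{p+i,k}$ rather than $I_{p,k}$, and they still constrain those forms. So the intermediate conditions are not vacuous. You do not need that step at all. Independence of the lift holds for every $\alpha\in Z_j^{p,\bullet}$: two lifts $\alpha-\sum_i z_{p+i}$ and $\alpha-\sum_i z'_{p+i}$ differ by some $c\in\Omega^{\ge p+1,\bullet}(\G)$ with $dc\in\Omega^{\ge p+j,\bullet}(\G)$. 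Then $(dc)_{p+j}\in B_j^{p+j,\bullet}$ directly by Definition~\ref{Z and B defined}.

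Second, the main obstacle you anticipate is not there. Suppose $\alpha=(dc)_p$ with $dc\in\Omega^{\ge p,\bullet}(\G)$. Then $dc$ is closed, and reading $d(dc)=0$ weight by weight gives exactly the relations of Definition~\ref{Z and B defined} for $z_{p+i}:=-(dc)_{p+i}$, for every $n$. No information about intermediate pages is used.

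This gives $B_m^{p,\bullet}\subseteq Z_\infty^{p,\bullet}$ for all $m$, and you should state it explicitly. It is also what makes the quotients $Z_j^{p,\bullet}/B_{m_1}^{p,\bullet}$ and $Z_{m_2}^{p+j,\bullet}/B_j^{p+j,\bullet}$ meaningful in the first place, a point you pass over.

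It also gives the source claim in one line. For $\alpha\in Z_{j+1}^{p,\bullet}$, computing with the first $j-1$ witnesses gives $\Delta_j[\alpha]=[d_0z_{p+j}]\in B_1^{p+j,\bullet}\subseteq B_j^{p+j,\bullet}$. Hence $\Delta_j$ kills $Z_{j+1}^{p,\bullet}\supseteq B_{m_1}^{p,\bullet}$. Likewise, since $\gamma\in Z_\infty^{p+j,\bullet}\subseteq Z_{l+1}^{p+j,\bullet}$, the same observation gives $\Delta_l\circ\Delta_j=0$.

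In your explicit computation there is a small slip: the truncated family gives $\Delta_l[\gamma]=[-d_0\beta_{p+j+l}]$, not $[(d\beta)_{p+j+l}]$. This is still zero modulo $B_l^{p+j+l,\bullet}$.

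The nonvanishing of the $d_c^j$ only matters for the resulting complexes to be nontrivial, not for well-definedness.
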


    Notice how in Proposition \ref{prop: Delta well defined} each non-trivial map $\Delta_j$ of bidegree $(j,1-j)$ determines the page of the $Z$ module in the domain, and the $B$ module in the target. More precisely, the corresponding map has the form
    \begin{align*}
        \Delta_j\colon Z_j^{p,\bullet}/B_{m_1}^{p,\bullet}\longrightarrow Z_{m_2}^{p+j,\bullet}/B_j^{p+j,\bullet}\,.
    \end{align*}
    Here $m_1,m_2\in\N$ may be chosen arbitrarily. However, when constructing a complex, $m_1$ is determined by the bidegree $(m_1,1-m_1)$ of the incoming differential $\Delta_{m_1}$, while $m_2$ is determined by the bidegree $(m_2,1-m_2)$ of the outgoing differential $\Delta_{m_2}$. This construction can be carried out in every degree
$k=0,\ldots,\dim(\mathbb{G})$.

In this way, for each nontrivial space $E_1^{p,k-p}$, one obtains as many possible subcomplexes as there are elements $j\in I_{p,k}$.

Since we want to realise these complexes as subspaces of differential forms, rather than as quotient spaces as in \eqref{the good differentials}, we use the scalar product introduced in Subsection~\ref{subsection: introducing a scalar product}.

    \begin{definition}\label{def: spectral complexes}[Definition 4.3 in \cite{tripaldi2026spectralcomplexestruncatedmulticomplexes}]
        Given the de Rham complex $(\Omega^\bullet(\mathbb{G}), d = d_0 + d_{w_1} + \cdots + d_{w_s})$, assume that for some bidegree $(p,k)$ the space $E_1^{p,k-p}$ is nontrivial (equivalently, $\ker \Box_0 \cap \Omega^{p,k-p}(\mathbb{G}) \neq 0$).
        Then, for each $j \in I_{p,k}$ and $l \in I_{p-l,k-1}$, we define
        \begin{align}\label{eq: def E_{j,l}}
            E_{j,l}^{p,k-p}:=Z_j^{p,k-p}\cap\big(B_l^{p,k-p}\big)^\perp\subset \ker\Box_0\cap\Omega^{p,k-p}(\G)\,.
        \end{align}
        Moreover, for each $j\in I_{p,k}$, $l\in I_{p-l,k-1}$, and $i\in I_{p+j,k+1}$
        \begin{align*}
            E_{l,m_1}^{p-l,k-1-p+l}\xrightarrow[]{\Delta_l} E_{j,l}^{p,k-p}\xrightarrow[]{\Delta_j}E_{i,j}^{p+j,k+1-p-j}\xrightarrow[]{\Delta_i}E_{m_2,i}^{p+j+i,k+2-p-j-i}
        \end{align*}
        satisfies $\Delta_j\circ \Delta_l=\Delta_i\circ\Delta_j=0$ for any choice of $m_1,m_2\in\N$.

        Since this construction applies in every degree $k$, it yields a family of subspaces $E_{j,l}^{\bullet,\bullet}$. These subspaces are connected by the operators $\Delta_j$ of bidegree $(j,1-j)$, thus forming a collection of complexes. We refer to this collection as the \textit{spectral complexes} associated with the de Rham complex $(\Omega^\bullet(\G),d)$ of the positively gradable Lie group $\G$ and denote it by 
        \begin{align*}
            \big\{\big(E_{j,l}^{\bullet,\bullet},\Delta_j\big)\big\}_{j\in I_{\bullet,\bullet}}\,.
        \end{align*}
    \end{definition}
    \begin{remark}\label{remark: projection with exact forms} When we write
    \begin{align*}
        \Delta_j\colon E_{j,l}^{p,k-p}\longrightarrow E_{m,j}^{p+j,k+1-p-j}
    \end{align*}
we mean that the operator defined in \eqref{the good differentials} is composed with the projection onto the subspace $E_{m,j}^{p+j,k+1-p-j}$, i.e. onto the orthogonal complement of $B_j^{p+j,k+1-p-j}$.

Unlike in the case of the Rumin complex, where the operator $\Pi_0$ in $d_c=\Pi_0 d \Pi_E$ is an orthogonal projection onto $\ker \Box_0 \cap \Omega^{\bullet,\bullet}(\mathbb{G})$, here the projection is realised by modifying representatives through the addition or subtraction of exact terms. More precisely, given a representative of $\Delta_j([\alpha])$, one replaces it by an equivalent form modulo $B_j^{p+j,k+1-p-j}$ by adding terms of the form
$d(c_{p+1} + \cdots + c_{p+j})$,
where $c_{p+i} \in \Omega^{p+i,k-p-i}(\mathbb{G})$ are chosen as in Definition~\ref{Z and B defined}.
        
    \end{remark}
    
\begin{theorem}[Stokes' theorem for spectral complexes]\label{thm: stokes for spectral complexes} Fix a bidegree $(p,k-1)$ for which the operator $$\Delta_j\colon E_{j,l}^{p,k-1-p}\longrightarrow E_{m,j}^{p+j,k-p-j}$$
is nonzero, for some admissible choice of $l,m\in\mathbb{Z}^+$. Let $\Sigma\subset\mathbb{G}$ be an orientable $k$-dimensional $C^1$ manifold such that $\operatorname{deg}(\Sigma)=p+j$, and assume that $\partial\Sigma$ is also an orientable $C^1$ manifold with $\operatorname{deg}(\partial\Sigma)=p$. Then
\begin{align*}
    \int_{\partial\Sigma}\alpha=\int_\Sigma \Delta_j\alpha\ \text{ for every }\alpha\in E_{j,l}^{p,k-1-p}\,.
\end{align*}
\end{theorem}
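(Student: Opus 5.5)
Fix $\alpha\in E_{j,l}^{p,k-1-p}$. The idea is to express $\Delta_j\alpha$ as an honest exterior derivative $d$ of a corrected representative of $\alpha$, up to error terms that the weight bounds from Lemma \ref{lem: forms of higher weight, integral vanishes} kill on $\Sigma$ and on $\partial\Sigma$, and then to apply the classical Stokes theorem.

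\textbf{Step 1: the corrected representative.} Since $\alpha\in Z_j^{p,k-1-p}$, Definition \ref{Z and B defined} supplies forms $z_{p+i}\in\Omega^{p+i,\bullet}(\G)$, $1\le i\le j-1$, such that
\begin{align*}
d_0\alpha=0,\qquad d_n\alpha=\sum_{i=0}^{n-1}d_iz_{p+n-i}\quad (1\le n\le j-1).
\end{align*}
Set $\tilde\alpha:=\alpha-z_{p+1}-\cdots-z_{p+j-1}$. Expanding $d\tilde\alpha$ by weight (Lemma \ref{lem: diving exterior derivative}) gives the following:
\begin{itemize}
\item the components of weight $p,\dots,p+j-1$ vanish, by exactly the defining relations above;
\item the component of weight $p+j$ is $d_j\alpha-\sum_{i=1}^{j-1}d_{j-i}z_{p+i}$, which by Proposition \ref{prop: Delta well defined} represents $\Delta_j[\alpha]$.
\end{itemize}
Hence
\begin{align*}
d\tilde\alpha=\Big(d_j\alpha-\sum_{i=1}^{j-1}d_{j-i}z_{p+i}\Big)+\Omega^{\ge p+j+1,\bullet}(\G),
\end{align*}
as in the remark following Proposition \ref{prop: expression Delta spectral}.

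\textbf{Step 2: passing to the chosen representative.} Following Remark \ref{remark: projection with exact forms}, the form $\Delta_j\alpha\in E_{m,j}^{p+j,k-p-j}$ is obtained from the weight-$(p+j)$ representative above by adding an element of $B_j^{p+j,\bullet}$. By the remark after Proposition \ref{prop: expression Delta spectral}, such an element has the form $d(c_{p+1}+\cdots+c_{p+j})$ plus terms of weight at least $p+j+1$, where $c_{p+i}\in\Omega^{p+i,\bullet}(\G)$. Combining,
\begin{align*}
\Delta_j\alpha=d\big(\tilde\alpha+c_{p+1}+\cdots+c_{p+j}\big)+\rho,\qquad \rho\in\Omega^{\ge p+j+1,k-p-j}(\G).
\end{align*}

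\textbf{Step 3: integration.} By Lemma \ref{lem: forms of higher weight, integral vanishes} and $\deg(\Sigma)=p+j$, we have $\int_\Sigma\rho=0$. Applying the classical Stokes theorem,
\begin{align*}
\int_\Sigma\Delta_j\alpha=\int_{\partial\Sigma}\Big(\alpha-\sum_{i=1}^{j-1}z_{p+i}+\sum_{i=1}^{j}c_{p+i}\Big).
\end{align*}
Every $z_{p+i}$ and $c_{p+i}$ is a $(k-1)$-form of weight at least $p+1>\deg(\partial\Sigma)=p$. Applying Lemma \ref{lem: forms of higher weight, integral vanishes} once more, each of these integrates to zero on $\partial\Sigma$, leaving $\int_{\partial\Sigma}\alpha$. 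This proves the claimed formula.

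\textbf{Main obstacle.} The delicate point is Step 2: I must check that every correction realising the projection onto $(B_j^{p+j,\bullet})^\perp$ is exact with a primitive of weight strictly greater than $p$, modulo forms of weight strictly greater than $p+j$. If instead a primitive could have weight at most $p$, its boundary integral would not vanish. I would unpack $B_j^{p+j,\bullet}$ via Definition \ref{Z and B defined}, writing $\beta=\sum_{k'=0}^{j-1}d_{k'}c_{p+j-k'}$ together with the relations $0=\sum_{k'=l'}^{j-1}d_{k'-l'}c_{p+j-k'}$. These relations should say precisely that $d(c_{p+1}+\cdots+c_{p+j})$ has no components of weight below $p+j$ and weight-$(p+j)$ component equal to $\beta$. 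The indices are $c_{p+1},\dots,c_{p+j}$, so every primitive has weight at least $p+1$, as needed.

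Two technical points remain. Compactness (or compact support) is needed in Lemma \ref{lem: forms of higher weight, integral vanishes}; I would assume it implicitly, as the paper does, or reduce to it by the standard localisation argument. The $C^1$ regularity suffices for the classical Stokes theorem.
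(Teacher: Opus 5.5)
Your proposal is correct and follows the paper's proof essentially step for step. You write $\Delta_j\alpha$ as $d(\alpha-z_{p+1}-\cdots-z_{p+j-1})$, plus an exact correction whose primitives have weight at least $p+1$, plus forms of weight at least $p+j+1$; then you discard the extra terms on $\partial\Sigma$ and on $\Sigma$ via Lemma~\ref{lem: forms of higher weight, integral vanishes}. The only cosmetic difference is that you unpack $B_j^{p+j,\bullet}$ directly from Definition~\ref{Z and B defined} (primitives $c_{p+1},\dots,c_{p+j}$), whereas the paper quotes the equivalent characterisation $B_j^{p+j,\bullet}=dZ_{j-1}^{p+1,\bullet}+\Omega^{\ge p+j+1,\bullet}(\G)$.
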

\begin{proof}
Let $\alpha\in E_{j,l}^{p,k-1-p}\subset E_0^{k-1}\cap\Omega^{p,k-1-p}(\mathbb G)$.
By Definition~\ref{def: spectral complexes} and formula~\eqref{the good differentials}, we can represent $\Delta_j\alpha$ as
\[
\Delta_j\alpha
=
d(\alpha - z_{p+1} - \cdots - z_{p+j-1})
+
B_j^{p+j,k-p-j}
+
\Omega^{\ge p+j+1,\bullet}(\mathbb G),
\]
where $z_{p+i}\in \Omega^{p+i,k-1-p-i}(\mathbb G)$, $i=1,\ldots,j-1$,
are determined by the condition $\alpha\in Z_j^{p,k-1-p}$. Moreover, using the characterisation
\[
B_j^{p+j,k-p-j}
=
dZ_{j-1}^{p+1,k-p-2}
+
\Omega^{\ge p+j+1,\bullet}(\mathbb G),
\]
we obtain that $\Delta_j\alpha$ differs from $d(\alpha - z_{p+1} - \cdots - z_{p+j-1})$
by terms that either belong to $dZ_{j-1}^{p+1,k-p-2}$ or have weight at least $p+j+1$. Finally, since $\deg(\partial\Sigma)=p$, the forms $z_{p+i}$, as well as those in $Z^{p+1,k-2-p}_{j-1}$, have strictly larger weight and therefore do not contribute to the integral over $\partial\Sigma$. 

 This can be more compactly rephrased using a slight abuse of notation as follows
    \begin{align*}
        \int_{\partial\Sigma}\alpha=&\int_{\partial\Sigma}\big(\alpha-z_{p+1}-\cdots-z_{p+j-1}\big)+Z_{j-1}^{p+1,k-2-p}\\=&\int_\Sigma d\big(\alpha-z_{p+1}-\cdots-z_{p+j-1}\big)+dZ_{j-1}^{p+1,k-2-p}\\=&\int_\Sigma d\big(\alpha-z_{p+1}-\cdots-z_{p+j-1}\big)+dZ_{j-1}^{p+1,k-2-p}+\Omega^{\ge p+j+1,\bullet}(\G)=\int_\Sigma \Delta_j\alpha\,.
    \end{align*}
    and the claim follows directly from Corollary \ref{cor: integrating forms on smooth intrinsic graphs}: the additional terms in the expression of $\Delta_j\alpha$ do not contribute to the integral over $\Sigma$, since
\begin{itemize}
\item the terms in $dZ_{j-1}^{p+1,k-p-2}$ integrate to zero by Stokes' theorem and  $\operatorname{deg}(\partial\Sigma)=p$;
\item the terms in $\Omega^{\ge p+j+1,\bullet}(\mathbb G)$ vanish on $\Sigma$ because $\deg(\Sigma)=p+j$.
\end{itemize}

\end{proof}

\begin{corollary}[Stokes' theorem on locally smooth intrinsic graphs with boundary]
\label{cor: stokes thm for intrinsic graphs}
Let $\Sigma\subset\mathbb{G}$ be an orientable $k$-dimensional locally smooth intrinsic graph with boundary $\partial\Sigma$, then
\[
\int_{\partial\Sigma}\alpha
=
\int_\Sigma \Delta_j \alpha
\quad
\text{for every } \alpha\in E_{j,l}^{p,k-1-p}\,,
\]
where $p=\operatorname{deg}(\Sigma)$, $j=\operatorname{deg}(\Sigma)-\operatorname{deg}(\partial
\Sigma)$, and some $l\in\mathbb{N}$ such that $E_{j,l}^{p,k-1-p}\neq 0$.
\end{corollary}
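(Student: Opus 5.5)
The plan is to reduce the statement to Theorem~\ref{thm: stokes for spectral complexes}. The only work is to check that a locally smooth intrinsic graph with boundary satisfies its degree hypotheses, with the indices set as in the statement. Note that the bidegree of $\alpha$ should have weight equal to $\deg(\partial\Sigma)$ and the target weight equal to $\deg(\Sigma)$. So I will read the statement with $p=\deg(\partial\Sigma)$ and $p+j=\deg(\Sigma)$, the only reading compatible with Theorem~\ref{thm: stokes for spectral complexes} and with the introduction's description.

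\emph{Step 1: the degrees are well defined and attained.} By Definition~\ref{def:LSIG}, outside a negligible set every point of $\Sigma$ has a neighbourhood in which $\Sigma$ is a $C^2$ intrinsic $(\W,\V)$-graph. Proposition~\ref{prop: smooth intrinsic graphs constant degree} then gives $\deg_\Sigma\equiv\deg(\W)$ on that piece. The same holds for $\partial\Sigma$ with some graphing subgroup $\W'$ of dimension $k-1$.

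I must ensure that $\deg(\Sigma)$ is a single well-defined number governing integration. The pointwise degree may a priori vary from chart to chart, and the exceptional negligible set carries no $\mathrm{vol}_k$-mass. So I would argue that only the maximal degree matters: Lemma~\ref{lem: forms of higher weight, integral vanishes} is applied chart by chart with a partition of unity, and forms of weight exceeding every local degree integrate to zero. This gives Corollary~\ref{cor: integrating forms on smooth intrinsic graphs} globally on $\Sigma$ and on $\partial\Sigma$ in the form actually used in the proof of Theorem~\ref{thm: stokes for spectral complexes}.

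\emph{Step 2: index compatibility.} By Lemma~\ref{lem: deg boundary less}, $j=\deg(\Sigma)-\deg(\partial\Sigma)\ge 1$. Proposition~\ref{prop: complementary subgroups vs Rumin forms} relates the graphing subgroups to simple Rumin forms. Thus $\deg(\W')$ is a weight in $P_{k-1}$ and $\deg(\W)$ a weight in $P_k$, and $\Sigma$ is a spectral manifold with boundary. Consequently $E_0^{k-1}\cap\Omega^{p,k-1-p}(\G)\neq0$. Provided $E_{j,l}^{p,k-1-p}\neq0$ for some $l$, which is assumed in the statement, the operator $\Delta_j$ of bidegree $(j,1-j)$ from this space is the one singled out in Definition~\ref{def: spectral complexes}. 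If $\Delta_j$ vanishes there, the formula still holds, since the proof of Theorem~\ref{thm: stokes for spectral complexes} only uses the representative $d(\alpha-z_{p+1}-\cdots-z_{p+j-1})$ modulo $B_j$ and higher weights.

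\emph{Step 3: conclude.} I apply Theorem~\ref{thm: stokes for spectral complexes} with $\deg(\partial\Sigma)=p$ and $\deg(\Sigma)=p+j$. In its proof, the $z_{p+i}$ and the primitives in $Z_{j-1}^{p+1,\bullet}$ have weight $>p$ and vanish on $\partial\Sigma$ by Step~1. The terms of weight $\ge p+j+1$ vanish on $\Sigma$ for the same reason.

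I expect the main obstacle to be Step~1. The difficulty is controlling the degree across different local graph charts and across the negligible exceptional set, so that the weight-vanishing lemma applies globally. Classical Stokes on the $C^1$ manifold $\Sigma$ requires no chart structure, but the vanishing arguments do, and I would handle this with a partition of unity subordinate to the graph charts.
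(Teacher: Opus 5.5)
Your proposal is correct and follows the paper's route. You reduce to Theorem~\ref{thm: stokes for spectral complexes} with $p=\deg(\partial\Sigma)$, which is how the paper's own proof reads the indices. You get $j\ge1$ from Lemma~\ref{lem: deg boundary less}, and nonzero Rumin forms in weights $\deg(\partial\Sigma)$ and $\deg(\Sigma)$ from the graphing-subgroup/simple-Rumin-form correspondence. Your remark that nontriviality of $\Delta_j$ is never used in that theorem's proof also sidesteps the paper's loosely justified claim that $d_c^j\neq0$.

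Your Step~1 is unnecessary. Lemma~\ref{lem: forms of higher weight, integral vanishes} is pointwise and holds on any oriented $C^1$ submanifold with $\deg(\Sigma)$ the maximal pointwise degree, so no partition of unity over graph charts is needed.
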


\begin{proof}
It is enough to show that the assumptions that $\Sigma$ is a locally smooth intrinsic graph with boundary $\partial\Sigma$ ensures the existence of indices $j\in I_{p,k-1}$ with $p=\operatorname{deg}(\partial\Sigma)$ and $l\in\mathbb{Z}^+$ such that
\[
E_{j,l}^{p,k-1-p}\neq 0
\quad\text{and}\quad
\Delta_j \colon E_{j,l}^{p,k-1-p}\longrightarrow E_{m,j}^{p+j,k-p-j}
\]
is nonzero for some $m\in\mathbb{Z}^+$.

Since $\Sigma$ is a locally smooth intrinsic graph of degree $p+j$, there exists at least a point $p\in\Sigma$ such that a neighbourhood of $p$ in $\Sigma$ is a $(\mathbb{W},\mathbb{V})$-graph with $\operatorname{deg}(\mathbb{W})=p+j$. Therefore, by Lemma \ref{lem: subalgebras}, we have $E_0^k \cap \Omega^{p+j,k-p-j}(\mathbb{G}) \neq 0$. Similarly, the condition $\deg(\partial\Sigma)=p$ implies
$E_0^{k-1} \cap \Omega^{p,k-1-p}(\mathbb{G}) \neq 0$. 

Finally, by Lemma \ref{lem: deg boundary less}, we have that $j\ge 1$ necessarily.

By the decomposition of the Rumin differential, this implies that there exists at least one $j\in I_{p,k-1}$ such that the component
\[
d_c^j \colon E_0^{k-1} \cap \Omega^{p,k-1-p}(\mathbb{G})
\longrightarrow
E_0^{k} \cap \Omega^{p+j,k-p-j}(\mathbb{G})
\]
is nonzero. This then implies that there exists a corresponding spectral differential $\Delta_j$ which is nontrivial on a suitable subspace $E_{j,l}^{p,k-1-p}$.

The claim then follows from Theorem~\ref{thm: stokes for spectral complexes}.
\end{proof}

\begin{remark}[Geometric interpretation]
The previous corollary shows that the degree of the intrinsic graph determines which component of the spectral complex governs the Stokes' formula. 

More precisely, the decomposition of the Rumin differential
\[
d_c = \sum_{j\in I_{p,k-1}} d_c^j
\]
splits the Rumin differential into components of different homogeneous weights. The condition $\deg(\partial\Sigma)=p$ and $\deg(\Sigma)=p+j$ selects exactly the component $d_c^j$ that produces a nontrivial contribution when passing from $\partial\Sigma$ to $\Sigma$.

Equivalently, among all the spectral differentials $\{\Delta_j\}_{j\in I_{p,k-1}}$, only those compatible with the degree jump between $\partial\Sigma$ and $\Sigma$ can contribute to the Stokes' formula. In this sense, the geometry of the submanifold determines which subcomplex $(E_{j,l}^{\bullet,\bullet},\Delta_j)$ is relevant.
\end{remark}

\begin{example}[$\mathbb{H}^1\times\mathbb{R}$ and spectral Stokes phenomena]
\label{ex: H1xR spectral}
We consider the $4$-dimensional nilpotent Lie group $\mathbb{H}^1\times\mathbb{R}$. 
Using the explicit description of Rumin forms and complementary subgroups given in Subsection~\ref{subsection: rumin on HtimesR}, we analyse how the spectral Stokes theorem behaves for smooth intrinsic graphs.

The space of Rumin 1-forms coincides with the space of horizontal 1-forms, so every 1-dimensional intrinsic graph must have degree 1. However, the space of Rumin $2$-forms contains components in both
$\Omega^{2,0}(\mathbb{H}^1\times\mathbb{R})
$ and $\Omega^{3,-1}(\mathbb{H}^1\times\mathbb{R})$.
Consequently, the Rumin differential decomposes as $d_c = d_c^1 + d_c^2$,
and both components may contribute nontrivially.

As a result, for $2$-dimensional smooth intrinsic graphs $\Sigma$, the spectral Stokes' theorem may involve either $\Delta_1$ or $\Delta_2$, depending on the choice of complementary subgroup (equivalently, on the corresponding simple Rumin form). In particular, higher-order contributions can arise, reflecting the presence of non-horizontal directions.

This example shows that, even in a low-dimensional setting, different geometric configurations of intrinsic graphs can lead to different spectral complexes used for Stokes' formula. In particular, the degree of the submanifold determines which component $\Delta_j$ appears.
\end{example}
\begin{remark}\label{rmk: spectral stokes no on R manifolds}
    Stokes' theorem does not, in general, hold for the operators
\[
\Delta_j\colon E_{j,l}^{p,k-1-p}\longrightarrow E_{m,j}^{p+j,k-p-j}
\]
on arbitrary $R$-manifolds when $j\ge 2$.

In the case when $j=2$, given $\alpha\in Z_2^{p,k-1-p}\subset \ker d_0\cap\Omega^{p,k-1-p}(\mathbb G)$, there exists $z_{p+1}\in\Omega^{p+1,k-2-p}(\mathbb G)$ such that $d_1\alpha=d_0z_{p+1}$
and therefore
\[
\Delta_2\alpha
=
d(\alpha - z_{p+1})
+
B_2^{p+2,k-p-2}\,.
\]

The condition above determines uniquely the component of $z_{p+1}$ in $\operatorname{Im}\delta_0$, namely $\operatorname{pr}_{\operatorname{Im}\delta_0} z_{p+1} = d_0^{-1} d_1 \alpha$.
Hence one can choose $z_{p+1}$ so that its $\ker d_0$-component vanishes. In that case, if $\partial\Sigma$ is an $R$-manifold, one has
\[
\int_{\partial\Sigma} \alpha
=
\int_{\partial\Sigma} (\alpha - z_{p+1}).
\]
However, the operator $\Delta_2$ is only defined up to addition of elements in
$B_2^{p+2,k-p-2}
=
dZ_1^{p+1,k-2-p}
+
\Omega^{\ge p+3,\bullet}(\mathbb G)$,
and the forms in $Z_1^{p+1,k-2-p}=\ker d_0\cap\Omega^{p+1,k-2-p}(\mathbb G)$ are not, in general, annihilated by integration on $\partial\Sigma$, even if $\partial\Sigma$ is an $R$-manifold. Consequently, one cannot expect
\[
\int_{\partial\Sigma}\alpha = \int_\Sigma \Delta_2\alpha
\]
to hold in general.

In the case when $j\ge 3$, if $\alpha\in Z_j^{p,k-1-p}$ with $j\ge 3$, the characterisation in terms of the Rumin differential (see Proposition~\ref{prop: spectral things in terms of Rumin}) implies the existence of forms $\omega_{p+i}\in \ker\Box_0\cap\Omega^{p+i,k-1-p-i}(\mathbb G)$ with $i\ge 1$,
such that
\[
d_c^1\Pi_0\alpha = 0,
\qquad
d_c^2\Pi_0\alpha = d_c^1 \omega_{p+1}.
\]
In particular, unless $d_c^2\Pi_0\alpha=0$, the form $\omega_{p+1}$ is a nontrivial Rumin form of weight strictly larger than $p$.

As a consequence, even if $\partial\Sigma$ is an $R$-manifold, one cannot remove the contribution of such terms when passing from $\alpha$ to a representative of $\Delta_j\alpha$. Therefore, in general,
\[
\int_{\partial\Sigma}\alpha \neq \int_\Sigma \Delta_j\alpha,
\qquad j\ge 3.
\]
We further remark that the condition $\Sigma$ being an $R$-manifold is never needed when dealing with spectral complexes $\Delta_j\colon E_{j,l}^{p,\bullet}\longrightarrow E_{m,j}^{p+j,\bullet}$, as long as $\operatorname{deg}(\Sigma)=p+j$ since for any $j\in\N$
    \begin{align*}
        d Z_j^{p,\bullet}=B_{j+1}^{p+j,\bullet}\subset Z_1^{p+j,\bullet}=\ker d_0\cap\Omega^{p+j,\bullet}\,.
    \end{align*}

The assumptions $\deg(\Sigma)=p+j$ and $\deg(\partial\Sigma)=p$ in Theorem~\ref{thm: stokes for spectral complexes} are essential. It ensures that the additional terms appearing in the definition of $\Delta_j\alpha$ either vanish on $\Sigma$ (by weight considerations) or integrate to zero (being exact).

Finally, if either $\deg(\partial\Sigma)<p$ or $\deg(\Sigma)<p+j$, then the degree compatibility is lost, and Stokes' theorem for the operator $\Delta_j$ fails, similarly to the phenomenon observed in Proposition~\ref{prop: stokes intrinsic graphs H1timesR}.

\end{remark}

The validity of Stokes' theorem for spectral complexes being so inextricably linked to the degree of the submanifold naturally motivates the introduction of a distinguished class of submanifolds  for which the analogue of Stokes' theorem associated with the spectral complex holds. 
\begin{definition}[Spectral manifolds]\label{def: spectral manifold}
Let $\Sigma\subset\mathbb G$ be a $k$-dimensional orientable $C^1$ manifold and denote by $P_h$ the set of weights of nontrivial Rumin forms $E_0^h$ in degree $h$. We say that $\Sigma$ is a \emph{spectral manifold} if $\deg(\Sigma)\in P_k$.

 We say that $\Sigma$ is a \emph{spectral manifold with boundary} if $\Sigma\subset\mathbb G$ is a $k$-dimensional orientable $C^1$ manifold with boundary $\partial\Sigma$ and
\[
\deg(\Sigma)\in P_k
\qquad\text{and}\qquad
\deg(\partial\Sigma)\in P_{k-1}.
\]

\end{definition}

By Lemma \ref{lem: deg boundary less}, we know that necessarily
$
\deg(\partial\Sigma)<\deg(\Sigma)$, hence if $\Sigma$ is a spectral manifold, there exists a nontrivial spectral complex whose differential connects forms of weight $\deg(\partial\Sigma)$ in degree $k-1$ to forms of weight $\deg(\Sigma)$ in degree $k$. Consequently, Theorem \ref{thm: stokes for spectral complexes} applies to $\Sigma$ and $\partial\Sigma$ through the corresponding differential $\Delta_{j}$, where $j=\operatorname{deg}(\Sigma)-\operatorname{deg}(\partial\Sigma)$.

In this sense, spectral manifolds provide the natural geometric setting where the spectral complex behaves as a genuine boundary operator from the point of view of integration theory (see also Section \ref{section: currents}). This is because the defining condition for a spectral manifold guarantees that the degrees of the manifold and of its boundary are compatible with the weights appearing in the spectral complexes, so that Stokes' theorem can be realised through a suitable spectral differential.

    As already observed in Corollary \ref{cor: stokes thm for intrinsic graphs}, the class of spectral manifolds contains, in particular, locally smooth intrinsic graphs with boundary. However, Remark \ref{rmk: example Rumin forms nonsimple} suggests that this class might be strictly larger. Indeed, one can find subspaces of $\ker\Box_0\cap\Omega^\bullet(\G)$ spanned by non-simple Rumin forms, whereas smooth intrinsic graphs are in one-to-one correspondence with simple Rumin forms. Therefore, spectral manifolds capture geometric configurations that cannot be represented by intrinsic graphs alone.

In the cases where the Rumin differential coincides with some differential appearing in the spectral complexes, we get that Stokes' theorem holds for the Rumin complex by applying directly Theorem \ref{thm: stokes for spectral complexes}. Notice that this is the case whenever we consider the Rumin differential $d_c$ acting between Rumin $(k-1)$-forms of maximal weight and Rumin $k$-forms of minimal weight, i.e. if $N_{k-1}=\max\{w(\alpha)\in\Z^+\mid \alpha\in E_0^{k-1} \}$ and $n_k=\min\{w(\alpha)\in\Z^+\mid \alpha\in E_0^k\}$, then
\begin{align*}
    d_c\colon E_0^{k-1}\cap\Omega^{N_{k-1},k-1-N_{k-1}}(\G)\longrightarrow E_0^{k}\cap\Omega^{n_k,k-n_k}(\G)
\end{align*}
coincides with the differential
\begin{align*}
    \Delta_{n_k-N_{k-1}}\colon Z_1^{N_{k-1},k-1-N_{k-1}}\cap\big(B_m^{N_{k-1},k-1-N_{k-1}}\big)^\perp\longrightarrow Z_l^{n_k,k-n_k}\cap\big(B_{n_k-N_{k-1}}^{n_k,k-n_k}\big)^\perp\ \text{ for any }m,l\in\Z^+\,.
\end{align*}
In some special Carnot groups, such as Heisenberg groups and the 5-dimensional Cartan group, this happens in each degree since for each $k=0,\ldots,\operatorname{dim}(\G)$ there exists only one $p=p(k)$ such that $E_0^k\cap\Omega^{p,k-p}(\G)\neq 0$ (see Remark 4.4 in \cite{tripaldi2026spectralcomplexestruncatedmulticomplexes} for an explicit proof of this fact). In particular, this implies that Proposition \ref{prop: stokes on heisenberg groups} can be viewed as a direct consequence of Corollary \ref{cor: stokes thm for intrinsic graphs} in the case where $\G=\H^n$.

\begin{corollary}[Stokes' theorem for the Rumin complex on locally smooth intrinsic graphs in $\H^n$ and the Cartan group]\label{cor: stokes on Hn and cartan}
    Let $\G$ be either the $(2n+1)$-dimensional Heisenberg group or the 5-dimensional free nilpotent Cartan group. Let $\Sigma\subset\G$ be a $k$-dimensional locally smooth intrinsic graph with boundary $\partial\Sigma$, then
    \begin{align*}
        \int_{\partial \Sigma}\alpha=\int_\Sigma d_c\alpha\ \text{ for any }\alpha\in E_0^{k-1}\,.
    \end{align*}
    
\end{corollary}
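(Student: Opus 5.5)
The plan is to reduce the statement to Corollary \ref{cor: stokes thm for intrinsic graphs}, using the fact that in both groups every degree carries Rumin forms of a single weight. The first step is to record, for $\G=\H^n$ and for the $5$-dimensional Cartan group, that for each $k=0,\ldots,\dim\G$ there is a unique weight $p(k)$ with $E_0^k\cap\Omega^{p(k),k-p(k)}(\G)\neq 0$. For $\H^n$ this can be read off the explicit description in Subsection \ref{subsection: R mflds Heisenberg}: $p(k)=k$ for $k\le n$ and $p(k)=k+1$ for $k\ge n+1$. For the Cartan group we invoke Remark 4.4 in \cite{tripaldi2026spectralcomplexestruncatedmulticomplexes}.

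The immediate consequence is that $I_{p(k-1),k-1}$ has only one element, namely $j=p(k)-p(k-1)$. Hence the Rumin differential $d_c\colon E_0^{k-1}\to E_0^{k}$ is homogeneous, $d_c=d_c^{j}$. Next we identify it with the spectral differential $\Delta_j$, as in the discussion preceding the statement (maximal weight in degree $k-1$ equals minimal weight in degree $k$). Concretely, we must check three things:
\begin{itemize}
\item Every $\alpha\in E_0^{k-1}$ lies in $Z_j^{p(k-1),\bullet}$. By Proposition \ref{prop: spectral things in terms of Rumin}, the conditions $d_c^i\Pi_0\alpha=\dots$ for $i<j$ hold trivially, since those components vanish for weight reasons.
\item $B_l^{p(k-1),\bullet}\cap\ker\Box_0=0$ for the relevant $l$, so that $E_{j,l}^{p(k-1),\bullet}=E_0^{k-1}$.
\item The target quotient by $B_j^{p(k),\bullet}$ does not alter $d_c^j\alpha$. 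This is because the only Rumin forms in degree $k$ of weight $p(k)$ reachable from below come from weight $p(k-1)$ via $d_c$ itself, and the intermediate $\omega$'s are zero.
\end{itemize}
Thus $\Delta_j\alpha=d_c\alpha$ for all $\alpha\in E_0^{k-1}$.

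Then, for a $k$-dimensional locally smooth intrinsic graph $\Sigma$ with boundary, we compute degrees. By Proposition \ref{prop: complementary subgroups vs Rumin forms} together with Proposition \ref{prop: smooth intrinsic graphs constant degree}, each local graph piece over $\W$ has $\deg=\deg(\W)\in P_k=\{p(k)\}$. So $\deg(\Sigma)=p(k)$ and likewise $\deg(\partial\Sigma)=p(k-1)$; the negligible exceptional set does not affect integrals. Consequently $\deg(\Sigma)-\deg(\partial\Sigma)=j$. Corollary \ref{cor: stokes thm for intrinsic graphs}, or directly Theorem \ref{thm: stokes for spectral complexes}, then gives $\int_{\partial\Sigma}\alpha=\int_\Sigma\Delta_j\alpha=\int_\Sigma d_c\alpha$.

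I expect the main obstacle to be the identification $\Delta_j=d_c$ on all of $E_0^{k-1}$. This requires controlling the correction terms $z_{p+i}$ and the quotient by $B_j$, and checking that the projection of Remark \ref{remark: projection with exact forms} acts trivially. The single-weight property should make all these terms either vanish or lie in weights with no Rumin forms. If a direct verification is awkward, I would instead bypass it: since $\Pi_E\alpha-\alpha$ and $d\Pi_E\alpha-d_c\alpha$ have weights strictly above $p(k-1)$ and $p(k)$ respectively, they integrate to zero on $\partial\Sigma$ and on $\Sigma$ by Lemma \ref{lem: forms of higher weight, integral vanishes}. This gives the claim directly from classical Stokes applied to $\Pi_E\alpha$.
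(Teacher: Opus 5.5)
Your proposal is correct and follows the paper's own route. The paper argues that, since $\H^n$ and the Cartan group have a single weight $p(k)$ of Rumin forms in each degree (Remark 4.4 in \cite{tripaldi2026spectralcomplexestruncatedmulticomplexes}), $d_c\colon E_0^{k-1}\to E_0^k$ coincides with $\Delta_j$ for $j=p(k)-p(k-1)$; applying Theorem \ref{thm: stokes for spectral complexes} (via Corollary \ref{cor: stokes thm for intrinsic graphs}) with $\deg(\partial\Sigma)=p(k-1)$ and $\deg(\Sigma)=p(k)$ then gives the formula, and your three bullet checks spell out what the paper only asserts in the paragraph before the statement.

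Your fallback via classical Stokes applied to $\Pi_E\alpha$ also works. It needs the justification that $d\Pi_E\alpha-d_c\alpha$ has only weights above $p(k)$, which follows from Proposition \ref{prop: when R manifold is not needed}(2) combined with the single-weight property.
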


\section{Re-expressing spectral complexes in view of Stokes' Theorem}\label{section: spectral Leibniz}
It is possible to reinterpret the spectral complexes $\{(E_{j,l}^{\bullet,\bullet},\Delta_j)\}_{j\in I_{\bullet,\bullet}}$, and in particular the action of the differentials
\[
\Delta_j\colon E_{j,l}^{p,\bullet}\longrightarrow E_{m,j}^{p+j,\bullet},
\]
in terms of Stokes' theorem. The aim of this discussion is to emphasise the naturality of the spectral complexes in the context of positively graded Lie groups. For this reason, unlike in Section~\ref{section: Rumin leibniz}, the considerations below are carried out in full generality and apply to spectral complexes defined on any positively gradable Lie group $\mathbb{G}$.

\medskip

To simplify the notation in the following computations, we introduce a decomposition adapted to the algebraic Hodge Laplacian $\Box_0=d_0\delta_0+\delta_0 d_0$, following the Hodge decomposition described in~\eqref{eq: hodge decompo Box_0}.

\begin{definition}
Let $\alpha\in\Omega^\bullet(\mathbb{G})$. We write
\[
\alpha=\check{\alpha}+\overline{\alpha}+\hat{\alpha},
\]
where
\begin{itemize}
    \item $\check{\alpha}=\operatorname{pr}_{\operatorname{Im}d_0}\alpha = d_0 d_0^{-1}\alpha \in \operatorname{Im} d_0$;
    \item $\hat{\alpha}=\operatorname{pr}_{\operatorname{Im}\delta_0}\alpha = d_0^{-1} d_0 \alpha \in \operatorname{Im} \delta_0$;
    \item $\overline{\alpha}=\Pi_0 \alpha = \alpha - d_0 d_0^{-1}\alpha - d_0^{-1} d_0 \alpha \in \ker \Box_0$.
\end{itemize}
In particular, $\overline{\alpha}$ is the Rumin component of $\alpha$.
When $\alpha\in\Omega^{p,\bullet}(\mathbb{G})$ and $\check{\alpha}=d_0\beta_p$, we will denote by $\beta_p$ a primitive of $\check{\alpha}$, in order to keep track of the weight of the corresponding term.
\end{definition}
To avoid excessively cumbersome notation, we do not carry out the computations in full generality for the differential $\Delta_j$ acting on a form $\alpha\in Z_j^{p,\bullet}$ for arbitrary $j\ge 1$. Instead, we focus on the cases $j=1,2,3$, which already illustrate the underlying mechanism and the role of the Leibniz rule in the general case.

Recall that, on a positively gradable Lie group $\mathbb{G}$, the exterior differential can be written as
\[
d = d_0 + d_1 + \cdots + d_{w_s},
\]
where $d_j$ is the zero operator unless $j=w_i$ for some $i=1,\ldots,s$ (see Lemma~\ref{lem: diving exterior derivative}).

\medskip
$j=1$. 
In this case, we consider $\alpha \in Z_1^{p,\bullet} = \ker d_0 \cap \Omega^{p,\bullet}(\mathbb{G})$. Using the Hodge decomposition, we write
\[
\alpha = \check{\alpha} + \overline{\alpha} = d_0 \beta_p + \overline{\alpha}\,,
\]
so that
\begin{align*}
    d\alpha=&d_0\alpha+d_1\alpha+\cdots+d_{w_s}\alpha=d_1(d_0\beta_p+\overline{\alpha})+\Omega^{\ge p+2,\bullet}(\G)=d_1\overline{\alpha}-d_0d_1\beta_p+\Omega^{\ge p+2,\bullet}\\=&
    d_1\overline{\alpha}-d_0d_0^{-1}d_1\overline{\alpha}+d_0\big(d_0^{-1}d_1\overline{\alpha}-d_1\beta_p\big)+\Omega^{\ge p+2,\bullet}(\G)=d_c^1\overline{\alpha}+\underbrace{d_0\big(d_0^{-1}d_1\overline{\alpha}-d_1\beta_p\big)}_{\in B_1^{p+1,\bullet}}+\Omega^{\ge p+2,\bullet}(\G)\\=&d_c^1\overline{\alpha}+d\big(d_0^{-1}d_1\overline{\alpha}-d_1\beta_p\big)-(d-d_0)\big(d_0^{-1}d_1\overline{\alpha}-d_1\beta_p\big)+\Omega^{\ge p+2,\bullet}(\G)\\=&d_c^1\overline{\alpha}+d\big(d_0^{-1}d_1\overline{\alpha}-d_1\beta_p\big)+\Omega^{\ge p+2,\bullet}(\G)\,.
\end{align*} 
Rearranging the formula, we obtain
\begin{align*}
    d\big(\alpha-d_0^{-1}d_1\overline{\alpha}+d_1\beta_p\big)=d_c^1\overline{\alpha}+\Omega^{\ge p+2,\bullet}(\G)
\end{align*}
In other words, the differential
\[
\Delta_1 \colon Z_1^{p,\bullet} \longrightarrow E_{m,1}^{p+1,\bullet}
\]
coincides with the exterior derivative $d$, up to an exact form and terms of weight $\ge p+2$.

Therefore, for any oriented $C^1$ manifold $\Sigma$ with $\deg(\Sigma)=p+1$ and boundary $\partial\Sigma$ with $\deg(\partial\Sigma)=p$, the spectral Stokes' theorem reduces to the classical Stokes' theorem for $d$.

\medskip
{$j=2$}.
In this case, we have
\begin{align*}
    \alpha\in Z_2^{p,\bullet}\  \longleftrightarrow\ d_0\alpha=0\ \text{ and there exists a }z_{p+1}\in\Omega^{p+1,\bullet}(\G)\text{ such that }d_1\alpha=d_0z_{p+1}\,.
\end{align*}
Notice that the second condition, univocally determines $\hat{z}_{p+1}=d_0^{-1}d_0z_{p+1}$, but we have no control over $\operatorname{pr}_{\ker d_0}z_{p+1}$. This is because
\begin{align*}
    d_1\alpha=d_1(d_0\beta_p+\overline{\alpha})=-d_0d_1\beta_p+d_1\overline{\alpha}-d_0d_0^{-1}d_1\overline{\alpha}+d_0d_0^{-1}d_1\overline{\alpha}=d_c^1\overline{\alpha}+d_0d_0^{-1}d_1\overline{\alpha}-d_0d_1\beta_p=d_0z_{p+1}
\end{align*}
so on one hand this implies that $d_c^1\overline{\alpha}=0$, but also that, $\hat{z}_{p+1}=d_0^{-1}d_1\overline{\alpha}-d_1\beta_p$. Therefore $d_1\alpha=d_0z_{p+1}$ as long as $\hat{z}_{p+1}=d_0^{-1}d_1\overline{\alpha}-d_1\beta_p$, for any $\check{z}_{p+1}+\overline{z}_{p+1}\in\ker d_0\cap\Omega^{p+1,\bullet}(\G)$. Then
\begin{align*}
    d\alpha=&d_0\alpha+d_1\alpha+d_2\alpha+\Omega^{\ge p+3,\bullet}(\G)=d_0\hat{z}_{p+1}+d_2\alpha+\Omega^{\ge p+3,\bullet}(\G)\\=&d_0(\check{z}_{p+1}+\overline{z}_{p+1}+\hat{z}_{p+1})+d_2\alpha+\Omega^{\ge p+3,\bullet}(\G)\\=&dz_{p+1}-(d-d_0)(\check{z}_{p+1}+\overline{z}_{p+1}+\hat{z}_{p+1})+d_2\alpha+\Omega^{\ge p+3,\bullet}(\G)\\
    =&dz_{p+1}-d_1(\check{z}_{p+1}+\overline{z}_{p+1})-d_1(d_0^{-1}d_1\overline{\alpha}-d_1\beta_p)+d_2d_0\beta_p+d_2\overline{\alpha}+\Omega^{\ge p+3,\bullet}(\G\\
    =&dz_{p+1}+d_c^2\overline{\alpha}-\underbrace{d_1(\check{z}_{p+1}+\overline{z}_{p+1})+d_0\big(d_0^{-1}\partial_2\overline{\alpha}-d_2\beta_p\big)}_{\in B_2^{p+2,\bullet}}+\Omega^{\ge p+3,\bullet}(\G)\\=&dz_{p+1}+d_c^2\overline{\alpha}-d\big[\check{z}_{p+1}+\overline{z}_{p+1}-d_0^{-1}\partial_2\overline{\alpha}+d_2\beta_p\big]+(d-d_1-d_0)(\check{z}_{p+1}+\overline{z}_{p+1})+\\&+(d-d_0)(-d_0^{-1}\partial_2\overline{\alpha}+d_2\beta_p)+\Omega^{\ge p+3,\bullet}(\G)\\=&dz_{p+1}+d_c^2\overline{\alpha}-d\big[\check{z}_{p+1}+\overline{z}_{p+1}-d_0^{-1}\partial_2\overline{\alpha}+d_2\beta_p\big]+(d-d_1-d_0)(\check{z}_{p+1}+\overline{z}_{p+1})+\Omega^{\ge p+3,\bullet}(\G)
\end{align*}
and so by rearranging we get
\begin{align*}
    d(\alpha-\hat{z}_{p+1}+d_0^{-1}\partial_2\overline{\alpha}-d_2\beta_p)=d_c^2\overline{\alpha}+\Omega^{\ge p+3,\bullet}(\G)\,.
\end{align*}
Note that in this case $d_1(\check{z}_{p+1}+\overline{z}_{p+1})+d_0\big(d_0^{-1}\partial_2\overline{\alpha}-d_2\beta_p\big)$ belongs to $B_2^{p+2,\bullet}=dZ_1^{p+1,\bullet}+\Omega^{\ge p+3,\bullet}(\G)$ simply because $\check{z}_{p+1}+\overline{z}_{p+1}\in\ker d_0\cap\Omega^{p+1,\bullet}(\G)=Z_1^{p+1,\bullet}$.

Furthermore, let us highlight the fact that in general for a given $\alpha\in Z_2^{p,\bullet}$, we won't have that $d_c^2\overline{\alpha}$ belongs to $B_3^{p+2,\bullet}\cap (B_2^{p+2,\bullet})^\perp$, that is
\begin{align*}
    \Delta_2\alpha=d_c^2\overline{\alpha}+\xi_{p+2}\ \text{ for some }\xi_{p+2}\in B_2^{p+2,\bullet}=dZ_1^{p+1,\bullet}+\Omega^{\ge p+3,\bullet}(\G)\,,
\end{align*}
so one can simply apply again the Leibniz rule just as we did above, since $\xi_{p+2}=d\theta_{p+1}+\Omega^{\ge p+3,\bullet}(\G)$ for some $\theta_{p+1}\in Z_1^{p+1,\bullet}$. We then finally obtain the following simplified expression
\begin{align*}
     d(\alpha-\hat{z}_{p+1}+d_0^{-1}\partial_2\overline{\alpha}-d_2\beta_p-\theta_{p+1})=\Delta_2\alpha+\Omega^{\ge p+3,\bullet}(\G)\,.
\end{align*}

In other words, the differential $\Delta_2\colon Z_2^{p,\bullet}\longrightarrow E_{m,2}^{p+2,\bullet}$ is the same as the exterior derivative $d$, up to an exact form exact form $d\eta$ with $\eta\in\Omega^{p+1,\bullet}(\G)\oplus\Omega^{p+2,\bullet}(\G)$ and forms of weight $\ge p+3$. When taking the integral over a $C^1$ manifold $\Sigma$ for which $\operatorname{deg}(\Sigma)=p+2$ whose boundary is a $C^1$ manifold $\partial\Sigma$ for which $\operatorname{deg}(\partial\Sigma)=p$, we get exactly the Stokes' theorem of Theorem \ref{thm: stokes for spectral complexes} directly from the classical Stokes' theorem together with the Leibniz rule.

\medskip
{$j=3$}. In this case, we have
\begin{align*}
    \alpha\in Z_3^{p,\bullet}\longleftrightarrow &d_0\alpha=0\text{ and there exist }z_{p+i}\in\Omega^{p+i,\bullet}(\G)\text{ with }i=1,2\text{ such that }\\
    &d_1\alpha=d_0z_{p+1}\text{ and }d_2\alpha=d_1z_{p+1}+d_0z_{p+2}\,.
\end{align*}
From the previous steps, we already know that
\begin{itemize}
    \item $\alpha=d_0\beta_p+\overline{\alpha}$;
    \item $d_c^1\overline{\alpha}=0$;
    \item $z_{p+1}=\check{z}_{p+1}+\overline{z}_{p+1}+\hat{z}_{p+1}$ with $\hat{z}_{p+1}=d_0^{-1}d_1\overline{\alpha}-d_1\beta_p$ and $\check{z}_{p+1}=d_0\beta_{p+1}$.
\end{itemize}
The additional condition then reads as
\begin{align*}
    d_2\alpha=d_2(d_0\beta_p+\overline{\alpha})=-d_1^2\beta_p-d_0d_2\beta_p+d_2\overline{\alpha}=d_1(d_0\beta_{p+1}+\overline{z}_{p+1}+d_0^{-1}d_1\overline{\alpha}-d_1\beta_p)+d_0z_{p+2}\,.
\end{align*}
By re-arranging, this gives
\begin{align*}
    &\partial_2\overline{\alpha}-\partial_1\overline{z}_{p+1}=d_0\big(z_{p+2}+d_2\beta_p-d_1\beta_{p+1}\big)\\&\longleftrightarrow d_c^2\overline{\alpha}-d_c^1\overline{z}_{p+1}=d_0(z_{p+2}+d_2\beta_p-d_1\beta_{p+1}-d_0^{-1}\partial_2\overline{\alpha}+d_0^{-1}\partial_1\overline{z}_{p+1})
\end{align*}
which implies that $d_c^2\overline{\alpha}=d_c^1\overline{z}_{p+1}$ but also that $\hat{z}_{p+2}=d_1\beta_{p+1}-d_2\beta_p+d_0^{-1}\partial_2\overline{\alpha}-d_0^{-1}\partial_1\overline{z}_{p+1}$. This also means that for any other choices of $\check{\zeta}_{p+1}+\overline{\zeta}_{p+1}\in\ker d_0\cap\ker d_c^1\cap\Omega^{p+1,\bullet}(\G)$, we have that ${y}_{p+1}:=\check{z}_{p+1}+\check{\zeta}_{p+1}+\overline{z}_{p+1}+\overline{\zeta}_{p+1}+\hat{z}_{p+1}$ also satisfies the equations $d_1\alpha=d_0{y}_{p+1}$ and $d_2\alpha=d_1y_{p+1}+d_0y_{p+2}$ with $\hat{y}_{p+2}=\hat{z}_{p+2}+d_1\gamma_{p+1}-d_0^{-1}\partial_1\overline{\zeta}_{p+1}$, where $\check{\zeta}_{p+1}=d_0\gamma_{p+1}$. Moreover, one can also consider $z_{p+2}=\check{z}_{p+2}+\overline{z}_{p+2}+\hat{z}_{p+2}$ for any choice of $\check{z}_{p+2}+\overline{z}_{p+2}\in\ker d_0\cap\Omega^{p+2,\bullet}(\G)$. Therefore, 
\begin{align*}
    d\alpha=&d_0\alpha+d_1\alpha+d_2\alpha+d_3\alpha+\Omega^{\ge p+4,\bullet}(\G)\\=&d_0y_{p+1}+d_1y_{p+1}+d_0\hat{y}_{p+2}+d_3\alpha+\Omega^{\ge p+4,\bullet}(\G)\\=&d(y_{p+1})-d_2y_{p+1}+dy_{p+2}-d_1y_{p+2}+d_3\alpha+\Omega^{\ge p+4,\bullet}(\G)\\=&d(y_{p+1}+y_{p+2})+\partial_3\overline{\alpha}-\partial_2(\overline{z}_{p+1}+\overline{\zeta}_{p+1})-d_1\overline{z}_{p+2}+d_0\big[d_1\beta_{p+2}+d_2(\beta_{p+1}+\gamma_{p+1})-d_3\beta_p\big]+\Omega^{\ge p+4,\bullet}(\G)\\=&d(y_{p+1}+y_{p+2})+\partial_3\overline{\alpha}-\partial_2\overline{z}_{p+1}-\underbrace{\partial_2\overline{\zeta}_{p+1}-d_1\overline{z}_{p+2}+\operatorname{Im}d_0\cap\Omega^{p+3,\bullet}(\G)}_{\in B_3^{p+3,\bullet}}+\Omega^{\ge p+4,\bullet}(\G)\\
    =&d(y_{p+1}+y_{p+2})+\Delta_3\alpha+B_3^{p+3,\bullet}+\Omega^{\ge p+4,\bullet}(\G)\,.
\end{align*}
Using the fact that $B_3^{p+3,\bullet}=dZ_2^{p+1}+\Omega^{\ge p+4,\bullet}(\G)$, one can use the Leibniz rule again to show that
\begin{align*}
    d(\alpha-y_{p+1}-y_{p+2}-y_{p+3})=\Delta_3\alpha+\Omega^{\ge p+4,\bullet}(\G)\,,
\end{align*}
that is, the differential $\Delta_3\colon Z_3^{p,\bullet}\longrightarrow E_{m,3}^{p+3,\bullet}$ is the same as the exterior derivative $d$, up to an exact form $d\eta$ with $\eta\in\Omega^{p+1,\bullet}(\G)\oplus\Omega^{p+2,\bullet}(\G)\oplus\Omega^{p+3,\bullet}(\G)$ and forms of weight $\ge p+4$. When taking the integral over a $C^1$ manifold $\Sigma$ for which $\operatorname{deg}(\Sigma)=p+3$ whose boundary is a $C^1$ manifold $\partial\Sigma$ for which $\operatorname{deg}(\partial\Sigma)=p$, we get exactly the Stokes' theorem of Theorem \ref{thm: stokes for spectral complexes} directly from the classical Stokes' theorem together with the Leibniz rule.

\section{Currents in positively graded groups}\label{section: currents}

The purpose of this section is to show how Stokes' theorem for spectral complexes automatically introduces the natural setting for a general theory of currents in positively graded groups. 
It is, in fact, quite natural to replace a spectral manifold with a continuous operator acting on the smooth compactly supported forms of the spectral complexes $\{(E_{j,l}^{\bullet,\bullet},\Delta_j)\}_{j\in I_{\bullet,\bullet}}$. 

In this section, $\G$ will denote a positively graded group and $U\subset\G$ an open subset. We also introduce the spaces ${\mathscr D}^{p,k-p}_{j,l}(U)$ of compactly supported smooth forms of $E^{p,k-p}_{j,l}(U)$, viewed as quotient spaces. Before introducing the precise definition of such quotient spaces, the following remark is needed.
\begin{remark}[Viewing $Z_r^{\bullet,\bullet}$ and $B_r^{\bullet,\bullet}$ as quotients]\label{remark:viewZandBas quotients}
    Let us consider the spaces of $r$-cocycles and $r$-boundaries that are traditionally used to define the quotients $E_r^{\bullet,\bullet}$ of spectral sequences (see e.g. \cite{mccleary2001user,livernet2020spectral}). Given $U\subseteq\G$ an open subset, for $r\ge 1$ we have
    \begin{align*}
        \mathcal{Z}_r^{p,\bullet}(U):=&\{\alpha\in\Omega^{\ge p,\bullet}(U)\mid d\alpha\in\Omega^{\ge p+r,\bullet}(U)\}\ \text{ and }\ 
        \mathcal{B}_r^{p,\bullet}(U):=\mathcal{Z}_{r-1}^{p+1,\bullet}(U)+d\mathcal{Z}_{r-1}^{p-r+1,\bullet}(U)\,.
    \end{align*}
    Then it is straightforward to show the following isomorphisms: for each $r\ge 1$, 
    \begin{align}\label{eq: Z and B as quotients}
        Z_r^{p,\bullet}(U)\cong \mathcal{Z}_r^{p,\bullet}(U)/\mathcal{Z}_{r-1}^{p+1,\bullet}(U)\ \text{ and }\ B_r^{p,\bullet}(U)\cong\mathcal{B}_r^{p,\bullet}(U)/\mathcal{Z}_{r-1}^{p+1,\bullet}(U)\,,
    \end{align}
    where $Z_r^{p,\bullet}(U)$ and $B_r^{p,\bullet}(U)$ are the $\R$-submodules of $\Omega^\bullet(U)$ constructed according to Definition \ref{Z and B defined}.

    For a thorough exposition of these properties, we refer to \cite{FILIPPA}.
\end{remark}
\begin{definition}Given $U\subseteq\G$ an open subset, for any $j,l\ge 1$ and $p=0,\ldots,Q$, we define
\begin{align*}
    \mathscr D_{j,l}^{p,\bullet}(U):={\{\alpha\in Z_j^{p,\bullet}(U)\cap \Omega_c^\bullet(U)\}}\ /\ \overline{\{\beta\in B_l^{p,\bullet}(U)\cap\Omega_c^\bullet(U)\}}\,.
\end{align*}
\end{definition}
 The closure here is taken in order to have the isomorphism between these quotients $\mathscr D_{j,l}^{\bullet,\bullet}(U)$ and the subspaces of compactly supported forms in $E_{j,l}^{\bullet,\bullet}(U)$, that is 
\begin{align*}
    \mathscr{D}_{j,l}^{p,\bullet}(U)\cong \{\alpha\in Z_j^{p,\bullet}(U)\cap \Omega_c^\bullet(U)\}\cap\big(\{\beta\in B_l^{p,\bullet}(U)\cap\Omega_c^\bullet(U)\}\big)^\perp\,.
\end{align*}
We remark that the closure is taken with respect to the $L^2$-norm, but the $\R$-modules $Z_j^{p,\bullet}(U)$ and $B_l^{p,\bullet}(U)$ are viewed via the isomorphism \eqref{eq: Z and B as quotients}. In other words, we do not view a form in either $Z_j^{p,\bullet}(U)$ or $B_l^{p,\bullet}(U)$ as purely its representative of weight $p$, as this would also prevent the subspaces $Z_j^{p,\bullet}(U)$ from being closed.

Using this characterisation of $\mathscr{D}_{j,l}^{\bullet,\bullet}(U)$, as well as the submodules $Z_r^{\bullet,\bullet}(U)$ and $B_r^{\bullet,\bullet}(U)$, as quotients, there is a natural way of extending norms from the space of compactly supported forms. Indeed, given a norm $\Vert\cdot\Vert_{X}$ on a Banach space $X$, we define the norm $\Vert\cdot\Vert_{X/Y}$ on the quotient $X/Y$ as follows
\begin{align}\label{eq: norm on quotient}
    \Vert [x]\Vert_{X/Y}:=\inf_{y\in Y}\Vert x-y\Vert_X\ \text{ for any }x\in X\,.
\end{align}
\begin{definition}[Seminorms on $\mathscr D_{j,l}^{p,\bullet}$]

For any compactly supported $\alpha\in Z_r^{p,\bullet}(U)\cap\Omega_c^\bullet(U)$ and $\beta\in B_r^{p,\bullet}(U)\cap\Omega_c^\bullet(U)$, we can use the isomorphisms \eqref{eq: Z and B as quotients} and formula \eqref{eq: norm on quotient} to define the seminorms 
\begin{align*}
    &\mathfrak{p}_{K,i}([\alpha]):=\inf\{p_{K,i}(\alpha+\Tilde{\alpha})\mid \Tilde{\alpha}\in \mathcal{Z}_{r-1}^{p+1,\bullet}(U)\}\ \text{ for any }[\alpha]\in\mathcal{Z}_r^{p,\bullet}(U)/\mathcal{Z}_{r-1}^{p+1,\bullet}(U)\\&\mathfrak{p}_{K,i}([\beta]):=\inf\{p_{K,i}(\beta+\Tilde{\beta})\mid \Tilde{\beta}\in\mathcal{Z}_{r-1}^{p+1,\bullet}(U)\}\ \text{ for any }[\beta]\in\mathcal{B}_r^{p,\bullet}(U)/\mathcal{Z}_{r-1}^{p+1,\bullet}(U)\,,
\end{align*}
where $p_{K,i}$ denotes the seminorm on compactly supported forms $\omega\in\Omega^k_c(U)$
\[
p_{K,i}(\omega)=\max\big\{\|\partial^\alpha \omega_I\|_{C^0(K)}\mid \omega=\sum_I \omega_I \,\theta_I, \;I=(i_1,\ldots,i_k)\in I(k,n),\, |\alpha|\le i  \big\}
\]
and  $\theta_I=\theta_{i_1}\wedge\cdots\wedge \theta_{i_k}\in\bigwedge^k\mathfrak{g}^\ast$.

Finally, for any $\omega\in\mathscr D_{j,l}^{p,\bullet}(U)=Z_j^{p,\bullet}(U)\cap\Omega_c^\bullet(U)/ \,\overline{B_l^{p,\bullet}(U)\cap\Omega_c^\bullet(U)}$, we have
\begin{align*}
    \Tilde{p}_{K,i}(\omega):=\inf\{\mathfrak{p}_{K,i}(\omega+\Tilde{\omega})\mid \Tilde{\omega}\in\overline{B_{l}^{p,\bullet}(U)\cap\Omega^\bullet_c(U)}\}\,,
\end{align*}
hence allowing to construct the associated {\em strict inductive limit topology} on $\mathscr D_{j,l}^{p,\bullet}(U)$.
\end{definition}

\begin{definition}[Spectral current and its boundary]
We say that $T:\mathscr D^{p,k-p}_{j,l}(U)\to\R$ is a $(p,k,j,l)$-{\em spectral current in $U$}, or simply a {\em spectral current in $U$}, if it is linear and continuous with respect to
the strict inductive limit topology on $\mathscr D^{p,k-p}_{j,l}(U)$.
The {\em boundary} of $T$ is naturally defined as 
\[
\partial^l T(\omega)=T(\Delta_l \omega) \quad \text{for each}\; \omega\in\mathscr{ D}^{p-l,k-1-p+l}_{l,m}(U)
\]
where we have set the spectral differential 
\[
\Delta_l\colon E_{l,m}^{p-l,k-1-p+l}\longrightarrow E_{j,l}^{p,k-p}\,,
\]
for an appropriate choice of $m\in\Z^+$.

The continuity of the boundary of $T$ follows directly from the characterisation of the differential $\Delta_j$ via the exterior derivative (see Proposition \ref{prop: expression Delta spectral}) and the well-posedness of the seminorms $\Tilde{p}_{K,i}$ defined on $\mathscr D_{j,l}^{p,\bullet}(U)$. In other words, the properties of the boundary operator $\partial^j$ follow directly from the properties of the boundary operator $\partial$ of a classical current, given by the exterior derivative. 

Therefore, given $T$ a $(p+j,k,m,j)$-spectral current in $U$, we have that $\partial^jT$ is a $(p,k-1,j,l)$-spectral current. Again, the choices of the indices $m,l\in\Z^+$ are not uniquely defined, and are chosen depending on the spectral complex so that both $E_{j,l}^{p,k-1-p}$ and $E_{m,j}^{p+j,k-p-j}$ are nontrivial.

If $\Sigma$ is an oriented $k$-dimensional {\em spectral manifold} of class $C^1$, then we set
\[
[[\Sigma]]_p(\omega):=\int_\Sigma\omega\ \text{ for any }\omega\in\mathscr D_{j,l}^{p,k-p}
\]
to denote the $(p,k,j,l)$-spectral current associated with the spectral manifold $\Sigma$ of degree $p$. Here we use the notation $[[\Sigma]]_p$ is to highlight the fact that $\operatorname{deg}(\Sigma)=p$.

In the case of an oriented $k$-dimensional $C^1$ spectral manifold $\Sigma$ with boundary $\partial\Sigma$, then by Stokes' theorem for spectral complexes (Theorem \ref{thm: stokes for spectral complexes}), we get
\begin{align*}
    [[\partial^j\Sigma]]_p(\omega)=\int_{\partial\Sigma}\omega=\int_\Sigma\Delta_j\omega=[[\Sigma]]_{p+j}(\Delta_j\omega)\ \text{ for any }\omega\in \mathscr D_{j,l}^{p,k-1-p}\,.
\end{align*}
\end{definition}

Inspired by the classical definition of comass in \cite{Federer69}, we define a modified notion of comass adapted to the bigraded structure of $\Omega^\bullet_c(\G)$ (see Definition~\ref{def: weights of forms}). This modification is designed to capture the additional geometric information encoded by the homogeneous weight decomposition.

\begin{definition}[Weighted comass of a differential form]\label{def:weightedcomass}   Let $\theta\in\bigwedge^k\mathfrak{g}^\ast$ and let $W_k$ denote all the possible weights of covectors in degree $k$, then we can introduce a weighted-comass as
\begin{align*}
    \Vert\theta\Vert_p:=\sup\{\theta(v)\mid v\in\bigwedge\nolimits^k\mathfrak{g},\,v\text{ is simple and of weight/degree }p,\text{ and }\vert v\vert\le 1\}\,.
\end{align*}
For each $p\in W_k$, we will refer to $\Vert\theta\Vert_p$ as the $p$-comass of $\theta\in\bigwedge^k\mathfrak{g}^\ast$.

We note that in the case where $\theta\in\bigwedge^k\mathfrak{g}^\ast$, is a homogenous $k$-covector of weight $p\in W_k$, its comass $\Vert\theta\Vert$ in the classical sense (see e.g. \cite[Subection 1.8.1]{Federer69}) coincides with $\Vert\theta\Vert_p$ by the orthogonality of covectors of different weight \eqref{eq: left diff weight implies orthogonal}.

This definition readily extends to arbitrary compactly supported forms, for that, given $\omega\in\Omega^k_c(U)$, we define its $p$-comass as
\begin{align*}
    \Vert\omega\Vert_p :=\sup\{\Vert\omega(x)\Vert_p\mid x\in U\}\ \text{ for a given }p\in W_k\,.
\end{align*}

\end{definition}

\begin{definition}[Comass of a form in $Z_j^{p,k}$]
    Let $\omega\in Z_j^{p,k}(U)\cap\Omega^{\bullet}_c(U)$. Then by the isomorphism \ref{eq: Z and B as quotients}, we can view the form $\omega$ as a linear combination $\omega-z_{p+1}-\cdots-z_{p+j-1}\in\oplus_{i=0}^{j-1}\Omega^{p+i,k-p-i}_c(U)$, where forms $z_{p+i}\in\Omega^{p+i,k-p-i}_c(U)$ are uniquely determined up to elements in $\mathcal{Z}_{j-1}^{p+1}(U)\cap\Omega_c^{\bullet}(G)$. We then define its comass as the $p$-comass of the form $\omega-z_{p+1}-\cdots-z_{p+j-1}$, that is
    \begin{align*}
        \Vert\omega\Vert=\Vert\omega\Vert_p=\Vert \omega-z_{p+1}-\cdots-z_{p+j-1}\Vert_p=\Vert \omega-z_{p+1}-\cdots-z_{p+j-1}+\mathcal{Z}_{j-1}^{p+1}(U)\cap\Omega_c^{\bullet}(G)\Vert_p\,.
    \end{align*}
    The first equality follows from the fact that $\omega$ is a homogeneous form of weight $p$, while the others are due to the fact that the $p$-comass of forms of weight strictly greater than $p$ vanish identically.
\end{definition}
\begin{remark}
    Note that this definition mimics exactly the simplifications needed in Theorem \ref{thm: stokes for spectral complexes} in order to obtain the validity of Stokes' theorem when integrating over a manifold of a given fixed degree $p$ equal to the weight of the form $\omega\in Z_j^{p,\bullet}$ considered.
\end{remark}

\begin{definition}[Mass of a spectral current]
    Let $T\colon \mathscr D_{j,l}^{p,k-p}(U)\longrightarrow\R$ be a $(p,k,j,l)$-spectral current in $U$, we define its $p$-mass as
    \begin{align*}
        \mathbf{M}(T):=\sup\{T(\omega)\mid \omega\in\mathscr D_{j,l}^{p,k-p}\ \text{ and }\Vert\omega\Vert\le 1\}\,.
    \end{align*}
\end{definition}

Since we have shown that, given $T$ a $(p+j,k,m,j)$-current in $U$, its boundary $\partial^jT$ is a $(p,k-1,j,l)$-current in $U$, and so it is possible to compute its mass to. Similarly to what has been done in the literature, we propose the definition of \textit{normal spectral currents} as those for which both $T$ and $\partial^j T$ have finite mass, so that $\mathbf{N}(T)=\mathbf{M}(T)+\mathbf{M}(\partial^j T)$.

\bigskip
\bibliographystyle{amsplain}
\bibliography{bibliography}

@article {JulNGolVit2023,
    AUTHOR = {Julia, Antoine and Nicolussi Golo, Sebastiano and Vittone,
              Davide},
     TITLE = {Lipschitz functions on submanifolds of {H}eisenberg groups},
   JOURNAL = {Int. Math. Res. Not. IMRN},
  FJOURNAL = {International Mathematics Research Notices. IMRN},
      YEAR = {2023},
    NUMBER = {9},
     PAGES = {7399--7422},
      ISSN = {1073-7928,1687-0247},
   MRCLASS = {22E25 (49Q15 49Q20 53C17)},
  MRNUMBER = {4584704},
MRREVIEWER = {Tijana\ \v Sukilovi\'c},
       DOI = {10.1093/imrn/rnac066},
       URL = {https://doi.org/10.1093/imrn/rnac066},
}

@book{mccleary2001user,
  title={A user's guide to spectral sequences},
  author={McCleary, John},
  number={58},
  year={2001},
  publisher={Cambridge University Press}
}

@article{franchi2025currents,
  title={Currents in {H}eisenberg groups},
  author={Franchi, Bruno and Pansu, Pierre},
  journal={arXiv preprint arXiv:2511.18895},
  year={2025}
}

@book {Morgan2016-bk,
    AUTHOR = {Morgan, Frank},
     TITLE = {Geometric measure theory},
   EDITION = {Fifth},
      NOTE = {A beginner's guide,
              Illustrated by James F. Bredt},
 PUBLISHER = {Elsevier/Academic Press, Amsterdam},
      YEAR = {2016},
     PAGES = {viii+263},
      ISBN = {978-0-12-804489-6},
   MRCLASS = {49-01 (26-01 28-01 28A75 49Q20 53C23 58E12)},
  MRNUMBER = {3497381},
}

@book {Federer69,
    AUTHOR = {Federer, Herbert},
     TITLE = {Geometric measure theory},
    SERIES = {Die Grundlehren der mathematischen Wissenschaften},
    VOLUME = {Band 153},
 PUBLISHER = {Springer-Verlag New York, Inc., New York},
      YEAR = {1969},
     PAGES = {xiv+676},
   MRCLASS = {28.80 (26.00)},
  MRNUMBER = {257325},
MRREVIEWER = {J.\ E.\ Brothers},
}

@article {Cygan81,
    AUTHOR = {Cygan, Jacek},
     TITLE = {Subadditivity of homogeneous norms on certain nilpotent {L}ie
              groups},
   JOURNAL = {Proc. Amer. Math. Soc.},
  FJOURNAL = {Proceedings of the American Mathematical Society},
    VOLUME = {83},
      YEAR = {1981},
    NUMBER = {1},
     PAGES = {69--70},
      ISSN = {0002-9939},
   MRCLASS = {22E30 (35H05 43A80)},
  MRNUMBER = {619983},
       DOI = {10.2307/2043893},
       URL = {http://dx.doi.org/10.2307/2043893},
}

@article {DiD21,
    AUTHOR = {Di Donato, Daniela},
     TITLE = {Intrinsic differentiability and intrinsic regular surfaces in
              {C}arnot groups},
   JOURNAL = {Potential Anal.},
  FJOURNAL = {Potential Analysis. An International Journal Devoted to the
              Interactions between Potential Theory, Probability Theory,
              Geometry and Functional Analysis},
    VOLUME = {54},
      YEAR = {2021},
    NUMBER = {1},
     PAGES = {1--39},
      ISSN = {0926-2601},
   MRCLASS = {35R03 (22E25 28A75 53C17)},
  MRNUMBER = {4194533},
       DOI = {10.1007/s11118-019-09817-4},
       URL = {https://doi.org/10.1007/s11118-019-09817-4},
}

@PhDThesis{CorniPhD,
title = {Low codimensional intrinsic regular submanifolds in the Heisenberg group $\mathbb H^n$},
author = {Francesca Corni},
school = {Universit{\`a} di Bologna},
year= {2021},
}

@article {ADDDLD24,
    AUTHOR = {Antonelli, Gioacchino and Di Donato, Daniela and Don,
              Sebastiano and Le Donne, Enrico},
     TITLE = {Characterizations of uniformly differentiable co-horizontal
              intrinsic graphs in {C}arnot groups},
   JOURNAL = {Ann. Inst. Fourier (Grenoble)},
  FJOURNAL = {Universit\'e{} de Grenoble. Annales de l'Institut Fourier},
    VOLUME = {74},
      YEAR = {2024},
    NUMBER = {6},
     PAGES = {2523--2621},
      ISSN = {0373-0956,1777-5310},
   MRCLASS = {53C17 (22E25 26A16 28A75 49Q15)},
  MRNUMBER = {4810245},
       DOI = {10.5802/aif.3660},
       URL = {https://doi.org/10.5802/aif.3660},
}

@article {HebSik90,
    AUTHOR = {Hebisch, Waldemar and Sikora, Adam},
     TITLE = {A smooth subadditive homogeneous norm on a homogeneous group},
   JOURNAL = {Studia Math.},
  FJOURNAL = {Polska Akademia Nauk. Instytut Matematyczny. Studia
              Mathematica},
    VOLUME = {96},
      YEAR = {1990},
    NUMBER = {3},
     PAGES = {231--236},
      ISSN = {0039-3223},
   MRCLASS = {22E25 (17B30 22E15 43A85)},
  MRNUMBER = {1067309},
MRREVIEWER = {Hidenori Fujiwara},
       DOI = {10.4064/sm-96-3-231-236},
       URL = {https://doi.org/10.4064/sm-96-3-231-236},
}

@article{FILIPPA,
    author = {Lo Biundo, Filippa},
    title = {Distributional spectral complexes and bi{L}ipschitz invariance},
    journal = {in preparation},
    year = {}
}

@article {FSSC01,
    AUTHOR = {Franchi, Bruno and Serapioni, Raul and Serra Cassano, Francesco},
     TITLE = {Rectifiability and perimeter in the {H}eisenberg group},
   JOURNAL = {Math. Ann.},
  FJOURNAL = {Mathematische Annalen},
    VOLUME = {321},
      YEAR = {2001},
    NUMBER = {3},
     PAGES = {479--531},
      ISSN = {0025-5831},
     CODEN = {MAANA},
   MRCLASS = {49Q15 (22E25 46E35)},
  MRNUMBER = {1871966 (2003g:49062)},
MRREVIEWER = {Piotr Haj{\l}asz},
       DOI = {10.1007/s002080100228},
       URL = {http://dx.doi.org/10.1007/s002080100228},
}

@article {Vit22,
    AUTHOR = {Vittone, Davide},
     TITLE = {Lipschitz graphs and currents in {H}eisenberg groups},
   JOURNAL = {Forum Math. Sigma},
  FJOURNAL = {Forum of Mathematics. Sigma},
    VOLUME = {10},
      YEAR = {2022},
     PAGES = {Paper No. e6, 104},
   MRCLASS = {49Q15 (22E30 26A16 53C17)},
  MRNUMBER = {4377000},
MRREVIEWER = {Yongsheng Zhang},
       DOI = {10.1017/fms.2021.84},
       URL = {https://doi.org/10.1017/fms.2021.84},
}

@article {Magnani2019Area,
    AUTHOR = {Magnani, Valentino},
     TITLE = {Towards a theory of area in homogeneous groups},
   JOURNAL = {Calc. Var. Partial Differential Equations},
  FJOURNAL = {Calculus of Variations and Partial Differential Equations},
    VOLUME = {58},
      YEAR = {2019},
    NUMBER = {3},
     PAGES = {58:91},
      ISSN = {0944-2669},
   MRCLASS = {28A75 (22E30 53C17)},
  MRNUMBER = {3947860},
       DOI = {10.1007/s00526-019-1539-7},
       URL = {https://doi.org/10.1007/s00526-019-1539-7},
}

@article {Mag12A,
    AUTHOR = {Magnani, Valentino},
     TITLE = {Non-horizontal submanifolds and coarea formula},
   JOURNAL = {J. Anal. Math.},
  FJOURNAL = {Journal d'Analyse Math\'ematique},
    VOLUME = {106},
      YEAR = {2008},
     PAGES = {95--127},
      ISSN = {0021-7670},
   MRCLASS = {53C17 (22E25 49Q15)},
  MRNUMBER = {2448983},
MRREVIEWER = {C{\'e}sar Rosales},
       DOI = {10.1007/s11854-008-0045-1},
       URL = {http://dx.doi.org/10.1007/s11854-008-0045-1},
}

@article {Mag06,
    AUTHOR = {Magnani, Valentino},
     TITLE = {Blow-up of regular submanifolds in {H}eisenberg groups and applications},
   JOURNAL = {Cent. Eur. J. Math.},
  FJOURNAL = {Central European Journal of Mathematics},
    VOLUME = {4},
      YEAR = {2006},
    NUMBER = {1},
     PAGES = {82--109},
      ISSN = {1895-1074,1644-3616},
   MRCLASS = {53C17 (22E25 28A75 49Q15)},
  MRNUMBER = {2213028},
MRREVIEWER = {Lars\ Olsen},
       DOI = {10.1007/s11533-005-0006-1},
       URL = {https://doi.org/10.1007/s11533-005-0006-1},
}

@article {FSSC6,
    AUTHOR = {Franchi, Bruno and Serapioni, Raul and Serra Cassano,
              Francesco},
     TITLE = {Regular submanifolds, graphs and area formula in {H}eisenberg
              groups},
   JOURNAL = {Adv. Math.},
  FJOURNAL = {Advances in Mathematics},
    VOLUME = {211},
      YEAR = {2007},
    NUMBER = {1},
     PAGES = {152--203},
      ISSN = {0001-8708},
   MRCLASS = {49Q15 (22E25 28A75 28A78)},
  MRNUMBER = {2313532},
MRREVIEWER = {Oleg G. Okunev},
       DOI = {10.1016/j.aim.2006.07.015},
       URL = {https://doi.org/10.1016/j.aim.2006.07.015},
}

@article {Can21jga,
    AUTHOR = {Canarecci, Giovanni},
     TITLE = {Sub-{R}iemannian currents and slicing of currents in the
              {H}eisenberg group {$\Bbb{H}^n$}},
   JOURNAL = {J. Geom. Anal.},
  FJOURNAL = {Journal of Geometric Analysis},
    VOLUME = {31},
      YEAR = {2021},
    NUMBER = {5},
     PAGES = {5166--5200},
      ISSN = {1050-6926,1559-002X},
   MRCLASS = {53C17 (53A35)},
  MRNUMBER = {4244901},
MRREVIEWER = {Francescopaolo\ Montefalcone},
       DOI = {10.1007/s12220-020-00474-3},
       URL = {https://doi.org/10.1007/s12220-020-00474-3},
}

@article{julia2023flat,
  title={Flat compactness of normal currents, and charges in {C}arnot groups},
  author={Julia, Antoine and Pansu, Pierre},
  journal={arXiv preprint arXiv:2303.02012},
  year={2023}
}

@article{corni2026minimal,
  title={A minimal regularity for the area formula in the {E}ngel group},
  author={Corni, Francesca and Essebei, Fares and Magnani, Valentino},
  journal={arXiv preprint arXiv:2602.00346},
  year={2026}
}

@article{tripaldi2026spectralcomplexestruncatedmulticomplexes,
      title={Spectral complexes from truncated multicomplexes}, 
      author={Francesca Tripaldi},
      journal={arXiv preprint, arXiv:2602.01214},
      year={2026},
}

@article{hakavuori2022gradings,
  title={Gradings for Nilpotent {L}ie Algebras},
  author={Hakavuori, Eero and Kivioja, Ville and Moisala, Terhi and Tripaldi, Francesca},
  journal={Journal of Lie Theory},
  volume={32},
  number={2},
  pages={383--412},
  year={2022}
}

@article{livernet2020spectral,
  title={On the spectral sequence associated to a multicomplex},
  author={Livernet, Muriel and Whitehouse, Sarah and Ziegenhagen, Stephanie},
  journal={Journal of Pure and Applied Algebra},
  volume={224},
  number={2},
  pages={528--535},
  year={2020},
  publisher={Elsevier}
}

@incollection {rumin_grenoble,
    AUTHOR = {Rumin, Michel},
     TITLE = {Around heat decay on forms and relations of nilpotent {L}ie
              groups},
 BOOKTITLE = {S\'eminaire de Th\'eorie Spectrale et G\'eom\'etrie, Vol. 19,
              Ann\'ee 2000--2001},
    SERIES = {S\'emin. Th\'eor. Spectr. G\'eom.},
    VOLUME = {19},
     PAGES = {123--164},
 PUBLISHER = {Univ. Grenoble I},
      YEAR = {2001},
   MRCLASS = {58J50 (53C17 58A10 58J10)},
  MRNUMBER = {MR1909080 (2003f:58062)},
MRREVIEWER = {Emmanuel Russ},
}

@article{LeDonneTripaldi2021,
  title={A {C}ornucopia of {C}arnot groups in low dimensions},
  author={Le Donne, Enrico and Tripaldi, Francesca},
pages = {155--289},
volume = {10},
number = {1},
journal = {Analysis and Geometry in Metric Spaces},
doi = {doi:10.1515/agms-2022-0138},
year = {2022},
lastchecked = {2026-03-01}
}

@book{Fischer2016,
author="Fischer, Veronique
and Ruzhansky, Michael",
title="Quantization on Nilpotent Lie Groups",
year="2016",
publisher="Springer International Publishing",
address="Cham",
pages="15--56",
isbn="978-3-319-29558-9",
doi="10.1007/978-3-319-29558-9_1",
url="https://doi.org/10.1007/978-3-319-29558-9_1"
}

@book{montgomery2002tour,
  title={A {T}our of {S}ubriemannian {G}eometries, {T}heir {G}eodesics and {A}pplications},
  author={Montgomery, Richard},
  number={91},
  year={2002},
  publisher={American Mathematical Soc.}
}

@article{FT,
  title={Differential {F}orms in {C}arnot {G}roups {A}fter {M}. {R}umin: an {I}ntroduction},
  author={Franchi, Bruno and Tripaldi, Francesca},
  journal={Quaderni dell’Unione Matematica Italiana, Topics in mathematics, Bologna, Pitagora},
  pages={75--122},
  year={2015}
}

@article{rumin1999differential,
  title={Differential geometry on {CC} spaces and application to the {N}ovikov-{S}hubin numbers of nilpotent {L}ie groups},
  author={Rumin, Michel},
  journal={Comptes Rendus de l'Acad{\'e}mie des Sciences-Series I-Mathematics},
  volume={329},
  number={11},
  pages={985--990},
  year={1999},
  publisher={Elsevier}
}

@article {CompensatedCompactness,
    AUTHOR = {Baldi, Annalisa and Franchi, Bruno and Tchou, Nicoletta and
              Tesi, Maria Carla},
     TITLE = {Compensated compactness for differential forms in {C}arnot
              groups and applications},
   JOURNAL = {Adv. Math.},
  FJOURNAL = {Advances in Mathematics},
    VOLUME = {223},
      YEAR = {2010},
    NUMBER = {5},
     PAGES = {1555--1607},
      ISSN = {0001-8708,1090-2082},
   MRCLASS = {43A80 (35B27 58A10)},
  MRNUMBER = {2592503},
       DOI = {10.1016/j.aim.2009.09.020},
       URL = {https://doi.org/10.1016/j.aim.2009.09.020},
}

@article {CorMag25,
    AUTHOR = {Corni, Francesca and Magnani, Valentino},
     TITLE = {Symmetry results for the area formula in homogeneous groups},
   JOURNAL = {J. Math. Anal. Appl.},
  FJOURNAL = {Journal of Mathematical Analysis and Applications},
    VOLUME = {546},
      YEAR = {2025},
    NUMBER = {2},
     PAGES = {Paper No. 129238, 17},
      ISSN = {0022-247X,1096-0813},
   MRCLASS = {53C17 (22E30 28A75)},
  MRNUMBER = {4853462},
       DOI = {10.1016/j.jmaa.2025.129238},
       URL = {https://doi.org/10.1016/j.jmaa.2025.129238},
}

@article {Mag22RS,
    AUTHOR = {Magnani, Valentino},
     TITLE = {Rotational symmetries and spherical measure in homogeneous
              groups},
   JOURNAL = {J. Geom. Anal.},
  FJOURNAL = {Journal of Geometric Analysis},
    VOLUME = {32},
      YEAR = {2022},
    NUMBER = {4},
     PAGES = {Paper No. 119, 31},
      ISSN = {1050-6926},
   MRCLASS = {43A80 (22E25 28A75 58C35)},
  MRNUMBER = {4375063},
MRREVIEWER = {Sergei S. Platonov},
       DOI = {10.1007/s12220-022-00874-7},
       URL = {https://doi.org/10.1007/s12220-022-00874-7},
}

@article{Mag14,
author={Magnani, Valentino},
title={Towards Differential Calculus in stratified groups},
journal = {J. Aust. Math. Soc.},
volume={95},
number = "1",
year="2013",
pages = "76-128"
}

@article{Mag13Vit,
author={Magnani, Valentino and Vittone, Davide},
title={An intrinsic measure for submanifolds in stratified groups},
journal = {J. Reine Angew. Math.},
volume={619},
pages={203--232},
year={2008},
language={English}
}

@book{donne2024metric,
author="Le Donne, Enrico",
title="Metric {L}ie Groups: Carnot-{C}arath{\'e}odory {S}paces from the {H}omogeneous {V}iewpoint",
year="2025",
publisher="Springer Nature Switzerland",
address="Cham",
pages="1--21",
isbn="978-3-031-98832-5",
doi="10.1007/978-3-031-98832-5_1",
url="https://doi.org/10.1007/978-3-031-98832-5_1"
}

@article {FMS14,
    AUTHOR = {Franchi, Bruno and Marchi, Marco and Serapioni, Raul Paolo},
     TITLE = {Differentiability and approximate differentiability for
              intrinsic {L}ipschitz functions in {C}arnot groups and a
              {R}ademacher theorem},
   JOURNAL = {Anal. Geom. Metr. Spaces},
  FJOURNAL = {Analysis and Geometry in Metric Spaces},
    VOLUME = {2},
      YEAR = {2014},
     PAGES = {258--281},
      ISSN = {2299-3274},
   MRCLASS = {49Q15 (53C17 58C20)},
  MRNUMBER = {3290378},
MRREVIEWER = {Laurent Moonens},
       DOI = {10.2478/agms-2014-0010},
       URL = {https://doi.org/10.2478/agms-2014-0010},
}

@article {FSSC5,
    AUTHOR = {Franchi, Bruno and Serapioni, Raul and Serra Cassano, Francesco},
     TITLE = {On the structure of finite perimeter sets in step 2 {C}arnot
              groups},
   JOURNAL = {J. Geom. Anal.},
  FJOURNAL = {The Journal of Geometric Analysis},
    VOLUME = {13},
      YEAR = {2003},
    NUMBER = {3},
     PAGES = {421--466},
      ISSN = {1050-6926},
   MRCLASS = {49Q15 (53C17)},
  MRNUMBER = {1984849 (2004i:49085)},
       DOI = {10.1007/BF02922053},
       URL = {http://dx.doi.org/10.1007/BF02922053},
}

@article{arena2009intrinsic,
  title={Intrinsic regular submanifolds in {H}eisenberg groups are differentiable graphs},
  author={Arena, Gabriella and Serapioni, Raul and others},
  journal={Calculus of Variations and Partial Differential Equations},
  volume={35},
  number={4},
  pages={517--536},
  year={2009},
  publisher={Springer-Verlag(Heidelberg), Tiergartenstrasse 17}
}

@unpublished{CorMag23pr,
	author = {Corni, F. and Magnani V.},
	title = {Area of intrinsic graphs in homogeneous groups},
	year={November 11, 2023},
	note={arXiv:2311.06638},
}

@article {rumin_palermo,
    AUTHOR = {Rumin, Michel},
     TITLE = {An introduction to spectral and differential geometry in
              {C}arnot-{C}arath\'eodory spaces},
   JOURNAL = {Rend. Circ. Mat. Palermo (2) Suppl.},
  FJOURNAL = {Rendiconti del Circolo Matematico di Palermo. Serie II.
              Supplemento},
    VOLUME = {75},
      YEAR = {2005},
     PAGES = {139--196},
   MRCLASS = {58J35 (53C23)},
  MRNUMBER = {MR2152359 (2006g:58053)},
MRREVIEWER = {Thierry Coulhon},
}

@article {FranchiSerapioni,
    AUTHOR = {Bruno Franchi  and Raul Paolo Serapioni},
     TITLE = {Intrinsic {L}ipschitz graphs within {C}arnot groups},
   JOURNAL = {J. Geom. Anal.},
  FJOURNAL = {Journal of Geometric Analysis},
    VOLUME = {26},
      YEAR = {2016},
    NUMBER = {3},
     PAGES = {1946--1994}
}

@article {FranchiTesi,
    AUTHOR = {Bruno Franchi and Maria Carla Tesi},
     TITLE = {Wave and {M}axwell's equations in {C}arnot groups},
   JOURNAL = {Commun. Contemp. Math.},
  FJOURNAL = {Communications in Contemporary Mathematics},
    VOLUME = {14},
      YEAR = {2012},
    NUMBER = {5},
     PAGES = {1250032, 62}
}

@article {F+T1,
    AUTHOR = {Fischer, V\'{e}ronique and Tripaldi, Francesca},
     TITLE = {An alternative construction of the {R}umin complex on
              homogeneous nilpotent {L}ie groups},
   JOURNAL = {Adv. Math.},
  FJOURNAL = {Advances in Mathematics},
    VOLUME = {429},
      YEAR = {2023},
     PAGES = {Paper No. 109192, 39},
      ISSN = {0001-8708,1090-2082},
   MRCLASS = {58J10 (22E25 53C17)},
  MRNUMBER = {4613357},
       DOI = {10.1016/j.aim.2023.109192},
       URL = {https://doi.org/10.1016/j.aim.2023.109192},
}

@article{StokesFranchi,
author = {Franchi, Bruno and Tchou, Nicoletta and Tesi, Maria Carla},
title = {DIV–CURL TYPE THEOREM, H-CONVERGENCE AND STOKES FORMULA IN THE {H}EISENBERG GROUP},
journal = {Communications in Contemporary Mathematics},
volume = {08},
number = {01},
pages = {67-99},
year = {2006},
doi = {10.1142/S0219199706002039},
URL = {   https://doi.org/10.1142/S0219199706002039

},
eprint = {         https://doi.org/10.1142/S0219199706002039
 }
}

@article {DiMJulNGoloVit25,
    AUTHOR = {Di Marco, Marco and Julia, Antoine and Nicolussi Golo,
              Sebastiano and Vittone, Davide},
     TITLE = {Submanifolds with boundary and {S}tokes' theorem in
              {H}eisenberg groups},
   JOURNAL = {Trans. Amer. Math. Soc.},
  FJOURNAL = {Transactions of the American Mathematical Society},
    VOLUME = {378},
      YEAR = {2025},
    NUMBER = {7},
     PAGES = {4955--4990},
      ISSN = {0002-9947,1088-6850},
   MRCLASS = {53C17 (26B20 53C65 58C35)},
  MRNUMBER = {4919585},
MRREVIEWER = {Christiam\ B.\ Figueroa},
       DOI = {10.1090/tran/9410},
       URL = {https://doi.org/10.1090/tran/9410},
}

@article{dimarco2025submanifoldsboundarysubriemannianheisenberg,
      title={Submanifolds with boundary in sub-{R}iemannian {H}eisenberg Groups}, 
      author={Marco Di Marco and Davide Vittone},
      year={2025},
      eprint={2508.15634},
      archivePrefix={arXiv},
      primaryClass={math.DG},
      url={https://arxiv.org/abs/2508.15634}, 
}
\end{document}